\documentclass[11pt]{amsart}
\usepackage{amsmath,amsthm,amsfonts,amssymb,enumerate,color,url,hyperref}
\usepackage[all]{xy}

\usepackage[top=30mm,right=25mm,bottom=30mm,left=25mm]{geometry}

\newcommand{\C}{\mathbb{C}}
\newcommand{\CB}{\mathbf{C}}
\newcommand{\F}{\mathbb{F}}
\newcommand{\N}{\mathbb{N}}
\newcommand{\Z}{\mathbb{Z}}
\newcommand{\R}{\mathbb{R}}
\newcommand{\Q}{\mathbb{Q}}

\newcommand{\GL}{\mathrm{GL}}
\newcommand{\SL}{\mathrm{SL}}
\newcommand{\SU}{\mathrm{SU}}
\newcommand{\GU}{\mathrm{GU}}

\newcommand{\Sp}{\mathrm{Sp}}

\newcommand{\PSL}{\mathrm{PSL}}
\newcommand{\PSU}{\mathrm{PSU}}
\newcommand{\GO}{\mathrm{O}}
\newcommand{\SO}{\mathrm{SO}}

\newcommand{\Stab}{\mathrm{Stab}}

\newcommand{\Frob}{\mathrm{Frob}}

\newcommand{\height}{\mathrm{ht}}

\newcommand{\Cl}{\mathrm{Cl}}

\newcommand{\cE}{\mathcal{E}}

\newcommand{\cO}{\mathcal{O}}

\newcommand{\cG}{\mathcal{G}}
\newcommand{\cT}{\mathcal{T}}

\newcommand{\St}{\mathsf {St}}
\newcommand{\ZB}{\mathbf{Z}}
\newcommand{\Irr}{\mathrm{Irr}}
\newcommand{\len}{\mathsf{len}}
\newcommand{\lev}{\mathsf{lev}}
\newcommand{\rank}{\mathsf{rank}}
\newcommand{\defect}{\mathsf{def}}
\newcommand{\nint}{\mathsf{nint}}

\newcommand{\e}{\epsilon}

\newcommand{\supp}{\mathsf {supp}}
\newcommand{\diag}{\mathrm {diag}}

\newcommand{\SSS}{\mathsf{S}}
\newcommand{\WWW}{\mathsf{W}}

\newcommand{\sgn}{\mathsf{sgn}}
\newcommand{\tw}[1]{{}^#1\!}

\renewcommand{\emptyset}{\varnothing}
\renewcommand{\setminus}{\smallsetminus}
\renewcommand{\max}{{\sf max}}
\newcommand{\cl}{\mathfrak{l}}

\newcommand{\varep}{\varepsilon}
\newcommand{\al}{\alpha}
\newcommand{\eps}{\epsilon}
\newcommand{\Ind}{\mathrm{Ind}}
\newcommand{\Ker}{\mathrm{Ker}}
\newcommand{\DC}{D^\circ}
\newcommand{\Id}{\mathrm{Id}}

\newcommand{\editnew}[1]{{\color{blue} #1}}

\newtheorem{thm}{Theorem}[section]

\newtheorem{theorema}{Theorem}

\newtheorem{cor}[thm]{Corollary}

\newtheorem{prop}[thm]{Proposition}

\newtheorem{exa}[thm]{Example}

\newtheorem{lem}[thm]{Lemma}

\newtheorem*{claim*}{Claim}

\numberwithin{equation}{section}

\begin{document}
\title[Character estimates and Thompson's conjecture]
{Character estimates for finite classical groups and the Asymptotic Thompson Conjecture}

\author{Michael Larsen}
\address{Department of Mathematics\\
    Indiana University\\
    Bloomington, IN 47405\\
    U.S.A.}
\email{mjlarsen@indiana.edu}

\author{Pham Huu Tiep}
\address{Department of Mathematics\\ Rutgers University\\ Piscataway, NJ 08854\\ U.S.A.}
\email{tiep@math.rutgers.edu}

\thanks{The first named author was partially supported by the NSF 
grants DMS-2001349 and DMS-2401098 and the Simons Foundation. The second named author gratefully acknowledges the support of the NSF (grant DMS-2200850) and the Joshua Barlaz Chair in Mathematics.}

\thanks{We are grateful to Evgeny Khukhro for pointing out the reference \cite{KN} as (presumably) the earliest appearance in print of Thompson's conjecture..} 

\begin{abstract}
If $G$ is a finite classical group, linear or unitary in any characteristic, and orthogonal in odd characteristic, we give an
approximate formula for $\chi(g)$ in which the error term is much smaller than the estimate, when $g\in G$ is an element with large centralizer and $\chi\in \Irr(G)$ is an irreducible character of low degree.  As an application, we prove Thompson's conjecture for all sufficiently large finite simple groups: each such group contains a conjugacy class whose square is the whole group.
\end{abstract}

\maketitle

\tableofcontents

\section{Introduction}
Let $G$ be a finite non-abelian simple group or a closely related group such as $\GL_N(q)$.  Let $g\in G$ be an element, and let $\chi\in \Irr(G)$ be an irreducible character.
There are many upper bounds for $|\chi(g)|$ in the literature, whose quality typically depend on the degree $\chi(1)$ and the size of the centralizer of $g$.
See \cite{La,Li} and the references contained therein for the current state of work on this problem and for applications of such bounds.
For applications, an important role is played by the Frobenius formula, which asserts 
that the number of representations of $g\in G$ as a product of a conjugate of $x$ and a conjugate of $y$ is
\begin{equation}
\label{Frobenius}
\frac{|x^G||y^G|}{|G|}\sum_{\chi\in \Irr(G)} \frac{\chi(x)\chi(y)\bar\chi(g)}{\chi(1)}.
\end{equation}
In many situations of interest, the contribution of the $\chi=1_G$ term dominates the sum, so that the approximation of treating the product of a random conjugate of $x$ and a random conjugate of $y$ as a random element of $G$
is fairly good.  It is often the case that the hardest part in proving this is dealing with the contribution of non-trivial characters of low degree, especially when $g$ has large centralizer.

An examination of the low degree complex irreducible characters of classical groups over finite fields suggests that they naturally fall into families indexed by a (non-negative integral) \emph{level}.
There have been several attempts to formalize this observation \cite{GMST,GH1,GH2,GLT1,GLT2}.
In this paper, we use the concept of \emph{true level} defined in \cite{GLT1} when $G$ is of type $\GL$ or $\GU$, and give a definition of the 
{\it level} $\lev(\chi)$ directly in terms of the Lusztig parameterization of irreducible characters $\chi$ for orthogonal and symplectic groups, see \eqref{lev-gen}.
Roughly speaking, if $G$ is a classical group of dimension $N$ over $\F_q$ and $\chi \in \Irr(G)$ has low degree, then $\lev(\chi) \approx \log_{q^N}\chi(1)$.
Sarnak has suggested to us that one should regard the distinction between low level and high level characters as being akin to the distinction between major and minor arcs in the circle method. This analogy suggests that 
 instead of settling for bounding $\chi(g)$, in low level cases we should aim for an actual estimate, with a small error term.
In this paper we prove such an estimate for classical groups of type A, B, and D over finite fields, in the large centralizer case,
and the low-degree condition means the character has level at most the square root of the largest possible level. 
One can hope that this may be a first step toward developing something like a ``singular series'' in this non-abelian setting.
(Type C can also be treated in a similar way but we do not present details here since this case is not needed for 
our intended application.)

Recall that if $g$ is an $N\times N$-matrix over a field $\F$, we define its \emph{support} $\supp(g)$ to be the codimension of its largest eigenspace
over $\overline{\F}$.  When $\supp(g)<N/2$, there is a unique eigenvalue $\lambda$ of $g$ with an eigenspace of dimension
$N-\supp(g)$, which we call the \emph{primary eigenvalue}. In classical groups, one can directly relate the size of the centralizer of an element $g$ to $\supp(g)$
(see e.g. \cite[\S\S4, 6]{LT3}),
and we formulate our results in terms of the support rather than the order of the centralizer. In the first result, we use the notation $\GL^\eps_N(q)$ to denote 
the general linear group $\GL(N,\F_q)$ when $\eps=+$, and the general unitary group $\GU(N,\F_{q^2})$ when $\eps=-$. For these groups, see \cite[Theorem 3.9]{GLT1} for the determination of the true level of an irreducible character.

\begin{theorema}
\label{mainB}
Let $m$ and $j$ be fixed positive integers, and let $\eps= \pm$.  If 
$$N \geq \max\bigl( 3j^2+j +2m+10,2j^2+4m+5\bigr),$$ 
$\chi$ is an irreducible complex character of $G=\GL^\eps_N(q)$ of true level $j$, and $g\in G$ has support $m$ and primary eigenvalue $\lambda$, 
then
$$\Bigl|\frac{(\eps q)^{mj}\chi(g)}{\chi(1)}\cdot \frac{\overline{\chi(\lambda \cdot \Id_V)}}{\chi(1)}-1\Bigr| < q^{-N/2}.$$
\end{theorema}

\begin{theorema}
\label{mainC}
Let $m,j \in \Z_{\geq 1}$, $q$ any odd prime power, and let $\varep= \pm$. Suppose that
$$N \geq 8j^2+4j +\max(4m,2j)+2$$ 
and $\chi$ is an irreducible complex character of $G=\SO^\varep_N(q)$ of level $j$.
Then there exists some $\beta=\beta(\chi,\lambda) \in \{1,-1\}$ such that when $g\in G$ has support $m$ and primary eigenvalue $\lambda = \pm 1$, we have
$$
\Bigl|\frac{q^{mj}\chi(g)}{\chi(1)}-\beta\Bigr| < q^{-N/4}.$$
Moreover, if $2|N$ or $\lambda=1$, then $\beta = \chi(\lambda \cdot 1_G)/\chi(1)$. 
\end{theorema}

The proofs of Theorems \ref{mainB} and \ref{mainC} are based on Deligne--Lusztig theory \cite{LusztigBook} and Howe duality \cite{Hw}.  
Theorem \ref{mainB} makes use of  \cite[Theorem 1.1]{GLT1}, while
Theorem \ref{mainC} makes use of Pan's work \cite{Pan24,Pan-I,Pan-T}, following work of Aubert, Michel, and Rouquier \cite{AMR96}, explaining the compatibility between the Howe correspondence and Lusztig's classification of irreducible representations of 
orthogonal and symplectic groups. 
As shown in Example \ref{large-level}, 
the constraints bounding $m=\supp(g)$ and $j=\lev(\chi)$ in Theorems~\ref{mainB}, \ref{mainC} cannot be 
completely removed. Thus results like Theorems~\ref{mainB}, \ref{mainC} are purely a phenomenon for characters of low degree and elements of 
not-too-large support.

Before discussing applications of Theorems \ref{mainB} and \ref{mainC}, we make some comments on
how they relate to other recent character bounds for
finite groups of Lie type $G$. The character bound obtained in \cite{LT3} applies uniformly, to all characters and all elements of the group, but because its constant is poor, the bound is non-trivial only when the rank $r$ of the group $G$ in question is very large. The character bounds of \cite{BLST,TT,LT,GLT1,GLT2}
apply to certain elements, or to certain characters of $G$; the bounds in \cite{BLST,TT,LT} work particularly well when the size $q$ of the field over which 
$G$ is defined is large compared to the rank $r$, whereas the bounds in \cite{GLT1,GLT2} typically require the rank $r$ to be large. 
All of these bounds are weakest for elements of small support and characters of low level,
which is exactly where the new results of this paper apply.
In the applicability range
of Theorems \ref{mainB} and \ref{mainC}, they yield not only an upper bound, but an approximate formula for $\chi(g)$ in which the error term is much smaller 
(roughly by a factor of $q^{r/2}$) than the estimate.

\smallskip
As a consequence of Theorems \ref{mainB} and \ref{mainC} and the main result of \cite{LT3}, we can now prove an 
essentially optimal refinement of 
\cite[Theorem 1.2.1]{LST}:

\begin{theorema}
\label{main-supp}
There exists a constant $n_0$ such that the following statement holds for any $N \geq n_0$. If $G=\SL_N(q)$, $\SU_N(q)$, or $G \in\{\SO^\eps_N(q),\Omega^\eps_N(q)\}$ with $2 \nmid q$, $g \in G$ has support $m>0$, and $\chi \in \Irr(G)$ has degree $>1$, then either
\begin{enumerate}[\rm(i)]
\item $|\chi(g)|/\chi(1) < q^{-m}$, or
\item $\chi$ has level $1$; equivalently, $1 < \chi(1) < q^N$. Furthermore, one of the following holds.
\begin{enumerate}[\rm(a)]
\item $G=\SL_N(q)$ and $|\chi(g)|/\chi(1) < q^{1-m}$.
\item $G=\SU_N(q)$ and $|\chi(g)|/\chi(1) < 1.53q^{1-m}$.
\item $G \in \{\SO^\eps_N(q),\Omega^\eps_N(q)\}$ and $|\chi(g)|/\chi(1) < q^{3-m}$.
\end{enumerate}
\end{enumerate}
\end{theorema}

Bounded rank analogues of Theorem \ref{main-supp} will be explored elsewhere, see e.g. \cite{LT4}.

\smallskip
As an application of Theorems~\ref{mainB} and \ref{mainC}, we prove the asymptotic version of Thompson's Conjecture,
which asserts that every finite non-abelian simple group $G$ contains a conjugacy class $C\subset G$ such that $C^2 = G$.
This can be thought of as a strong version of Ore's Conjecture (now a theorem of Liebeck, O'Brien, Shalev, and Tiep \cite{LOST}) that every element in $G$ is 
a commutator.  
According to a private communication by Khukhro, the conjecture first appeared  in Kourovka's Notebook \cite{KN} as Problem 9.24 in 1984. 
It was communicated to Mazurov by Thompson in Oberwolfach in 1982, and although Thompson did not want to claim authorship, he consented to Mazurov 
describing it as ``Thompson's problem''. Within a year \cite{AH} it had achieved its modern name.
In 1998, Ellers and Gordeev \cite{EG} reduced the conjecture to the case of finite simple groups of Lie type over fields of order $\le 8$.  They also handled completely the groups of the form $\PSL_n(q)$.

In \cite{LT3}, we proved Thompson's conjecture for all sufficiently large finite simple groups of symplectic type in any characteristic,
and also for orthogonal groups in characteristic $2$.
In this paper, we treat the cases of  unitary groups and of orthogonal groups over fields of odd order, in sufficiently high rank,
constructing for each such simple group $G$ an {\it explicit} conjugacy class $C\subset G$ such that $G=C^2$,
and thereby completing the proof of the asymptotic version of Thompson's Conjecture:

\begin{theorema}
\label{mainA}
If $G$ is a finite non-abelian simple group of sufficiently high order, then $G$ contains a conjugacy class $C$ satisfying $C^2 = G$.
\end{theorema}

The difficulty in applying \eqref{Frobenius}  to Thompson's Conjecture is that for any choice of $x=y$, there are many different characters $\chi\in \Irr(G)$
for which $\chi(x)=\chi(y)\neq 0$.  (By comparison, it is relatively easy to find distinct $x$ and $y$ in $G$ such that
$\{\chi\in \Irr(G)\mid \chi(x)\chi(y)\neq 0\}$ is very small, often consisting just of the identity character and the Steinberg character,
and this can be used to show that $x^G y^G$ contains every element other than the identity: see \cite{MSW,LST,GM}.)

Nevertheless, in favorable circumstances, the $x=y$ case can be treated by means of \eqref{Frobenius}.
There is a glimmer of hope for this in \cite[Theorem 10.7]{LTT}, where, in joint work with Taylor, we proved that
$C^2\cup \{1_G\}= G$, when $G := \PSL_p(q)$ is sufficiently large,
$p$ is prime, and $C$ is the conjugacy class of an element of $S$ represented by $x\in \SL_p(q)$ of order $\frac{q^p-1}{q-1}$. 
This result is simpler than what we need to do in this paper in several respects:
$\PSL_p(q)$ is easier to deal with than unitary or orthogonal groups; we do not
insist that $1_G\in C^2$, so we are not limited to real classes $C$; and we are only dealing with the case $p$ is prime, so the description of $\{\chi\in\Irr(\SL_p(q))\mid \chi(x)\neq 0\}$, for $x$ of order $\frac{q^p-1}{q-1}$,
is particularly simple: just the cuspidal and unipotent characters.

For Theorem~\ref{mainA}, our strategy is to show that for unitary or orthogonal groups of certain special ranks parametrized by large primes, there exist real, regular semisimple elements $x$
such that for any non-central $g$, the formula \eqref{Frobenius} counting the number of representations of $g$ as a product of two conjugates of $x$ is dominated
by the $\chi=1_G$ term.  This entails showing that $|\chi(g)|$ is substantially smaller than $\chi(1)$ for each non-trivial character $\chi$ (or, at least, all those for which $\chi(x)\neq 0$.)
By  \cite[Theorem 5.5]{LT3}, there is a character ratio bound of the form 

\vskip4pt
\centerline{$\dfrac{|\chi(g)|}{\chi(1)} \le \chi(1)^{-c\cdot\supp(g) /N},$}
\vskip4pt
\noindent
where $N$ is the dimension of the natural representation of $G$, $c$ is a positive absolute constant, and the \emph{support} $\supp(g)$ is the codimension 
of the largest eigenspace of $g$ in the natural representation of $G$.  This bound is useful when either $\log_{q^N}\chi(1)$ or $\supp(g)$  is large.
It implies our result with Taylor \cite[Theorem 1]{LTT} asserting that there exists a universal constant $B$ such that every finite simple group $G$ of classical Lie type has a regular semisimple conjugacy class $C$ such that $C^2$ contains all elements of
support $\ge B$.  The main difficulty in treating elements of small support
is the contribution of characters of low degree.

For Theorem~\ref{mainA}, we need a method to go from groups $G_1$ of special rank, for which we can construct elements $x_1$ with $x_1^{G_1} x_1^{G_1} = G_1$, to the general case.
Suppose, to illustrate the idea, we have a regular semisimple element $x_1$ in $\Omega^+_{2p}(q) = [\SO^+_{2p}(q),\SO^+_{2p}(q)]$ such that every element of $\Omega^+_{2p}(q)$ is a product of two conjugates of $x_1$.  Let $y$ denote a real class in $\Omega^+_{4k}(q)$, where $k$ is small compared to $p$.  Then both $I_{8k}$ and $-I_{8k}$ are
products of two conjugates of $\diag(y,-y)$  in $\Omega_{8k}^+(q)$.  We claim that if $p$ is large enough then every element $g\in \Omega_{2p+8k}^+(q)$ is a product of two conjugates of $x:=\diag(x_1,y,-y)$.  Indeed, unless $g$ has support less than $p$, this follows easily from \eqref{Frobenius}.  If $g$ has small support, its primary eigenvalue
must be $1$ or $-1$, so it is conjugate either to an element of the form  $\diag(g_1,I_{4k},I_{4k})$ or to one of the form $\diag(g_1,-I_{4k},-I_{4k})$. Either way, this is a product of 
two conjugates of $x$ in $\Omega^+_{2p}(q)\times \Omega^+_{8k}(q)$, and therefore of two conjugates of $x$ in $\Omega^+_{2p+8k}(q)$.  

The difficulties in implementing this program for unitary groups and orthogonal groups are very different.  For unitary groups, there are $q+1$ possible primary eigenvalues, so we need special ranks representing all
congruence classes modulo $2q+2$; as a result, the set of special ranks for which the theorem must be proved is relatively large. For orthogonal groups, Lusztig's classification is quite complicated, it is relatively difficult to compute character values, and there are many characters of small level to deal with. Rather than attempting a unified presentation that handles all difficulties simultaneously, we treat the unitary cases  and the orthogonal cases separately.  Sections 2--7 are devoted to the former and sections 8--14 to the latter. 

Although there are similarities in our strategy to attack Thompson's Conjecture between this paper and \cite{LT3}, there are significant differences as well.
Here, the construction of the conjugacy class $C$ is explicit, and the proof that $C^2 = G$ makes use of \eqref{Frobenius} and strong character estimates.  
By comparison, the argument of \cite{LT3} is indirect and depends on Ore's Conjecture \cite{LOST}.
For the groups treated in this paper, \cite{LOST} is not a needed input,  and, indeed, we give a new proof of Ore's Conjecture.
It would be easy to treat the  groups of \cite{LT3} with $q$ odd by the methods of this paper. However, we do not know the relation between Howe duality and Lusztig's parametrization in characteristic $2$, and this prevents us from claiming a complete (independent) proof of asymptotic Ore Conjecture.

Finally, some notational comments. We do not carefully distinguish a representation from its character $\rho$, and write $\rho\sigma$
and $\rho\otimes \sigma$ interchangeably depending on which aspect we want to emphasize. For an $n$-dimensional vector space $V$, we use 
the notation $\Id_V$ (or $\Id_N$) to denote the identity transformation of $V$, whereas $I_n$ denotes the identity $n \times n$-matrix over any field.
For a finite group $G$, $1_G$ denotes the identity element of the group or the principal character, depending on the context.

\section{Some $\SSS_N$-character values}
In this section, we prove that certain elements $g$ of the symmetric group $\SSS_N$ have the property that all irreducible character values at $g$ belong to the set $\{-1,0,1\}$.
This is relevant to us because it allows us to find regular semisimple elements in groups of type A such that every unipotent character value at such an element lies in $\{-1,0,1\}$.

A \emph{partition} $\lambda = \lambda_1\ge \lambda_2\ge\lambda_3\ge\cdots$ will mean a weakly decreasing finitely supported infinite sequence of non-negative integers.  
We let $|\lambda|$ denote the sum $\sum_i \lambda_i$ of the parts of $\lambda$.  We  often use the notation
$\lambda\vdash N$  for $N=|\lambda|$.
For each partition $\lambda\vdash N$, we let $\chi_\lambda$ denote the irreducible
character of $\SSS_N$ associated to $\lambda$. 

We identify a partition $\lambda\vdash N$ with its Ferrers diagram, with row lengths $\lambda_1,\lambda_2,\ldots$.
Each box in the diagram has a \emph{row number} $y\ge 1$ and a \emph{column number} $x\ge 1$ and can be identified by the pair $(x,y)$.
The \emph{layer number} is the smaller of the two, and if the two are equal, we say the box is \emph{diagonal}.
By the \emph{$k^{\mathrm {th}}$ layer}, we mean the set of boxes with layer number $k$; the \emph{size}, that is, the number of boxes, of the $k^{\mathrm {th}}$ layer decreases monotonically with increasing $k$.
The number of diagonal boxes equals the number of (non-empty) layers.  The \emph{distance} between the boxes $(x_1,y_1)$ and $(x_2,y_2)$
is $|x_1-x_2| + |y_1-y_2|$.

A \emph{border strip} in $\lambda$ is a connected skew diagram containing no $2\times 2$ squares.  By connectedness, the size of any border strip containing two boxes must be greater than the distance between the boxes.  The maximum length of a border strip in $\lambda$ is $\lambda_1+\lambda'_1 - 1$, where $\lambda'$ is the transpose of $\lambda$.  If the border strip does not include the last box in the first row of $\lambda$, its size can be no greater than $\lambda_1+\lambda'_1-2$.

A border strip can be thought of as the union of its rows, which must be contained in a consecutive set of rows of $\lambda$, with the property that the first box of row $i$ lies in the same column as the last box of row $i+1$, or as the union of its columns, which must be contained in a consecutive set of columns of $\lambda$ such that the first box of column $i$ lies in the same row as the last box of column $i+1$.

A \emph{border strip tableau of type  $(\alpha_1,\ldots,\alpha_m)$} of  $\lambda$ is a labelling of its boxes by integers in $[1,m]$ such that the 
labels are non-decreasing in each row and column and
such that the boxes labeled by any given $i$ form
a border strip of size $\alpha_i$.  Note that if $x_1<x_2$ and $y_1 < y_2$ then $(x_1,y_1)$ and $(x_2,y_2)$ cannot have the same label $i$, since otherwise, the 
border strip of boxes labeled $i$ would contain the rectangle of which $(x_1,y_1)$ and $(x_2,y_2)$ are opposite corners.  Thus labels are strictly increasing along diagonals of type $(1,1)$.
Regarding the last border strip in a border strip tableau of $\lambda$ as a union of columns, we see that if it contains the last box in the first row of $\lambda$, it is uniquely determined by $\alpha_m$.

The \emph{height} of a tableau $T$ is the sum of the heights of its border strips, where the height of a single border strip is one less than the number of rows
it meets.  By the Murnaghan-Nakayama rule, the value of $\chi_\lambda$ at a cycle $x$ with orbit lengths $\alpha_1,\ldots,\alpha_m$ is $\sum_T (-1)^{\height(T)}$, where the sum
is taken over border strip tableaux of type $(\alpha_1,\cdots,\alpha_m)$.  The order of the $\alpha_i$ in this $m$-tuple may affect the number of terms in the sum but not, of course, the value of the sum.

We say that $\lambda$ is \emph{minimal} for an element $x$ if there is some ordering of the orbit lengths of $x$ so that the number of tableaux of type $(\alpha_1,\ldots,\alpha_m)$ is at most $1$.
This guarantees 
$$\chi_\lambda(x)\in \{-1,0,1\}.$$

\begin{prop}
\label{fast growth}
If $x\in \SSS_N$ has orbits of lengths $\alpha_1\le\cdots \le\alpha_m$ and
$$\alpha_{i+1} \ge 2(\alpha_1+\cdots+\alpha_i)-1$$
for all $1 \leq i<m$, then every $\lambda\vdash N$ is minimal for $x$.  In particular, every irreducible character takes value $0$ or $\pm1$ on $x$.
\end{prop}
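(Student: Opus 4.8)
The plan is to induct on $m$, showing that under the rapid-growth hypothesis there is a unique border strip tableau of type $(\alpha_1,\ldots,\alpha_m)$ for every $\lambda\vdash N$ — in fact, with the orbit lengths taken in the given increasing order, the tableau is forced step by step from the largest label down to the smallest. The key observation is that the last border strip, the one labeled $m$, is enormous compared to everything below it: $\alpha_m \ge 2(\alpha_1+\cdots+\alpha_{m-1})-1$, so in particular $\alpha_m > \alpha_1+\cdots+\alpha_{m-1}$, meaning the label-$m$ strip occupies more than half of $\lambda$. I would first argue that such a large border strip must contain the last box of the first row of $\lambda$: a border strip avoiding that box has size at most $\lambda_1+\lambda_1'-2$, and more to the point, the complement of the label-$m$ strip is a Young diagram $\mu$ of size $\alpha_1+\cdots+\alpha_{m-1} < \alpha_m$; I want to show that a border strip of size exceeding $|\mu|$ inside $\lambda=\mu\cup(\text{strip})$ is necessarily the "outer rim" piece anchored at the end of the first row, hence uniquely determined by its size $\alpha_m$ (as the excerpt already notes: "if it contains the last box in the first row of $\lambda$, it is uniquely determined by $\alpha_m$").

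Granting that, removing the label-$m$ strip leaves a uniquely determined $\mu\vdash \alpha_1+\cdots+\alpha_{m-1}$, and the orbit lengths $\alpha_1\le\cdots\le\alpha_{m-1}$ still satisfy the same hypothesis (the inequalities for $i<m-1$ are unchanged), so by induction $\mu$ has a unique border strip tableau of type $(\alpha_1,\ldots,\alpha_{m-1})$. This gives uniqueness of the tableau for $\lambda$, hence $\lambda$ is minimal for $x$, hence $\chi_\lambda(x)\in\{-1,0,1\}$ by the Murnaghan–Nakayama rule (the single term contributes $\pm1$ if a valid tableau exists, and the sum is $0$ if none does). The base case $m=1$ is trivial: the only border strip tableau of type $(\alpha_1)$ with $\alpha_1=N$ exists iff $\lambda$ is a hook, and it is unique.

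The main obstacle is the geometric claim that a border strip of size greater than $|\lambda|/2$ inside a Young diagram $\lambda$ must be the rim hook anchored at the end of the first row. The cleanest route is probably via the rim-hook/$p$-core bijection: border strips of a given size $k$ in $\lambda$ correspond to ways of removing a bead and sliding it in the abacus (equivalently to addable/removable $k$-rim-hooks), and a rim hook of size $k$ that does \emph{not} reach the $(1,\lambda_1)$ corner leaves a diagram still containing the first row's last box, so its complement has a row of length $\ge \lambda_1$; combined with the complement having size $|\lambda|-k < |\lambda|/2 < \lambda_1$ when $k>|\lambda|/2$... here one must be a little careful, since $\lambda_1$ need not exceed $|\lambda|/2$. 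So instead I would argue directly on the hook structure: write the outer rim hook $R$ of $\lambda$ (the unique maximal rim hook, of size $\lambda_1+\lambda_1'-1$); any border strip $B$ with $|B| > |\lambda|/2 \ge |\lambda \setminus R^{\circ}|$ for suitable interior... The safest formulation: a border strip is determined by the set of rows it meets and is "as far out as possible" within those rows; if $B$ omits the $(1,\lambda_1)$ box then $B$ lies within $\lambda$ minus that box, and one shows by a direct counting/connectivity argument using "the size of any border strip containing two boxes is greater than the distance between them" that such a $B$ cannot have size exceeding $\alpha_1+\cdots+\alpha_{m-1}$ once the growth bound forces $\alpha_m$ to be so large — more precisely, if $B$ is the label-$m$ strip and it omits the corner, then $\lambda\setminus B$ is a (possibly disconnected or skew) region of size $<\alpha_m$ that must still be tiled by strips of total size $\alpha_1+\cdots+\alpha_{m-1}<\alpha_m$, and showing $\lambda \setminus B$ contains the corner box forces $\lambda_1 \le \alpha_1+\cdots+\alpha_{m-1}$ and similarly $\lambda_1' \le \alpha_1+\cdots+\alpha_{m-1}$, whence $|\lambda| = |B| + |\lambda\setminus B| \le (\lambda_1+\lambda_1'-2) + (\alpha_1+\cdots+\alpha_{m-1})$, contradicting $|\lambda| = \alpha_m + (\alpha_1+\cdots+\alpha_{m-1}) \ge 3(\alpha_1+\cdots+\alpha_{m-1}) - 1$ once we also bound $\lambda_1+\lambda_1'-2 \le 2(\alpha_1+\cdots+\alpha_{m-1}) - 2$ from the two inequalities. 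Making this chain of inequalities airtight — in particular handling the degenerate small-$m$ cases and the possibility that $\lambda\setminus B$ is disconnected — is where the real work lies.
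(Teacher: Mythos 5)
Your proposal follows the same route the paper takes: show by contradiction that the label-$m$ border strip must contain the last box of the first row, then induct on $m$. The counting argument you sketch at the end is essentially the paper's computation, and it does yield the needed contradiction once every step is justified.

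The one genuine gap is the claim ``and similarly $\lambda_1' \le \alpha_1+\cdots+\alpha_{m-1}$.'' From ``$B$ omits the box $(1,\lambda_1)$ and $\lambda\setminus B$ is a Young diagram'' you may indeed conclude that the entire first row lies in $\lambda\setminus B$ (a Young diagram containing $(1,\lambda_1)$ must contain all of row $1$), hence $\lambda_1\le|\lambda\setminus B|=\alpha_1+\cdots+\alpha_{m-1}$. But nothing forces $B$ to avoid the bottom box $(\lambda_1',1)$ of the first column, so the dual inequality for $\lambda_1'$ does not follow by the same mechanism; a border strip confined to rows $2,\ldots,\lambda_1'$ can perfectly well hug the left edge and contain $(\lambda_1',1)$. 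The fix is exactly what the paper does at the outset: transposing a Young diagram carries border-strip tableaux to border-strip tableaux, so one may assume $\lambda_1\ge\lambda_1'$ without loss of generality. With that normalization you get $\lambda_1+\lambda_1'-2\le 2\lambda_1-2\le 2(\alpha_1+\cdots+\alpha_{m-1})-2$ from the single inequality you actually proved, and your chain $N\le 3(\alpha_1+\cdots+\alpha_{m-1})-2 < 3(\alpha_1+\cdots+\alpha_{m-1})-1\le N$ closes up. One further remark: your concern about $\lambda\setminus B$ being disconnected is unfounded, since $B$ is by definition the last strip removed and removing a rim hook from a Young diagram always leaves a Young diagram.
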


\begin{proof}
Since the transpose of any border strip tableau of $\lambda$ is a border strip tableau of the transpose partition $\lambda'$, we may assume
without loss of generality that the number $\lambda_1$ of columns is at least as great as the number $\lambda'_1$ of rows.  

If there are no boxes in the first row which are labeled $m$,
then the last border strip is entirely contained in a rectangle with $\lambda_1$ columns and $\lambda'_1-1$ rows, so
$$\alpha_m \le \lambda_1 + \lambda'_1 - 2\le 2\lambda_1 - 2.$$
Also,
$$\alpha_m \le \lambda_2 + \lambda_3 + \cdots.$$
Adding twice this inequality to the previous inequality, we obtain
$$3\alpha_m \le 2(\lambda_1+\lambda_2+\cdots) - 2 = 2N-2.$$
On the other hand,
$$3\alpha_m = 2\alpha_m + \alpha_m \ge 2\alpha_m + (2\alpha_1+\cdots+2\alpha_{m-1}) - 1 = 2N-1.$$
The contradiction implies that the first row must have some box labeled $m$, and this implies that the last box in the first row is so labeled.
We conclude that the $m$th border strip is uniquely determined by $\alpha_m$, and this determines the partition obtained from $\lambda$ by removing the $m$th border strip.
By induction on $m$, there can be at most one border strip tableau of $\lambda$ of type $(\alpha_1,\ldots,\alpha_m)$.
\end{proof}

Let $a < N/3$ be a positive integer,
and let 
\begin{equation}\label{xy}
  x = (1\,2\,\cdots\,a)\,(a+1\;a+2\,\cdots\,N),\ y = (1)\,(2\,3\,\cdots\,a)\,(a+1\;a+2\,\cdots\,N).
\end{equation}

\begin{prop}\label{value1}
Every partition $\lambda\vdash N$ is minimal for the element $x$ in \eqref{xy}.  The total number of $\lambda$ such that 
$\chi_\lambda(x) \neq 0$ is $a(N-a)$. 
For any fixed $b$, the number of  $\lambda$ for which $\lambda_1 = N-b$ and  $\chi_\lambda(x) \neq 0$ is at most $1+a(a+1)/2$.
If $0 \leq b<a$, the only $\lambda$ with $\lambda_1 = N-b$ and
$\chi_\lambda(x)\neq 0$ is the hook partition $\lambda = (N-b,1,\ldots,1)$. \end{prop}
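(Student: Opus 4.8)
The plan is to reduce all four statements to counting border strip tableaux. Since $a < N/3$, the two orbit lengths $a \le N - a$ of $x$ satisfy $N - a \ge 2a - 1$, so Proposition~\ref{fast growth} applies: every $\lambda \vdash N$ is minimal for $x$, hence $\chi_\lambda(x) \in \{0, \pm 1\}$, and by the Murnaghan--Nakayama rule together with minimality, $\chi_\lambda(x) \ne 0$ if and only if $\lambda$ carries a (then unique) border strip tableau of type $(a, N - a)$, i.e. a chain $\emptyset \subset \mu \subset \lambda$ with $\mu$ a hook of size $a$ and $\lambda/\mu$ a border strip of size $N - a$. Uniqueness makes the map ``tableau $\mapsto \lambda$'' injective, so the number of $\lambda$ with $\chi_\lambda(x) \ne 0$ equals $\sum_\mu r(\mu)$, the sum over the $a$ hooks $\mu = (a - j, 1^j)$, $0 \le j \le a - 1$, of size $a$, where $r(\mu)$ is the number of border strips of size $s := N - a$ that can be adjoined to $\mu$.

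I would compute $r(\mu)$ on the abacus: adjoining a size-$s$ strip to $\mu$ amounts to choosing a bead at a position $p$ with $p + s$ vacant. For $\mu = (a - j, 1^j)$ the beads lie at $a - j - 1$, at $-1, \dots, -j$, and at every position $\le -(j+2)$, while the vacancies are $\{0, \dots, a - j - 2\}$, the single site $-(j+1)$, and everything $\ge a - j$. Since $s > 2a$, every site $\ge a - j$ is vacant, so the bead at $a - j - 1$ and the $j$ beads at $-1, \dots, -j$ all qualify, and a short interval count shows that exactly $(N - 2a - 1) + (a - j - 1) + 1$ of the deep beads do; the total is $r(\mu) = N - a$, independent of $j$, which gives $a(N - a)$ overall and settles the second claim. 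For the third, I would track the new largest bead position, which equals $\lambda_1 - 1$: moving $a - j - 1$ yields $\lambda_1 = N - j$; moving $-t$ yields $\lambda_1 = N - a - t + 1$; moving a deep bead whose image exceeds $a - j - 1$ yields $\lambda_1 = p + s + 1 \ge a - j + 1$; and every remaining admissible move leaves $a - j - 1$ on top, so $\lambda_1 = a - j$. Fixing $\lambda_1 = N - b$, the first option pins down $j$ (contributing at most $1$ tableau), the second pins down $t$ (at most $b - 1$ choices of $j$), the third pins down $p$ (at most $a$ choices of $j$), and the second and third cannot both occur (one forces $b \le a$, the other $b \ge a + 1$); the last option arises only when $N - b = a - j \le a$, for the unique such $j$, and contributes $a - j = N - b$ tableaux. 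Combining the cases in each range of $b$ yields at most $2a - 1 < 1 + a(a+1)/2$.

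The hook statement for $0 \le b < a$ I would argue directly. If $\lambda_1 = N - b$ and $\chi_\lambda(x) \ne 0$, remove a size-$(N-a)$ border strip $\sigma$ from $\lambda$, leaving a hook $\mu$ of size $a$. Here $|\sigma| = N - a < N - b = \lambda_1$, so $\sigma$ does not exhaust the first row; and if $\sigma$ omitted the last cell of the first row, that cell would lie in $\mu$, forcing $\mu_1 \ge \lambda_1 > a \ge |\mu|$, absurd. Hence $\sigma$ meets the first row in a final segment $[\mu_1 + 1, \lambda_1]$. If $\sigma$ also reached the second row, then, since consecutive rows of a border strip overlap in exactly one column and $\mu$ is a hook, its second-row part would contain at least $\mu_1$ cells, so $|\sigma| \ge (\lambda_1 - \mu_1) + \mu_1 = \lambda_1 > N - a$, again absurd. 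Thus $\sigma$ lies in the first row alone, and $\lambda = (\lambda_1, \mu_2, \mu_3, \dots) = (N - b, 1^b)$.

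I expect the second paragraph to demand the most care: checking that the listed sites really are beads, respectively holes, for every $j$ and every $b$, pinning down the interval lengths exactly, and confirming that the cases in the refined count are mutually disjoint so that the bounds simply add. The remaining ingredients---minimality, the reduction to counting chains, and the first-row argument for hooks---are short and structural.
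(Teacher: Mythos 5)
Your proof is correct in its conclusions but takes a genuinely different and substantially heavier route than the paper, and one step of your justification as written is false.

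For the count $a(N-a)$, the paper simply invokes column orthogonality: since every $\lambda$ is minimal, $|\chi_\lambda(x)|\in\{0,1\}$, so $|\{\lambda:\chi_\lambda(x)\neq 0\}| = \sum_\lambda|\chi_\lambda(x)|^2 = |\CB_{\SSS_N}(x)| = a(N-a)$, with no combinatorics at all. Your abacus computation of $r(\mu)=N-a$ is correct but does the same work the hard way. For the third and fourth statements the paper works with layers rather than the abacus: each border strip meets at most one diagonal box, so $\chi_\lambda(x)=0$ unless $\lambda$ has at most two layers; given $b$ and the complement size $k\le a$ the first layer is pinned down, and there are at most $k$ hooks for the second layer, giving $1+a(a+1)/2$ in three lines. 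Your direct first-row argument for the hook case ($b<a$) is correct and roughly as short as the paper's.

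The place where your write-up actually goes wrong is the assertion that the second and third abacus cases ``cannot both occur (one forces $b\le a$, the other $b\ge a+1$).'' Moving the bead $-t$ ($1\le t\le j$) lands the new top bead at $N-a-t$, so $\lambda_1=N-a-t+1$ and hence $b=a+t-1\ge a$; moving a deep bead to a position above the old top forces $p=a-b-1\le -(j+2)$, hence $b\ge a+1$. So both cases force $b\ge a$, and they coexist precisely for $a+1\le b\le 2a-2$ (where they contribute $2a-b-1$ and $b-a$ tableaux respectively, totalling $a-1$), contradicting the claimed disjointness. Your final bound $2a-1$ is nonetheless correct, and in fact sharp at $b=N-a$: for each $j$ there is at most one move that changes the top bead and yields $\lambda_1=N-b$, while the moves preserving the top bead can only occur for the single $j$ with $a-j=N-b$ and contribute $N-b\le a$ tableaux, so the total is at most $(a-1)+a=2a-1\le 1+a(a+1)/2$. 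With that repaired case analysis your argument is sound and actually sharper than the paper's; as written, however, the disjointness step is untrue and would not survive scrutiny.
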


\begin{proof}
By Proposition~\ref{fast growth}, $\lambda$ is always minimal for $x$.  Therefore, 
$$a(N-a) = |\CB_{\SSS_N}(x)| = \sum_{\chi\in \Irr(\SSS_N)} |\chi(x)|^2 = |\{\chi\in\Irr(\SSS_N)\mid \chi(x)\neq 0\}|.$$

Next, we fix $b = N-\lambda_1$.  Each border strip can contain at most one diagonal box, so $\chi_\lambda(x)=0$ unless $\lambda$ has at most two layers.
Assuming there are $\le 2$ layers, the size $N-k$ of the first layer is at least $N-a$, and, given $b$, this size determines the boxes in the first layer.  For $k=0$, this determines $\lambda$ uniquely.  Otherwise, there are at most $k$ possibilities for the second layer, so the total number of possibilities is at most $1+a(a+1)/2$.

Henceforth, we assume $b<a$.
Suppose first that $\lambda$ has one layer, which implies that $\lambda = (N-b,1,\ldots,1)$.  In any border tableau of type $(a,N-a)$, the unique diagonal box is labeled $1$, and the boxes labeled $2$ form a connected set, which implies that either they all belong to the first row or all to the first column.  However, the first column has $b+1\le a < N/3 < N-a$ boxes, 
so the boxes labeled $2$ must be exactly the last $N-a$ boxes of the first row.
Note that the first row has $N-b>N-a$ boxes, so this is, indeed, possible.
Therefore, $\chi_\lambda(x) = \pm 1$.

If $\lambda$ has two layers, in any border strip tableau of type $(N-a,a)$, the second diagonal box must be labeled $2$.  It is impossible for the last box in the first row to be labeled $2$ as well, since its distance from the second diagonal box is $N-b > a$.  Therefore, all $N-b$ boxes in the first row are labeled $1$, which is
possible only if $b\ge a$; otherwise, there are no tableaux of this shape at all, and $\chi_\lambda(x) = 0$.
\end{proof}

\begin{prop}\label{value2}
Every $\lambda \vdash N$ is minimal for the element $y$ in \eqref{xy}.
The  number of $\lambda$ such that 
$\chi_\lambda(x) \neq 0$ is $(a-1)(N-a)$, unless $a=2$, in which case it is $2(N-2)$.
For fixed $b$, the number of partitions $\lambda$ with $\lambda_1 = N-b$ and $\chi_\lambda(y)\neq 0$ is at most $a^2$.
If $0 \leq b<a-1$, the only partition $\lambda$ with $\lambda_1 = N-b$ and
$\chi_\lambda(y)\neq 0$ is $(N)$ if $b=0$ and $(N-b, 2, 1,\ldots,1)$ if $b\ge 2$. 
\end{prop}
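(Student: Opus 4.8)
The plan is to imitate the proof of Proposition~\ref{value1}, this time applying the Murnaghan--Nakayama rule with the orbit lengths of $y$ taken in increasing order $(1,a-1,N-a)$; this is the order for which the argument of Proposition~\ref{fast growth} produces at most one border strip tableau, since $a-1\ge 2\cdot1-1$ (as $a\ge2$) and $N-a\ge2a-1$ (as $N>3a$). Thus every $\lambda\vdash N$ is minimal for $y$, so $\chi_\lambda(y)\in\{-1,0,1\}$, and $|\{\lambda:\chi_\lambda(y)\ne0\}|=\sum_\lambda|\chi_\lambda(y)|^2=|\CB_{\SSS_N}(y)|$. The centralizer of $y$ has order $(a-1)(N-a)$ when $1,a-1,N-a$ are pairwise distinct, and order $2(N-2)$ exactly when $a-1=1$; the coincidences $a-1=N-a$ and $1=N-a$ are excluded by $a<N/3$. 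This gives the first count.

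Now fix $b=N-\lambda_1$. If the (unique) tableau of type $(1,a-1,N-a)$ exists, its label-$1$ cell is necessarily $(1,1)$, so the tableau is the same data as a partition $\mu\vdash a$ with $\mu\subset\lambda$, with $\lambda/\mu$ a border strip of size $N-a$, and with $\mu\setminus\{(1,1)\}$ a border strip (the last condition being exactly what lets $\mu$ carry a tableau of type $(1,a-1)$). A short case check shows $\mu\setminus\{(1,1)\}$ is a border strip if and only if $\mu$ is the single row $(a)$, the single column $(1^a)$, or a ``near-hook'' $(s,2,1^{a-2-s})$ with $2\le s\le a-2$; call such $\mu$ \emph{admissible}. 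There are at most $a$ admissible $\mu$, each with at most two layers, so $\lambda=\mu\cup(\lambda/\mu)$ automatically has at most three layers, and $\chi_\lambda(y)\ne0$ if and only if $\lambda$ arises in this way from an admissible $\mu$.

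The heart of the matter is a rigidity property of the long border strip $R:=\lambda/\mu$, where $N-a>2a\ge 2\mu_1$. Viewing $R$ as a union of rows, the border-strip condition forces $\lambda_j=\mu_{j-1}+1$ for every row $j$ of $R$ below its topmost one, and if $R$ meets the first row then that row is exactly the columns $[\mu_1+1,N-b]$, determined by $b$. Consequently $R$ is determined by the index of its top row together with its number of rows, and the equation $|R|=N-a$ has left side strictly increasing in the number of rows, hence at most one solution. Therefore: if $R$ meets the first row there is at most one such $R$; if $R$ avoids the first row then $\lambda_1=\mu_1$, so $b=N-\mu_1$ is forced and the only freedom lies in the top row of $R$, with the number of choices telescoping to at most $\mu_1\le a$. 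Since at most one admissible $\mu$ can satisfy $\mu_1=N-b$, summing over the admissible $\mu$ yields a bound that is comfortably below $a^2$.

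For the last assertion, suppose $0\le b<a-1$. Then $R$ has at least $N-b-\mu_1\ge N-b-a>a$ cells in the first row, hence fewer than $b<a-1$ cells below it; feeding this into the size computation, an $R$ spanning $h+1\ge2$ rows satisfies $h-\mu_{h+1}=b-a<-1$, i.e.\ $\mu_{h+1}>h+1$, which no admissible $\mu$ satisfies. So $R$ lies entirely in the first row, forcing $\mu_1=a-b$ and $\lambda=(N-b,\mu_2,\mu_3,\dots)$; running through the admissible $\mu$ with first part $a-b$ produces $(N)$ for $b=0$ (from $\mu=(a)$), $(N-b,2,1^{b-2})$ for $2\le b\le a-2$ (from $\mu=(a-b,2,1^{b-2})$), and nothing for $b=1$ (no admissible $\mu$ has first part $a-1$). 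The step I expect to require the most care is the rigidity estimate and its bookkeeping --- especially the case where $R$ avoids the first row, the only source of more than one $\lambda$ for given admissible $\mu$ and fixed $b$ --- along with the treatment of the degenerate shapes and of small $a$.
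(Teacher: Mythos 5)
Your proof is correct, and it takes a genuinely different route from the paper's, so a brief comparison is in order. The paper's proof of the $a^2$ bound proceeds layer by layer: it notes that $\chi_\lambda(y)\ne0$ forces $\lambda$ to have at most three layers, fixes the size $k\in[0,a]$ of the complement of the first layer (which, together with $b$, determines the first layer), and then counts $\le 2k-1$ possibilities for the remaining layers, summing to $a^2$. You instead organize the count around the partition $\mu\vdash a$ left after deleting the long size-$(N-a)$ strip $R$: your observation that $\mu$ must be admissible (i.e.\ $\mu\smallsetminus\{(1,1)\}$ a border strip, forcing $\mu$ to be the single row, single column, or a near-hook $(s,2,1^{a-2-s})$) is the exact analogue of the paper's ``at most three layers'' constraint, and your rigidity analysis of $R$ relative to $\mu$ --- one $R$ when $R$ meets the top row, a telescoping total of $\le\mu_1\le a$ choices when it does not, the latter occurring for at most one admissible $\mu$ once $b$ is fixed --- replaces the paper's per-layer count. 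Your bookkeeping actually yields the slightly sharper estimate $\le 2a$, which of course still sits below $a^2$. The small-$b$ analysis is likewise a reparametrization of the paper's $d=1,2,3$ case split in terms of $\mu$, and arrives at the same conclusion: $R$ must be confined to the first row, forcing $\mu_1=a-b$, so the only admissible $\mu$ are $(a)$ when $b=0$ and $(a-b,2,1^{b-2})$ when $2\le b\le a-2$, with $b=1$ excluded because no admissible $\mu$ has first part $a-1$. What each approach buys: the paper's is closer to the language already set up for Proposition \ref{value1}; yours makes the role of the size-$a$ subtableau explicit, which isolates the combinatorics of the type-$(1,a-1)$ data cleanly and gives the sharper constant $2a$. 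One minor caveat worth writing out in a final version is the verification that the interval $[\,\mu_i+1,\mu_{i-1}\,]$ of allowed top-row lengths $\lambda_i$ really telescopes to $\mu_1$ over $i\ge 2$ (including the rows beyond $\ell(\mu)$), and that each $(\lambda_i,i)$ determines at most one $h$; you flagged this yourself, and the details do check out.
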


\begin{proof}
By Proposition~\ref{fast growth}, $\lambda$ is always minimal for $y$.  Therefore, assuming $a\ge 3$
$$(a-1)(N-a) = |C_{\SSS_N}(y)| = \sum_{\chi\in \Irr(\SSS_N)} |\chi(y)|^2 = |\{\chi\in\Irr(\SSS_N)\mid \chi(y)\neq 0\}|.$$
If $a=2$, then $|C_{\SSS_N}(y)|=2(N-2)$.

Next, we assume that $b$ is fixed, and $\lambda_1 = N-b$.  Let $d$ be the number of layers of $\lambda$.  If $d > 3$, there are no border strip tableaux consisting of three strips, so we may assume $d\le 3$.
As in the proof of Proposition~\ref{value1}, we fix $k\in [0,a]$ to be the size of the complement of the first layer of $\lambda$, so 
$k$ and $b$ together determine the first layer.  If $d\le 2$, there are $\le k$ possibilities for the second layer.
If $d=3$ and $\lambda$ has a tableau of type $(a-1,N-a,1)$, then the third diagonal box must be labeled $3$, so all boxes in the third layer must be so labeled,
meaning that the third layer must consist only of its diagonal box.  There are $\le k-1$ possibilities for the second layer in this case.  In total, therefore,
for any given $k$, there are at most $2k-1$ possibilities for $\lambda$ with $\chi_\lambda(y)\neq 0$; summing over $k\in [0,a]$, there are
$\le a^2$ possibilities for $\lambda$.

Finally, we assume that $b<a-1$.
The result is trivial for $b=0$, so we assume $b>0$.

\vskip 5pt\noindent
\textbf{Case $d=1$.}  

As $b<a-1$,  there is a unique border strip of size $N-a$, namely the last $N-a$ entries in the first row.  The complement of this border strip
is the Ferrers diagram of $(a-b,\underbrace{1,\ldots,1}_b)$.  This cannot contain a border strip of size $a-1$ because we have already assumed that $b > 0$ and $a-b > 1$.
Therefore, $\lambda$ is minimal for $y$, and, in fact, $\chi_\lambda(y)=0$.

\vskip 5pt\noindent
\textbf{Case $d=2$.} 

We claim that the only way in which a border tableau of type $(1,a-1,N-a)$ can exist is if the second layer has size $1$.  Note that $b$ is the number of boxes which
do not belong to the first row.  Therefore, the condition $b < a-1$ implies that the number of boxes in the first row and labeled by $2$ is greater than the number of boxes not in the first row and labeled by $3$.  Given any box $(1,y)$ labeled by $2$, the box $(2,y+1)$, if it exists, must be labeled by $3$.  Therefore, there exists some box $(1,y)$ labeled by $2$
such that $\lambda_2 \le y$.
This is only possible if the first box in the first row which is labeled $3$ is the only box in its column in $\lambda$, which, by connectedness, means that the third border strip consists of the last $N-a$ boxes in the first row.  The complement of this border strip has a tableau of type $(1,a-1)$.  Since the longest possible border strip in any Ferrers diagram has length equal to the size of the first layer, this is possible only if the second layer consists only of the $(2,2)$ box.

\vskip 5pt\noindent
\textbf{Case $d=3$.} 

Let $T$ be any tableau of type $(N-a,a-1,1)$.  The existence of $T$ implies there is only one box in the third layer.
Removing this box, we obtain a partition $\mu$ with two layers, and $T$ gives a border strip tableau of $\mu$ of type $(N-a,a-1)$.  As $b<a-1$, by 
the last paragraph of the proof of Proposition~\ref{value1}, there is no such tableau.
 \end{proof}

\section{A sharp character estimate for groups of type A}
Fix $\varepsilon = \pm$, $q$, and $N$, and let $G := G_N := \GL_N^\varepsilon(q)$.
Recall the notion of
the {\it true level} $\cl^*(\chi)$ of any $\chi \in \Irr(G)$ introduced in \cite[Definition 1]{GLT1}.
Let $\tau_N$ denote the reducible Weil character of $G$:
$$\tau_N(g) = \varepsilon^N(\varepsilon q)^{\dim \ker (g-1)},$$
(in which the sign $\eps$ is interpreted as an element of $\{\pm1\}$).
Then $\cl^*(\chi)$ is the smallest integer $j \geq 0$ such that $\chi$ is an irreducible constituent of $\tau^j$.

We now prove the following theorem, which implies Theorem~\ref{mainB} by taking $\kappa = 0$ in part (i):

\begin{thm}\label{slu-bound-main}
Let $m,j \in \Z_{\geq 0}$, $0 \leq \kappa \leq 2$ any fixed real number, and let $\eps= \pm$.  
\begin{enumerate}[\rm(i)]
\item If 
$$N \geq \max\bigl(3j^2+j + (2-\kappa)m +10, 2j^2+(4-\kappa) m+5\bigr),$$ 
$\chi$ is an irreducible complex character of $G=\GL^\eps_N(q)$ of true level $j$, and $g\in G$ has support $m$ and primary eigenvalue $\lambda$, 
then
\begin{equation}
\label{bound1}
  \Bigl|\frac{(\eps q)^{mj}\chi(g)}{\chi(1)}\cdot \frac{\overline{\chi(\lambda \cdot \Id_V)}}{\chi(1)}-1\Bigr| < q^{-(N-\kappa m)/2}.
\end{equation}
\item If $N \geq (3j^2+j)/2+5$, $\chi$ is an irreducible complex character of $G=\GL^\eps_N(q)$ of true level $j \geq 1$, and $g\in G$ has support 
$m \geq j+5$, then
\begin{equation}
\label{bound2}
  \Bigl|\frac{\chi(g)}{\chi(1)}\Bigr| < \left\{\begin{array}{rl}1.53q^{1-m}, &\mbox{if } j=1,~\eps=-,\\
 q^{1-m}, & \mbox{if }j=1,~\eps=+,\\
  q^{-m}, & \mbox{if }j \geq 2.\end{array}\right. 
\end{equation}
\end{enumerate}
\end{thm}

\begin{proof}
We prove the statements by induction on $j\geq 0$, with the induction base $j=0$ being obvious.
The statements are also obvious for $m=0$, so we will assume $m \geq 1$. 

Next we verify \eqref{bound2} directly when $j=1$. Recall that the characters of true level $1$ are among the Weil characters of $G$,
and their values are explicitly known (see e.g. \cite[Lemma 4.1]{TZ1} for $\eps=-$). Recall that $n \geq 7$ and $1 \leq m \leq n-1$. If $\eps=+$, then 
$$\Bigl| \frac{\chi(g)}{\chi(1)}\Bigr| \leq \frac{(q-1)(q^{n-m}+1)}{q^n-q}  
\leq \frac{1}{q^{m-1}}\cdot\frac{q-1}{q}\cdot\frac{q^n+q^{n-1}}{q^n-q}= \frac{1}{q^{m-1}}\cdot \frac{q^2-1}{q^2}\cdot \frac{q^n}{q^n-q}<q^{1-m}.$$ 
If $\eps=-$, then 
$$\Bigl| \frac{\chi(g)}{\chi(1)}\Bigr| \leq \frac{(q+1)q^{n-m}}{q^n-q}  
= \frac{1}{q^{m-1}}\cdot\frac{q+1}{q}\cdot\frac{q^n}{q^n-q} <1.53q^{1-m}.$$ 
The analysis of Weil characters shows that, up to the linear factor in front of $q^{1-m}$, this upper bound is optimal.

For the induction step $j \geq 1$ for (i) and $j \geq 2$ for (ii), we will follow in part the proof of \cite[Theorem 1.6]{GLT1}. 
Note that we always have $N \geq m+1$, so $j^2+j+1 \leq N$. Let $\chi \in \Irr(G)$ have $\cl^*(\chi) = j$. 
Let $S := G_j$, and let $\tau:= \tau_{jN}$.  The restriction of $\tau$ from $G_{jN}$ to the central product $G_N\ast G_j$ gives a decomposition
$$\tau|_{G\ast S} = \sum_{\alpha\in \Irr(S)} D_\alpha\boxtimes \alpha,$$
where $D_\alpha$ is given by the well-known formula
\begin{equation}\label{gl-dual1}
  D_\al(g) = \frac{1}{|S|}\sum_{s \in S}\tau(g \otimes s)\bar\al(s).
\end{equation}  
%
By \cite[Theorem 1.1]{GLT1}, there exists a unique irreducible factor $D_\alpha^\circ$ which is of true level $j$, and all other factors have
strictly lower true level.  Moreover, for each $\chi$ of true level $j$, there exists a unique $\alpha\in \Irr(S)$ such that $\chi = D_\alpha^\circ$.
Consider any $g \in G \smallsetminus \ZB(G)$ of support $m$
and primary eigenvalue $\lambda$.

If $\varepsilon=+$ (resp. $\varepsilon = -$), we define $Q := q$ (resp. $Q:=q^2$).  We write $A:=\F_Q^N$, $B := \F_Q^j$, $V := A\otimes_{\F_Q} B$, which is endowed with the tensor product of the Hermitian forms on $A$ and $B$ if $\varepsilon = -$.
We will bound $\DC_\al(1)$ and $|\DC_\al(g)|$ using \eqref{gl-dual1}.
According to Definition 3.2 and Theorem 1.1 of \cite{GLT1}, embedding $G = G\times \{1\}\hookrightarrow G\ast S\hookrightarrow G_{jN}$, we can write 
\begin{equation}\label{gl-dual2}
  \tau|_G = \sum^M_{i=1}a_i\theta_i,~~D'_\al := D_\al-\DC_\al = \sum^{M'}_{i=1}b_i\theta_i,
\end{equation}  
where $\theta_i \in \Irr(G)$ are pairwise distinct, $a_i,b_i \in \Z_{\geq 0}$,
$M \geq M'$, $a_i \geq b_i$ if $i \leq M'$, $\cl^*(\theta_i) \leq j$ for all $i$. In fact, if 
$i \leq M'$, then $\cl^*(\theta_i) \leq j-1$, and so $\cl(\theta_i) = \cl^*(\theta_i) \leq j-1 < N/2$ as $j \leq N/2$, whence 
$\theta_i(1) \leq q^{N(j-1)}$ by \cite[Theorem 1.2]{GLT1}.  

Let $k(X) = |\Irr(X)|$ denote the class number of a finite group $X$. By \cite[Proposition 3.5]{FG}, $k(\GL_n(q)) \leq q^n$.  Note that 
$M'$ cannot exceed the total number of irreducible characters of true level $<j$. Hence, when $\e=+$ by \cite[Theorem 1.1]{GLT1} we have 
$$M' \leq \sum^{j-1}_{i=0}k(\GL_i(q)) \leq \sum^{j-1}_{i=0}q^i < q^j.$$
Also, $\sum^M_{i=1}a_i^2 = [\tau|_G,\tau|_G]_G =[\tau^2|_G,1_G]_G \leq 8q^{j^2}$ by \cite[Lemma 2.4]{GLT1} (since 
$\tau$ is the permutation character of $\GL_{jN}(q)$ on $\F_q^{Nj}$, and therefore $\tau^2\vert_G$ is the permutation character of $G$ acting on $\F_q^{Nj}\oplus \F_q^{Nj}$). It follows that
\begin{equation}\label{for-m10}
  (\sum^{M'}_{i=1}b_i)^2 \leq M'\sum^{M'}_{i=1}b_i^2 \leq q^j\sum^M_{i=1}a_i^2 \leq 8q^{j^2+j},
\end{equation}  
and so
\begin{equation}\label{gl-dual3}
  D'_\al(1) \leq \sum^{M'}_{i=1}b_iq^{N(j-1)} \leq \sqrt{8q^{j^2+j}}q^{N(j-1)} \leq q^{N(j-1)+(j^2+j+3)/2}.
\end{equation}
Assume now that $\e=-$. Then every irreducible constituent of $D'_\al$ has level $\leq j-2$ by \cite[Corollary 4.8]{GLT1} if $j \geq 2$, and  
$k(\GU_n(q)) \leq 8.26q^n$ by \cite[\S3.3]{FG}. So in this case we have
$$M' \leq \sum^{j-2}_{i=0}k(\GU_i(q)) \leq 8.26\sum^{j-2}_{i=0}q^i < 8.26q^{j-1}.$$
If $j=1$, then $D'_\al$ is zero, and so $M'=0$.
Also, $\sum^M_{i=1}a_i^2 = [\tau|_G,\tau|_G]_G = [\tau^2|_G,1_G] \leq 2q^{j^2}$ by \cite[Lemma 2.4]{GLT1} (since 
$\tau^2$ is the permutation character of $\GU_{jN}(q)$ on $\F_{q^2}^{Nj}$). It follows that
\begin{equation}\label{for-m11}
  (\sum^{M'}_{i=1}b_i)^2 \leq M'\sum^{M'}_{i=1}b_i^2 \leq 8.26q^{j-1}\sum^M_{i=1}a_i^2 \leq 16.52q^{j^2+j-1} \leq 8.26q^{j^2+j},
\end{equation}  
and so
$$D'_\al(1) \leq \sum^{M'}_{i=1}b_iq^{N(j-2)} \leq \sqrt{8.26q^{j^2+j}}q^{N(j-2)} < \sqrt{8q^{j^2+j}}q^{N(j-1)},$$
and thus \eqref{gl-dual3} holds in this case as well.

\smallskip
(a) We now complete the induction step for (i).
Recall that $g$ 
has support $m < N/2$, which implies that $g$ has a 
primary eigenvalue $\lambda$ with 
$\lambda^{q-\e}=1$, and the dimension $\delta_A(g)$ of the largest eigenspace of $g$ on $A$, which corresponds to
$\lambda$, satisfies 
$$\delta_A(g) = N-m > \frac N2.$$
Multiplying $g$ by the central element $\lambda^{-1}  \Id_A$, we may assume $\lambda=1$.
Consider any $s \in S = \GL^\e(B) \cong \GL^\e_j(q)$. By \cite[Lemma 8.2]{GLT1} we always have 
\begin{equation}\label{chi10a}
  d_V(g \otimes s):=\dim \mathrm{Ker}({g \otimes s}-\Id_V) \leq j(N-m).
\end{equation}  
More precisely, this bound is attained for $s_0=\Id_B $, whereas for all other elements $s\neq s_0$ in $S$ we have the stronger bound
\begin{equation}\label{chi10}
  d_V(g \otimes s) \leq (N-m)(j-2)+N = N^*+m,\mbox{ where }N^*:=(N-m)(j-1).
\end{equation}  
Note that $\tau(g \otimes s) = \e^{Nj}(\e q)^{d_V(g \otimes s)}$; in particular, 
$$\tau(g \otimes s_0) = \e^{Nj}(\e q)^{(N-m)j} = \e^{mj}q^{(N-m)j}.$$ 

By \eqref{gl-dual1} we can now write
\begin{equation}\label{chi11}
  \chi(g)=\DC_\al(g) = \frac{\al(1)}{|S|}\Bigl(  \e^{mj}q^{(N-m)j} + X \Bigr),
\end{equation}  
where 
$$X := \sum_{s_0 \neq s \in S}\frac{\tau(g \otimes s)\bar\al(s)}{\al(1)}-\frac{|S|}{\al(1)}D'_\al(g).$$
Recall that for $1 \leq i \leq M'$, $\theta_i$ has true level $0 \leq l \leq j-1$ and degree $\leq q^{Nl}$, so by the induction hypothesis we have 
$$|\theta_i(g)| \leq \frac{\theta_i(1)}{q^{ml}}\bigl(1+\frac{1}{q^{(N-\kappa m)/2}}\bigr).$$
Here, as $m < N/2$ and $\kappa \leq 2$, we have $N-\kappa m \geq N-2m \geq 1$, whence 
$$1+q^{-(N-\kappa m)/2} \leq 1+q^{-1/2} \leq 1+\frac1{\sqrt{2}} < q^{0.8}.$$ 
It follows that $|\theta_i(g)| \leq q^{(N-m)l+0.8} \leq q^{N^*+0.8}$. Together with \eqref{for-m10} and \eqref{for-m11}, this implies that
$$|D'_\al(g)| \leq \sqrt{8.26q^{j^2+j}}q^{N^*+0.8} < q^{N^*+(j^2+j)/2+2.4}.$$
Furthermore, 
$$|S| = |\GL^\e_j(q)| \leq (q+1)q^{j^2-1} < q^{j^2+0.6}.$$ 
Using \eqref{gl-dual3} and \eqref{chi10}, we have
$$|X| \leq q^{N^*+m+j^2+0.6}+q^{N^*+(3j^2+j)/2+3}.$$
Similarly, $d_V(\Id_A \otimes s) = Nd_V(s) \leq N(j-1)$ for $s \neq s_0$, and so
\begin{equation}\label{chi12}
  \chi(1) = \DC_\al(1)=\frac{\al(1)}{|S|}\Bigl(  q^{Nj} + Y \Bigr),
\end{equation}  
where 
$$|Y| := \Bigm|\sum_{s_0 \neq s \in S}\frac{\tau(I_A\otimes s)\bar\al(s)}{\al(1)}-\frac{|S|}{\al(1)}D'_\al(1)\Bigm| \leq q^{N(j-1)}\bigl( q^{j^2+0.6} +q^{(3j^2+j)/2+2.1}\bigr).$$
Now, \eqref{chi11} and \eqref{chi12} imply
$$\Bigm| \frac{(\e q)^{mj}\chi(g)}{\chi(1)}-1\Bigm| = \Bigm| \frac{q^{Nj}+X(\e q)^{mj}}{q^{Nj}+Y}-1\Bigm| = \Bigm|\frac{X(\e q)^{mj}-Y}{q^{Nj}+Y} \Bigm|.$$
Setting $R:= q^{m+j^2+0.6}+q^{(3j^2+j)/2+3.1}$, 
we have 
$$|X(\e q)^{mj}+Y| \leq R\bigl(q^{Nj-N+m}+q^{Nj-N-1}\bigr),~~|q^{Nj}+Y| \geq q^{Nj-N}(q^N-R)$$
Hence to prove \eqref{bound1}, it suffices to prove
$$R\bigl((q^m+q^{-1})q^{(N-\kappa m)/2}+1\bigr) < q^{N}.$$
Note that $q^m+q^{-1} \leq q^m(1+1/4) < q^{m+0.4}$, and $1+q^{-(m+0.9)} \leq 1+2^{-1.9} < q^{0.4}$. Hence it suffices to show that 
$$Rq^{m+(N-\kappa m)/2+0.8} < q^N.$$
The condition on $N$ implies that 
$$N-1 > m+(N-\kappa m)/2+0.8+ \max \bigl( m+j^2+0.6,(3j^2+j)/2+3.1\bigr),$$
and so we are done.

\smallskip
(b) We now complete the induction step for (ii); in particular $j \geq 2$.
By \eqref{gl-dual1} we have 
$$\chi(g)=\DC_\al(g)=  \frac{\al(1)Z}{|S|},$$
where 
$$Z := \sum_{s \in S}\frac{\tau(g \otimes s)\bar\al(s)}{\al(1)}-\frac{|S|}{\al(1)}D'_\al(g).$$
By \eqref{chi10a}, the first sum has absolute value at most $|S|q^{(N-m)j}$.
For $1 \leq i \leq M'$, $\theta_i$ has true level $0 \leq l \leq j-1$ and degree $\leq q^{Nl}$. If $l \geq 1$, then the induction hypothesis implies that
$$|\theta_i(g)| \leq q^{2-m}\theta_i(1) \leq q^{2-m+N(j-1)}.$$
Since $j \geq 2$, the same bound also holds for $l=0$.
Together with \eqref{for-m10} and \eqref{for-m11}, this implies that
$$|D'_\al(g)| \leq \sqrt{8.26q^{j^2+j}}q^{N(j-1)+2-m} < q^{N(j-1)-m+(j^2+j)/2+3.6}.$$
It follows that
$$|Z| \leq q^{(N-m)j+j^2+0.6}+q^{N(j-1)-m+(3j^2+j)/2+4.2}.$$
Since \eqref{chi12} also holds in this case, to prove \eqref{bound2}, it suffices to prove
$$q^m|Z| +|Y| < q^{Nj}.$$
The assumption on $m$ implies that $Nj > N_1+m+2$ where $N_1:= (N-m)j+j^2+0.6$, whence
$q^{Nj}-q^{N_1+m} > (1-q^{-2})q^{Nj} \geq 0.75q^{Nj}$. Next, the assumption on $N$ implies that
$Nj \geq N_2+0.8$ for $N_2:= N(j-1)+(3j^2+j)/2+4.2$, whence
$0.75q^{Nj} \geq 0.75q^{0.8}q^{N_2} > 1.3q^{N_2}$. Also, 
$N_2 > N_3+2$ for $N_3:= N(j-1)+(3j^2+j)/2+2.1$ and 
$N_2 > N_4+5.6$ for $N_4:=N(j-1)+j^2+0.6$, whence 
$0.3q^{N_2} > q^{N_3}+q^{N_4}$. Thus
$$q^{Nj} > (q^{N_1+m}+q^{N_2})-(q^{N_3}+q^{N_4}) \geq q^m|Z|+|Y|,$$
and so we are done again.
\end{proof}

We will also need the following quantitative statement, which is well known in the $\GL$-case:

\begin{lem}\label{cent}
Let $q$ be any prime power, $N \in \Z_{\geq 6}$, and $g$ any element in $G=\GU_N(q)$. If $g$ has support $1$, then
$$|\CB_G(g)| \leq q^{N^2-2N+3.2}.$$
If $g$ has support $\geq 2$, then
$$|\CB_G(g)| \leq q^{N^2-3.4N+5}.$$
\end{lem}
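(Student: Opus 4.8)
The plan is to read off $|\CB_G(g)|$ from the structure of the centralizer of $g$ in $\GL_N(\bar\F_q)$, separating the $q$-power part from the prime-to-$q$ part. For any $g\in\GU_N(q)\subset\GL_N(\bar\F_q)$ the algebraic centralizer $H:=\CB_{\GL_N(\bar\F_q)}(g)$ is connected, so $\CB_G(g)=H^F$ where $\GU_N(q)=\GL_N(\bar\F_q)^F$; writing $H=R_u(H)\rtimes L$ with $L$ reductive gives $|\CB_G(g)|=q^{\dim R_u(H)}\,|L^F|$, and $L^F$ is a direct product of groups $\GU_{a_i}(q)$ — one of rank $a_i$ for each distinct Jordan block length occurring at each eigenvalue of $g_s$ lying in $\mu_{q+1}$ — together with groups $\GL_{b_j}(q^2)$ from the Galois-conjugate pairs of eigenvalues. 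Two elementary estimates drive the argument: $|\GU_a(q)|=q^{a^2}\prod_{l=1}^a(1-(-1)^lq^{-l})\le q^{a^2}(1+q^{-1})$ — the partial products of this alternating product never exceed the first factor, and for $a\ge2$ never exceed $P_3:=(1+q^{-1})(1-q^{-2})(1+q^{-3})$ — and $|\GL_b(q^2)|<q^{2b^2}$; hence $|\CB_G(g)|\le q^{\dim H}(1+q^{-1})^s$ where $s$ is the number of $\GU$-factors.

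The key dimension bound is that if $\supp(g)=m<N/2$ then $\dim H\le(N-m)^2+m^2$. Indeed $\dim H$ is the sum, over eigenvalues $\lambda$ of $g$, of the sum of squares of the column lengths of the Jordan partition of $g$ at $\lambda$; the partition at the primary eigenvalue has exactly $N-m$ parts and at most $N$ boxes, the non-primary generalized eigenspaces together have dimension at most $m$, and convexity of $t\mapsto t^2$ forces the extremum into the configuration where the surplus boxes form a single height-$2$ column attached to the primary part, or lie entirely within one non-primary eigenvalue. Since $(N-m)^2+m^2$ strictly decreases on $[0,N/2)$, this yields $\dim H\le N^2-2N+2$ for support $1$ and $\dim H\le N^2-4N+8$ for support $\ge2$; using $\dim H\equiv N\pmod 2$ one refines the latter to $\dim H\in\{N^2-4N+6,\,N^2-4N+8\}$ when $m=2$, and tracking the equality cases shows that $\dim H=N^2-4N+8$ forces $g$ to be, up to a central scalar, either the semisimple element with eigenvalue multiplicities $(N-2,2)$ (second eigenvalue in $\mu_{q+1}$) or the unipotent element of Jordan type $(2^2,1^{N-4})$; in both, $s\le2$ and $|\CB_G(g)|=q^{N^2-4N+8}\,(1+q^{-1})(1-q^{-2})\prod_{l=1}^a(1-(-1)^lq^{-l})$ for $a\in\{N-2,N-4\}$.

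For support $1$ the only possibilities are $g$ semisimple with multiplicities $(N-1,1)$, where $\CB_G(g)=\GU_{N-1}(q)\times\GU_1(q)$, or $g$ a scalar times a transvection, where $|\CB_G(g)|=q^{2N-3}\cdot|\GU_{N-2}(q)\times\GU_1(q)|$; in either case $|\CB_G(g)|\le q^{N^2-2N+2}(1+q^{-1})^2\le q^{N^2-2N+3.2}$, since $(1+q^{-1})^2\le 9/4<q^{1.2}$ for $q\ge2$. For support $\ge2$ observe that the target $q^{N^2-3.4N+5}$ exceeds $q^{N^2-4N+8}$ by a factor $q^{0.6N-3}\ge q^{0.6}$ for $N\ge6$; when $\dim H=N^2-4N+8$ the explicit description above gives $|\CB_G(g)|\le q^{N^2-4N+8}P_2P_3$ with $P_2:=(1+q^{-1})(1-q^{-2})$, and one checks $P_2P_3<q^{0.6N-3}$ for all $N\ge6$, $q\ge2$ (the extreme case $N=6$, $q=2$ gives $P_2P_3=729/512<2^{0.6}$); when $\dim H\le N^2-4N+6$ — or $m\ge3$, so $\dim H\le N^2-6N+18$ — one checks that $s$ is correspondingly small (at most $3$ for support $2$, and bounded in terms of $m$ in general) so that $q^{\dim H}(1+q^{-1})^s\le q^{N^2-3.4N+5}$ still holds for $N\ge6$.

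I expect the main obstacle to lie in the support-$\ge2$, near-extremal regime, where the crude bound $|\CB_G(g)|\le q^{\dim H}(1+q^{-1})^s$ is too lossy for the smallest admissible $N$: one must (i) classify precisely the centralizers with $\dim H=N^2-4N+8$ so as to replace $1+q^{-1}$ by the true factor $\prod_l(1-(-1)^lq^{-l})$, and (ii) bound the number $s$ of reductive factors by the support, since many factors force a smaller $\dim H$ — in other words, carry out the Jordan-type convexity argument sharply enough to pin down its equality cases. Any residual gap at the smallest values of $N$ (for instance $N=6$, where the largest centralizer of a support-$\ge2$ element is $\GU_4(q)\times\GU_2(q)$, of order $<q^{20.6}$) is then closed by a short finite check.
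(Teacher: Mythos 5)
Your approach is genuinely different from the paper's and potentially more uniform, but as written it has a real gap in exactly the regime you flag as the main obstacle.

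The paper splits into $m:=\supp(g)\ge 4$, where it uses the coarse bound $\dim \CB_{\GL_N(\bar\F_q)}(g)\le N(N-m)$ of \cite[Proposition 4.2(a)]{LT3} together with the slack $|\GU_N(q)|<q^{N^2+0.6}$ (so $|\CB_G(g)|\le q^{N(N-m)+0.6N}\le q^{N^2-3.4N}$), and $m\in\{1,2,3\}$, where it explicitly enumerates the few Jordan types (on the unipotent part) and the few semisimple types (via $|\CB_G(g)|\le|\CB_G(s)|$), reading off the order from \cite[Theorem 7.1]{LiSe} case by case. You instead aim for a uniform estimate $|\CB_G(g)|\le q^{\dim H}(1+q^{-1})^s$ with the sharper bound $\dim H\le (N-m)^2+m^2$, classifying the equality configurations. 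That dimension bound and its equality analysis are correct and attractive, and your refinement $P_a\le P_3$ for $a\ge2$ is also correct.

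However, two things are left unresolved. First, $\dim H\le(N-m)^2+m^2$ is only valid for $m<N/2$; for $m\ge N/2$ one must revert to $\dim H\le N(N-m)$ (each column has length $\le N-m$ and the column lengths sum to $N$), which still suffices, but this needs to be said. Second, and more importantly, the crux of your plan for $\supp(g)\ge2$ below the extremal dimension is the claim that $s$ is small enough that $q^{\dim H}(1+q^{-1})^s\le q^{N^2-3.4N+5}$; you state this as ``one checks'' and as a ``short finite check.'' But $s$ and $\dim H$ are not independent, and the inequality fails if one uses the crude factor $(1+q^{-1})$ per $\GU$-block without the compensating $(1-q^{-2})$ terms (for instance $N=6$, $q=2$, $g$ unipotent of type $(2^2,1^2)$ has $\dim H=20$, $s=2$, and $q^{20}(1+q^{-1})^2=2^{21.17}>2^{20.6}$, whereas $|\CB_G(g)|=2^{12}|\GU_2(2)|^2\approx 2^{20.3}$ is fine). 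So one really must use $P_2=(1+q^{-1})(1-q^{-2})$ (not $1+q^{-1}$) for every $\GU$-factor of rank $\ge2$, and must prove an honest combinatorial bound relating $s$, the factor ranks, and $\dim H$ in each sub-extremal configuration. That is precisely the content the paper supplies by its explicit enumeration for $m\le 3$. Your sketch identifies the right difficulty and would likely close once that bookkeeping is carried out, but as written the key inequality in the near-extremal range is asserted rather than proved.
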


\begin{proof}
We will use the estimates 
\begin{equation}\label{ord20}
  q^{N^2} < |\GU_N(q)| < q^{N^2+0.6}
\end{equation}  
for any $N \geq 1$.
Let $m:=\supp(g)$, and consider the case $m \geq 4$. By \cite[Proposition 4.2(a)]{LT3}, $\CB_{\GL_N(\bar{\F}_q)}(g)$ has dimension $\leq N(N-m)$. 
Now we can follow the proof of \cite[Proposition 6.1]{LT3} in passing to the finite case and increase the upper bound
by a factor of $q^{0.6N}$ to account for the upper bound in \eqref{ord20}. This shows
$$|\CB_G(g)| \leq q^{N(N-m+0.6)} \leq q^{N^2-3.4N}.$$ 
In the remaining cases $1 \leq m \leq 3$, let $s$ denote the semisimple part of $g$. Suppose first that $s \in \ZB(G)$. Then we may assume 
that $g$ is unipotent, and use \cite[Theorem 7.1]{LiSe} and \eqref{ord20} to bound $|\CB_{G}(g)|$. Denoting the Jordan block of size $i$ and eigenvalue $1$
by $J_i$, we have the following possibilities for $g$. If $g=J_1^{N-5}\oplus J_2^1 \oplus J_3^1$, then
$$|\CB_G(g)| \leq q^{N^2-6N+14+1.8} \leq q^{N^2-3.4N+5}$$
when $N \geq 5$ (and the term $1.8$ accounts for three $\GU$-factors in $\CB_G(g)$). 

Similarly, if $g=J_1^{N-4}\oplus J_4^1$, then $|\CB_G(g)| \leq q^{N^2-6N+13.2}$.

If $g=J_1^{N-3}\oplus J_3^1$, then $|\CB_G(g)| \leq q^{N^2-4N+7.2}$.

If $g=J_1^{N-6}\oplus J_2^3$, then $|\CB_G(g)| \leq q^{N^2-6N+19.2} < q^{N^2-3.4N+5}$ since $N \geq 6$.

If $g=J_1^{N-4}\oplus J_2^2$, then $|\CB_G(g)| \leq q^{N^2-4N+9.2} \leq q^{N^2-3.4N+5}$ since $N \geq 7$.

If $g=J_1^{N-2}\oplus J_2^1$, then $m=1$, and $|\CB_G(g)| \leq q^{N^2-2N+3.2}$.

Now we may assume $s \notin \ZB(G)$. Then $1 \leq \supp(s) \leq m \leq 3$. If $\supp(s)=3$, then as $N \geq 7$ we have 
$$|\CB_G(g)| \leq |\CB_G(s)| \leq |\GU_3(q) \times \GU_{N-3}(q)| \leq q^{N^2-6N+19.2} < q^{N^2-3.4N+5}.$$
Similarly, if $\supp(s)=2$, then 
$$|\CB_G(g)| \leq |\CB_G(s)| \leq |\GU_2(q) \times \GU_{N-2}(q)| \leq q^{N^2-4N+9.2} \leq q^{N^2-3.4N+5}.$$
In the remaining cases, $\supp(s)=1$ and $\CB_G(s) = \GU_1(q) \times \GU_{N-1}(q)$. If $m=1$, then 
$|\CB_G(g)|=|\CB_G(s)| \leq q^{N^2-2N+3.2}$. If $2 \leq m \leq 3$, then the unipotent part of the $\GU_{N_1}(q)$-component of $g$ has
support $1$ or $2$, respectively, and 
$|\CB_G(g)| \leq q^{N^2-4N+7.8}$, respectively $|\CB_G(g)| \leq q^{N^2-6N+15.8}$. 
\end{proof}

\section{Thompson's conjecture for $\mathrm{PSU}_{2p}(q)$}
Let $p \geq 3$ be any prime, and let $q\le 7$ be a prime power (which may be coprime to $p$).
In this section we show that Thompson's conjecture
holds for the simple group $\mathrm{PSU}_{2p}(q)$ as long as $p$ is sufficiently large. 
Together with the main result of \cite{EG}, this implies Thompson's conjecture for all sufficiently large unitary simple groups 
in the case that the rank is twice a prime less $1$.

We will work inside the general unitary group $G = \GU_{2p}(q)$ and consider any element $x \in G$ which has some  fixed
$\xi \in \F_{q^p} \smallsetminus \F_q$ as an eigenvalue. Note that $\xi$ has degree $p$ over $\F_q$, and the orbit of $\xi$ under the map
$\lambda \mapsto \lambda^{-q}$ has length $2p$ and contains $\xi^{-1}$. It follows that $x$ is regular semisimple and real and 
belongs to $\SU_{2p}(q)$. 

\begin{thm}\label{su2p}
There is an explicit absolute constant $A > 0$ such that the following statement holds when the prime $p$ is at least $A$. If $C$ denotes the 
conjugacy class of the image of the element $x$ described above in the simple group $S = \mathrm{PSU}_{2p}(q)$, then
$S=C^2$. In fact, $(x^{\SU_{2p}(q)})^2$ contains all unipotent elements and all elements of support $\ge 2$ in $\SU_{2p}(q)$.
\end{thm}

The rest of the section is devoted to the proof of Theorem \ref{su2p}.
The chosen element $x$ has 
$$T:=\CB_G(x) \cong C_{q^{2p}-1}.$$ 
In particular, $T \cap \SU_{2p}(q)$ has index $q+1$ in $T$, and hence the conjugacy class $x^G$ is a single $\SU_{2p}(q)$-conjugacy class. As $x$ is real,
it suffices to show that, any non-central element $g \in \SU_{2p}(q)$ 
is a product of two $G$-conjugates of $x$, except possibly in the case $g=zh$ where $1 \neq z \in \ZB(\SU_{2p}(q))$ and $h$ is a transvection.
By the extension \cite[Lemma 5.1]{GT1} of Gow's lemma \cite{Gow}, the statement holds in the case
$g$ is semisimple. So we may assume that $g$ is not semisimple, and furthermore  
$g$ is a transvection if $\supp(g)=1$.

As $x$ is real, we can rewrite \eqref{Frobenius} as follows:
\begin{equation}\label{sum10}
  \sum_{\chi \in \Irr(G)}\frac{|\chi(x)|^2\bar\chi(g)}{\chi(1)} \neq 0.
\end{equation}  
The summation in \eqref{sum10} includes $q+1$ linear characters which all take value $1$ at any element in 
$\SU_{2p}(q)$. Hence it suffices to show
\begin{equation}\label{sum11}
\Bigm|\sum\nolimits^*\frac{|\chi(x)|^2\chi(g)}{\chi(1)}\Bigm| < q+1,
\end{equation} 
where $\sum^*$ indicates the sum over $\chi\in\Irr(G)$ such that $\chi(1)>1$, $\chi(x)\neq 0$, and $\chi(g)\not\in [0,\infty)$.
We denote by $\Sigma^*$ the sum in \eqref{sum11}.

We will identify the dual group $G^*$ with $G$. Note that any non-central semisimple element $s \in G$ has the property that $\CB_G(s)$ is a Levi subgroup
$L$ of $G$. Fix an embedding of $\bar{\F}_q^\times$ into $\C^\times$. Then one can identify $\ZB(\CB_G(s))$ with 
$$\mathrm{Hom}(\CB_G(s)/[\CB_G(s),\CB_G(s)],\C^\times)$$ 
as in \cite[(1.16)]{FS}, and the linear character of $L=\CB_G(s)$ corresponding to
$s$ will be denoted by $\hat{s}$. Now, any irreducible character $\chi$ in the Lusztig series labeled by $s$ is 
\begin{equation}\label{sum12}
  \chi = \pm R^G_L(\hat{s}\psi),
\end{equation} 
where $\psi$ is a unipotent character of $L$; see \cite[p. 116]{FS}. Let $\St_G$ and $\St_L$ denote the Steinberg character of $G$, respectively of
$L$. By \cite[Corollary 10.2.10(ii)]{DM},
\begin{equation}\label{sum13}
  \St_G\cdot \chi = \pm \Ind^G_L(\hat{s}\psi\cdot\St_L).
\end{equation}  

Assume now that $\chi(x) \neq 0$. As $x$ is regular semisimple, 
$$\St_G(x) = \pm 1.$$
Hence \eqref{sum13} implies that, up to conjugacy, $x \in L=\CB_G(s)$. It follows that $s \in \CB_G(x)=T$; in fact,
$s \in T \smallsetminus \ZB(G)$. Now using the primality of $p$, we see that 
the characters $\chi \in \Irr(G)$ that are non-vanishing at $x$ are divided into the following types:

\begin{enumerate}[\rm(I)]
\item Unipotent characters, multiplied possibly by a linear character of $G$.
\item Characters $\chi$ in \eqref{sum12} with $s$ belonging to the unique subgroup $C_{q^2-1}$ of $T$, but not in $\ZB(G) \cong C_{q+1}$
(which is possible only if $q > 2$). For any such $s$, $\CB_G(s) \cong \GL_p(q^2)$, a rational Levi subgroup of $G$.
\item Characters $\chi$ in \eqref{sum12} with $s$ belonging to the unique subgroup $C_{q^p+1}$ of $T$, but not in $C_{q^2-1}$.
For any such $s$, $\CB_G(s) \cong \GU_2(q^p)$.
\item Characters $\chi$ in \eqref{sum12} with $s$ not belonging to the union of the unique subgroups $C_{q^p+1}$ and $C_{q^2-1}$ of $T$.
Any such $s$ is regular semisimple, with $\CB_G(s) =T$.
\end{enumerate} 

Let $m$ denote the support of $g$, which is also $\supp(zg)$ for any $z \in \ZB(G)$. We will use the character bound \cite[Theorem 5.5]{LT3}
\begin{equation}\label{lt-bound}
  \frac{|\chi(g)|}{\chi(1)} \leq \chi(1)^{-\sigma m/N}
\end{equation}
for an explicit absolute constant $\sigma > 0$, and 
$$N:=2p.$$

\begin{lem}\label{type1}
There is an absolute constant $N_1>0$ such that when $N \geq N_1$, the total contribution of characters of type {\rm (I)} to $\Sigma^*$ has absolute value 
$< (q+1)/2$. 
\end{lem}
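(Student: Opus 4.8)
The plan is to peel off the linear twists, reduce the type~(I) part of $\Sigma^*$ to a sum over hook characters, and then estimate that sum by combining Theorem~\ref{slu-bound} (for the low‑level hooks) with the exponential bound \eqref{lt-bound} (for everything else). A type~(I) character has the form $\psi\otimes\nu$ with $\psi$ a unipotent character of $G$ and $\nu$ a linear character; since $x,g\in\SU_{2p}(q)=\ker(\det)$, every such $\nu$ is trivial at $x$ and at $g$, so $\psi\otimes\nu$ takes the same values as $\psi$ at $1$, $x$ and $g$, and in particular all $q+1$ twists of a fixed $\psi$ either all lie in $\Sigma^*$ or none do. Hence the type~(I) contribution to $\Sigma^*$ equals $(q+1)$ times $\Sigma_0:=\sum'\,|\psi(x)|^2\psi(g)/\psi(1)$, the sum over unipotent $\psi$ with $\psi(x)\neq0$ and $\psi(g)\notin[0,\infty)$ (the requirement $\psi(1)>1$ being automatic, since the trivial character has $\psi(g)=1\in[0,\infty)$), and it suffices to show $|\Sigma_0|<1/2$. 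Now $x$ is regular semisimple with $\CB_G(x)=T\cong C_{q^{2p}-1}$, a maximal torus of $G$ of type $w$, where $w\in\SSS_N$ ($N=2p$) is an $N$-cycle. The non‑vanishing of a unipotent character $\psi_\mu$ ($\mu\vdash N$) at $x$ is governed by that of $\chi_\mu$ at $w$, with $|\psi_\mu(x)|=|\chi_\mu(w)|$ (cf.\ the discussion in the introduction and \S2); by the $m=1$ case of Proposition~\ref{fast growth} (Murnaghan--Nakayama for a single $N$-cycle) this equals $1$ when $\mu$ is a hook $(N-k,1^k)$ and $0$ otherwise. Thus $\Sigma_0=\sum'_k\,\psi_{(N-k,1^k)}(g)/\psi_{(N-k,1^k)}(1)$, the sum over those $k\in\{1,\dots,N-1\}$ with $\psi_{(N-k,1^k)}(g)\notin[0,\infty)$.

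Put $m:=\supp(g)$, and recall from \cite{GLT} that $\cl^*(\psi_{(N-k,1^k)})=k$, that the degrees $\psi_{(N-k,1^k)}(1)$ increase with $k$, and that $\psi_{(N-k,1^k)}(1)\ge q^{kN/3}$ for $1\le k\le N/2$ (and $N$ large). Fix $k_0\sim\tfrac12\sqrt N$ and $m_0=O(\log N)$, arranged so that $N\ge 3k_0^2+k_0+2m_0(k_0+1)+5$ and $q^{-\sigma m_0/3}\le N^{-2}$. If $\supp(g)>m_0$, then \eqref{lt-bound} alone gives $|\psi_{(N-k,1^k)}(g)|/\psi_{(N-k,1^k)}(1)\le(q^{N/3})^{-\sigma m/N}\le q^{-\sigma m_0/3}\le N^{-2}$ for every $k$, so $|\Sigma_0|<N\cdot N^{-2}<1/2$. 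So assume $m\le m_0$. For $k>k_0$, monotonicity of the degrees and $m\ge1$ give $|\psi_{(N-k,1^k)}(g)|/\psi_{(N-k,1^k)}(1)\le\psi_{(N-k_0-1,1^{k_0+1})}(1)^{-\sigma m/N}\le q^{-\sigma(k_0+1)/3}$, so the contribution of these $k$ to $|\Sigma_0|$ is $<Nq^{-\sigma(k_0+1)/3}=o(1)$. For $k\le k_0$, Theorem~\ref{slu-bound} applies with $j=k$; since $\lambda\cdot1_V\in\ZB(G)$ (because $\lambda\in\mu_{q+1}$) and unipotent characters are trivial on $\ZB(G)$, the factor $\overline{\psi_{(N-k,1^k)}(\lambda\cdot1_V)}/\psi_{(N-k,1^k)}(1)$ equals $1$, and the theorem yields
\[
\frac{\psi_{(N-k,1^k)}(g)}{\psi_{(N-k,1^k)}(1)}=(\eps q)^{-mk}(1+\theta_k),\qquad|\theta_k|<q^{-N/2}.
\]
Since unipotent characters of $G$ are rational‑valued, $1+\theta_k$ is a positive real, so this ratio is a real number of sign $(-1)^{mk}$ (recall $\eps=-$) and absolute value $<q^{-mk}(1+q^{-N/2})$. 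In particular, if $mk$ is even then $\psi_{(N-k,1^k)}(g)>0$, so this character does not lie in $\Sigma^*$; only the $k\le k_0$ with $mk$ odd survive.

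Putting the pieces together: if $m$ is even, only the $k>k_0$ tail contributes and $|\Sigma_0|=o(1)<1/2$; if $m$ is odd, the surviving hooks have $k$ odd, and
\[
|\Sigma_0|<(1+q^{-N/2})\sum_{k\ \mathrm{odd}}q^{-mk}+o(1)\le\frac{q^m}{q^{2m}-1}+o(1),
\]
which is $<1/2$ as soon as $q^m\ge3$. This covers every case except $q=2$ with $m=1$, i.e.\ $g$ a transvection over $\F_4$; that case is the one I expect to be the genuine obstacle, since the estimate above yields only $\approx 2/3>1/2$ and cannot be improved --- already the single hook $(N-1,1)$ contributes $\approx-(q+1)/q=-3/2$ to the type~(I) part of $\Sigma^*$. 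For $g$ a transvection one must proceed differently, invoking the unitary‑specific positivity property \eqref{for-g2} that governs transvections and keeps further type~(I) characters out of $\Sigma^*$ (or else controls $\Sigma^*$ directly); correctly incorporating that input is the one delicate point, and is the reason support‑$1$ elements receive separate treatment, as flagged in the introduction.
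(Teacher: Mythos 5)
Your reduction to unipotent hooks and the small-level/large-level split follows the paper's strategy, but you diverge fatally at one step: by restricting to the hooks with $\psi_k(g)\notin[0,\infty)$, you discard exactly the cancellation that makes the transvection case go through. The paper estimates the sum over \emph{all} hooks at once: for $m=1$, $\lambda=1$, $\gamma_\lambda=1$, the main term is the full alternating series $\sum_{j\ge1}(-q)^{-j}=-\tfrac1{q+1}$, of absolute value $\tfrac1{q+1}\le\tfrac13<\tfrac12$ for every $q\ge2$. Your restricted sum $\sum_{k\ \mathrm{odd}}q^{-k}=\tfrac{q}{q^2-1}$, which equals $\tfrac23$ when $q=2$, is precisely what that cancellation rescues. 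Keeping all hooks is legitimate because the Frobenius argument ultimately needs only that the full character sum is nonzero; it suffices to bound the absolute value of the \emph{total} type-(I) contribution (positive terms included), and the positive terms you discard are harmless since they only push the total sum in the favorable direction.

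Your diagnosis of $q=2$, $m=1$ as the sticking point is therefore correct, but your proposed repair misidentifies the mechanism. The positivity property \eqref{for-g2} asserts $\chi(g)>0$ only for the characters Lusztig-induced from $\GU_2(q^p)$ and $\GL_1(q^{2p})$ --- that is, types (III) and (IV) --- and says nothing about the type-(I) hooks, whose values on a transvection genuinely alternate in sign, as your own formula $\psi_k(g)/\psi_k(1)=(-q)^{-mk}(1+\theta_k)$ already shows. The rescue for type (I) is not a positivity statement but the unrestricted alternating sum above, which is exactly the $\bigl|\sum_{j\ge1}(-q)^{-mj}\gamma_\lambda\bigr|\le\tfrac1{q+1}$ estimate the paper uses for $m=1$.
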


\begin{proof}
It suffices to prove that the contribution of unipotent characters $\chi$ of type {\rm (I)} to $\Sigma^*$ has absolute value 
$< 1/2$.  Since $x$ is regular semisimple with centralizer $T \cong C_{q^N-1}$, we can have $\chi(x) \neq 0$ only when the unipotent character
$\chi$ is labeled by one of the $N$ hook partitions $\al=(N-j,1^j)$, in which case 
$$|\chi(x)| = 1,$$
cf. the proof of \cite[Proposition 3.1.5]{LST}.
  
First we consider $\chi=\chi_j=\chi^{(N-j,1^j)}$ with $j \geq \sqrt{N/13}$. Choosing $N_1$ sufficiently large, when $N \geq N_1$ we have $\cl^*(\chi) =j$ by \cite[Theorem 3.9]{GLT1},
and so, by \cite[Theorem 1.2]{GLT1}, $\chi(1) \geq q^{c_1N^{3/2}}$ for some absolute constant $c_1 > 0$. So, by \eqref{lt-bound}, the total contribution of 
these unipotent characters to $|\Sigma^*|$ is at most
$$Nq^{-c_1\sigma\sqrt{N}} < 1/12$$
when $N_1$ is large enough.

Now we look at the $\chi=\chi_j$ with $1 \leq j \leq \sqrt{N/13}$. Suppose that 
$$m=\supp(g) \geq \sqrt{N/13}.$$ 
Since $\chi(1) \geq q^{N/2}$, by \eqref{lt-bound}, the total contribution of these unipotent characters to $|\Sigma^*|$ is at most
$$\sqrt{N/13}q^{-\sigma\sqrt{N/52}} < 1/3$$
when $N_1$ is large enough, yielding the claim in this case.

In the remaining case, we have $m \leq \sqrt{N/13}$ and $j \leq \sqrt{N/13}$. By Theorem \ref{mainB},
$$\frac{\chi_j(g)}{\chi_j(1)}=\e_j+(-q)^{-mj}\gamma_\lambda,$$
where $|\e_j| < q^{-N/2}$ and $\gamma_\lambda = \chi(\lambda I_A)/\chi(1)$ for the primary eigenvalue $\lambda$ of $g$. If $m \geq 2$, then
$$\Bigm| \sum_{j \geq 1}(-q)^{-mj}\gamma_\lambda\Bigm|$$
is at most $\sum_{j \geq 1}4^{-j} = 1/3$. If $m =1$, then $g$ is a transvection by our convention, so $\lambda=1$, $\gamma_\lambda=\gamma_1=1$,
and 
$$\Bigm| \sum_{j \geq 1}(-q)^{-mj}\gamma_\lambda\Bigm| \leq 1/(q+1) \leq 1/3.$$ 
Choosing $N_1$ large enough, we therefore have
$$\Bigm| \sum_{1 \leq j \leq \sqrt{N/13}}(-q)^{-mj}\gamma_\lambda\Bigm| \leq \frac{1}{3}+\frac{1}{24}$$
and 
$$\Bigm| \sum_{1 \leq j \leq \sqrt{N/13}}\eps_j\Bigm| \leq 2^{-N/2}\sqrt{N/13} \leq \frac{1}{24}.$$
It follows that
$$\Bigm| \sum_{1 \leq j \leq \sqrt{N/13}}\frac{|\chi_j(x)|^2\chi_j(g)}{\chi_j(1)}\Bigm| \leq \frac{1}{3}+\frac{1}{12},$$
and so we are done in this case as well.
\end{proof}

\begin{lem}\label{type2}
There is an absolute constant $N_2>0$ such that when $N \geq N_2$, the total contribution of characters of type {\rm (II)} to $\Sigma^*$ has absolute value 
$< 1/4$. 
\end{lem}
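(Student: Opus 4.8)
The plan is to bound the contribution of type (II) characters to $\Sigma^*$ by splitting the sum according to the size of the relevant true level (equivalently, the size of the unipotent character $\psi$ of the Levi $L \cong \GL_p(q^2)$), just as in the proof of Lemma \ref{type1}. First I would recall that a type (II) character has the form $\chi = \pm R^G_L(\hat s\psi)$ with $L = \CB_G(s) \cong \GL_p(q^2)$ and $s$ running over the elements of $C_{q^2-1} \setminus \ZB(G)$; since $\St_G(x) = \pm 1$ and \eqref{sum13} forces (a conjugate of) $x$ into $L$, the condition $\chi(x)\neq 0$ restricts $\psi$ to be a unipotent character of $\GL_p(q^2)$ that is nonzero on the regular semisimple element $x \in L$. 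As in Lemma \ref{type1}, $x$ viewed in $\GL_p(q^2)$ has centralizer a maximal torus $C_{q^{2p}-1}$, so $\psi(x)\neq 0$ forces $\psi$ to be one of the $p$ hook unipotent characters $\psi = \psi^{(p-a,1^a)}$ of $\GL_p(q^2)$, with $|\psi(x)| = 1$, and then $|\chi(x)| = 1$ as well (using that $R^G_L$ at a regular semisimple element reduces to the value of $\psi$ at that element up to sign). Counting the choices of $s$ (at most $q^2$) times the choices of $\psi$ (exactly $p$), there are $O(q^2 p) = O(N)$ type (II) characters contributing to $\Sigma^*$.

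Next I would estimate $|\chi(g)|/\chi(1)$ for each such $\chi$. The crude tool is \eqref{lt-bound}: $|\chi(g)|/\chi(1) \leq \chi(1)^{-\sigma m/N}$. I would combine this with a lower bound on $\chi(1)$: since $\chi = \pm R^G_L(\hat s\psi)$ with $L \cong \GL_p(q^2)$, one has $\chi(1) = [G:L]_{p'} \cdot \psi(1)$, and $[G:L]_{p'}$ is of size roughly $q^{N^2/2}$-ish — in any case at least $q^{cN^2}$ for an absolute $c>0$ — so $\chi(1) \geq q^{cN^2}$. Hence $|\chi(g)|/\chi(1) \leq q^{-\sigma c m N}$. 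Summing over the $O(N)$ type (II) characters and over $m \geq 1$ gives a bound like $O(N) \cdot q^{-\sigma c N}$, which is $< 1/4$ once $N \geq N_2$ for a suitable absolute constant $N_2$. This already handles all support values $m \geq 1$ at once, because the degree of a type (II) character is genuinely large (quadratic exponent in $N$), in contrast to the unipotent hook characters of Lemma \ref{type1} which can have degree as small as $q^{N/2}$ and therefore needed the delicate Theorem \ref{slu-bound} treatment. So here no appeal to Theorem \ref{slu-bound} is needed.

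The main obstacle — really the only point requiring care — is getting a clean, correct lower bound for $\chi(1)$ that is uniform over all type (II) characters, i.e. checking that the index $[G:L]$ for $L \cong \GL_p(q^2) \leq \GU_{2p}(q)$ grows like $q^{\Theta(N^2)}$ and that multiplying by a (possibly small) unipotent degree $\psi(1)$ of $\GL_p(q^2)$ does not hurt — it only helps, since $\psi(1) \geq 1$. One should double-check the exact exponent: $|\GU_{2p}(q)| \approx q^{4p^2} = q^{N^2}$ while $|\GL_p(q^2)| \approx q^{2p^2} = q^{N^2/2}$, so $\chi(1) \gtrsim q^{N^2/2}$, comfortably of the form $q^{cN^2}$ with, say, $c = 1/3$. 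Plugging this into \eqref{lt-bound} yields $|\chi(g)|/\chi(1) \leq q^{-\sigma m N/3}$, and then
$$
|\Sigma^*_{\mathrm{(II)}}| \;\leq\; \sum_{\chi \ \mathrm{type\ (II)}} \frac{|\chi(x)|^2 |\chi(g)|}{\chi(1)} \;\leq\; O(N) \cdot q^{-\sigma N/3} \;<\; \frac14
$$
for $N \geq N_2$. The remaining bookkeeping — that there really are at most $q^2 - q - 2 < q^2$ admissible values of $s$, that for each the hook unipotent characters of $L$ are exactly the ones nonvanishing at $x$, and that $|\chi(x)| = 1$ in each case — is routine and parallels the argument already given for type (I).
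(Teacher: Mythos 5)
Your overall plan is the same as the paper's: bound the number of type (II) characters, bound $|\chi(x)|$ and $\chi(1)$, and apply the exponential bound \eqref{lt-bound}, no appeal to Theorem~\ref{slu-bound} being needed because the degree $\chi(1)$ is at least $q^{\Theta(N^2)}$. The strategy is correct and lands in the same place, but two of your intermediate claims are off, and one of them involves a real overclaim in the reasoning rather than just a numerical slip.

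First, the claim $|\chi(x)|=1$. You justify it by saying that ``$R^G_L$ at a regular semisimple element reduces to the value of $\psi$ at that element up to sign.'' This is the place I would push back. After writing $\St_G\cdot\chi = \pm\Ind^G_L(\hat s\psi\St_L)$ via \eqref{sum13} and using $\St_G(x)=\pm 1$, the value $\chi(x)$ becomes a \emph{sum} over the $L$-conjugacy classes of $x^G\cap L$, one term per orbit of $N_G(L)/L$ on such classes. For $L\cong\GL_p(q^2)$ inside $\GU_{2p}(q)$, the group $N_G(L)/L$ has order~$2$ (swapping the two totally isotropic summands), and the torus $\CB_G(x)\cong C_{q^{2p}-1}$ has Weyl group $C_{2p}$ in $G$ but only $C_p$ in $L$, so $x^G\cap L$ typically breaks into \emph{two} $L$-classes. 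You therefore get $|\chi(x)|\le 2$, not $|\chi(x)|=1$; in fact the paper does not attempt this fine analysis at all and simply invokes the general bound $|\chi(x)|\le N$ coming from \cite[Corollary~7.6]{LTT} (this is \eqref{for-x}), which is all that is needed since it only costs a factor of $N^2$ in the final estimate.

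Second, the degree estimate. Writing $\chi(1)=[G:L]_{q'}\psi(1)\ge[G:L]_{q'}$, the actual value is
\[
[G:L]_{q'}=\prod_{\substack{1\le i\le 2p \\ i\text{ odd}}}(q^i+1)>q^{1+3+\cdots+(2p-1)}=q^{p^2}=q^{N^2/4},
\]
not $q^{N^2/2}$ as you speculate (your approximation forgets that $[G:L]$ has $q$-part $q^{p^2}$, which must be stripped off to get $[G:L]_{q'}$). So the right absolute constant is $c=1/4$, not $1/3$. Both corrections still leave you with a bound of the shape $O(N^3)\,q^{-\sigma N/4}$, which goes to $0$, so the lemma follows; but as written the proof has a gap at ``$|\chi(x)|=1$'' and a wrong exponent in the degree bound, and you should replace the former with either the crude $|\chi(x)|\le N$ from \cite[Corollary~7.6]{LTT} or a correct count of $L$-classes.
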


\begin{proof}
Each of these characters can be written in the form \eqref{sum12}, where $L=\GL_p(q^2)$ is unique up to conjugacy in $G$. 
For the element $x$, we can use the bound
\begin{equation}\label{for-x}
  |\chi(x)| \leq N
\end{equation} 
for all irreducible characters of $G$, which follows from \cite[Corollary 7.6]{LTT}. There are fewer than $q^2 \leq 49$ choices for the element $s$ (up to conjugacy). Furthermore, by
\eqref{sum13}, the condition $\chi(x) \neq 0$ implies that (after conjugating $x$ suitably) $x \in L$ and $\psi(x) \neq 0$. Viewed as an element in
$L$, $x$ is again regular semisimple with centralizer $C_{q^{2p}-1}$. Hence the unipotent character $\psi$ is labeled by one of the $N/2$ hook partitions of
$p=N/2$. Thus there are fewer than $25N$ characters of type (II) with $\chi(x) \neq 0$. Each of them has degree
$$\chi(1) \geq [G:L]_{q'} = (q+1)(q^3+1) \ldots (q^{N-1}+1) > q^{N^2/4}.$$
Hence, by \eqref{lt-bound}, the total contribution of 
these characters to $|\Sigma^*|$ is at most
$$25N^3q^{-\sigma N/4} < 1/4$$
when $N \geq N_2$ and $N_2$ is large enough.
\end{proof}

To complete the proof of \eqref{sum11} and of Theorem \ref{su2p}, it remains to prove:

\begin{prop}\label{type34}
There is an absolute constant $N_3>0$ such that when $N \geq N_3$, the total contribution of characters of types {\rm (III)} and {\rm (IV)} to $\Sigma^*$ has absolute value $< 1/4$. 
\end{prop}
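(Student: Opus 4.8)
The plan is to bound the contributions of types (III) and (IV) separately, in each case by combining an upper bound on the number of relevant characters, an upper bound on $|\chi(x)|$, a lower bound on $\chi(1)$, and the uniform exponential character bound \eqref{lt-bound}. For type (IV), the centralizer $\CB_G(s) = T \cong C_{q^N-1}$ is a maximal torus, so by \eqref{sum12} every such $\chi$ equals $\pm R^G_T(\hat s)$; there are at most $|T| < q^N$ such characters, and $\chi(x)\neq 0$ together with $|\chi(x)| \le N$ from \cite[Corollary 7.6]{LTT}. Each such $\chi$ is (up to sign) a Deligne-Lusztig character of a maximal torus, so $\chi(1) = [G:T]_{q'} \ge q^{N^2-N-1}$, which is enormous; thus $|\chi(g)|/\chi(1) \le \chi(1)^{-\sigma m/N} \le q^{-\sigma(N-1)}$ by \eqref{lt-bound}, and the whole type-(IV) contribution is at most $q^N\cdot N^2 \cdot q^{-\sigma(N-1)} < 1/8$ for $N$ large. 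For type (III), $L = \CB_G(s) \cong \GU_2(q^p)$; there are fewer than $q^p+1$ choices of $s$ up to conjugacy, and, exactly as in the proof of Lemma~\ref{type2}, the condition $\chi(x)\neq 0$ forces (a conjugate of) $x$ into $L$ with $\psi(x)\neq 0$ for the unipotent character $\psi$ of $L$. Since $x$ is regular semisimple in $L \cong \GU_2(q^p)$, and $\GU_2$ has only two unipotent characters (trivial and Steinberg), there are at most $2(q^p+1) < q^{N/2+1}$ characters of type (III) with $\chi(x)\neq 0$; each has degree $\chi(1) \ge [G:L]_{q'} = \prod_{i=3}^{N-1}(q^i-(-1)^i) > q^{N^2/2 - 2N}$ (roughly), so again \eqref{lt-bound} gives $|\chi(g)|/\chi(1) \le q^{-\sigma(N/2-2)}$, and, using $|\chi(x)| \le N$, the type-(III) contribution is at most $N\cdot q^{N/2+1}\cdot N\cdot q^{-\sigma(N/2-2)} < 1/8$ for $N$ large. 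Summing, $|\Sigma^*_{\mathrm{(III)}} + \Sigma^*_{\mathrm{(IV)}}| < 1/4$.

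In more detail, the main point for both types is that $\chi(1)$ is comparable to $|G|/|L|$, which grows like $q^{cN^2}$ with $c = 1/2$ for type (III) and $c = 1$ for type (IV), so the saving $\chi(1)^{-\sigma m/N}$ already beats the character count $q^{O(N)}$ and the crude bound $|\chi(x)| \le N$ even for support $m = 1$. Concretely, for type (III) I would write $\chi(1) = \psi(1)\cdot[G:L]_{q'}$ and note $[G:L]_{q'} = \prod_{3\le i\le N-1}(q^i - (-1)^i)$, whence $\log_q \chi(1) \ge \binom{N}{2} - cN$ for an absolute constant $c$; for type (IV), $\chi(1) = [G:T]_{q'} = \prod_{1\le i\le N}(q^i-(-1)^i)/q^{\lfloor\cdot\rfloor}$ has $\log_q\chi(1) \ge \binom{N}{2} - c'N$. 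Then \eqref{lt-bound} with $m\ge1$ gives $|\chi(g)|/\chi(1) < q^{-\sigma(N/2-c)}$ in both cases. Multiplying by the number of such $\chi$ (at most $q^{N/2+1}$ for type (III), at most $N q^N$ for type (IV)) and by $\max|\chi(x)|^2 \le N^2$, one sees that for $N$ exceeding some absolute $N_3$ the sum of both contributions is $< 1/4$.

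The only real subtlety is bookkeeping: one must be sure that in type (III) the characters are genuinely indexed by pairs $(s, \psi)$ with $s \in C_{q^p+1}\setminus C_{q^2-1}$ up to $G$-conjugacy and $\psi$ a unipotent character of $\GU_2(q^p)$ nonvanishing at the regular semisimple element $x\in L$ — here one uses that distinct geometric Lusztig series are disjoint, and that the Jordan decomposition of characters is a bijection — and that in type (IV) the constraint $s\in T\setminus(C_{q^p+1}\cup C_{q^2-1})$ forces $\CB_G(s) = T$ exactly (this is where the primality of $p$ is used, precisely as in the classification (I)–(IV) preceding Lemma~\ref{type1}). Given that, the inequalities are routine. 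I expect the main obstacle to be purely notational — getting the power of $q$ in $[G:L]_{q'}$ right for $L\cong\GU_2(q^p)$ versus $\GU_{N-2}(q)\times\GU_2(q)$ (they are not the same Levi!) — rather than conceptual; once the degree lower bound $\chi(1) > q^{N^2/2 - O(N)}$ is correctly established, \eqref{lt-bound} does all the work.
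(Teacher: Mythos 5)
Your strategy breaks at $m=1$, and the failure is essential, not notational. You rely solely on the exponential character bound \eqref{lt-bound}, $|\chi(g)|/\chi(1)\le\chi(1)^{-\sigma m/N}$, with an unspecified absolute $\sigma>0$. For type (III), $\chi(1)\approx q^{N^2/2}$ and there are roughly $q^{N/2}$ characters, so with $m=1$ the product is about $q^{N(1-\sigma)/2}$; you thus implicitly need $\sigma>1$. For type (IV), $\chi(1)\approx q^{(N^2-N)/2}$ (note your claim $\chi(1)\ge q^{N^2-N-1}$ is off by a factor of two in the exponent: $[G:T]_{q'}=\prod_{i<N}(q^i-(-1)^i)\approx q^{N(N-1)/2}$, not $q^{N^2}$), and there are roughly $q^N$ characters, so you need $\sigma>2$. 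The constant $\sigma$ supplied by \cite[Theorem 5.5]{LT3} is a fixed small positive number, with no guarantee of being $>1$, let alone $>2$, so the argument does not close. In contrast, in Lemma \ref{type2} the character count is only polynomial in $N$, which is why the pure \eqref{lt-bound} estimate works there for any $\sigma>0$.

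The paper's proof avoids this by splitting on $\supp(g)$. For $\supp(g)\ge 2$, it uses the centralizer bound of Lemma \ref{cent}, namely $|\chi(g)|\le\sqrt{|\CB_G(g)|}\le q^{N^2/2-1.7N+2.5}$, which combined with $\chi(1)\ge q^{N^2/2-N-1}$ or $q^{(N^2-N)/2}$ already beats the $q^{O(N)}$ character count with an explicit margin, independent of $\sigma$. For $\supp(g)=1$, the centralizer bound does not suffice (it gives $|\chi(g)|/\chi(1)$ of size $q^{O(1)}$), and the paper instead proves a positivity property: via a Mackey-formula vanishing in the $\GL_N(q)$ analogue, an explicit evaluation of the resulting Green functions, and Ennola duality ($q\mapsto -q$), it establishes \eqref{for-g2}, i.e.\ $\chi(g)=\chi(1)/(q^{N-1}+1)>0$ for every type (III) or (IV) character when $g$ is a transvection. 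Since $\Sigma^*$ by definition sums only over $\chi$ with $\chi(g)\notin[0,\infty)$, these characters contribute nothing at all in the $m=1$ case. Your proposal has no substitute for this step, so as written it does not prove the proposition.
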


\begin{proof}
(a) First we consider the case $\supp(g) \geq 2$. By Lemma \ref{cent} we have
$$|\chi(g)| \leq \sqrt{|\CB_G(g)|} \leq q^{N^2/2-1.7N +2.5}$$
for any $\chi$; and we still have the bound \eqref{for-x} for $|\chi(x)|$. 

Each $\chi$ of type (III) can be written in the form \eqref{sum12}, 
where $L=\GU_2(q^p)$ is unique up to conjugacy in $G$, and  
$$\chi(1) \geq [G:L]_{q'} = (q+1)(q^2-1) \ldots (q^{p-1}-1) \cdot (q^{p+1}-1)(q^{p+2}+1) \ldots (q^{2p-1}+1) > q^{N^2/2-N-1}$$
by \cite[Lemma 4.1(iii)]{LMT}. 
There are fewer than $q^{N/2}$ choices for $s$, and two choices for the 
unipotent character $\psi$. Thus there are fewer than $2q^{N/2}$ choices for $\chi$ of type (III). Hence the total contribution of 
these characters to $|\Sigma^*|$ is at most
$$2N^2q^{N/2}q^{-0.7N+3.5} \leq N^22^{3.5-0.2N} < \frac18$$
when $N \geq N_3$ and $N_3$ is large enough.

Each $\chi$ of type (IV) can be written in the form \eqref{sum12}, 
where $L=\GL_1(q^{2p})$ is unique up to conjugacy in $G$, and  
$$\chi(1) \geq [G:L]_{q'} = (q+1)(q^2-1) \ldots (q^{2p-1}+1) > q^{(N^2-N)/2}.$$
There are fewer than $q^N$ choices for $s$, and one choice for the 
unipotent character $\psi$. Hence the total contribution of these characters to $|\Sigma^*|$ is at most
$$N^2q^Nq^{-1.2N+2.5}  \leq N^22^{2.5-0.2N}< \frac18$$
when $N \geq N_3$ and $N_3$ is large enough.

\smallskip
(b) Next assume that $\supp(g)=1$. Even though \eqref{sum10} is already proved in the case $g$ is semisimple, for further use we will 
consider this case here. So assume that $g$ is semisimple with $\supp(g)=1$. Then
\begin{equation}\label{for-g1}
  \CB_G(g) = M \cong \GU_1(q) \times \GU_{N-1}(q).
\end{equation}   
Consider a Levi subgroup $L = \GU_2(q^p)$ and assume that $M \cap L$ contains a maximal torus $S$ of $G$. Then $S$ contains the 
central torus $\ZB(L) \cong C_{q^p+1}$ of $L$. In particular, $S$ contains a generator $t$ of $\ZB(L)$, all of whose eigenvalues on 
$\F_{q^2}^N$ are of order $q^p+1$. But then $t$ cannot belong to $M$, since any element in $M$ (namely its 
$\GU_1(q)$-component) has an eigenvalue of order dividing $q+1$, a contradiction. 

The same argument shows that $M \cap L$ cannot contain any maximal torus of $L = \GL_1(q^{2p})$. By the Mackey formula, see 
\cite[Theorem 5.2.1 and p. 140]{DM}, we now have
$$\tw*R^G_M(\chi) = 0$$ 
for any character $\chi$ of type (III) or (IV). Together with \eqref{for-g1}, this implies by \cite[Lemma 13.3]{TT} that $\chi(g)=0$ for all these characters.

\smallskip
(c) In the rest of the proof, we will assume that $g$ is a transvection, and aim to show that 
\begin{equation}\label{for-g2}
  \chi(g) = \frac{\chi(1)}{q^{N-1}+1}
\end{equation} 
for any character $\chi$ of type (III) or (IV). In particular, $\chi(g) > 0$, so it does not contribute to $\Sigma^*$ in \eqref{sum11}.

To do this, we consider $G^+:=\GL_{2p}(q)$ (and identify its dual with $G^+$), and its Levi subgroups 
$$L_1^+:=L^+ \cong \GL_2(q^{p}),~L_2^+:=T^+ \cong \GL_1(q^{2p}),~M^+ \cong \GL_1(q) \times \GL_{N-1}(q).$$ 
Furthermore, we consider any character $\chi^+ \in \Irr(G^+)$ that lies in the Lusztig series labeled
by a semisimple element $s^+$ with $\CB_{G^+}(s^+)=L_i^+$, $i=1,2$; in particular, $\chi^+$ is Lusztig induced from $L_2^+$ by \eqref{sum12}. The 
same argument as in (b) shows that $L_i^+ \cap M^+$ contains no maximal torus of $G^+$, and hence 
\begin{equation}\label{srgl}
  \tw*R^{G^+}_{M^+}(\chi^+) = 0
\end{equation}
by the Mackey formula.

The Levi subgroup $M^+$ is a Levi subgroup of a parabolic subgroup $P=U \rtimes M^+$ of $G^+$ (the stabilizer of a line in $\F_q^N$), where
the unipotent radical $U$ is an elementary abelian subgroup of order $q^{N-1}$. Furthermore, $M^+$ acts transitively on the set of 
nontrivial elements of $U$ and on the set of nontrivial irreducible characters of $U$. Fix a nontrivial element $g^+$ of $U$, which is then a transvection.
Since $M^+$ is rational, $\tw*R^{G^+}_{M^+}(\chi^+)$ is just the Harish-Chandra restriction of $\chi^+$ to $P$. The formula 
\eqref{srgl} now means that the fixed point subspace of $U$ in any module $V$ affording $\chi^+$ is zero. Hence the restriction of $V$ to
$U$ affords all, and only, nontrivial irreducible characters of $U$, each with the same multiplicity 
$$e:=\chi^+(1)/(q^{N-1}-1).$$ 
The sum of these characters is the regular character of $U$ minus $1_U$, hence takes value $-1$ at $g^+$. It follows that 
\begin{equation}\label{for-g3}
  \chi^+(g^+) = -e=-\frac{\chi^+(1)}{q^{N-1}-1}.
\end{equation} 
We will interpret this identity in terms of Green functions of $L^+=\GL_2(q^p)$. According to \cite[p. 191]{DM},
$$R^{L^+}_{T^+_{1,1}}(1_{T^+_{1,1}}) = 1_{L^+}+\St_{L^+},~R^{L^+}_{T^+_{2}}(1_{T^+_{2}}) = 1_{L^+}-\St_{L^+}.$$
Here, $T^+_{1,1}$ is a maximal torus in $L^+$ of order $(q^p-1)^2$ and $T^+_2$ is a maximal torus in $L^+$ of order $q^{2p}-1$. Viewed in $G^+$, they become
maximal tori $T^+_{p,p}$ and $T^+_{2p}$, corresponding to permutations of type $(p,p)$ and $(2p)$ in the Weyl group $\SSS_{2p}$. Applying 
Lusztig induction to $G^+$, we get
\begin{equation}\label{for-g4}
  R^{G^+}_{T^+_{p,p}}(1_{T^+_{p,p}}) = R^{G^+}_{L^+}(1_{L^+})+R^{G^+}_{L^+}(\St_{L^+}),~
  R^{G^+}_{T^+_{2p}}(1_{T^+_{2p}}) = R^{G^+}_{L^+}(1_{L^+})-R^{G^+}_{L^+}(\St_{L^+}).
\end{equation}   
We now evaluate these formulae at the unipotent element $g^+ \in G^+$,
and use the fact that $R^{G^+}_{L'}(\al)$ and 
$R^{G^+}_{L'}(\theta\al)$ agree at the unipotent element $g^+$ for any linear character $\theta$ of a Levi subgroup $L'$ of $G^+$, cf. 
\cite[Proposition 10.1.2]{DM}. 
Also, each of $R^{G^+}_{L^+}(\theta\cdot 1_{L^+})$ and $R^{G^+}_{L^+}(\theta\cdot\St_{L^+})$ 
is, up to sign, an irreducible character $\chi^+$ corresponding to 
$L^+$, for a suitable choice of the linear character $\theta$. Hence \eqref{for-g3} implies the following equalities for Green functions
$$Q^{G^+}_{T^+_{p,p}}(g^+)= -\frac{Q^{G^+}_{T^+_{p,p}}(1)}{q^{N-1}-1},~Q^{G^+}_{T^+_{2p}}(g^+)= -\frac{Q^{G^+}_{T^+_{2p}}(1)}{q^{N-1}-1}.$$
Since we have proved these equalities for all prime powers $q$, they also hold as identities for Green functions considered 
as polynomials in $q$. Applying the Ennola duality $q \mapsto -q$, cf. \cite[\S15.9]{TT}, we now obtain
\begin{equation}\label{for-g5}
  Q^{G}_{T_{p,p}}(g)= \frac{Q^{G}_{T_{p,p}}(1)}{q^{N-1}+1},~Q^{G}_{T_{2p}}(g)= \frac{Q^{G}_{T_{2p}}(1)}{q^{N-1}+1}
\end{equation}  
for Green functions of $G=\GU_{2p}(q)$,
where $T_{p,p}$ is a maximal torus of order $(q^p+1)^2$ and $T_{2p}$ is a maximal torus of order $q^{2p}-1$ of $G$. 

We can take $T_{p,p}$ to be a maximal torus $T_{1,1}$ of $L=\GU_2(q^p)$ and $T_{2p}=T$ to be a maximal torus $T_2$ of $L$. Inside 
$L$ we now have
$$R^{L}_{T_{1,1}}(1_{T_{1,1}}) = 1_{L}-\St_{L},~R^{L}_{T_{2}}(1_{T_{2}}) = 1_{L}+\St_{L}.$$
After Lusztig inducing to $G$, we obtain
$$R^{G}_{T_{p,p}}(1_{T_{p,p}}) = R^{G}_{L}(1_{L})-R^{G}_{L}(\St_{L}),~
   R^{G}_{T_{2p}}(1_{T_{2p}}) = R^{G}_{L}(1_{L})+R^{G}_{L}(\St_{L}),$$
and so
\begin{equation}\label{rgl}  
  R^{G}_{L}(1_{L})= \bigl(R^{G}_{T_{p,p}}(1_{T_{p,p}})+R^{G}_{T_{2p}}(1_{T_{2p}})\bigr)/2,~
  R^{G}_{L}(\St_{L})= \bigl(R^{G}_{T_{p,p}}(1_{T_{p,p}})-R^{G}_{T_{2p}}(1_{T_{2p}})\bigr)/2.
\end{equation}  
Note that each irreducible character $\chi$ of type (III) is, up to sign, $R^{G}_{L}(\theta \cdot 1_{L})$ or $R^{G}_{L}(\theta\cdot\St_{L})$ for 
a suitable linear character $\theta$ of $L$, and the values at $g$ of the latter characters are equal to the values at $g$ of 
$R^{G}_{L}(1_{L})$, respectively $R^{G}_{L}(\St_{L})$.
Hence \eqref{for-g5} and \eqref{rgl} show that \eqref{for-g2} holds for every character of type (III).
Finally, each irreducible character $\chi$ of of type (IV) is, up to sign, $R^{G}_{T_{2p}}(\theta)$ for a suitable linear character $\theta$ of $T_{2p}$. Hence, \eqref{for-g5} shows that \eqref{for-g2} holds for every character of type (IV), and the proof is complete.
\end{proof}

\section{Thompson's conjecture for $\mathrm{PSU}_{2p+1}(q)$}
Let $p \geq 3$ be any prime, and let $q$ be any prime power (which may be coprime to $p$). In this section we show that Thompson's conjecture
holds for the simple group $\mathrm{PSU}_{2p+1}(q)$ as long as $p$ is sufficiently large. 

We will work inside $G := \GU_{2p+1}(q)$ and consider any semisimple element $x \in G$ which has both some  fixed
$\xi \in \F_{q^p} \smallsetminus \F_q$ and $1$ as eigenvalues. As in \S4,
this choice of $\xi$ implies that $x$ is regular semisimple, real, and 
of determinant $1$.

\begin{thm}\label{su2p1}
There is an explicit absolute constant $A > 0$ such that the following statement holds when the prime $p$ is at least $A$. If $C$ denotes the 
conjugacy class of the image of the element $x$ described above in the simple group $S = \mathrm{PSU}_{2p+1}(q)$, then
$S=C^2$. In fact, $(x^{\SU_{2p}(q)})^2$ contains all unipotent elements and all elements of support $\ge 2$.
\end{thm}

The rest of the section is devoted to the proof of Theorem \ref{su2p1}.
By the main result of \cite{EG}, it suffices to prove the result for 
$q \leq 7$.
The chosen element $x$ has 
$$T:=\CB_G(x) = T_1 \times T_2, \mbox{ where }T_1=\GL_1(q^{2p}) \cong C_{q^{2p}-1} \mbox{ and }T_2 = \GU_1(q) \cong C_{q+1}.$$ 
In particular, $T \cap \SU_{2p+1}(q)$ has index $q+1$ in $T$, and hence the conjugacy class $x^G$ is a single $\SU_{2p+1}(q)$-conjugacy class. As $x$ is real,
it suffices to show that, any non-central element $g \in \SU_{2p+1}(q)$ 
is a product of two $G$-conjugates of $x$, except possibly in the case $g=zh$ where $1 \neq z \in \ZB(\SU_{2p+1}(q))$ and $h$ is a transvection.
By \cite[Lemma 5.1]{GT1}, the statement holds in the case
$g$ is semisimple. So we may assume that $g$ is not semisimple, and furthermore  
$g$ is a transvection if $\supp(g)=1$.

Note that the statement also follows in the case
\begin{equation}\label{red10}
  \begin{aligned}g = \diag(g',1) \in \SU_{2p}(q) \times \SU_1(q) \mbox{ with }g' \in \SU_{2p}(q) \mbox{ but}\\
  g' \neq zh, \mbox{ with }1 \neq z \in \ZB(\SU_{2p}(q)) \mbox{ and }h \mbox{ is a 
  transvection}.\end{aligned}
\end{equation}    
Indeed, as $g$ is not semisimple, $g'$ is not central in $\SU_{2p}(q)$. We may put $x$ in $\SU_{2p}(q)$, and then apply Theorem \ref{su2p} to the element $g'$.
Therefore, in what follows we may assume that  
$$m=\supp(g) \geq 2.$$

As in \S4, we use the Frobenius formula in the form \eqref{sum10} and identify the dual group $G^*$ with $G$. Again, any non-central semisimple element $s \in G$ has the property that $\CB_G(s)$ is a Levi subgroup
$L$ of $G$. Fixing an embedding of $\overline{\F_q}^\times$ into $\C^\times$, we can identify $\ZB(\CB_G(s))$ with 
$$\mathrm{Hom}(\CB_G(s)/[\CB_G(s),\CB_G(s)],\C^\times),$$ 
and denote the linear character of $L=\CB_G(s)$ corresponding to
$s$ by $\hat{s}$. Then, any irreducible character $\chi$ in the Lusztig series labeled by $s$ can be written in the form \eqref{sum12},
where $G$ and $L$ are understood to be defined as in this section.
Assume now that $\chi(x) \neq 0$. As $x$ is regular semisimple, 
\eqref{sum13} implies that, up to conjugacy, $x \in L=\CB_G(s)$. It follows that $s \in \CB_G(x)=T$; in fact,
$s \in T \smallsetminus \ZB(G)$. Now using the primality of $p$, we see that 
the characters $\chi \in \Irr(G)$ that are non-vanishing at $x$ are divided into the following types:

\begin{enumerate}[\rm(I)]
\item Characters $\chi$ in \eqref{sum12} with $s \in T_{11} \times T_2 < T$, where $T_{11}$ is the unique subgroup $C_{q+1}$ of $T_1$.
\item Characters $\chi$ in \eqref{sum12} with $s \in T_{12} \times T_2 \setminus (T_{11} \times T_2)$, where $T_{12}$ is the unique subgroup $C_{q^2-1}$ of $T_1$ (in particular, $q > 2$). For any such $s$, $\CB_G(s) \cong \GL_p(q^2) \times \GU_1(q)$.
\item Characters $\chi$ in \eqref{sum12} with $s \in T_{13} \times T_2 \setminus (T_{11} \times T_2)$, where $T_{13}$ is the unique subgroup $C_{q^p+1}$ of $T_1$. For any such $s$, $\CB_G(s) \cong \GU_2(q^p) \times \GU_1(q)$.
\item Characters $\chi$ in \eqref{sum12} with $s \in T$ not belonging to the union of $T_{12} \times T_2$ and $T_{13} \times T_2$. Any such $s$ is regular semisimple, with $\CB_G(s) =T$.
\end{enumerate} 

The Frobenius formula includes $q+1$ linear characters which all take value $1$ at any element in 
$\SU_{2p+1}(q)$. Hence it suffices to show
\begin{equation}\label{sum21}
  \Bigm|\sum\nolimits^*\frac{|\chi(x)|^2\chi(g)}{\chi(1)}\Bigm| < q+1,
\end{equation} 
where, again, we sum only over $\chi$ with $\chi(1)>1$, $\chi(x)\neq 0$, and $\chi(g)\not\in [0,\infty)$.
We again refer to the sum \eqref{sum21} as $\Sigma^*$. We will frequently use the character bound \eqref{lt-bound}, with
$$N:=2p+1.$$

\begin{prop}\label{type1b}
There is an absolute constant $N_1>0$ such that when $N \geq N_1$, the total contribution of characters of type {\rm (I)} to $\Sigma^*$ has absolute value 
$< 2(q+1)/3$. 
\end{prop}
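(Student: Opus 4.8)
The plan is to run the argument of Lemma~\ref{type1}, the one genuinely new phenomenon being that the centralizer torus $T=T_1\times T_2$ splits off the factor $T_2=\GU_1(q)$, which produces a second sub-family of type~(I) characters with no analogue in \S4. First I would sort type~(I) into two pieces. If $s\in T_{11}\times T_2$ is central in $G$, then $\chi$ is a unipotent character of $G$ times a linear character; since $x$ lies in a maximal torus whose Weyl element has cycle type $(2p,1)$, the Murnaghan--Nakayama rule together with Proposition~\ref{fast growth} shows that $\chi_\lambda(x)\ne 0$ only for $\lambda$ minimal for this element, so $|\chi(x)|\le 1$, and the $\SSS_N$-combinatorics of \S2 (together with the elementary fact that $\chi_{(N-1,1)}$ is fixed points minus one) identifies the surviving $\lambda$ as $(N)$ (the trivial character, excluded from $\Sigma^*$), $(1^N)$ (the Steinberg character), and the near-hooks $(a,2,1^{N-2-a})$ for $2\le a\le N-2$; in particular the hook $(N-1,1)$ gives a character \emph{vanishing} at $x$, so every surviving positive-degree unipotent character has true level $\ge 2$. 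If $s=\diag(\zeta I_{2p},\eta)$ with $\zeta\ne\eta$ in $\mu_{q+1}$, then $L:=\CB_G(s)\cong\GU_{2p}(q)\times\GU_1(q)$ and $\chi=\pm R^G_L(\hat s\,(\psi'\boxtimes 1))$ as in \eqref{sum12}; since $x=\diag(x',1)$ is regular semisimple with $\CB_G(x)=T\subseteq L$, the Deligne--Lusztig character formula localizes to give $\chi(x)=\pm\hat s(x)\,\psi'(x')$, and as the eigenvalues of $x'$ pair up so that $\det(x')=1$ one gets $\hat s(x)=1$; thus $\chi(x)\ne 0$ forces $\psi'$ to be one of the $2p$ hook-labelled unipotent characters of $\GU_{2p}(q)$, and then $|\chi(x)|=1$. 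Using \cite{GLT} one pins down the true level: $\cl^*(\chi)$ grows with the hook index of $\psi'$, equalling $1$ only when $\psi'$ is trivial, and $\deg\chi$ grows with it as well.

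As in Lemma~\ref{type1}, the $q+1$ linear characters of $G$ restrict trivially to $\SU_{2p+1}(q)$ and permute each sub-family into orbits of size $q+1$ on which $\frac{|\chi(x)|^2\chi(g)}{\chi(1)}$ is constant, so it suffices to bound the sum of this quantity over one representative per orbit by $2/3$; the representatives of the second sub-family are indexed by $\zeta/\eta\in\mu_{q+1}\setminus\{1\}$ ($q$ of these) together with a hook $\psi'$. I then split by size. Because large true level forces large degree (by \cite[Theorem~1.2]{GLT} for the unipotent ones and via the index factor $[G:L]_{q'}\psi'(1)$ otherwise), the contribution of the characters of true level $\ge\sqrt N$, or with $m:=\supp(g)\ge\sqrt N$ (recall $m\ge 2$ here), is $N^{O(1)}q^{-c\sqrt N}$ by \eqref{lt-bound}, hence negligible. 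For the rest, with $j=\cl^*(\chi)\le O(\sqrt N)$ and $m\le O(\sqrt N)$, the hypothesis of Theorem~\ref{slu-bound} holds and yields
\[
\frac{\chi(g)}{\chi(1)}=(\eps q)^{-mj}\,\overline{\gamma_\lambda}+O(q^{-N/2}),\qquad \gamma_\lambda=\frac{\chi(\lambda I)}{\chi(1)},\quad |\gamma_\lambda|=1,
\]
where $\lambda$ is the primary eigenvalue of $g$ and $\gamma_\lambda=1$ for unipotent characters.

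The crucial observation --- the place where a naive triangle inequality would overshoot the budget --- is that, up to the $O(q^{-N/2})$ error, $\chi(g)$ is the \emph{real} number $(\eps q)^{-m\cl^*(\chi)}\chi(1)$ times the root of unity $\overline{\gamma_\lambda}$. When that product is real and positive the character is absent from $\Sigma^*$; this disposes of the unipotent sub-family entirely when $m$ is even (leaving only a sparse alternating tail when $m$ is odd), and of the second sub-family whenever $\gamma_\lambda=1$. When $\gamma_\lambda\ne 1$, summing over the $q$ representatives $\chi_{\zeta/\eta,\psi'}$ (with $\psi'$, hence $j$, fixed) produces cancellation, since their central characters are the characters of $\ZB(G)\cong C_{q+1}$ attached to the values $(\zeta/\eta)^{2p}$, and $\sum_{\zeta/\eta}\overline{\gamma_\lambda}$ has modulus $\le 1$ whenever $\lambda$ avoids the small subgroup $\mu_{\gcd(2p,\,q+1)}$. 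Either way the second sub-family contributes at most a geometric series $\sum_{i\ge 0}q^{-m(1+i)}$ (plus $O(Nq^{-N/2})$), the unipotent one at most $\sum_{j\ge 2}q^{-mj}$, and since $m\ge 2$ and $q\ge 2$ both are comfortably below $1/3$; their sum is $<2/3$, which gives the claimed bound on the type~(I) part of $\Sigma^*$.

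The step I expect to be hardest is exactly this bookkeeping: pinning down $\cl^*(\chi)$ with the matching degree lower bounds (so that no character of true level above $\sqrt N$ slips through a gap between Theorem~\ref{slu-bound} and \eqref{lt-bound}), and then handling, uniformly in the parity of $m$ and the order of $\lambda$, the interplay between the ``$\chi(g)>0$ hence excluded'' mechanism and the cancellation over the $q$ linear-twist orbits --- where the error term in Theorem~\ref{slu-bound} must be weighed against a main term that can itself be small. It is precisely this second sub-family, which \S4 does not have, that forces the enlarged threshold $2(q+1)/3$ in place of the $(q+1)/2$ of Lemma~\ref{type1}.
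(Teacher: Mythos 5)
The proposal is close in broad outline---split type (I) characters by central versus non-central label $s$, use the unique fixed line to get $|\chi(x)|\le 1$, sort by true level, apply Theorem~\ref{slu-bound} in the low-level regime, and use positivity/cancellation over the $q$ non-central orbit representatives---but I see two genuine problems in the key cancellation step, which is precisely the step the paper handles differently.

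First, the claimed parametrization of central characters by $(\zeta/\eta)^{2p}$ is wrong. The central character of $\chi'\in\mathcal E(G,\operatorname{diag}(I_{N-1},\eta/\zeta))$ at $\lambda I_N$ is determined by the non-degenerate duality pairing $\ZB(G)\times (G^*)_{ab}\to\C^\times$, which has no factor of $2p$: as $\eta/\zeta$ ranges over $\mu_{q+1}$, the map $\eta/\zeta\mapsto\gamma_\lambda$ is a group homomorphism $\mu_{q+1}\to\C^\times$ whose order equals the order of $\lambda$. (The paper's $q=2$, $\lambda=\omega$ computation in part (c), citing \cite[Proposition~4.5]{NT}, has $\gamma_\lambda$ hitting all three cube roots of unity bijectively; with your formula this would fail whenever $3\mid N=2p+1$.) As a result, your stated criterion ``cancellation holds whenever $\lambda$ avoids $\mu_{\gcd(2p,q+1)}$'' is both misstated and leaves a real gap: you do not say what happens when $1\ne\lambda\in\mu_{\gcd(2p,q+1)}$, and this can occupy nearly all of $\mu_{q+1}$ (e.g.\ $q=3$ with $\lambda=-1$). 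Taking your proof at face value, those $g$ are simply unhandled. The correct dichotomy is just $\lambda=1$ (reduce via \eqref{red10}) versus $\lambda\ne 1$ (the above map is a \emph{nontrivial} character, so $\sum_{\nu\in\mu_{q+1}}\gamma_\lambda(\nu)=0$).

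Second, even with the parametrization corrected, the cancellation you invoke is on the full sum $\sum_{\zeta/\eta\neq 1}\gamma_\lambda$, whereas $\Sigma^*$ by definition omits all $\chi$ with $\chi(g)\in[0,\infty)$. For $\lambda$ of order $e<q+1$, the kernel of $\nu\mapsto\gamma_\lambda(\nu)$ has order $(q+1)/e>1$, so several second-sub-family characters at each level have $\gamma_\lambda=1$ and drop out of $\Sigma^*$ along with the unipotent; the surviving partial sum is $-(q+1)/e$ rather than $-1$. The conclusion $|\Sigma^*_{\text{type (I)}}|<2(q+1)/3$ still goes through after a more careful count (the per-orbit contribution works out to at most $(q+1)/(e(q^m-1))\le 1/2$ for $e\ge 2$, $m\ge 2$, $q\ge 2$), but your quoted bound $\sum_{i\ge 0}q^{-m(1+i)}\le 1/3$ is not correct as stated and one cannot read it off from your argument without redoing the dropped-term bookkeeping.

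For comparison, the paper does \emph{not} rely on cancellation in general: for $m\ge 3$ and for $m=2$ with $q\ge 3$ it simply applies the triangle inequality over all $(q+1)^2$ pairs $(a,b)$, getting $(q+1)^2\sum_{j\ge 1} q^{-mj}\le(q+1)/2$, and reserves the nine-term cancellation for the one genuinely tight case $m=q=2$, where it is explicitly checked against \cite{NT} with $\lambda=\omega\ne 1$. Your uniformly-cancellatory approach is more elegant if it can be salvaged, but as written the central-character parametrization is incorrect, the cancellation criterion is wrong, and the resulting gap is not a minor constant-chasing issue but a whole family of elements $g$ that are left untreated.
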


\begin{proof}
(a) The elements $s$ that label the characters $\chi$ of type (I) are 
$$s=\diag(aI_{N-1},b)$$
where $a,b \in \mu_{q+1} \subseteq \F_{q^2}^\times$.

If $a=b$, then $\chi$ is a unipotent character $\chi^\al$ times a linear character. 
Since $x$ is regular semisimple with centralizer $T \cong C_{q^{N-1}-1} \times C_{q+1}$, $\chi(x) \neq 0$ only when the unipotent character
$\chi^\al$ is labeled by one of the $N-1$ partitions $\al=(N)$, $(1^N)$, or $(N-k,2,1^{N-2-k})$ with $2 \leq k \leq N-2$, in which case 
\begin{equation}\label{for-x2}
  |\chi(x)| = 1,
\end{equation}  
cf. \cite[Corollary 3.1.2]{LST}.

Next we show that \eqref{for-x2} also holds for the $\chi$ with $a \neq b$ and $\chi(x) \neq 0$. Indeed, we will use \eqref{sum13} and look at all 
$y \in G$ where $yxy^{-1} \in L = \CB_G(s)=\GU_{N-1}(q) \times \GU_1(q) \ni x$. This implies that $y$ fixes the unique $\F_{q^2}$-line fixed by $x$ and 
hence $y \in L$. For such a $y$, the $\GU_{N-1}(q)$-component of $yxy^{-1}$ is a regular semisimple element of $\GU_{N-1}(q)$ with 
centralizer $C_{q^{N-1}-1}$, and by \cite[Corollary 3.1.2]{LST}, $\chi(x) \neq 0$ implies that the $\GU_{N-1}$-component of $\psi$ is one of 
the $N-1$ hook unipotent characters, each taking value $\pm 1$ at $yxy^{-1}$. As $\St_G(x) = \pm 1$, \eqref{sum13} implies \eqref{for-x2}.
  
\smallskip  
(b) First we consider the $\chi$ with 
$$j:=\cl(\chi) \geq \sqrt{N/13}.$$ 
Choosing $N_1$ sufficiently large, when $N \geq N_1$, by \cite[Theorem 1.2]{GLT1}
we have $\chi(1) \geq q^{c_1N^{3/2}}$ for some absolute constant $c_1 > 0$. So, by \eqref{lt-bound}, the total contribution of 
these characters to $|\Sigma^*|$ is at most
$$2N(q+1)^2q^{-c_1\sigma\sqrt{N}} < \frac{q+1}{12}$$
when $N_1$ is large enough.

Now we look at the $\chi$ with $1 \leq j \leq \sqrt{N/13}$. Suppose that 
$$m=\supp(g) \geq \sqrt{N/13}.$$ 
Since $\chi(1) \geq q^{N/2}$, by \eqref{lt-bound}, the total contribution of these characters to $|\Sigma^*|$ is at most
$$2\sqrt{N/13}(q+1)^2q^{-\sigma\sqrt{N/52}} < \frac{q+1}3$$
when $N_1$ is large enough, yielding the claim in this case.

\smallskip
(c) In the remaining case, we have $2 \leq m \leq \sqrt{N/13}$ and $j \leq \sqrt{N/13}$. Still, for each $j$ we have at most $(q+1)^2$ 
characters $\chi$ to consider, one for each pair $(a,b)$. Let $\delta_a$ denote the linear character $\hat{t}$ for 
$t=aI_N$. Then we can write $\chi=\delta_a\chi'$, where $\chi'$ now has $\cl^*(\chi')=j$ by \cite[Theorem 3.9]{GLT1}, and $\delta_a(g)=1$ as 
$g \in \SU_N(q)$. By Theorem \ref{mainB},
$$\frac{\chi(g)}{\chi(1)}=\frac{\chi'(g)}{\chi'(1)}=\e_{\chi}+(-q)^{-mj}\gamma_\lambda,$$
where $|\e_{\chi}| < q^{-N/2}$ and $\gamma_\lambda = \chi'(\lambda I_A)/\chi'(1)$ for the primary eigenvalue $\lambda$ of $g$. 

If $m \geq 3$, then
$$(q+1)^2\Bigm| \sum_{j \geq 1}(-q)^{-mj}\gamma_{\lambda}\Bigm| \leq (q+1)^2\sum^\infty_{j \geq 1}q^{-3j} = (q+1)^2/(q^3-1) \leq \frac{3(q+1)}7.$$ 
If $m=2$ but $q\geq 3$, then
$$(q+1)^2\Bigm| \sum_{j \geq 1}(-q)^{-mj}\gamma_{\lambda}\Bigm| \leq (q+1)^2\sum^\infty_{j \geq 1}q^{-2j} = (q+1)^2/(q^2-1) \leq \frac{q+1}{2}.$$ 

Suppose $m=2=q$. Recall that $g$ is a non-semisimple element of support $2$ by our convention. 
If $\lambda=1$, then $g$ has the shape \eqref{red10}, and so \eqref{sum10} is already established. 
%
So assume $\lambda \neq 1$.
Since $q=2$, we have $\lambda=\omega$, a fixed cubic root of unity in $\F_4$.  
For the three choices $(a,b) = (1,1)$, $(\omega,\omega)$, $(\omega^2,\omega^2)$, $\chi'$ is the same unipotent character, for which 
$\gamma_\lambda=1$. For the three choices $(a,b) = (1,\omega)$, $(\omega,\omega^2)$, $(\omega^2,1)$, $\chi'$ is the same character corresponding to
$s=\diag(I_{N-1},\omega) \notin \SU_N(q)$, for which $\gamma_\lambda=\zeta$,  a fixed cubic root of unity in $\C$, see \cite[Proposition 4.5]{NT}.
For the three choices $(a,b) = (\omega,1)$, $(\omega^2,\omega)$, $(1,\omega^2)$, $\chi'$ is the complex conjugate of the preceding character (and corresponding to $s=\diag(I_{N-1},\omega^2)$), for which $\gamma_\lambda=\bar\zeta$. Hence the sum of the terms 
$(-q)^{mj}\gamma_\lambda$ over these nine characters for any fixed $j$ is $0$.

Hence, choosing $N_1$ large enough, for all possible $(m,q)$, we have
$$(q+1)^2\Bigm| \sum_{1 \leq j \leq \sqrt{N/13}}(-q)^{-mj}\gamma_\lambda\Bigm| \leq (q+1)\Bigl(\frac{1}{2}+\frac{1}{24}\Bigr)$$
and 
$$(q+1)^2\Bigm| \sum_{1 \leq j \leq \sqrt{N/13}}\eps_j\Bigm| \leq (q+1)^2q^{-N/2}\sqrt{N/13} \leq \frac{q+1}{24}.$$
It follows that
$$\Bigm| \sum_{1 \leq j=\cl(\chi) \leq \sqrt{N/13}}\frac{|\chi(x)|^2\chi_j(g)}{\chi_j(1)}\Bigm| \leq (q+1)\Bigl(\frac{1}{2}+\frac{1}{12}\Bigr) = \frac{7(q+1)}{12},$$
and so we are done in this case as well.
\end{proof}

\begin{lem}\label{type2b}
There is an absolute constant $N_2>0$ such that when $N \geq N_2$, the total contribution of characters of type {\rm (II)} to $\Sigma^*$ has absolute value 
$< 1/6$. 
\end{lem}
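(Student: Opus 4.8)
The plan is to mirror the proof of Lemma \ref{type2}, now with $G = \GU_{2p+1}(q)$ and the relevant Levi subgroup $L \cong \GL_p(q^2) \times \GU_1(q)$: I would bound the number of type (II) characters that do not vanish at $x$, show that each of them has very large degree, and then kill the entire subsum using the character bound \eqref{lt-bound}.

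First I would count the admissible data $(s,\psi)$. Each $\chi$ of type (II) has the form \eqref{sum12} with $L = \CB_G(s) \cong \GL_p(q^2) \times \GU_1(q)$, a single Levi subgroup up to $G$-conjugacy, and $s$ ranges over $T_{12} \times T_2 \setminus (T_{11} \times T_2)$, so there are fewer than $|T_{12} \times T_2| = (q^2-1)(q+1) < q^3$ choices for $s$ up to conjugacy. For the unipotent character $\psi$ of $L$: since $s \in T \subseteq \CB_G(x)$ we have $x \in \CB_G(s) = L$, and \eqref{sum13} together with $\St_G(x) = \pm 1$ forces $\psi$ to be non-vanishing on some $G$-conjugate of $x$ lying in $L$. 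Any such conjugate is regular semisimple with eigenvalue multiset equal to $\{1\}$ (with multiplicity one) together with the size-$2p$ orbit of $\xi$ under $\lambda \mapsto \lambda^{-q}$; hence its $\GU_1(q)$-component acts on a non-degenerate line by an eigenvalue lying in $\mu_{q+1}$, which can only be $1$, while its $\GL_p(q^2)$-component is regular semisimple with centralizer the Coxeter torus $C_{q^{2p}-1}$ (here one uses that $\xi$ has degree $p$ over $\F_{q^2}$, which holds since $p$ is odd). Therefore $\psi = \psi_1 \boxtimes 1_{\GU_1(q)}$, where $\psi_1$ is one of the $p = (N-1)/2$ hook unipotent characters of $\GL_p(q^2)$. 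Altogether there are fewer than $q^3 N$ characters $\chi$ of type (II) with $\chi(x) \neq 0$.

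Next I would bound the degrees: for each such $\chi$ the degree formula gives
$$\chi(1) \geq [G:L]_{q'} = (q^3+1)(q^5+1)\cdots(q^{2p+1}+1) > q^{p^2+2p} > q^{N^2/4}.$$
Using $|\chi(x)| \leq N$ (by \cite[Corollary 7.6]{LTT}), the bound \eqref{lt-bound}, and the fact that $m = \supp(g) \geq 2$ (established at the start of the section), the total contribution of type (II) characters to $\Sigma^*$ has absolute value at most
$$q^3 N \cdot N^2 \cdot \chi(1)^{-2\sigma/N} < q^3 N^3 q^{-\sigma N/2}.$$
Since $q \leq 7$, the right-hand side tends to $0$ as $N \to \infty$, so it is $< 1/6$ once $N \geq N_2$ for a suitable absolute constant $N_2$.

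I do not expect a genuine obstacle here, since the argument is a direct adaptation of Lemma \ref{type2}. The one point that needs care is the identification $\CB_G(s) \cong \GL_p(q^2) \times \GU_1(q)$ together with the observation that the non-degenerate line fixed by the $\GU_1$-factor of this Levi must be the $1$-eigenspace of a conjugate of $x$: this simultaneously pins down $L$ and forces the admissible $\psi$ to have trivial $\GU_1$-component, which is what keeps the number of relevant characters linear — rather than quadratic — in $N$. The degree estimate $[G:L]_{q'} > q^{N^2/4}$ is then routine.
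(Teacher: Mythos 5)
Your proof follows essentially the same route as the paper's: constrain $L \cong \GL_p(q^2)\times\GU_1(q)$, use \eqref{sum13} to force $x\in L$ and $\psi$ to be a hook character on the $\GL_p(q^2)$-factor, count the at most $O(N)$ admissible $\chi$, verify $\chi(1) > q^{N^2/4}$, and apply \eqref{lt-bound}. The only cosmetic point is that the statement $\psi=\psi_1\boxtimes 1_{\GU_1(q)}$ needs no argument about eigenvalues of the conjugate of $x$—it is automatic since $\GU_1(q)$ is abelian and so has only the trivial unipotent character—but this is harmless and the rest of the estimate is correct.
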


\begin{proof}
Each of these characters can be written in the form \eqref{sum12}, where $L=\GL_p(q^2) \times \GU_1(q)$ is unique up to conjugacy in $G$. 
For the element $x$, we can again use the general bound \eqref{for-x}.
There are fewer than $(q+1)q^2 < 400$ choices for the element $s$ (up to conjugacy). Furthermore, by
\eqref{sum13}, the condition $\chi(x) \neq 0$ implies that (after conjugating $x$ suitably) $x \in L$ and $\psi(x) \neq 0$. Viewed as an element in 
$L$, the $\GL_p(q^2)$-component of $x$ is again regular semisimple with centralizer $C_{q^{2p}-1}$. Hence the $\GL_p(q^2)$-component of the unipotent character $\psi$ is labeled by one of the $p$ hook partitions of
$p=(N-1)/2$. Thus there are fewer than $200N$ characters of type (II) with $\chi(x) \neq 0$. Each of them has degree
$$\chi(1) \geq [G:L]_{q'} = (q^3+1)(q^5+1) \ldots (q^{N-2}+1)(q^N+1) > q^{(N+1)^2/4-1}.$$
Hence, by \eqref{lt-bound}, the total contribution of these characters to $|\Sigma^*|$ is at most
$$200N^3q^{-\sigma(N/4+1/2-3/4N)} < \frac16$$
when $N \geq N_2$ and $N_2$ is large enough.
\end{proof}

To complete the proof of \eqref{sum21} and of Theorem \ref{su2p1}, it remains to prove:

\begin{lem}\label{type34b}
There is an absolute constant $N_3>0$ such that when $N \geq N_3$, the total contribution of characters of types {\rm (III)} and {\rm (IV)} to $\Sigma^*$ has absolute value $< 1/6$. 
\end{lem}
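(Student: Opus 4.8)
The plan is to mirror the structure of Proposition \ref{type34} in \S4, handling types (III) and (IV) for $G = \GU_{2p+1}(q)$ exactly as was done for $\GU_{2p}(q)$. Since we have already reduced to the case $m = \supp(g) \ge 2$ (and, if $\supp(g)=2$ with $\lambda = 1$, to the previously-established case \eqref{red10}), there is no transvection subcase to treat here, which simplifies matters compared to \S4. First I would invoke Lemma \ref{cent}: for $g$ of support $\ge 2$ in $\GU_N(q)$ we have $|\CB_G(g)| \le q^{N^2 - 3.4N + 5}$, hence $|\chi(g)| \le \sqrt{|\CB_G(g)|} \le q^{N^2/2 - 1.7N + 2.5}$ for every $\chi$, and combine this with $|\chi(x)| \le N$ from \eqref{for-x} (via \cite[Corollary 7.6]{LTT}).

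Next I would count characters and bound their degrees from below in each type. For type (III), each $\chi$ has the form \eqref{sum12} with $L = \GU_2(q^p) \times \GU_1(q)$ unique up to conjugacy; the index $[G:L]_{q'}$ is a product of the relevant cyclotomic factors, giving $\chi(1) > q^{N^2/2 - N - c}$ for an explicit small constant $c$ by an estimate of the type \cite[Lemma 4.1(iii)]{LMT}; there are fewer than $2q^{N/2}$ choices for $s$ (the $T_{13}$-part) times a bounded factor of $(q+1)$ for the $T_2$-part, and at most two unipotent characters $\psi$ on the $\GU_2(q^p)$-factor (the $\GU_1(q)$-factor forcing the other $\psi$-component). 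So the type (III) contribution to $|\Sigma^*|$ is at most roughly $N^2 \cdot q^{N/2} \cdot q^{-0.7N + O(1)}$, which is $< 1/12$ once $N$ is large. For type (IV), $L = \GL_1(q^{2p}) \times \GU_1(q) = T$, so $\psi$ is trivial, $\chi(1) \ge [G:T]_{q'} > q^{(N^2-N)/2 - O(1)}$, and there are fewer than $q^N \cdot (q+1)$ choices for $s$; the contribution is at most about $N^2 \cdot q^N \cdot q^{-1.2N + O(1)} < 1/12$ for large $N$. Adding the two gives $< 1/6$.

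The one point requiring care — and the likely main obstacle — is the precise bookkeeping of the cyclotomic-index lower bounds $\chi(1) \ge [G:L]_{q'}$ for the odd-dimensional unitary group and the slightly different Levi $L = \GU_2(q^p) \times \GU_1(q)$ (resp.\ $\GL_1(q^{2p}) \times \GU_1(q)$) compared to \S4, since the exponents in Lemma \ref{cent} (namely $N^2 - 3.4N + 5$ rather than the tighter bounds available in the symplectic case) leave only a linear-in-$N$ margin, and one must check that $0.7N$ and $1.2N$ genuinely dominate the $O(1)$ error terms and the $\log_q(N^2)$ from the character count. Once those inequalities are verified with explicit constants, choosing $N_3$ large enough to absorb all the $O(1)$ slack completes the proof; combined with Propositions \ref{type1b}, \ref{type2b} and Lemma \ref{type2b}, this establishes \eqref{sum21} and hence Theorem \ref{su2p1}.
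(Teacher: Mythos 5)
Your proposal follows essentially the same route as the paper's own argument: it invokes Lemma~\ref{cent} to bound $|\chi(g)|$ for $g$ of support $\ge 2$, uses $|\chi(x)| \le N$ from \eqref{for-x}, lower-bounds $\chi(1)$ via the cyclotomic index $[G:L]_{q'}$ for the Levis $\GU_2(q^p)\times\GU_1(q)$ and $\GL_1(q^{2p})\times\GU_1(q)$, counts the characters by bounding the number of labels $s$ and unipotent components $\psi$, and obtains a $q^{-0.2N+O(1)}$ margin in each of types (III) and (IV). The paper carries this out with the explicit exponents $q^{N^2/2-N-1/2}$, $q^{(N^2-N)/2-1}$, and character counts $<q^{(N+5)/2}$, $<q^{N+1}$, matching your estimate.
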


\begin{proof}
Since $\supp(g) \geq 2$, by Lemma \ref{cent} we have
$$|\chi(g)| \leq \sqrt{|\CB_G(g)|} \leq q^{N^2/2-1.7N +2.5}$$
for any $\chi$; and \cite[Corollary~7.6]{LTT} still gives the bound \eqref{for-x} for $|\chi(x)|$. 

Each $\chi$ of type (III) can be written in the form \eqref{sum12}, 
where $L=\GU_2(q^p) \times \GU_1(q)$ is unique up to conjugacy in $G$, and  
$$\begin{aligned}\chi(1) & \geq [G:L]_{q'}\\ 
   & = (q^2-1)(q^3+1)\ldots (q^{p-1}-1) \cdot (q^{p+1}-1)(q^{p+2}+1) \ldots (q^{2p-1}+1)(q^{2p+1}+1)\\
   &  > q^{N^2/2-N-1/2}\end{aligned}$$
by \cite[Lemma 4.1(iii)]{LMT}. 
There are fewer than $(q+1)q^{(N-1)/2}$ choices for $s$, and two choices for the 
unipotent character $\psi$. Thus there are fewer than $2(q+1)q^{(N-1)/2}<q^{(N+5)/2}$ choices for $\chi$ of type (III). Hence the total contribution of 
these characters to $|\Sigma^*|$ is at most
$$N^2q^{(N+5)/2}q^{-0.7N+3} \leq N^22^{5.5-0.2N} < \frac1{12}$$
when $N \geq N_3$ and $N_3$ is large enough.

Each $\chi$ of type (IV) can be written in the form \eqref{sum12}, 
where $L=\GL_1(q^{2p}) \times \GU_1(q)$ is unique up to conjugacy in $G$, and  
$$\chi(1) \geq [G:L]_{q'} = (q^2-1)(q^3+1) \ldots (q^{2p-1}+1)(q^{2p+1}+1) > q^{(N^2-N)/2-1}.$$
There are fewer than $(q+1)q^{N-1} < q^{N+1}$ choices for $s$, and one choice for the 
unipotent character $\psi$. Hence the total contribution of these characters to $|\Sigma^*|$ is at most
$$N^2q^{N+1}q^{-1.2N+3.5}  \leq N^22^{4.5-0.2N}< \frac1{12}$$
when $N \geq N_3$ and $N_3$ is large enough.
\end{proof}

\section{Thompson's conjecture for $\PSU_{2p+2r+\kappa}(q)$}
Let $p,r \geq 3$ be primes, $q$ be any prime power, and $\kappa \in \{0,1\}$. In this section we show that Thompson's conjecture
holds for the simple group $\mathrm{PSU}_{2p+2r+\kappa}(q)$ as long as $p$ is sufficiently large, and $r$ is suitably chosen. 

We work inside the general unitary group $G = \GU_N(q)$ where 
$$N:=2p+2r+\kappa,$$ 
and consider any semisimple element 
$$x =\diag(x_{2p},x_{2r},x_\kappa) \in G,$$ 
such that $x_{2p} \in \GU_{2p}(q)$ has some  fixed
$\xi \in \F_{q^p} \smallsetminus \F_q$ as an eigenvalue,  $x_{2r} \in \GU_{2r}(q)$ has some  fixed
$\xi' \in \F_{q^r} \smallsetminus \F_q$ as an eigenvalue. Furthermore, $x_\kappa$ occurs only when $\kappa=1$, in which case $x_\kappa = I_1$. This ensures that $x$ is regular semisimple, real, and 
of determinant $1$.

\begin{thm}\label{su2pr}
There exist explicit absolute constants $A_3 > A_2 > A_1> 0$ such that the following statements hold 
for any prime $p \geq A_3$, any prime $r$  in the interval $[A_1,A_2]$, any prime power $q \leq 7$, and any $\kappa \in \{0,1\}$.
\begin{enumerate}[\rm(i)]
\item The interval $[A_1,A_2]$ contains prime integers that are congruent to any fixed congruence class 
$\ell$ modulo $120$ with $\gcd(\ell,120)=1$.
\item If $C$ denotes the conjugacy class of the image of the element $x$ described above in the simple group 
$S := \mathrm{PSU}_{2p+2r+\kappa}(q)$, then $S=C^2$. In fact, $(x^{\SU_{2p+2r+\kappa}(q)})^2$ contains all unipotent elements and all
elements of support $\ge 2$.
\end{enumerate}
\end{thm}

Certainly, condition \ref{su2pr}(i) holds when both $A_1$ and $A_2-A_1$ are large enough.
The rest of the section is devoted to the proof of Theorem \ref{su2pr}(ii).

The chosen element $x$ has 
$$T:=\CB_G(x) = T_1 \times T_2 \times T_3, \mbox{ where }T_1=\GL_1(q^{2p}) \cong C_{q^{2p}-1} \mbox{ and }T_2 = \GL_1(q^{2r}) \cong C_{q^{2r}-1};$$
furthermore, $T_3$ occurs only when $\kappa=1$, in which case $T_3 = \GU_1(q)$. 
In particular, $T \cap \SU_N(q)$ has index $q+1$ in $T$, and hence the conjugacy class $x^G$ is a single $\SU_N(q)$-conjugacy class. 
As $x$ is real, it suffices to show that, any non-central element $g \in \SU_N(q)$ 
is a product of two $G$-conjugates of $x$, except possibly in the case $g=zh$ where $1 \neq z \in \ZB(\SU_N(q))$ and $h$ is a transvection. Using \cite[Lemma 5.1]{GT1} as before, we may assume that $g$ is not semisimple, and furthermore  $g$ is a transvection if $\supp(g)=1$.

Note that the statement also follows in the case
\begin{equation}\label{red30}
  \begin{aligned}g = \diag(g',I_{2r+\kappa}) \in \SU_{2p}(q) \times \SU_{2r+\kappa}(q) \mbox{ with }g' \in \SU_{2p}(q)\\ \mbox{but }
  g' \neq zh, \mbox{ with }1 \neq z \in \ZB(\SU_{2p}(q)) \mbox{ and }h \mbox{ is a 
  transvection}.\end{aligned}
\end{equation}    
Indeed, as $g$ is not semisimple, $g'$ is not central in $\SU_{2p}(q)$. We may put $x$ in $\SU_{2p}(q)$, and then apply Theorem \ref{su2p} to the element $g'$.
Therefore, in what follows we may assume that  
$$m=\supp(g) \geq 2.$$

As before, we need to show \eqref{sum10}.
We identify the dual group $G^*$ with $G$ and express any irreducible character $\chi$ in the Lusztig series labeled by a semisimple $s \in G$ such that $\CB_G(s)=L$  in the form \eqref{sum12}.
Assume now that $\chi(x) \neq 0$. As $x$ is regular semisimple, 
\eqref{sum13} implies that, up to conjugacy, $x \in L=\CB_G(s)$. It follows that $s \in \CB_G(x)=T$; 
write
$$s=\diag(s_{2p},s_{2r},s_\kappa),$$
where $s_\kappa$ occurs only when $\kappa=1$.
Now using the primality of $p$ and of $r$, we see that 
the characters $\chi \in \Irr(G)$ that are non-vanishing at $x$ are divided into the following types:
\begin{enumerate}[\rm(I)]
\item Characters $\chi$ in \eqref{sum12} with $s_{2p} \in T_{11}$, where $T_{11}$ is the unique subgroup $C_{q+1}$ of $T_1$.
\item Characters $\chi$ in \eqref{sum12} with $s_{2p} \in T_{12} \setminus T_{11}$, where $T_{12}$ is the unique subgroup $C_{q^2-1}$ of $T_1$ (in particular, $q > 2$). 
Either
\begin{equation}\label{cent2a}
  \CB_G(s) = \GL_p(q^2) \times \CB_{\GU_{2r+\kappa}(q)}(s)
\end{equation}  
or
\begin{equation}\label{cent2b}
  \CB_G(s) = \GL_{p+r}(q^2) \times T_3,
\end{equation}  
depending on whether the set of eigenvalues of $s_{2r}$ is different from or the same as the set of eigenvalues of $s_{2p}$.
\item Characters $\chi$ in \eqref{sum12} with $s_{2p} \in T_{13} \setminus T_{11}$, where $T_{13}$ is the unique subgroup $C_{q^p+1}$ of $T_1$. For any such $s$, 
\begin{equation}\label{cent3}
  \CB_G(s) = \GU_2(q^p) \times \CB_{\GU_{2r+\kappa}(q)}(s).
\end{equation}
\item Characters $\chi$ in \eqref{sum12} with $s_{2p} \in T_1 \setminus (T_{12} \cup T_{13})$. For any such $s$,
\begin{equation}\label{cent4}
  \CB_G(s) = T_1 \times \CB_{\GU_{2r+\kappa}(q)}(s).
\end{equation}  
\end{enumerate} 
Here $\CB_{\GU_{2r+\kappa}(q)}(s)$ is short for $\CB_{\GU_{2r+\kappa}(q)}(\diag(s_{2r},s_\kappa))$.

We note that there is an explicit function $f(A_2)$ (which can be taken to be the class number of 
$\SSS_{2A_2+1}$) such that the Levi subgroup $\CB_{\GU_{2r+\kappa}(q)}(s)$ of $\GU_{2r+\kappa}(q)$ 
has at most $f(A_2)$ unipotent characters.  We denote by $T_{21}$  the unique subgroup $C_{q+1}$ of $T_2$.

\smallskip
The Frobenius sum includes $q+1$ linear characters which all take value $1$ at any element in 
$\SU_N(q)$. Hence it suffices to show
\begin{equation}\label{sum31}
  \Bigm|\sum\nolimits^*\frac{|\chi(x)|^2\chi(g)}{\chi(1)}\Bigm| < q+1,
\end{equation} 
where we again refer to the summation in \eqref{sum31},
which is restricted to $\chi\in\Irr(G)$ such that $\chi(1)>1$, $\chi(x)\neq 0$, and $\chi(g)\not\in [0,\infty)$,
 as $\Sigma^*$ and frequently use the character bound \eqref{lt-bound}. For the element $x$, we have the bound
\begin{equation}\label{for-x3}
  |\chi(x)| \leq 4pr
\end{equation} 
which follows from \cite[Corollary 7.6]{LTT}.

\begin{prop}\label{type1c}
The following statement holds when $A_1$ is large enough.  
For each $A_2 > A_1$, there is an absolute constant $N_1>0$ (depending on
$A_2$) such that when $A_1 \leq r \leq A_2$ and $N \geq N_1$, the total contribution of characters of type {\rm (I)} to $\Sigma^*$ has absolute value 
$< 7(q+1)/8$. 
\end{prop}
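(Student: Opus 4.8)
The plan is to adapt the argument of Proposition~\ref{type1b}, treating the extra $\GU_{2r+\kappa}(q)$-block by the ``bounded data'' bookkeeping of \S2. First I would reduce to a sum over ``reduced'' characters. Each type {\rm (I)} character $\chi$ lies in the Lusztig series of some $s = \diag(aI_{2p},s_{2r},s_\kappa)$ with $a$ in the unique $C_{q+1}$ inside $T_1$, and after factoring out the central twist we may write $\chi = \delta_a\chi'$, where $\delta_a$ is the linear character of $G$ attached to $aI_N\in\ZB(G)$ and $\chi'$ lies in the series of $\diag(I_{2p},t')$ for some semisimple $t'\in\GU_{2r+\kappa}(q)$. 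Because $\delta_a$ is trivial on $\SU_N(q)$, for $g\in\SU_N(q)$ we have $\delta_a(g)=1$, hence $|\delta_a\chi'(x)| = |\chi'(x)|$, $(\delta_a\chi')(g)/(\delta_a\chi')(1) = \chi'(g)/\chi'(1)$, and the three conditions defining $\Sigma^*$ depend only on $\chi'$; since $(a,\chi')\mapsto\delta_a\chi'$ is a bijection from $\mu_{q+1}$ times the set of reduced characters onto the type {\rm (I)} characters, the type {\rm (I)} part of $\Sigma^*$ equals $(q+1)\Sigma'$ with $\Sigma'$ the corresponding sum over reduced characters. Thus it suffices to show $|\Sigma'| < 7/8$.

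Writing a reduced character as $\chi' = \pm R^G_L(\hat s(\psi_{2p}\boxtimes\psi'))$ with $L = \GU_{2p}(q)\times\CB_{\GU_{2r+\kappa}(q)}(t')$, the Steinberg identity~\eqref{sum13} together with $\St_G(x) = \pm1$ forces, whenever $\chi'(x)\neq0$: that $x$ be conjugate into $L$; that $\psi_{2p}$ be one of the $2p$ hook unipotent characters of $\GU_{2p}(q)$ (each with $|\psi_{2p}(x_{2p})| = 1$, as $x_{2p}$ is regular semisimple with cyclic centralizer); that $\psi'$ be one of the at most $f(A_2)$ unipotent characters of $\CB_{\GU_{2r+\kappa}(q)}(t')$; and that $t'$ be conjugate into the fixed maximal torus $\CB_{\GU_{2r+\kappa}(q)}(\diag(x_{2r},x_\kappa))$. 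Consequently there are at most $c(A_2,q)\,N$ reduced characters $\chi'$ with $\chi'(x)\neq0$, and for each of them $|\chi'(x)|$ is bounded in terms of $A_2$ and $q$. Combining the degree bounds $\chi'(1)\ge[G:L]_{q'}\,\psi_{2p}(1)\ge q^{N/2}$ and (via \cite[Theorem~1.2]{GLT} applied to the hook $\psi_{2p}$) $\chi'(1)\ge q^{c_1N^{3/2}}$ when $\cl^*(\chi')\ge\sqrt{N/13}$, the crude bound~\eqref{lt-bound} shows that the reduced characters with $\cl^*(\chi')\ge\sqrt{N/13}$, or those contributing when $\supp(g)\ge\sqrt{N/13}$, together contribute $<1/8$ to $\Sigma'$ once $N\ge N_1(A_2)$.

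In the remaining range $j:=\cl^*(\chi')\le\sqrt{N/13}$ and $m:=\supp(g)\le\sqrt{N/13}$, I would first dispose of the case $\lambda=1$, where $\lambda$ is the primary eigenvalue of $g$: here $\ker(g-1)$ has dimension $N-m$, hence contains a non-degenerate subspace $W$ of dimension $2r+\kappa$; writing $g = 1_W\oplus g'$ puts $g$ into the shape~\eqref{red30} with $g'\in\SU_{2p}(q)$ of support $m\ge2$, so the conclusion follows from Theorem~\ref{su2p}. So assume $\lambda\neq1$. Since $j,m\le\sqrt{N/13}$ we have $N\ge3j^2+j+2m(j+1)+5$ for $N$ large, so Theorem~\ref{slu-bound} applies and yields $\chi'(g)/\chi'(1) = (-q)^{-mj}\gamma_{\chi'}+\e_{\chi'}$, where $\gamma_{\chi'} = \chi'(\lambda\cdot1_V)/\chi'(1)$ is the central-character value at $\lambda\cdot1_V$ — a root of unity depending only on the central parameter $z_{\chi'}\in\mu_{q+1}$ of $\chi'$, which is $\det(t')$ up to a fixed automorphism of $\mu_{q+1}$ — and $|\e_{\chi'}|<q^{-N/2}$. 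The error term $\sum|\chi'(x)|^2\e_{\chi'}$ is $<1/8$ for $N\ge N_1$, so $\Sigma'$ reduces, up to $<1/4$, to $\sum_{j\ge1}(-q)^{-mj}\widetilde B_j$ with $\widetilde B_j = \sum|\chi'(x)|^2\langle z_{\chi'},\lambda\rangle$, the sum over reduced characters of level $j$ meeting the $\Sigma^*$-conditions. The key point is that $\widetilde B_j$ is bounded by an absolute constant, thanks to cancellation in the central parameter: the reduced characters of level $j$ fall into packets — obtained by fixing the $G$-conjugacy type of $\CB_{\GU_{2r+\kappa}(q)}(t')$, the unipotent character $\psi'$, and the one or two hooks $\psi_{2p}$ of that level — on which $|\chi'(x)|$ is constant while $\det(t')$ runs uniformly over $\mu_{q+1}$, so that the packet-sum $\sum\langle z_{\chi'},\lambda\rangle$ vanishes because $\lambda\neq1$; here the freedom, guaranteed by Theorem~\ref{su2pr}(i), to take $r$ to be a prime larger than $q+1$ is what makes the relevant determinant/norm maps out of the centres of the Levi subgroups of $\GU_{2r+\kappa}(q)$ surjective onto $\mu_{q+1}$.

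The step I expect to be the \textbf{main obstacle} is exactly this cancellation, in two respects. One is the verification that $|\chi'(x)|$ is genuinely constant along each packet and that $\det(t')$ is equidistributed over $\mu_{q+1}$, which requires the surjectivity of the above norm maps and an explicit description of how $x$ fuses into the Levi subgroups. The more delicate point is the interaction with the restriction $\chi(g)\notin[0,\infty)$ in the definition of $\Sigma^*$: when $\lambda\neq1$ this restriction can only remove reduced characters $\chi'$ for which $\langle z_{\chi'},\lambda\rangle$ is real (hence equal to $\pm1$), so it acts only on a proper sub-collection; one must then show that discarding these does not destroy the cancellation — e.g.\ by a separate argument for such characters, bounding their number and contribution, or by folding their contribution into the error estimate. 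Granting this, $|\widetilde B_j|\le C$ and $|\sum_j(-q)^{-mj}\widetilde B_j|\le C/(q^m-1)\le C/(q^2-1)<1/2$, whence $|\Sigma'| < 1/8 + 1/8 + 1/2 < 7/8$. It only remains to fix $A_1$ large enough for the required congruence and size conditions on $r$ and for Theorem~\ref{slu-bound} to apply, and then $N_1$ large in terms of $A_2$.
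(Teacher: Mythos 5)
Your overall skeleton is compatible with the paper's: factor out the linear twist $\delta_a$, split the sum by level $j=\cl^*(\chi')$ and by $\supp(g)$ using \eqref{lt-bound} together with the degree estimates of \cite{GLT}, and then treat the remaining regime $j,m\leq\sqrt{N/13}$ via Theorem~\ref{slu-bound}, after first disposing of $\lambda=1$ through the reduction \eqref{red30}. That part is fine. But your treatment of the main regime has a genuine gap, and it omits the observation that makes the paper's argument work.

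The paper makes a crucial structural observation that you do not use: whenever $a$ is \emph{not} an eigenvalue of $s_{2r}$ (i.e.\ $s_{2r}\notin T_{21}$, or $s_{2r}=bI_{2r}$ with $b\neq a$), one automatically has $j=\cl^*(\chi')\geq 2r$, because $a^{-1}s$ is far from a scalar (this is \cite[Theorem~3.9]{GLT}). Combined with the refined bound $|\chi(x)|\leq 2r$ (from \cite[Corollary~7.6]{TT} applied to the $\GU_{2r}(q)$-side; not the generic $4pr$) and the count $\leq(q+1)^2q^{2r}$ of choices of $s$, this gives a contribution $\leq\frac{4r^2(q+1)^2}{q^{2r-2}(q^2-1)}$, which can be made as small as you like by taking $A_1$ large. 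No cancellation is needed. What remains at small $j$ are only the characters with $s$ essentially scalar, where Propositions~\ref{value1} and \ref{value2} give at most one partition per level for $j<2r$; the $(q+1)$-fold count when $\kappa=1$ is then handled by the trivial bound, except in the single extreme case $\kappa=m=q=2$ where the cube-root-of-unity cancellation from \cite{NT} is invoked.

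Your plan instead tries to bound $\widetilde B_j$ by an absolute constant \emph{simultaneously} over all $s_{2r}$ via cancellation in the central parameter, and this does not close. For $j$ between roughly $1$ and $\sqrt{N/13}$, without the observation above, $\widetilde B_j$ is a sum over up to $\sim q^{2r}(q+1)f(A_2)$ terms of size up to $(2r)^2$; you need the cancellation to bring that down from $\sim q^{2r}$ to something $<(q^2-1)/2$, which for $q=2$ means $<3/2$. This is much more than the paper's cancellation does. Moreover, the packeting you describe — claiming $|\chi'(x)|$ constant while $\det(t')$ is equidistributed over $\mu_{q+1}$ on each packet — is not actually true in general: for the packet $s_{2r}=bI_{2r}$, the map $b\mapsto\det(bI_{2r})=b^{2r}$ has image of index $\gcd(2r,q+1)=2$ in $\mu_{q+1}$ when $q$ is odd, even for $r$ a large prime coprime to $q+1$. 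And you yourself flag, but do not resolve, the interaction between your hoped-for cancellation and the $\chi(g)\notin[0,\infty)$ restriction defining $\Sigma^*$, which removes terms from your packets in a way that depends on the parity of $mj$. So as written, the proposal relies on exactly the cancellations you identify as the main obstacle, without the ingredient (the $j\geq 2r$ threshold for non-scalar $s_{2r}$) that makes the paper's proof go through without them.
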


\begin{proof}
(a) We will choose $N \geq N_1 >8A_2+1$ so that $p > 3r$. 
The elements $s$ that label the characters $\chi$ of type (I) are 
$$s=\diag(aI_{2p},s_{2r},cI_\kappa)$$
where $a$ and, if $\kappa=1$, also $c$, belong to  $\mu_{q+1} \subseteq \F_{q^2}^\times$, and $s_{2r} \in \GL_1(q^{2r})$. 

First we show that 
\begin{equation}\label{for-x4}
  |\chi(x)| = 1
\end{equation}  
if $\chi(x) \neq 0$ and $s_{2r} \in T_{21}$, that is, $s_{2r}=bI_{2r}$ with $b \in \mu_{q+1}$.

Suppose that $a=b$ and furthermore $a=c$ if $\kappa=1$. Then $\chi$ is a unipotent character $\chi^\al$ of $G$ times a linear character. 
Recall that $x$ is regular semisimple with centralizer $T \cong C_{q^{2p}-1} \times C_{q^{2r}-1} \times C_{q+1}$ 
if $\kappa=1$ and $T \cong C_{q^{2p}-1} \times C_{q^{2r}-1}$ 
if $\kappa=0$. Hence $\chi(x) \neq 0$ only when the unipotent character
$\chi^\al$ is labeled by one of the partitions prescribed by Proposition \ref{value1} or Proposition 
\ref{value2} for $\kappa=0$ or $\kappa=1$ respectively, and in either case,
\eqref{for-x4} holds.

Next suppose that $\kappa=1$ and $a=b \neq c$. We now use \eqref{sum13} and look at all 
$y \in G$ where $yxy^{-1} \in L = \CB_G(s)=\GU_{N-1}(q) \times \GU_1(q) \ni x$. This implies that $y$ fixes the unique $\F_{q^2}$-line fixed by $x$ in $\F_{q^2}^N$, and 
hence $y \in L$. For such an $y$, the $\GU_{N-1}(q)$-component of $yxy^{-1}$ is a regular semisimple element of $L$ with 
centralizer $C_{q^{2p}-1} \times C_{q^{2r}-1}$. Now $\chi(x) \neq 0$ implies that the $\GU_{N-1}$-component of 
$\psi$ is labeled by one of the partitions prescribed by Proposition \ref{value1} and takes value $\pm 1$ at 
$yxy^{-1}$. As $\St_G(x) = \pm 1$, \eqref{sum13} implies \eqref{for-x4}.

Next suppose that $\kappa=0$ and $a \neq b$. We again use \eqref{sum13} and look at all 
$y \in G$ where $yxy^{-1} \in L = \CB_G(s)=\GU_{2p}(q) \times \GU_{2r}(q) \ni x$. Since $x$ fixes a unique 
non-degenerate subspace $M$ of dimension $2r$ in $\F_{q^2}^N$, this implies that $y$ fixes $M$ and 
hence $y \in L$. For such a $y$, the $\GU_{2p}(q)$-component of $yxy^{-1}$ is a regular semisimple element of $\GU_{2p}(q)$ with centralizer $C_{q^{2p}-1}$, and the $\GU_{2r}(q)$-component of $yxy^{-1}$ is a regular semisimple element of $\GU_{2r}(q)$ with centralizer $C_{q^{2r}-1}$. Now $\chi(x) \neq 0$ implies that the $\GU_{2p}(q)$-component of 
$\psi$ is labeled by one of the $2p$ hook partitions of $2p$, and the $\GU_{2r}(q)$-component of 
$\psi$ is labeled by one of the $2r$ hook partitions of $2r$, each taking value $\pm 1$ at the corresponding 
component of  $yxy^{-1}$. As $\St_G(x) = \pm 1$, \eqref{sum13} implies \eqref{for-x4}.

The arguments in the preceding paragraph also apply to each of the remaining possibilities when $a \neq b$ and $\kappa=1$: 

$\bullet$ $c=a \neq b$;

$\bullet$ $c=b \neq a$; and 

$\bullet$ $c \notin \{a,b\}$.\\
In each of the above possibilities with $s_{2r} \in T_{21}$, our analysis also shows that 
the number of $\chi$ with $\chi(x) \neq 0$ is at most $(q+1)^3(2p)(2r) \leq 2^{10}NA_2$.
  
If $s_{2r} \notin T_{21}$, then $|\chi(x)| \leq 4pr < 2NA_2$ by \eqref{for-x4}, and the number of $\chi$ with 
$\chi(x) \neq 0$ is at most $(q+1)^2q^{2r}(2p+\kappa)f(A_2) < 2^{6}Nq^{2A_2}f(A_2)$ (here, $2p+\kappa$ accounts 
for the hook unipotent characters of $\GU_{2p}(q)$ or $\GU_{2p+\kappa}(q)$).

Thus the total number of $\chi$ of type (I) with $\chi(x) \neq 0$ is at most $65Nq^{2A_2}f(A_2)$ when $A_2$ is large enough.  
  
\smallskip  
(b) First we consider the $\chi$ with 
$$j:=\cl(\chi) \geq \sqrt{N/13}.$$ 
Choosing $N_1$ sufficiently large, when $N \geq N_1$, by \cite[Theorem 1.2]{GLT1}
we have $\chi(1) \geq q^{c_1N^{3/2}}$ for some absolute constant $c_1 > 0$. So, by \eqref{lt-bound}, the total contribution of 
these characters to $|\Sigma^*|$ is at most
$$260N^3A_2^2f(A_2)q^{2A_2}q^{-c_1\sigma\sqrt{N}} < \frac{q+1}{12}$$
when $N_1$ is large enough compared to $A_2$.

Now we look at the $\chi$ with $1 \leq j \leq \sqrt{N/13}$. Suppose that 
$$m=\supp(g) \geq \sqrt{N/13}.$$ 
Since $\chi(1) \geq q^{N/2}$, by \eqref{lt-bound}, the total contribution of these characters to $|\Sigma^*|$ is at most
$$260N^3A_2^2f(A_2)q^{2A_2}q^{-\sigma\sqrt{N/52}} < \frac{q+1}{2}$$
when $N_1$ is large enough compared to $A_2$, yielding the claim in this case.

\smallskip
(c) In the remaining case, we have $2 \leq m \leq \sqrt{N/13}$ and $j \leq \sqrt{N/13}$. 
Since $p \geq r+1$ and $s_{2p}=aI_{2p}$, $a$ is the primary eigenvalue of $s$.
Let $\delta_a$ denote the linear character $\hat{t}$ of $G$ for 
$t=aI_N$. Then we can write 
$$\chi=\delta_a\chi',$$ 
where $\chi'$ now has $\cl^*(\chi')=j$ by \cite[Theorem 3.9]{GLT1}, and $\delta_a(g)=1$ as 
$g \in \SU_N(q)$. By Theorem \ref{mainB},
\begin{equation}\label{341}
  \frac{\chi(g)}{\chi(1)}=\frac{\chi'(g)}{\chi'(1)}=\e_{\chi}+(-q)^{-mj}\gamma_\lambda,
\end{equation}  
where $|\e_{\chi}| < q^{-N/2}$ and $\gamma_\lambda = \chi'(\lambda  I_A)/\chi'(1)$ for the primary eigenvalue $\lambda$ of $g$. 

\smallskip
Note that the total contribution of the error terms $\e_\chi$ in \eqref{341} is at most
\begin{equation}\label{342}
  260N^3A_2^2f(A_2)q^{2A_2}q^{-N/2} < \frac{q+1}{12}
\end{equation}  
when $N_1$ is large enough compared to $A_2$. So we need to bound the total contribution of the 
main term $(-q)^{-mj}\gamma_\lambda$ in \eqref{341}; call it the total {\it major} contribution.

\smallskip
First we bound the major contribution coming from those $\chi$ where either $s_{2r} \notin T_{21}$ or $s_{2r}=bI_{2r}$ with
$a \neq b \in \mu_{q+1}$. Then $a$ is not an eigenvalue of $s_{2r}$, and this already implies that
$$j \geq 2r.$$
By \cite[Theorem 3.9]{GLT1}, 
$j$ is determined by the partition $\al$, of $2p$ if $\kappa=0$ or if $\kappa=1$ but $c \neq a$, and of 
$2p+1$ if $\kappa=1$ and $a=c$, where $\al$ labels the corresponding component $\psi^\al$ of $\psi$ 
in the subgroup $\GU_{2p}(q)$, respectively $\GU_{2p+1}(q)$, of $L=\CB_G(s)$, 
cf. the expression \eqref{sum12} for $\chi$. In the former case, the unipotent character $\psi^\al$ is nonzero
at the regular semisimple $x_{2p}$, whence
$\al$ is a hook partition of $2p$ and uniquely determined by $j$. In the latter case, $\psi^\al$ is 
nonzero at $\diag(x_{2p},x_\kappa)$ of type $(2p,1)$. In this case
$\al$ is one of $2p$ partitions of $2p+1$, see \cite[Corollary 3.1.2]{LST}, and is again uniquely determined by $j$.
We have therefore shown that, for each choice of $s$ with $s_{2r} \notin T_{21}$, there is a unique $\chi$ of 
level $j$. As in (a), $\psi^\al$ takes value $\pm 1$ at the corresponding component of $x$.
Inducing the $\CB_{\GU_{2r}(q)}(s_{2r})$-component of $\psi$ to $\GU_{2r}(q)$ and using the fact 
that $x_{2r}$ is regular semisimple with centralizer $C_{q^{2r}-1}$ in $\GU_{2r}(q)$, by \cite[Corollary 7.6]{TT} 
we have $|\chi(x)| \leq 2r$ using \eqref{sum13}. Also, there are at most $(q+1)^2q^{2r}$ choices for $s$.
Hence, the total major contribution coming from these $\chi$ is at most
$$4r^2(q+1)^2q^{2r}\Bigm| \sum_{j \geq 2r}(-q)^{-mj}\gamma_{\lambda}\Bigm| \leq 4r^2(q+1)^2q^{2r}\sum^\infty_{j \geq 2r}q^{-2j} = \frac{4r^2(q+1)^2}{q^{2r-2}(q^2-1)} \leq (q+1)/12$$ 
when $r \geq A_1$ and $A_1$ is large enough.

\smallskip
(d) It remains to bound the major contribution coming from those $\chi$ with 
\begin{equation}\label{343}
  s=\diag(aI_{2p+2r},cI_\kappa)
\end{equation}  
where $a$ and, if $\kappa=1$, also $c$, belong to $\mu_{q+1}$. Here, $\CB_G(s) = \GU_{2p+2r}(q) \times T_3$ if $\kappa=0$ or if $\kappa=1$ but 
$a \neq c$, and 
$\CB_G(s)=\GU_{2p+2r+1}(q)$ if $\kappa=1$ and $a = c$. By \eqref{for-x4} we have $|\chi(x)|=1$.
By \cite[Theorem 3.9]{GLT1},  $j$ is determined by the partition $\al$ of $2p+2r$, respectively $2p+2r+1$, where $\al$ labels the corresponding component $\psi^\al$ of $\psi$ 
in the subgroup $\GU_{2p+2r}(q)$, respectively $\GU_{2p+2r+1}(q)$, of $L=\CB_G(s)$. 
In the former case, the unipotent character $\psi^\al$ is nonzero
at the regular semisimple element $\diag(x_{2p},x_{2r})$ of type $(2p,2r)$. By Proposition \ref{value1}, for a given $j$ the number of
$\al$ is $\leq 1$ if $1 \leq j \leq 2r-1$ and at most $2r^2+r+1$ in general.
In the latter case, $\psi^\al$ is 
nonzero at the regular semisimple $\diag(x_{2p},x_{2r},x_\kappa)$ of type $(2p,2r,1)$. By Proposition \ref{value2}, for a given $j$ the number of $\al$ is $\leq 1$ if $1 \leq j \leq 2r-1$ and at most $(2r+1)^2$ in general.

Now, the total major contribution coming from those $\chi$ subject to \eqref{343} and of level 
$2r \leq j \leq \sqrt{N/13}$ is at most  
$$\begin{aligned}(2r+1)^2(q+1)^2\Bigm| \sum_{j \geq 2r}(-q)^{-mj}\gamma_{\lambda}\Bigm| & \leq    
   (2r+1)^2(q+1)^2\sum^\infty_{j \geq 2r}q^{-2j}\\ 
   & =   \frac{(2r+1)^2(q+1)^2}{q^{2r-2}(q^2-1)}\\ 
   & \leq \frac{q+1}{8}\end{aligned}$$
when $r \geq A_1$ and $A_1$ is large enough.
    
The total major contribution coming from those $\chi$ subject to \eqref{343} and $1 \leq j \leq 2r-1$ is at most  
$$(q+1)^{1+\kappa}\Bigm| \sum_{1 \leq j < 2r}(-q)^{-mj}\gamma_{\lambda}\Bigm|  \leq    
   (q+1)^{1+\kappa}\sum^\infty_{j \geq 1}q^{-mj} \leq \frac{(q+1)^{1+\kappa}}{q^{m-2}(q^2-1)} \leq \frac{q+1}{2}$$
if $\kappa=0$, or if $\kappa=1$ but $m \geq 3$, or if $(\kappa,m)=(1,2)$ but $q \geq 3$.

\smallskip
Suppose $\kappa=1$ and $m=2=q$. Recall that $g$ is a non-semisimple element of support $2$ with primary eigenvalue 
$\lambda$ by our convention. 
If $\lambda=1$, then $g$ has the shape \eqref{red30} and so \eqref{sum10} is already established. 
So assume that $\lambda \neq 1$.
Since $q=2$, we have $\lambda=\omega$, a fixed cubic root of unity in $\F_4$.  
For the three choices $(a,c) = (1,1)$, $(\omega,\omega)$, $(\omega^2,\omega^2)$, $\chi'$ is the same unipotent character, for which 
$\gamma_\lambda=1$. For the three choices $(a,c) = (1,\omega)$, $(\omega,\omega^2)$, $(\omega^2,1)$, $\chi'$ is the same character corresponding to
$s=\diag(I_{N-1},\omega) \notin \SU_N(q)$, for which $\gamma_\lambda=\zeta$,  a fixed cubic root of unity in $\C$, see \cite[Proposition 4.5]{NT}.
For the three choices $(a,b) = (\omega,1)$, $(\omega^2,\omega)$, $(1,\omega^2)$, $\chi'$ is the complex conjugate of the preceding character (and corresponding to $s=\diag(I_{N-1},\omega^2)$), for which $\gamma_\lambda=\bar\zeta$. Hence the sum of the terms 
$(-q)^{mj}\gamma_\lambda$ over these nine characters for any fixed $j$ is $0$.

\smallskip
Hence, in all cases, choosing $A_1$ large enough, and $N_1$ large enough compared to $A_2$, from 
the characters $\chi$ subject to \eqref{343} of level $\leq \sqrt{N/13}$ 
the major terms contribute at most $5(q+1)/8$. It follows that
$$\Bigm| \sum_{1 \leq j=\cl(\chi)}\frac{|\chi(x)|^2\chi_j(g)}{\chi_j(1)}\Bigm| \leq (q+1)
   \Bigl(\frac{5}{8}+\frac{1}{12}+\frac{1}{12}+\frac{1}{12}\Bigr) = \frac{7(q+1)}8,$$
and so we are done.
\end{proof}

\begin{lem}\label{type2c}
For any constant $A_2 \geq 3$,  there is an absolute constant $N_2>0$ (depending on $A_2$) such that when 
$r \leq A_2$ and $N \geq N_2$, the total contribution of characters of type {\rm (II)} to $\Sigma^*$ has absolute value 
$< 1/6 < (q+1)/16$. 
\end{lem}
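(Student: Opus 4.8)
The plan is to adapt the proof of Lemma~\ref{type2b} to the present setting. A character $\chi$ of type (II) is labeled by a semisimple element $s=\diag(s_{2p},s_{2r},s_\kappa)\in T$ with $s_{2p}\in T_{12}\setminus T_{11}$, and $L:=\CB_G(s)$ has one of the two forms \eqref{cent2a}, \eqref{cent2b}. Since $q\le 7$ and $r\le A_2$, the number of such $s$ up to conjugacy is bounded by a constant $c_0(A_2)$: there are at most $q^2-1$ choices for $s_{2p}$, at most $q^{2r}-1\le q^{2A_2}$ for $s_{2r}$, and at most $q+1$ for $s_\kappa$. For each such $s$, the $\GL$-factor of $L$ (namely $\GL_p(q^2)$ or $\GL_{p+r}(q^2)$) is unique up to $G$-conjugacy, while the other factor, $\CB_{\GU_{2r+\kappa}(q)}(s)$ or $T_3$, has at most $f(A_2)$ unipotent characters. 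As in \S5, \eqref{sum13} together with $\St_G(x)=\pm1$ shows that $\chi(x)\ne0$ forces, after a suitable conjugation, $x\in L$ and $\psi(x)\ne0$ for the unipotent character $\psi$ of $L$ occurring in \eqref{sum12}. Choosing $N_2$ large enough (in terms of $A_2$) that $p>3r$, the restriction of $x$ to the $\GL$-factor of $L$ is regular semisimple, with centralizer $C_{q^{2p}-1}$ in case \eqref{cent2a} and centralizer $C_{q^{2p}-1}\times C_{q^{2r}-1}$ of the factor of type $(p,r)$ in case \eqref{cent2b}; hence by the Murnaghan--Nakayama rule together with \cite[Corollary 3.1.2]{LST} and Proposition~\ref{value1}, the $\GL$-component of $\psi$ is one of at most $p$ hook partitions (first case), respectively one of at most $rp\le A_2N$ partitions (second case), and the remaining component of $\psi$ is one of at most $f(A_2)$ unipotent characters. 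Therefore the number of type-(II) characters with $\chi(x)\ne0$ is at most $c_1(A_2)\,N$, and each of them satisfies $|\chi(x)|^2\le(4pr)^2\le 4A_2^2N^2$ by \eqref{for-x3}.

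Next I would bound $\chi(1)=[G:L]_{q'}\,\psi(1)\ge[G:L]_{q'}$ from below. In case \eqref{cent2b}, with $p+r=\lfloor N/2\rfloor$, estimating the $q'$-part of $[G:L]$ gives $\chi(1)>q^{N^2/4-N}$. In case \eqref{cent2a}, writing $[G:L]_{q'}=|\GU_N(q)|_{q'}\big/\big(|\GL_p(q^2)|_{q'}\,|\CB_{\GU_{2r+\kappa}(q)}(s)|_{q'}\big)$, using $|\CB_{\GU_{2r+\kappa}(q)}(s)|_{q'}\le|\GU_{2r+\kappa}(q)|_{q'}$ and $p=(N-2r-\kappa)/2<N/2$, one gets $\chi(1)>q^{N^2/4-N-c_2(A_2)}$ for a constant $c_2(A_2)$. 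In either case, once $N\ge N_2$ with $N_2$ large enough we have $\chi(1)>q^{N^2/5}$, so \eqref{lt-bound} with $m=\supp(g)\ge2$ gives $|\chi(g)|/\chi(1)\le\chi(1)^{-2\sigma/N}\le q^{-2\sigma N/5}$. Combining the two estimates, the contribution of type-(II) characters to $\Sigma^*$ has absolute value at most
$$c_1(A_2)\,N\cdot 4A_2^2N^2\cdot q^{-2\sigma N/5}=4\,c_1(A_2)\,A_2^2\,N^3\,q^{-2\sigma N/5}<\frac16<\frac{q+1}{16}$$
once $N\ge N_2$ and $N_2$ is chosen large enough in terms of $A_2$ (the last inequality holds since $q\ge2$).

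Essentially all of this is routine and runs parallel to Lemmas~\ref{type2} and \ref{type2b}; the one point requiring genuine care is the degree bound $\chi(1)\ge q^{N^2/4-O_{A_2}(N)}$ uniformly over the two shapes \eqref{cent2a}, \eqref{cent2b} of $L$. In particular one must check that the ``smaller'' Levi $\GL_p(q^2)$, whose rank is only about $N/2$ rather than about $N/2+A_2$, still forces $\chi(1)$ to be of size $q^{\Omega(N^2)}$, and that the losses coming from the bounded factor $\CB_{\GU_{2r+\kappa}(q)}(s)$, from $r$ being merely bounded rather than growing with $N$, and from the weaker bound $|\chi(x)|\le4pr$ of \eqref{for-x3} (versus $|\chi(x)|\le N$ in \S5) merely enlarge the admissible threshold $N_2=N_2(A_2)$ without affecting the conclusion.
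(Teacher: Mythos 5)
Your proof is correct and runs along essentially the same lines as the paper's: count $s$ (boundedly many in terms of $A_2$), count $\psi$ (polynomially many in $N$ with coefficient depending on $A_2$), bound $|\chi(x)|$ via \eqref{for-x3}, observe that $[G:L]_{q'}$ is $q^{\Omega(N^2)}$ uniformly over both shapes \eqref{cent2a}, \eqref{cent2b}, and finish with \eqref{lt-bound} and $m\ge 2$. One small point where you and the paper diverge, and where I believe you are the more careful: in case \eqref{cent2b} you compute the centralizer of $x$ in the $\GL_{p+r}(q^2)$-factor as $C_{q^{2p}-1}\times C_{q^{2r}-1}$ (cycle type $(p,r)$ in $\SSS_{p+r}$) and invoke Proposition~\ref{value1} to get $\le pr$ relevant partitions, whereas the paper's proof asserts the centralizer is the Coxeter torus $C_{q^{2p+2r}-1}$ and therefore counts only the $p+r$ hook partitions. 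Since $\CB_G(x)\cap L = T_1\times T_2\cong C_{q^{2p}-1}\times C_{q^{2r}-1}$ and $x$ has two distinct Frobenius orbits of eigenvalues of sizes $p$ and $r$ on the $\lambda$-eigenspace of $s$, your description is the right one. This only changes a polynomial factor and does not affect the conclusion, but it is a genuine correction that also explains why you need $p>3r$ (so that $r<(p+r)/3$ and Proposition~\ref{value1} applies) rather than merely $p>r$ as in the paper.
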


\begin{proof}
We will choose $N_2 > 4A_2+1$ so that $p > r$.
There are fewer than $q^2$ choices for $s_{2p}$, at most 
$q^{2r}$ choices for $s_{2r}$ (up to conjugacy), 
and at most $q+1$ choices for the element $s_\kappa$.  By
\eqref{sum13}, the condition $\chi(x) \neq 0$ implies that (after conjugating $x$ suitably) $x \in L:=\CB_G(s)$ and 
$\psi(x) \neq 0$. Viewed as an element in  $L$, the $\GL_p(q^2)$-component of $x$ in the case of \eqref{cent2a} is again regular semisimple with centralizer $C_{q^{2p}-1}$. Hence the $\GL_p(q^2)$-component of the unipotent character $\psi$ is labeled by one of the $p$ hook partitions of
$p<N/2$, and the number of possibilities for the other component of $\psi$ is bounded by the aforementioned function $f(A_2)$. In the case of \eqref{cent2b}, the $\GL_{p+r}(q^2)$-component of $x$ is regular semisimple with centralizer $C_{q^{2p+2r}-1}$, whence the $\GL_{p+r}(q^2)$-component of the unipotent character $\psi$ is labeled by one of the $p+r$ hook partitions of $p+r \leq N/2$; also, $\psi$ is uniquely determined by
this component.

Thus there are fewer than $(q+1)q^{2r+2}f(A_2)N/2$ characters of type (II) with $\chi(x) \neq 0$. Each of them has degree
$$\chi(1) \geq [G:L]_{q'} > q^{cN^2}$$
for some absolute constant $c > 0$. Also, $|\chi(x)| \leq 4pr < 2NA_2$ by \eqref{for-x3}. 
Hence, by \eqref{lt-bound}, the total contribution of these characters to $|\Sigma^*|$ is at most
$$2(q+1)q^{2r+2}f(A_2)N^3A_2^2q^{-\sigma cN} < \frac16$$
when $N \geq N_2$ and $N_2$ is large enough compared to $A_2$.
\end{proof}

To complete the proof of \eqref{sum31} and of Theorem \ref{su2pr}, it remains to prove:

\begin{lem}\label{type34c}
For any constant $A_2 \geq 3$,  there is an absolute constant $N_3>0$ (depending on $A_2$) such that when 
$r \leq A_2$ and $N \geq N_3$, the total contribution of characters of types {\rm (III)} and {\rm (IV)} to $\Sigma^*$ has absolute value $< 1/6 < (q+1)/16$. 
\end{lem}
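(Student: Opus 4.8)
The plan is to run the argument of Lemma~\ref{type34b} essentially word for word. Since $\supp(g)=m\ge 2$, Lemma~\ref{cent} gives $|\chi(g)|\le\sqrt{|\CB_G(g)|}\le q^{N^2/2-1.7N+2.5}$ for every $\chi$, while \eqref{for-x3} gives $|\chi(x)|^2\le(4pr)^2\le 4N^2A_2^2$, using $2p<N$ and $r\le A_2$. Fix a constant $c_1=c_1(A_2)$ with $|\GU_{2r+\kappa}(q)|_{q'}<q^{c_1}$ for every prime power $q$ and every prime $r\le A_2$ (such a $c_1$ exists by \eqref{ord20}), so that $|\CB_{\GU_{2r+\kappa}(q)}(s)|_{q'}<q^{c_1}$ for every semisimple $s$. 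Every character of type~(III) or~(IV) is of the form \eqref{sum12}; the extra Levi factor $\CB_{\GU_{2r+\kappa}(q)}(s)$ and the bound $f(A_2)$ on its number of unipotent characters are the only new features relative to Lemma~\ref{type34b}.

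For type~(III), $L=\GU_2(q^p)\times\CB_{\GU_{2r+\kappa}(q)}(s)$ by \eqref{cent3}, and since $|\GU_2(q^p)|_{q'}=(q^p+1)(q^{2p}-1)<q^{3p+1}$ and $|G|_{q'}>q^{N(N+1)/2}$ by \eqref{ord20}, while $3p\le 3N/2$,
$$\chi(1)\ \ge\ [G:L]_{q'}=\frac{|G|_{q'}}{|L|_{q'}}\ >\ q^{N(N+1)/2-3p-1-c_1}\ \ge\ q^{N^2/2-N-1-c_1}.$$
The label $s=\diag(s_{2p},s_{2r},s_\kappa)$ ranges over fewer than $|T_{13}|\,|T_2|\,(q+1)^\kappa<q^{p+2r+3}$ possibilities, and $L$ has at most $2f(A_2)$ unipotent characters, so at most $2f(A_2)q^{p+2r+3}$ characters of type~(III) satisfy $\chi(x)\neq 0$. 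Hence their total contribution to $|\Sigma^*|$ is at most
$$2f(A_2)\,q^{p+2r+3}\cdot 4N^2A_2^2\cdot q^{N^2/2-1.7N+2.5}\cdot q^{-N^2/2+N+1+c_1}=8f(A_2)A_2^2N^2\,q^{\,p+2r-0.7N+6.5+c_1},$$
and since $p\le N/2$ and $r\le A_2$ the exponent is $\le -0.2N+2A_2+6.5+c_1$, so this quantity is $<1/12$ once $N\ge N_3$, with $N_3$ chosen large enough depending on $A_2$.

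For type~(IV), $L=T_1\times\CB_{\GU_{2r+\kappa}(q)}(s)$ by \eqref{cent4}, and $|T_1|_{q'}=q^{2p}-1<q^{2p}$ with $2p\le N$, so
$$\chi(1)\ \ge\ [G:L]_{q'}\ >\ q^{N(N+1)/2-2p-c_1}\ \ge\ q^{(N^2-N)/2-c_1}.$$
Now $s$ ranges over fewer than $|T_1|\,|T_2|\,(q+1)^\kappa<q^{2p+2r+2}$ possibilities and $L$ has at most $f(A_2)$ unipotent characters, so at most $f(A_2)q^{2p+2r+2}$ characters of type~(IV) satisfy $\chi(x)\neq 0$, and their total contribution to $|\Sigma^*|$ is at most
$$f(A_2)\,q^{2p+2r+2}\cdot 4N^2A_2^2\cdot q^{N^2/2-1.7N+2.5}\cdot q^{-(N^2-N)/2+c_1}=4f(A_2)A_2^2N^2\,q^{\,2p+2r-1.2N+4.5+c_1};$$
since $2p+2r\le N$ the exponent is $\le -0.2N+4.5+c_1$, so this is $<1/12$ for $N\ge N_3$ with $N_3$ large in terms of $A_2$. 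Adding the two bounds shows that the contribution of types~(III) and~(IV) to $\Sigma^*$ has absolute value $<1/6<(q+1)/16$, completing the proof of \eqref{sum31} and of Theorem~\ref{su2pr}.

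The argument involves no genuine difficulty: it is the direct analogue of Lemma~\ref{type34b}, and the only places that require attention are (i) the degree estimates for $\chi(1)$, where one must use the exact values of $|\GU_2(q^p)|_{q'}$ and $|T_1|_{q'}$ — of $q$-degree $3p\le 3N/2$ and $2p\le N$ respectively, rather than a crude substitution into \eqref{ord20} — so that the presence of the additional factor $\CB_{\GU_{2r+\kappa}(q)}(s)$, whose $q'$-part has $q$-degree bounded by $c_1(A_2)$, still leaves $\chi(1)\ge q^{N^2/2-N-O_{A_2}(1)}$ for type~(III) and $\chi(1)\ge q^{(N^2-N)/2-O_{A_2}(1)}$ for type~(IV); and (ii) checking that the number of eligible semisimple labels, namely $q^{N/2+O_{A_2}(1)}$ for type~(III) and $q^{N+O_{A_2}(1)}$ for type~(IV), is dominated by the factor $q^{-0.7N}$, respectively $q^{-1.2N}$, gained from $|\chi(g)|/\chi(1)$ once $N$ is large compared with $A_2$.
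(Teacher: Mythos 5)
Your proposal is correct and follows essentially the same route as the paper: bound $|\chi(g)|$ via the centralizer estimate of Lemma~\ref{cent}, bound $|\chi(x)|$ via \eqref{for-x3}, bound $\chi(1)$ from below by $[G:L]_{q'}$ with $L$ as in \eqref{cent3} and \eqref{cent4}, and count the admissible pairs $(s,\psi)$. The only cosmetic differences are that you make the $A_2$-dependent constant concrete (as a bound on $|\GU_{2r+\kappa}(q)|_{q'}$) and carry slightly different numerical exponents, but these wash out into the $q^{-0.2N+O_{A_2}(1)}$ tail exactly as in the paper.
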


\begin{proof}
We will choose $N_3 > 4A_2 +1$ so that $p > r$.
Since $\supp(g) \geq 2$, by Lemma \ref{cent} we have
$$|\chi(g)| \leq \sqrt{|\CB_G(g)|} \leq q^{N^2/2-1.7N +2.5}$$
for any $\chi$; and we still have the bound \eqref{for-x3} for $|\chi(x)|$. 

Each $\chi$ of type (III) can be written in the form \eqref{sum12}, 
where $L=\CB_G(s)$ is given in \eqref{cent3}, and  
$$\chi(1)  \geq [G:L]_{q'} > q^{N^2/2-N-c_1}$$
for some absolute constant $c_1>0$ (that depends on $A_2$).
There are fewer than $(q+1)q^{p+2r} < q^{N/2+2A_2}$ choices for $s$. Furthermore, there are two choices for the
$\GU_2(q^p)$-component of the unipotent character $\psi$, and at most $f(A_2)$ choices for its other component
(as in the proof of Lemma \ref{type34c}). Thus there are fewer than $2q^{N/2+2A_2}f(A_2)$ choices for $\chi$ of type (III). Hence the total contribution of 
these characters to $|\Sigma^*|$ is at most
$$8N^2A_2^2q^{N/2+2A_2}f(A_2)q^{-0.7N+c_1+2.5} \leq 8N^2A_2^2f(A_2)q^{-0.2N+c_1+2.5+2A_2} < \frac{1}{12}$$
when $N \geq N_3$ and $N_3$ is large enough compared to $A_2$.

Each $\chi$ of type (IV) can be written in the form \eqref{sum12}, 
where $L=\CB_G(s)$ is given in \eqref{cent4}, and  
$$\chi(1) \geq [G:L]_{q'} > q^{(N^2-N)/2-c_2}$$
for some absolute constant $c_2>0$ (that depends on $A_2$).
There are fewer than $(q+1)q^{N-1} < q^{N+1}$ choices for $s$. Furthermore, there is one choice for the
$T_1$-component of the unipotent character $\psi$, and at most $f(A_2)$ choices for its other component. Hence the total contribution of these characters to $|\Sigma^*|$ is at most
$$4N^2A_2^2f(A_2)q^{N+1}q^{-1.2N+2.5+c_2} \leq 4N^2A_2^2f(A_2)q^{-0.2N+3.5+c_2}< \frac{1}{12}$$
when $N \geq N_3$ and $N_3$ is large enough compared to $A_2$.
\end{proof}

\section{Thompson's conjecture for unitary groups}

In this section, we prove the unitary part of Theorem~\ref{mainA}.  The idea is to write $N = M + L$, where $L$ is divisible by $2q+2$, and to find a block diagonal regular semisimple element $x\in \SU_N(q)$ of the form
\begin{equation}
\label{xdef}
x:=\diag(x_1,x_2).
\end{equation}
We choose $x_1$ in such a way that  $x_1^{\SU_M(q)} x_1^{\SU_M(q)}$ includes all unipotent elements and all elements of support $\ge 2$.
We choose $x_2$ so that
\begin{equation}
\label{scalars}
x_2^{\SU_L(q)}x_2^{\SU_L(q)} \supset \ZB(\SU_L(q)),
\end{equation}
which consists of all scalar matrices of the form $\omega I$, where $\omega\in \mu_{q+1}$.
The construction of $x_1$ was accomplished in sections \S4, \S5, and \S6 for many integers $M$.
In this section, we determine the decompositions $N = M + L$ and the elements $x_2$.   We start with the second task.

Let $T$ denote a finite subgroup of $\bar{\F}_q^\times$ which is stable under the $q$-Frobenius $\Frob_q$. There is a natural action of  $T\rtimes (\Z \times \Z/2\Z)$
on $\bar{\F}_q^\times$ such that 
the orbit of $\alpha$ is
$$\cO_T(\alpha) := \{\omega\alpha^{\pm q^i}\mid \omega\in T,\,i\in\N\}.$$
Slightly abusing the language, in what follows, a {\it $T$-orbit} will mean a set of the form $\cO_T(\alpha)$.
We say a subset of $\bar\F_q^\times$ is \emph{$T$-stable} if it is stable under this action, that is, stable under $\Frob_q$, multiplication by any element of $T$,
and inversion.  This is equivalent to being a union of $T$-orbits $\cO_T(\alpha_i)$.
We say $\alpha\in\F_{q^m}$ is \emph{$T$-regular} if $\cO_T(\alpha)$
has $2m|T|$ elements.  
In this case, it consists of $2|T|$ $\Frob_q$-orbits of length $m$, with representatives $\omega \alpha,\omega^{-1}\alpha^{-1}$, $\omega \in T$, which shows
that the product of the elements in $\cO_T(\alpha)$ is $1$.
In particular, we say $\alpha$ is \emph{$\SU$-regular} if it is $\mu_{q+1}$-regular.

\begin{lem}
\label{Singular}
The number of elements of $\F_{q^m}$ which are not $\SU$-regular is $O(m^2 q^{1+m/2})$.
\end{lem}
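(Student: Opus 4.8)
The plan is to count the non-$\SU$-regular elements of $\F_{q^m}$ by organizing the failure of regularity around the two ways $\cO_T(\alpha)$ can be smaller than $2m|T|$: either the $\Frob_q$-orbit of $\alpha$ is short (so $\alpha$ lies in a proper subfield $\F_{q^d}$ with $d\mid m$, $d<m$), or $\alpha$ is fixed by some nontrivial combination of inversion and multiplication by $T=\mu_{q+1}$. I would set $t:=|T|=q+1$ and treat these two sources separately, then add the bounds.

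First I would dispose of the subfield elements: the number of $\alpha\in\F_{q^m}$ lying in some $\F_{q^d}$ with $d\mid m$, $d<m$, is at most $\sum_{d\mid m,\,d<m} q^d$. Since the largest such $d$ is at most $m/2$ and the number of divisors of $m$ is at most $m$, this is $O(m\,q^{m/2})$, which is absorbed into the claimed $O(m^2 q^{1+m/2})$. So I may assume $\alpha$ has a $\Frob_q$-orbit of full length $m$, and the only way $|\cO_T(\alpha)| < 2m t$ is that the $2t$ elements $\{\omega\alpha, \omega^{-1}\alpha^{-1} : \omega\in T\}$ are not all in distinct $\Frob_q$-orbits. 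This forces a relation of the form $\omega_1\alpha = (\omega_2\alpha)^{q^i}$ or $\omega_1\alpha = (\omega_2\alpha^{-1})^{q^i}$ for some $\omega_1,\omega_2\in T$ and some $0\le i<m$ — and after absorbing the $\Frob_q$-power into the exponent (using $\alpha^{q^i}$ and $\omega^{q^i}=\omega^{\pm 1}$ since $\omega^{q}=\omega^{-1}$ for $\omega\in\mu_{q+1}$), each such relation says $\alpha^2 = \eta$ or $\alpha^{q^i+1}=\eta$ or $\alpha^{q^i-1}=\eta$ for some $\eta$ that ranges over a set of size $O(t^2)=O(q^2)$.

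The counting step is then: for each fixed pair $(i,\eta)$ with $0\le i<m$, the equation $\alpha^{q^i\pm 1}=\eta$ has at most $q^i+1 \le q^m$ — no, I must be more careful — it has at most $q^i+1$ solutions in $\bar\F_q^\times$, but I only want solutions in $\F_{q^m}$, and more to the point I should bound $q^i$ by something like $q^{m/2}$. The key observation making this work is that if $\alpha\in\F_{q^m}$ satisfies $\alpha^{q^i+1}=\eta$ with $\eta$ of bounded order, then $\alpha$ has order dividing $(q^i+1)\cdot|T|$, and $\alpha\in\F_{q^m}$ means the order divides $q^m-1$; since $q^{2i}-1 \mid q^m-1$ would force $\alpha\in\F_{q^{2i}}\cap\F_{q^m}=\F_{q^{\gcd(2i,m)}}$, either $2i\ge m$ is needed for new elements or we are back in the subfield case. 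Thus the genuinely new solutions only appear for $i$ in a range where $q^i = O(q^{m/2})$ (roughly $i\le m/2$), giving $O(q^{m/2})$ choices of $\alpha$ per pair $(i,\eta)$; summing over $O(m)$ values of $i$ and $O(q^2)$ values of $\eta$ yields $O(m\cdot q^2\cdot q^{m/2}) = O(m q^{2+m/2})$. I expect the main obstacle is exactly this last point — handling the exponent $i$ correctly so that the relevant solution count is $O(q^{m/2})$ rather than $O(q^m)$, which requires carefully tracking the interaction between the constraint $\alpha\in\F_{q^m}$ and the relation $\alpha^{q^i\pm 1}=\eta$, and noting that the cases with $2i\ge m$ collapse to a bounded number of $i$'s (or to the subfield count). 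The factor $m^2$ in the target bound comfortably absorbs the $O(m)$ from the divisor sum and the $O(m)$ from summing over $i$; the factor $q^{1}$ absorbs the $q^2$ via $q\le q^{1+\text{something}}$ — I would tighten the count of $\eta$'s to $O(t)=O(q)$ rather than $O(t^2)$ by noting that one of $\omega_1,\omega_2$ can be normalized away, landing exactly at $O(m^2 q^{1+m/2})$.
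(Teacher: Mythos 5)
Your overall strategy coincides with the paper's: reduce the failure of $\SU$-regularity to a relation $\alpha^{q^i\pm 1}=\eta$ with $\eta\in\mu_{q+1}$ and $0\le i<m$, count $O(m)\cdot O(q)$ such relations, and bound the number of solutions of each by $O(q^{m/2})$. The paper formulates the relation as $\omega_1\alpha^{e_1q^{r_1}}=\omega_2\alpha^{e_2q^{r_2}}$ and normalizes $\omega_1=1$ and $0\le r_1\le r_2\le r_1+m/2$; after factoring out $q^{r_1}$ (a unit mod $q^m-1$), the exponent has absolute value at most $q^{m/2}+1$, and that bounds the solution count. Your splitting off of subfield elements is a harmless reorganization, and your tightening of $\eta$ from $O(q^2)$ to $O(q)$ choices matches the paper's normalization $\omega_1=1$.

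The genuine gap is in your justification of the $O(q^{m/2})$ solution bound. The claim that ``either $2i\ge m$ is needed for new elements or we are back in the subfield case'' is not correct: from $\alpha^{q^i+1}=\eta$ with $\eta\in\mu_{q+1}$ you only get $\ord(\alpha)\mid(q^i+1)(q+1)$, and $(q^i+1)(q+1)$ does \emph{not} in general divide $q^{2i}-1$ (you would need $q+1\mid q^i-1$, i.e.\ $i$ even), so there is no forced inclusion $\alpha\in\F_{q^{2i}}$. Solutions with $2i<m$ need not lie in any proper subfield. Nor do ``the cases with $2i\ge m$ collapse to a bounded number of $i$'s'' — there are roughly $m/2$ of them, and for each you would still need a solution count. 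As written, the argument does not deliver the bound you assert.

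The fix is to count directly by $\gcd$: the number of $\alpha\in\F_{q^m}^\times$ with $\alpha^k=\eta$ is $0$ or $\gcd(k,q^m-1)$. Since $q^i+1\mid q^{2i}-1$, one has $\gcd(q^i+1,q^m-1)\mid q^{\gcd(2i,m)}-1$; for $0\le i<m$, either $m\mid 2i$ (hence $i\in\{0,m/2\}$, giving $\gcd\le 2$ or $\le q^{m/2}+1$) or $\gcd(2i,m)\le m/2$. Likewise $\gcd(q^i-1,q^m-1)=q^{\gcd(i,m)}-1\le q^{m/2}-1$ for $1\le i<m$. So every relevant equation has at most $q^{m/2}+1$ solutions, uniformly in $i$ — no case split on $2i$ versus $m$ and no subfield preprocessing are needed. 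This is exactly what the paper's normalization accomplishes in one stroke: after reducing $r_2-r_1$ to at most $m/2$ and pulling out the factor $q^{r_1}$, the exponent itself is at most $q^{m/2}+1$ in absolute value.
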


\begin{proof}
If $\alpha \in \F_{q^m}$ is not $\SU$-regular, it must satisfy an equation of the form
\begin{equation}
\label{SU-irregular}
\omega_1\alpha^{e_1 q^{r_1}} = \omega_2 \alpha^{e_2 q^{r_2}},
\end{equation}
where $e_1,e_2\in\{-1,1\}$, $\omega_1,\omega_2\in \mu_{q+1}$, $r_1,r_2\in \Z$, $|r_1-r_2|<m$, and $(\omega_1,e_1,r_1) \neq (\omega_2,e_2,r_2)$.
Changing $r_1$ and $r_2$ within their residue classes (mod $m$) and exchanging them if necessary, we may assume
$0\le r_1 \le r_2 \le r_1+m/2$.
Replacing $\omega_2$ by $\omega_2/\omega_1$, we may assume $\omega_1=1$.  Therefore, we need only consider $O(m^2q)$ equations,
and each has at most $q^{m/2}+1$ solutions.  The lemma follows.
\end{proof}

\begin{thm}
\label{xm}
Let $q \leq 7$ be a prime power and let $N$ be an integer which is not $0 \pmod {2q+2}$ and is not $6$ or $7 \pmod{12}$ if $q=5$. 
Then there exists an arbitrarily large integer $M\equiv N \pmod{2q+2}$ and an element $x\in H:=\SU_M(q)$ 
satisfying:
\begin{itemize}
\item The element $x$ is regular semisimple,
\item The number of irreducible factors of the characteristic polynomial of $x$ as a polynomial over $\F_{q^2}$ is bounded (in fact by $5$),
\item Every unipotent element in $H$ is the product of two $H$-conjugates of $x$,
\item Every element $g\in H$ with $\supp(g)\ge 2$ is the product of two $H$-conjugates of $x$.
\end{itemize}
\end{thm}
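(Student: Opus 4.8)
The plan is to realize $M$ in one of the three shapes for which the required element has already been constructed, namely $M=2p$, $M=2p+1$, or $M=2p+2r+\kappa$ with $\kappa\in\{0,1\}$, where $p$ is a sufficiently large prime and, in the third case, $r$ is a prime in the interval $[A_1,A_2]$ furnished by Theorem~\ref{su2pr}. With such an $M$, the element $x$ constructed in \S4, \S5 or \S6 has all four asserted properties: it is regular semisimple and real by the discussion preceding Theorems~\ref{su2p}, \ref{su2p1} and~\ref{su2pr}; the third and fourth bullet points are precisely the assertions of those theorems about $(x^{\SU_M(q)})^2$; and the characteristic polynomial of $x$ over $\F_{q^2}$ factors into at most five irreducible factors, since the eigenvalues of the block $x_{2p}$ (and likewise those of $x_{2r}$) split into exactly two Frobenius orbits, while the block $x_\kappa=I_1$, when present, contributes one further linear factor, for a total of at most $2+2+1=5$. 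Since $p$ may be chosen arbitrarily large, so is $M$.

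It remains to exhibit such an $M$ congruent to $N$ modulo $2q+2$. Let $\kappa\in\{0,1\}$ be the residue of $N$ modulo $2$ and put $n:=(N-\kappa)/2$, so $N=2n+\kappa$; if $M=2m+\kappa$ then $M\equiv N\pmod{2q+2}$ exactly when $m\equiv n\pmod{q+1}$. Thus it suffices to find arbitrarily large $m\equiv n\pmod{q+1}$ that is either prime or of the form $p+r$ with $p$ a large prime and $r$ a prime in $[A_1,A_2]$. If $\gcd(n,q+1)=1$, Dirichlet's theorem on primes in arithmetic progressions already supplies arbitrarily large primes $m\equiv n\pmod{q+1}$. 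Suppose then $\gcd(n,q+1)>1$. I would first choose a residue $\rho$ with $\gcd(\rho,q+1)=\gcd(n-\rho,q+1)=1$; for each prime $\ell\mid q+1$ this only asks that $\rho\bmod\ell$ avoid the at most two residues $0$ and $n$, so such a $\rho$ exists unless $2\mid q+1$ and $n$ is odd. That one bad configuration is excluded by the hypotheses: $2\mid q+1$ forces $q\in\{3,5,7\}$; for $q\in\{3,7\}$ we have $q+1\in\{4,8\}$, and then $n$ odd would give $\gcd(n,q+1)=1$ against our standing assumption, while for $q=5$ the conditions $\gcd(n,6)>1$ and $n$ odd force $3\mid n$, which --- via $N=2n+\kappa$ --- amounts to $N\equiv 6$ or $7\pmod{12}$, excluded by hypothesis. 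Having fixed $\rho$, lift it by the Chinese Remainder Theorem to a residue $\ell_0$ modulo $120$ with $\gcd(\ell_0,120)=1$ and $\ell_0\equiv\rho\pmod{q+1}$, which is possible because $q+1\mid 120$ for every prime power $q\le 7$; then Theorem~\ref{su2pr}(i) produces a prime $r\in[A_1,A_2]$ with $r\equiv\ell_0\pmod{120}$, hence $r\equiv\rho\pmod{q+1}$ and $\gcd(n-r,q+1)=1$. A final application of Dirichlet's theorem yields arbitrarily large primes $p\equiv n-r\pmod{q+1}$; taking $m=p+r$ and $M=2m+\kappa$ finishes the construction.

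Almost all of the work has been done in \S4--\S6, so the only part of this argument needing genuine care is the congruence bookkeeping of the second paragraph; the heart of it is the observation that the sole obstruction to choosing the auxiliary residue $\rho$ --- namely $2\mid q+1$ together with $n$ odd and $\gcd(n,q+1)>1$ --- is exactly the family of cases that the hypotheses on $N$ remove.
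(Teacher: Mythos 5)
Your proof is correct and reaches the same conclusion as the paper, but by a cleaner, unified route. The paper's proof of Theorem~\ref{xm} literally enumerates, for each $q \in \{2,3,4,5,7\}$, which residue classes modulo $2q+2$ are covered by the families $2p$, $2p+1$, $2p+2r$, $2p+2r+1$ (each time invoking that $r$ represents any coprime residue class mod $120$ and that $p$ can be taken in any coprime progression). You replace the five-fold case analysis with a single CRT/Dirichlet argument in the variable $m = (M-\kappa)/2$: if $\gcd(n,q+1)=1$ take $m=p$ prime, otherwise pick $\rho$ with $\gcd(\rho,q+1)=\gcd(n-\rho,q+1)=1$, lift via CRT to an admissible class mod $120$ for $r$, and then pick $p$ by Dirichlet. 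Your analysis of when $\rho$ fails to exist ($2\mid q+1$ and $n$ odd with $\gcd(n,q+1)>1$) cleanly explains \emph{why} the hypotheses exclude $N\equiv 0 \pmod{2q+2}$ and, for $q=5$, $N\equiv 6,7\pmod{12}$ — something the paper's enumeration accomplishes implicitly but never spells out. The only point that deserves to be made a hair more explicit in your write-up is the CRT lift: the reason it succeeds is not merely that $q+1\mid 120$ but that $\gcd(\rho,q+1)=1$, so for each prime $\ell\mid q+1$ the constraint $\ell_0\equiv\rho\pmod{\ell^{v_\ell(q+1)}}$ is already coprime to $\ell$ and can be extended to a unit mod $\ell^{v_\ell(120)}$. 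Your count of irreducible factors ($\le 2+2+1=5$) is also correct and matches the structure of the constructed $x$.
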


\begin{proof}
Such elements are constructed in the proofs of Theorems~\ref{su2p}, \ref{su2p1}, and \ref{su2pr} whenever $M$ is respectively of the form $2p$, for $p$ a sufficiently large prime; $2p+1$, for $p$ a sufficiently large prime; or $2p+2r+\kappa$, for $p$ a sufficiently large prime, $r$ chosen from a set which represents every residue class (mod $120$) which is prime to $120$, and $\kappa\in \{0,1\}$.  When $q=2$, $2p$ represents $2$ and $4$ (mod $6$), and $2p+2r+1$ represents $1$, $3$, and $5$.  When $q=3$, $2p+2r$ represents $4$ (mod $8$), $2p+2r+1$ represents $1$ and $5$, $2p$ represents $2$ and $6$, and $2p+1$ represents $3$ and $7$.  
When $q=4$, $2p$ represents $2$, $4$, $6$, and $8$ (mod $10$), and $2p+2r+1$ represents $1$, $3$, $5$, $7$, and $9$.
When $q=5$, $2p+2r$ represents  $4$ and $8$ (mod $12$),
$2p+2r+1$ represents $1$, $5$, and $9$, $2p$ represents $2$ and $10$, and $2p+1$ represents $3$ and $11$.
When $q=7$, $2p+2r$ represents $4$, $8$, and $12$ (mod $16)$, $2p+2r+1$ represents $1$, $5$, $9$, and $13$, $2p$ represents $2$, $6$, $10$, and $14$, and $2p+1$ represents $3$, $7$, $11$, and $15$.
Since $p$ can be taken arbitrarily large within each residue class (mod $2q+2$) which is prime to $2q+2$, $M$ can likewise be taken arbitrarily large, and the statement holds.
\end{proof}

We can now prove the unitary part of Theorem~\ref{mainA}.

\begin{proof}
We have already seen that the main theorem of \cite{EG} allows us to assume $q\le 7$.  If $N$ is divisible by $q+1$ and sufficiently large then $\PSU_N(q)$ satisfies Thompson's conjecture by \cite[Theorem 7.7]{LT3}.   This covers all cases when $N\equiv 0\pmod{2q+2}$ as well as the case that $q=5$ and $N\equiv 6\pmod{12}$.
Also, by \cite{EG}, if $N$ is odd and $q\ge 5$, then $\PSU_N(q)$ satisfies Thompson's conjecture, which covers the case that $q=5$ and $N\equiv 7\pmod{12}$.
Therefore, it remains to handle the case $N$ belongs to a residue class (mod $2q+2$) for which Theorem~\ref{xm} applies.  It follows that there exists an arbitrarily large integer $M$ congruent to $N$ (mod $2q+2$) and an element $x_1\in \SU_M(q)$ satisfying all the properties of that theorem.  For each residue class, we fix $M$ and $x_1$.

If $N$ is sufficiently large and belongs to the same residue class as $M$, we may assume that
$N = M+(2q+2)m$, where $m>M$ is large enough that Lemma~\ref{Singular} guarantees  the existence of an $\SU$-regular element $\alpha\in \F_{q^m}$.
We set $L := (2q+2)m$.  If $X\in \SU_m(q)$ has eigenvalue $\alpha$ and $\mu_{q+1} = \langle \gamma \rangle$, then 
$$x_2 := \diag(X,\gamma X, \gamma^2 X,\ldots, \gamma^q X, X^{-1},\gamma X^{-1}, \ldots, \gamma^q X^{-1})$$
is regular semisimple.  Permuting the blocks  preserves the conjugacy class in $\SU_L(q)$, so \eqref{scalars} holds, and for all $\omega\in \mu_{q+1}$, the scalar matrix $\omega I$ is a product of
two conjugates of $x_2$.

The irreducible factors of the characteristic polynomial of $x_1$ have degree $\leq M < m$, so it cannot have an irreducible factor in common with the characteristic polynomial of $x_2$.
Therefore, the element $x$ with blocks  $x_1$ and $x_2$ defined in
\eqref{xdef} is
regular semisimple, and its characteristic polynomial has a bounded number of factors, independent of all choices.  
By \cite[Theorem 1.1]{LTT}, there exists $B>0$, independent of all choices,
such that every element $g\in \SU_N(q)$ with $\supp(g) > B$ lies in $x^{\SU_N(q)} x^{\SU_N(q)}$.  
As $B$ is independent of choices, we may assume $M > B$.

Defining $C$ to be the conjugacy class of the image of $x$ in $S$, we claim that $C^2 = \PSU_N(q)$.
To prove this claim, suppose we have any element of $\PSU_N(q)$.  We lift it to $g\in \SU_{N}(q)$ in such a way that if $g$ is a scalar multiple of a unipotent element, then the scalar is $1$.
We may assume that $\supp(g) \le B < M < L$, so $g$ has a (unique) primary eigenvalue $\lambda \in \mu_{q+1}$ whose eigenspace has dimension $\geq N-B >L$.  
It follows that $g$ is conjugate to a block diagonal matrix $(g_1,g_2)\in \SU_M(q) \times \SU_L(q) < \SU_N(q)$,
where $g_2 = \lambda I_L$ and $\lambda$ is an eigenvalue of $g_1$. Hence, if $g_1$ is a scalar multiple of a unipotent element, the scalar must be $\lambda$, so $g$ itself is $\lambda$ times a unipotent element, and so, by our choice of $g$, 
the scalar must be $1$.

By Theorem~\ref{xm} (and \cite[Lemma 5.1]{GT1} if $g_1$ is semisimple of support $1$), $g_1$ is the product of two 
$\SU_M(q)$-conjugates of $x_1$. By construction, $g_2$ is the product of two $\SU_L(q)$-conjugates of $x_2$, so we are done.
\end{proof}

\section{Unipotent characters: Lusztig symbols, level, and values}\label{sec:unip}

In this section, we recall the parametrization of the unipotent characters of classical groups of type B, C, and D.
We describe certain regular semisimple elements for which the unipotent character values always belong in $\{-1,0,1\}$
and characterize which unipotent characters take non-zero values on such elements.

The length $\len(\lambda)$ of a partition is the number of non-zero terms.
If $\lambda$ and $\mu$ are partitions, we write 
$$\lambda \preceq \mu$$ 
if $\mu_1\ge \lambda_1\ge\mu_2\ge\lambda_2\ge\cdots$.
This implies 
$\len(\mu)-1\le\len(\lambda) \le \len(\mu).$

A \emph{half-symbol} is a set of integers $A = \{a_1,a_2,\ldots,a_n\}$, where $a_1>a_2>\cdots>a_n\ge 0$.  If $a_n>0$, we say $A$ is \emph{reduced}.
We define 
$$\Sigma(A) =  \{a_1+1,a_2+1,\ldots,a_n+1,0\},$$ 
so $A$ is reduced if and only if there is no half-symbol $B$ with $A = \Sigma(B)$.
We define
\begin{equation}\label{Abar}
  \bar A = \{a_2,\ldots,a_n\}.
\end{equation}
Each $A$ determines a partition $\lambda = \pi(A)$, where
$$\lambda_1 = a_1+1-n,\ldots,\lambda_{n-1} = a_{n-1}-1,\lambda_n = a_n, \lambda_{n+1}=\cdots = 0;$$
in particular, $\pi(\bar A)$ is obtained from $\pi(A)$ by removing its largest part.
For every partition $\lambda$, there is a unique reduced half-symbol $A$ with $\pi(A) = \lambda$.  We write $A = \delta(\lambda)$.  Thus,
$$\pi^{-1}(\lambda)= \{\delta(\lambda),\Sigma(\delta(\lambda)),\Sigma^2(\delta(\lambda)),\ldots\}.$$
Note that if $\pi(C)\preceq \pi(A)$, then $\pi(\bar A)\preceq \pi(C)$, so, in particular, the number of parts of $\pi(A)$ minus the number of parts of $\pi(C)$ is $0$ or $1$.

A \emph{symbol} is an ordered pair $(A,B)$ of half-symbols.  For greater readability, 
symbols are sometimes written as fractions
$\Bigl(\frac AB\Bigr)$, with the set brackets of $A$ and $B$ omitted.
The \emph{transpose} of symbol $(A,B)$ is $(B,A)$.  The symbol $(A,B)$ is \emph{non-degenerate} if $A\neq B$.
It is \emph{reduced} if at least one of $A$ and $B$ is so.  The \emph{rank} of $(A,B)$ is 
\begin{equation}\label{rk10}
  \rank(A,B) = \sum_{a\in A} a + \sum_{b\in B} b - \Bigl\lfloor \frac {(|A|+|B|-1)^2}4\Bigr\rfloor
  = |\pi(A)|+|\pi(B)| +  \Bigl\lfloor \frac {(|A|-|B|)^2}4\Bigr\rfloor,
\end{equation}  
and its \emph{defect} is $\defect(A,B)=|A| - |B|$ (which can be negative).  Its \emph{maximum value} is 
$$\max(A,B) = \max(A\cup B).$$  

Let $(A,B)$ be a symbol of even defect and $(C,D)$ a symbol with odd defect.  We say  \emph{$(A,B)$ is linked to $(C,D)$} if one of the two following conditions hold:
\begin{equation}
\label{split conditions}
\pi(B)\preceq \pi(C),\ \pi(D)\preceq \pi(A),\ |A|-|B| = 1+|D|-|C|.
\end{equation}
\begin{equation}
\label{non-split conditions}
\pi(A)\preceq\pi(D),\ \pi(C)\preceq\pi(B),\ |A|-|B| = -1+|D|-|C|.
\end{equation}

From now on we assume  $(A,B)$ is reduced. Then
\begin{equation}
\label{max vs rank}\max(A,B)\le \rank(A,B).
\end{equation}
Indeed, we may assume without loss of generality that $0\not\in A$.  If $\max(A,B)\in A$, 
\begin{align*}
\rank(A,B) \geq 1+2+\cdots+(|A|-1)+&\max(A,B) + 0+1+\cdots+(|B|-1) \\
&-  \Bigl\lfloor \frac {(|A|+|B|-1)^2}4\Bigr\rfloor \\
&\ge \max(A,B) + \frac{(|A|-|B|)^2}4 \ge \max(A,B).
\end{align*}
If $\max(A,B)\in B$, then the rank is at least
\begin{align*}
\rank(A,B) \geq 1+2+\cdots+|A| &+ 0+1+\cdots+(|B|-2)+\max(A,B) \\
&-  \Bigl\lfloor \frac {(|A|+|B|-1)^2}4\Bigr\rfloor \\
&\ge \max(A,B) + \frac{(2+|A|-|B|)^2}4 \ge \max(A,B).
\end{align*}

Setting 
\begin{equation}\label{m10}
  m=m(A,B)=|A|+|B|-1,
\end{equation}
we define the weakly increasing sequence 
$$0\le c_0\le c_1\le c_2 \le \cdots\le c_m$$ 
obtained by merging the sets $A$ and $B$. 
In particular, $\max(A,B) = c_m$.
At least one of every consecutive pair of inequalities is strict.
We define $\epsilon_m$ to $0$ or $1$ depending on whether $m$ (or, equivalently, $\defect(A,B)$) is even or odd
and define the \emph{level} of $(A,B)$ to be
\begin{equation}\label{lev-uni}
  \lev(A,B) = \rank(A,B) - \max(A,B) + \frac{m-\epsilon_m}2 = \rank(A,B) - \max(A,B) + \bigl\lfloor\frac m2\bigr\rfloor;
\end{equation}
note that $\lev(A,B) \in \Z_{\geq 0}$ by \eqref{max vs rank}.

\begin{lem}
\label{no tie}
If $\rank(A,B) > 4\,\lev(A,B)^2 + 2\,\lev(A,B)$, then $\max(A) \neq \max(B)$.
\end{lem}

\begin{proof}
If $\max(A) = \max(B) = c_m$, then
$$\rank(A,B) = \rank(\bar A,\bar B) + 2c_m - \bigl\lfloor \frac{m^2}4\bigr\rfloor + \bigl\lfloor \frac{(m-2)^2}4\bigr\rfloor
\ge 2c_m - m + 1.$$
By \eqref{max vs rank} and \eqref{lev-uni},
$$\lev(A,B) \ge c_m-m+1+ \bigl\lfloor\frac m2\bigr\rfloor \ge c_m - \bigl\lfloor\frac m2\bigr\rfloor \ge c_m - \lev(A,B).$$
and it follows that $c_m \le 2\,\lev(A,B)$.  Thus,
$$\rank(A,B) \le \sum_{a\in A} a + \sum_{b\in B} b \le c_m^2 + c_m \le 4\,\lev(A,B)^2 + 2\,\lev(A,B).$$
\end{proof}

A unipotent character of an orthogonal group is an irreducible character whose restriction to the special orthogonal group
is a sum of unipotent characters.
There are bijective correspondences between symbols of rank $n$ whose defect is $1\pmod4$, unipotent characters of $\SO_{2n+1}(q)$,
and unipotent characters of $\Sp_{2n}(q)$.
There is a bijective correspondence between symbols of rank $n$ whose defect is $0 \pmod4$ and unipotent characters of $\GO^+_{2n}(q)$; 
the only case where this unipotent character is not irreducible over $\SO^+_{2n}(q)$ is when the symbol is degenerate.
There is a bijective correspondence between symbols of rank $n$ whose defect is $2 \pmod4$ and unipotent characters of $\GO^-_{2n}(q)$, all of which
are irreducible over $\SO^-_{2n}(q)$.
This follows from the definition of unipotent characters for the (finite disconnected reductive) group $\GO^\pm_{2n}(q)$, together with the action of 
diagonal automorphisms on unipotent characters of $\SO^\pm_{2n}(q)$, see \cite[Theorem 2.5]{Ma}. The {\it level} $\lev(\chi)$ of the unipotent character $\chi$ is 
defined to be the level of the corresponding symbol. 

If $k$ is a positive integer, then a \emph{$k$-hook} of a symbol $(A,B)$ is either an element $r\ge k$ of $A$ such that 
$r-k\not\in A$ or an element $r\ge k$ of $B$ such that $r-k\not\in B$.   
\emph{Removing} the hook means replacing $(A,B)$ with 
$(B, \{r-k\}\cup A\setminus \{r\})$ or $(\{r-k\}\cup B\setminus \{r\},A)$ respectively.
If $(A,B)$ is the symbol of a unipotent character of $\GO_{2n+1}(q)$ or $\Sp_{2n}(q)$, then removing a $k$-hook leaves
the symbol of a unipotent character of $\GO_{2n-2k+1}(q)$ or $\Sp_{2n-2k}(q)$ respectively.
If $(A,B)$ is the symbol of a unipotent character of $\GO_{2n}^\varepsilon(q)$, then removing a $k$-hook gives
the symbol of a unipotent character of $\GO_{2n-2k}^{-\varepsilon}(q)$.

By a \emph{$k$-cohook}, we mean either an element $r\ge k$ of $A$ such that $r-k\not\in B$ or an element $r\ge k$ in $B$ such that $r-k\not\in A$.
\emph{Removing} this cohook means replacing $(A,B)$ by $(B\cup\{r-k\},A\setminus\{r\})$ in the first case and by
$(B\setminus \{r\},A\cup\{r-k\})$ in the second.  As with $k$-hooks, removing a $k$-cohook reduces the rank of a symbol by $k$
and replaces $\varepsilon$ by $-\varepsilon$ in the case of even-dimensional orthogonal groups.

Let $\WWW_n = (\Z/2\Z)^n\rtimes \SSS_n$ denote the Weyl group of root systems of type $B_n$ and $C_n$ with $n \geq 4$,
which is also the automorphism group of the root system of type $D_n$ if in addition $n > 4$.
We say \emph{$w\in \WWW_n$ is a $k$-cycle} if its image in $\SSS_n$ is so.
The conjugacy classes in $\WWW_n$ are indexed by ordered pairs $(\lambda,\mu)$ of partitions, with $|\lambda|+|\mu| = n$.
In particular, every $n$-cycle is associated to $((n),\emptyset)$ or $(\emptyset,(n))$, while every $n-1$-cycle is associated to 
$((n-1,1),\emptyset)$, $((n-1),(1))$, $((1),(n-1))$, or $(\emptyset,(n-1,1))$.
If $G = \Sp_{2n}(q)$, $G = \SO_{2n+1}(q)$, or $G = \GO_{2n}^\varepsilon(q)$, let $t_n\in G$ denote any regular semisimple  element of $G^\circ=\Sp_{2n}(q)$, 
$G^\circ = \SO_{2n+1}(q)$, or $G^\circ = \SO_{2n}^\varepsilon(q)$ respectively whose centralizer in $G^\circ$ is a maximal torus $T$ associated with
an $n$-cycle in $\WWW_n$.  Thus $|T|=q^n-1$ or $|T|=q^n+1$ depending on whether $w$ is of order $n$ or $2n$.
If $G = \GO_{2n}^\varepsilon(q)$, let $t_{n-1}$ denote a regular semisimple element of $G^\circ$
whose centralizer is a maximal torus $T$ associated with an $n-1$-cycle in $\WWW_n$.
When $\varepsilon=+$, $T$ has order $(q^{n-1}-1)(q-1)$ or
$(q^{n-1}+1)(q+1)$, depending on whether $\mu$ or $\lambda$ is emptyset, while if $\varepsilon=-$ and $T$ has order $(q^{n-1}-1)(q+1)$ or
$(q^{n-1}+1)(q-1)$, depending on whether $\mu$ or $\lambda$ equals $(1)$. The existence of such regular semisimple elements follows from \cite[Lemma 2.1]{LM}. 

\begin{prop}\label{unip-regBC}
If $G$ is $\Sp_{2n}(q)$ or $\GO_{2n+1}(q)$, $n \geq 4$, and $\chi$ is the unipotent character associated with a reduced symbol $(A,B)$,
then $\chi(t_n)\in \{-1,0,1\}$.  Furthermore, $\chi(t_n)\neq 0$ only if $(A,B)$ belongs to
\begin{equation}
\label{four sets}
\begin{split}
&\Bigl\{\Bigl(\frac{1,2,\ldots,l,n}{0,1,\ldots,l-1}\Bigr)\Bigm| 0\le l < n\Bigr\}
\cup \Bigl\{\Bigl(\frac{0,1,\ldots,l+1}{1,2,\ldots,l,n}\Bigr)\Bigm| 0\le l < n\Bigr\}  \\
 &\cup\Bigl\{\Bigl(\frac{1,2,\ldots,l-1}{0,1,\ldots,l,n}\Bigr)\Bigm| 0< l < n\Bigr\}
\cup \Bigl\{\Bigl(\frac{0,1,\ldots,l,n}{1,2,\ldots,l+1}\Bigr)\Bigm| 0\le l < n\Bigr\}.
\end{split}
\end{equation}
%
%
Moreover, there are at most $2$ unipotent characters $\chi$ of any given level such that $\chi(t_n)\neq 0$.
\end{prop}

\begin{proof}
We use the Murnaghan-Nakayama rule \cite[Theorem 3.3]{LM}
for unipotent characters of groups of type $B_n$ and $C_n$ at regular semisimple elements.
By this rule, if $(A,B)$ has neither an $n$-hook nor an $n$-cohook, then $\chi(t_n)=0$.

Assume now that $\chi(t_n) \neq 0$. By the Murnaghan-Nakayama rule, either $(A,B)$ has an $n$-hook and $\lambda=(n)$, or it has an $n$-cohook and $\mu=(n)$.
Since $\max(A,B) \leq \rank(A,B)=n$ by \eqref{max vs rank}, the only possible $n$-hook or $n$-cohook is $n$ itself.

For $(A,B)$ to have an $n$-hook, it is necessary that either $n\in A$ and $0\not\in A$  or
$n\in B$ and $0\not\in B$.
In the first case, we must also have that $(\{0\}\cup \bar A,B)$ is a symbol of rank $0$ and defect $1$ (mod $4$).
The only such symbol which is reduced is $(\{0\},\emptyset)$.  Thus,
$$(\{0\}\cup \bar A,B) = (\{0,1,\ldots,l\},\{0,1,\ldots,l-1\})$$
for some $l\ge 0$, so
$$\Bigl(\frac AB\Bigr) = \Bigl(\frac{1,2,\ldots,l,n}{0,1,\ldots,l-1}\Bigr).$$
In the second case, $(A,\{0\}\cup \bar B)$ has rank $0$ and defect $1$ (mod $4$), so 
$$\Bigl(\frac AB\Bigr) = \Bigl(\frac{0,1,\ldots,l+1}{1,2,\ldots,l,n}\Bigr).$$

For $(A,B)$ to have an $n$-cohook, it is necessary that either $n\in A$ and  $0\not\in B$,
or $n\in B$ and  $0\not\in A$.  In the first case, $(\bar A, B\sqcup\{n\})$ has rank $0$ and defect $3$ (mod $4$),
so it must be equivalent to $(\emptyset,\{0\})$, meaning
$$\Bigl(\frac AB\Bigr) = \Bigl(\frac{0,1,\ldots,l,n}{1,2,\ldots,l+1}\Bigr)$$
for some $l\ge 0$.
In the second case, $(A\sqcup\{n\},\bar B)$ must be equivalent to $(\emptyset,\{0\})$, so
$$\Bigl(\frac AB\Bigr) = \Bigl(\frac{1,2,\ldots,l-1}{0,1,\ldots,l,n}\Bigr)$$
for some $l>0$.

As mentioned above, the conjugacy class of $\alpha_n$ determines whether $(A,B)$ needs an $n$-hook or an $n$-cohook for $\chi(t_n)$ to be non-zero: it needs 
an $n$-hook if $\lambda=(n)$ and it needs an $n$-cohook if $\mu=(n)$.
The first and second sets in \eqref{four sets} contain the symbols with a unique $n$-hook and no $n$-cohook, while the third and fourth contain the symbols with 
no $n$-hook and a unique $n$-cohook. It follows from the Murnaghan-Nakayama rule that $\chi(t_n)=\pm 1$. 
Within any one of these four sets, no two symbols  have the same level; indeed, the listed symbol in these four sets 
with given $l$ has level $l$, $l+1$, $l$, and $l+1$, respectively. 
It follows that the number of unipotent characters $\chi$ of given level with $\chi(t_n)\neq 0$ is at most two.
\end{proof}


\begin{prop}\label{prime-rank-D}
If $G$ is $\GO_{2n}^{\varepsilon}(q)$, $n\ge 5$, $\varepsilon = \pm$, and $\chi$ is the unipotent character associated with a symbol $(A,B)$,
then $\chi(t_n)\in \{-1,0,1\}$ and $j:=\lev(\chi) \leq n-1$.  Moreover, for a fixed $\varepsilon$ the number of unipotent characters $\chi$ of given level $j\in [0,n-1]$ with $\chi(t_n)\neq 0$ is $2$.
\end{prop}

\begin{proof}
We use the Murnaghan-Nakayama rule \cite[Theorem 3.3]{LM}
for unipotent characters of groups of type $D_n$ or $\tw2 D_n$ at regular semisimple elements.  Let $S=(A,B)$ be the reduced symbol associated with $\chi$, 
let $t_n$ be associated to the pair $(\lambda,\mu)$, and 
assume that $$\chi(t_n)\neq 0.$$ 
Then $S$ must have an $n$-hook if $n$ is a part of $\lambda$, or an $n$-cohook if $n$ is a part of $\mu$. 

Since $\max(S) \leq \rank(S)=n$ by 
\eqref{max vs rank}, the hook or cohook can only be $n$; moreover, there can be only one occurrence of $n$, since $\max(A)=\max(B) = \rank(A,B)$ can only happen for the symbol $(0,1,2,\ldots,n|1,2,\ldots,n)$ and its transpose, which are odd.  Since $S$ is reduced, $n$ cannot be both 
a $n$-hook and an $n$-cohook.  Moreover, since $0$ and $n$ each appear at most once, $m\le 2n-1$, so $\lev(\chi) \le n-1$.

Removing the $n$-hook or $n$-cohook must leave a symbol $T$ of even defect and rank $0$, which must be equivalent to the reduced symbol
$(\emptyset |\emptyset)$.  Thus,  $T = (0,1,\ldots,l-1|0,1,\ldots,l-1)$, so the possibilities for $S$ are
$$\Bigl(\frac{1,\ldots,l-1,n}{0,1,\ldots,l-1}\Bigr),\ \Bigl(\frac{0,1,\ldots,l-1}{1,\ldots,l-1,n}\Bigr)$$
if $\varepsilon=1$ and
$$\Bigl(\frac{0,1,\ldots,l-1,n}{1,\ldots,l-1}\Bigr),\ \Bigl(\frac{1,\ldots,l-1}{0,1,\ldots,l-1,n}\Bigr)$$
if $\varepsilon = -1$.
\end{proof}

\begin{prop}\label{unip-regD}
If $G$ is $\GO_{2n}^{\varepsilon}(q)$, $n\ge 5$, $\varepsilon = \pm$, and $\chi$ is the unipotent character associated with a symbol $(A,B)$,
then $\chi(t_{n-1})\in \{-1,0,1\}$ and $j:=\lev(\chi) \leq n-1$.  Moreover, for a fixed $\varepsilon$ the number of unipotent characters $\chi$ of given level $j$ with $\chi(t_{n-1})\neq 0$ is 
$2$ if $j = 0,1$, and at most $4$ if $2 \leq j \leq n-1$.
\end{prop}

\begin{proof}
We again use the Murnaghan-Nakayama rule \cite[Theorem 3.3]{LM}.
Let $S=(A,B)$ be the reduced symbol associated with $\chi$, and 
assume that $$\chi(t_{n-1})\neq 0.$$ 
Then $S$ must have an $(n-1)$-hook if $n-1$ is a part of $\lambda'$, or an $(n-1)$-cohook if $n-1$ is a part of $\mu'$. 
Since $\max(S) \leq \rank(S)=n$ by 
\eqref{max vs rank}, the hook or cohook can only be $n-1$ or $n$.  Removing this hook or cohook must leave a symbol $T$ of even defect and rank $1$.
Thus, $T = (\Sigma^l(C),\Sigma^l(D))$ for some reduced symbol $(C,D)$ of even defect and rank $1$.  
The possibilities for $(C,D)$ are as follows:
%
%
\begin{equation}
\label{rank 1 level 1}
(\{0,1\},\emptyset),\,(\{0\},\{1\}),\,(\{1\},\{0\}),\,(\emptyset,\{0,1\}).
\end{equation}
Each of these symbols contains either a single $1$-hook and no $1$-cohook or a single $1$-cohook and no $1$-hook, and all of them are non-degenerate.

\smallskip
(a) Suppose $l=0$. Then for each $T$ in \eqref{rank 1 level 1} we have $\max(T)=1$ and $m(T)=1$. If $S$ is obtained from $T$ by adding an $(n-1)$-hook
$n$, then $\max(S)=n$, $m(S)=m(T)=1$, and hence $\lev(S) = 0$ by \eqref{lev-uni}; varying $T$ we obtain $2$ symbols $S$ of defect $0$ (mod $4$) and $2$ 
symbols $S$ of defect $2 \pmod4$. The same holds if $S$ is obtained from $T$ by adding an $(n-1)$-cohook
$n$.  If $S$ is obtained from $T$ by adding an $(n-1)$-hook $n-1$, then $\max(S)=n-1$, $m(S)=m(T)=1$, and hence $\lev(S) = 1$ by \eqref{lev-uni}; varying $T$ we obtain $2$ symbols $S$ of defect $0$ and $2$ 
symbols $S$ of defect $2 \pmod4$. The same holds if $S$ is obtained from $T$ by adding an $(n-1)$-cohook
$n-1$. 
So fixing the defect of $S$ modulo $4$ and fixing its level (between $0$ and $1$), we have exactly $2$ possibilities 
for $S$ by hook addition, and $2$ by cohook addition. Furthermore, each of these $S$ either has a single entry $n$ and no occurrence of $n-1$, or  has a single entry $n-1$ and no $n$. Hence $S$ has at most one 
$(n-1)$-hook and at most one $(n-1)$-cohook. It follows from the Murnaghan-Nakayama rule that $\chi(t_{n-1}) = \pm 1$.

\smallskip
(b) From now on we assume $l > 0$.
For $(C,D)$ in \eqref{rank 1 level 1} and each positive integer $m$, $(\Sigma^m(C),\Sigma^m(D))$ is a non-reduced symbol of maximum value $m+1$, level $1$ and rank $1$.  Since $S=(A,B)$ is reduced, 
the hook or cohook removed from $S$ to form $T= (\Sigma^l(C),\Sigma^l(D))$ must be $n-1$. 

Note that $n-1$ must not appear in at least one of the half-symbols 
$\Sigma^l(C)$, $\Sigma^l(D)$.
It follows that $n-1\ge l$ since $0,1,\ldots l-1$ all belong to both $\Sigma^l(C)$ and $\Sigma^l(D)$. If $l \leq n-2$, then $\max(T)=l+1 \leq n-1$ and, since 
$T$ is obtained from $S$ by exchanging $n-1$ to $0$, we have $\max(S)=n-1$. In this case, since $m(S) = m(T)=2l+1$, \eqref{lev-uni} implies 
that $j=\lev(S)=l+1$. If $l=n-1$, then $\max(T)=n=\max(S)$, and so $j=\lev(S)=l=n-1$. In both cases, 
$$2 \leq j=\lev(S) \leq n-1,$$ 
showing in particular that none of these $S$ can appear in (a). 
Moreover, if $S=(A,B)$ has more than one $(n-1)$-hook $n-1$ or more than one $(n-1)$-cohook $n-1$, then $n-1$ must appear in both $A$ and 
$B$, but $0$ cannot appear in any of them, which means that $T$ will be reduced, contrary to $l >0$. It follows from the Murnaghan-Nakayama rule that $\chi(t_{n-1}) = \pm 1$.

Assume now that $2 \leq j  \leq n-2$. The above analysis shows that $j=l+1$, and so $l=j-1$ is completely determined by $j$, and $1 \leq l \leq n-3$.
Note that there are at most two symbols of level $l+1$ and rank $n$ which give rise to
$T=(\Sigma^l(C),\Sigma^l(D))$ by removing an $(n-1)$-hook (which must be $n-1$), and at most two that give rise to $T=(\Sigma^l(C),\Sigma^l(D))$ by removing an $(n-1)$-cohook (again necessarily $n-1$). Fixing the defect of $S$, which implies that we can choose only two possibilities for $T$ from \eqref{rank 1 level 1} for a hook removal,
respectively for a cohook removal, we get at most $4$ possibilities for $S$ for each of these two removal operations.

Next assume that $j=n-1$. Then $l=n-2$ or $l=n-1$. Direct computation shows that, for each value of $l$, each $T=(\Sigma^l(C),\Sigma^l(D))$ gives rise to one $S$ by adding an 
$(n-1)$-hook, respectively by adding an $(n-1)$-cohook, and these two symbols have different defect modulo $4$. So again we get  at most $4$ possibilities for
$S$ of given defect modulo $4$ for each of these two operations.

As mentioned above, given the choice of $(n-1)$-cycle, the condition $\chi(t_{n-1})\neq 0$ determines whether $(A,B)$ is to have an $(n-1)$-hook or an $(n-1)$-cohook (and whether the element of \eqref{rank 1 level 1} equivalent to the symbol obtained by removing this hook or cohook itself has a $1$-hook or a $1$-cohook).  This gives at most four cases 
of $\chi$ with $\chi(t_{n-1})\neq 0$ and $\lev(\chi)=j$.
\end{proof}


\section{Level and degree}

Let $G$ be one of the finite classical groups $\Sp_{2n}(q)$, $\SO_{2n+1}(q)$, or $\GO^\varep_{2n}(q)$, of rank $n$. In this section we define the \emph{level} of
an irreducible character $\chi$ of $G$ in terms of its Lusztig label \cite{LusztigBook}, \cite{DM}, and relate it to  $\chi(1)$ when $\chi(1)$ is not large. For groups of 
type A this has been done in \cite{GLT1}.

\medskip
First we consider \emph{unipotent} characters.
The degree of the unipotent character $\chi$ of $G$ associated to the reduced symbol $(A,B)$ of rank $n$ is given by \cite[p.~359]{LusztigBook}.  Note that the order of the prime-to-$p$ part $|G|_{p'}$ of the relevant classical group $G$ is
within a bounded factor of $q^{2+4+\cdots+2n} = q^{n(n+1)}$ for symplectic and odd-dimensional orthogonal groups and of $q^{2+4+\cdots+2n-n} = q^{n^2}$
for even-dimensional orthogonal groups.
Moreover, the degree is within a factor of $2^{O(m)}$ of 
$$q^{-\sum_{a\in A} a(a+1) - \sum_{b\in B} b(b+1)- \sum_{i\ge 0} \binom{m-1-2i}2}|G|_{p'}\prod_{a_1>a_2 \in A} q^{a_1} \prod_{b_1>b_2\in B} q^{b_1} \prod_{a\in A}\prod_{b\in B} q^{\max(a,b)}.$$
As
$$\sum_{i\ge 0} \binom{m-1-2i}2 = \frac{2m^3-3m^2-2m+3\epsilon_m}{24},$$
we have
\begin{equation}
\label{log}
\begin{split}
\log_q \chi(1) = n^2&+(1-\epsilon_m) n - \sum_{a\in A} a(a+1) - \sum_{b\in B} b(b+1) - \frac{2m^3-3m^2-2m}{24} \\
&+ \sum_{a_1>a_2 \in A} a_1 + \sum_{b_1>b_2\in B} b_1 + \sum_{a\in A}\sum_{b\in B} \max(a,b) + O(|A|+|B|).
\end{split}
\end{equation}
Furthermore, if $|\pi(A)|$ denotes the sum of all parts of the partition $\pi(A)$ as before and $|A|=k$, then  
$\sum_{a \in A}a = |\pi(A)| + k(k-1)/2.$
Hence it follows from \eqref{rk10} that
\begin{equation}\label{rk11}
  \rank(A,B) = |\pi(A)|+|\pi(B)| + \frac {\defect(A,B)^2-(1-\epsilon_m)}4.
\end{equation}  
Next,
$$\sum_{a_1>a_2 \in A} a_1 + \sum_{b_1>b_2 \in B} b_1  + \sum_{a\in A}\sum_{b\in B} \max(a,b) = c_1+2c_2+\cdots+mc_m,$$
and the right hand side of \eqref{log} can be rewritten as
\begin{equation}
\label{c-form}
n^2+(1-\epsilon_m) n - \sum_i (c_i^2+c_i-ic_i) -\frac{2m^3-3m^2-2m}{24} + O(m).
\end{equation}

Setting $d_i = c_i - i/2$, the $d_i$ are non-negative with $d_i \ge 1/2$ for odd $i$, and 
\begin{equation}\label{d10}
  d_{i} \leq d_{i+2},~d_i \leq d_{i+1} + 1/2,~d_0+d_1+\cdots+d_i \ge \frac i4
\end{equation}  
for any $i \geq 0$. By 
\eqref{rk10} we have
\begin{equation}
\label{n}
n = (c_0+c_1+\cdots+c_m) - \frac{m^2}4 +\frac{\epsilon_m}{4}= (d_0+d_1+\cdots+d_m)+\frac m4 + \frac{\epsilon_m}{4}.
\end{equation}
Setting 
$$\Sigma=\bigl(\sum^m_{i=0}d_i\bigr)^2-\sum^m_{i=0}d_i^2 = 2\sum_{0 \leq i < j \leq m}d_id_j,~\Sigma'=\bigl(\sum^{m-1}_{i=0}d_i\bigr)^2-\sum^{m-1}_{i=0}d_i^2,$$
we also have
$$\Sigma' = 2\sum_{0\le i<j\le m-1} d_id_j \ge \frac{m^2-4m}{16}$$
(using only $d_id_j$ with odd $i < j$.) 
This implies
\begin{align*}
n^2 &\ge (d_0+\cdots+d_m)^2 + \frac{m^2}{16} + \frac{m}{2}(d_0+\cdots+d_m)\\
&\ge (d_0^2+\cdots+d_m^2)+ \Sigma'+2d_m(d_0+\cdots+d_{m-1})+\frac{3m^2}{16}\\
&\ge \sum^m_{i=0}d_i^2+ 2d_m(d_0+\cdots+d_{m-1})+\frac{m^2-m}{4}. 
\end{align*}
so
\begin{equation}
\label{m-cubed}
\begin{split}
n^2 - \sum_i (c_i^2&+c_i-ic_i)  - \frac{2m^3-3m^2-2m}{24} \\
&= n^2 - \sum_i c_i - \sum_i d_i^2  - \frac{2m^3-3m^2-2m}{24} + \frac{m(m+1)(2m+1)}{24} \\
&= n^2 - \sum_i c_i - \sum_i d_i^2  + \frac{2m^2+m}{8} \\
&\ge  -\sum_i c_i + 2d_m(d_0+\cdots+d_{m-1})  + \frac{4m^2-m}8 \\
&\ge -n + 2d_m(d_0+\cdots+d_{m-1})  + \frac{4m^2-m}8.
\end{split}
\end{equation}
We also have 
$$n^2 \ge (d_0+\cdots+d_m)^2 + \frac{m^2}{16} + \frac{m}{2}(d_0+\cdots+d_m)
\ge \sum^m_{i=0}d_i^2+\Sigma+\frac{3m^2}{16},$$ 
and so
\begin{equation}
\label{m20}
\begin{split}
n^2 - \sum_i (c_i^2&+c_i-ic_i)  - \frac{2m^3-3m^2-2m}{24} \\
&= n^2 - \sum_i c_i - \sum_i d_i^2  + \frac{2m^2+m}{8} \\
&\ge -n + \Sigma  + \frac{7m^2+2m}{16}.
\end{split}
\end{equation}

In what follows, $\nint(x)$ denotes the integer nearest to $x \in \R$.

\begin{thm}\label{lev-unip}
For any $1 \leq \alpha < 3/2$ and any $c>0$, there is some $N=N(\alpha,c)$ such that the following statement holds whenever
$n \geq N$.
Suppose the unipotent character  $\chi$ of the classical group $G \in \{\Sp_{2n}(q),\SO_{2n+1}(q),\GO^\pm_{2n}(q)\}$ 
is associated to a symbol $(A,B)$ and $\log_q \chi(1) \leq cn^{\alpha}$. 
Then
$$\log_q \chi(1) = (2\,\lev(A,B)+O(n^{2\alpha-3}))n,$$
and hence
$$\lev(A,B) = \nint\bigl( \frac{\log_q \chi(1)}{2n} \bigr).$$
\end{thm}

\begin{proof}
By \eqref{log}, \eqref{c-form}, and \eqref{m-cubed}, we have
\begin{equation}
\label{log-ineq}
\log_q \chi(1)+\epsilon_mn \geq 2d_m(d_0+\cdots+d_{m-1}) + \frac{m^2}2+ O(m).
\end{equation}
Since the left hand side of \eqref{log-ineq}  is $O(n^{\alpha})$, we have $m = O(n^{\alpha/2}) = o(n)$.  

We now prove the stronger bound
\begin{equation}\label{m21}
  m = O(n^{\alpha-1}).
\end{equation}
Certainly, we may assume $m \geq 4$,  
Using \eqref{m20} instead of \eqref{m-cubed}, we have the following variant of \eqref{log-ineq}:
\begin{equation}\label{log10}
  \log_q \chi(1) + \epsilon_mn \geq  \Sigma + \frac{7m^2}{16}+ O(m).
\end{equation}  
Choosing 
$$j =\lfloor m/4 \rfloor \geq 1,$$ 
by \eqref{d10} we have  
$$d_0+d_1 + \cdots + d_{2j-1} \leq d_{2j}+d_{2j+1} + \cdots + d_{4j-1},$$
and hence 
$$d_{2j}+d_{2j+1} + \cdots +d_{m} \geq \sum^m_{i=0}\frac{d_i}2 > \frac{n}2-\frac{m+1}8 > \frac{n}3$$ 
when $n$ is large enough. This implies 
$$\Sigma \geq 2\bigl(d_0+d_1 + \cdots + d_{2j-1}\bigr)\bigl( d_{2j}+d_{2j+1} + \cdots +d_{m}\bigr) \geq \frac{2n}{3}\bigl(d_0+d_1 + \cdots + d_{2j-1}\bigr).$$ 
It follows from \eqref{log10} that 
$$d_0+d_1 + \cdots + d_{2j-1} = O(n^{\alpha-1}),$$
and so $j = O(n^{\alpha-1})$ by \eqref{d10}. The choice of $j$ now implies \eqref{m21}. 

Since $d_i \leq d_m+1/2$ by \eqref{d10}, \eqref{n} shows 
that 
$$d_m+\frac 12\ge \frac{2(n-(m+1)/4)}{m+1}.$$ 
Using \eqref{m21}, we get 
$d_m \gg n^{2-\alpha}$.
Also by \eqref{n} and \eqref{m21},
\begin{equation}
\label{almost n}
d_0+\cdots+d_{m-1}+d_{m} \ge n-O(n^{\alpha-1}).
\end{equation}
However, since $\log_q \chi(1) = O(n^{\alpha})$ and $d_m \gg n^{2-\alpha}$, \eqref{log-ineq} implies
%
$$d_0+\cdots+d_{m-1}= O(n^{2\alpha-2}).$$
By \eqref{almost n} and since $1 \leq \alpha < 3/2$, $d_m = n - O(n^{2\alpha-2}) > n/2$ if $n$ is sufficiently large, which, by \eqref{log-ineq},  implies the improved bound
\begin{equation}\label{d11}
  d_0+\cdots+d_{m-1}= O(n^{\alpha-1}), \mbox{ whence }d^2_0+\cdots+d^2_{m-1}= O(n^{2\alpha-2}).
\end{equation}  
%
Together with \eqref{m21}, this shows that
$$d_m^2-\bigl(n^2-2(n-d_m)n\bigr) = (n-d_m)^2 = O(n^{2\alpha-2}).$$ 
Using the equality in \eqref{m-cubed}, we now obtain
\begin{align*}
\log_q \chi(1) &= n^2 + (1-\epsilon_m) n - \sum_i c_i - \sum_i d_i^2  + \frac{m^2}4 + O(m) \\
&= n^2 + (1-\epsilon_m) n - n - (n^2 - 2(n-d_m)n) + O(n^{2\alpha-2}) \\
&= n(-\epsilon_m+2(n-d_m) + O(n^{2\alpha-3})).
\end{align*}
As 
$$2(n-d_m)  - \epsilon_m =  2\bigl(\rank(A,B)-\max(A,B) + \frac{m-\epsilon_m}2\bigr) = 2\,\lev(A,B),$$
the statement follows.
\end{proof}

In the general case, write $G=\Cl(V)$,
where $V$ is defined over a field $\F_q$ in characteristic $p$ and $\Cl=\SO$ or $\Sp$. Then $G$ is dual to a finite classical group 
$G^*= \Cl^*(V^\sharp)$ and $V^\sharp$ is also defined over $\F_q$. By Lusztig's classification \cite{LusztigBook}, any $\chi \in \Irr(G)$ is labeled by $(s,\psi)$, where 
$s \in G^*$ is semisimple, and $\psi$ is a unipotent character of $\CB_{G^*}(s)$. 
We will identify $G^*$  with $G$ for $G = \Sp_{2n}(q)$ (and view $\SO_{2n+1}(q)$ as $\Sp_{2n}(q)$)
when $2|q$. 

We have the orthogonal decomposition
\begin{equation}
\label{decomp}
V^\sharp = V^\sharp_1 \oplus V^\sharp_{-1} \oplus V^\sharp_0
\end{equation}
into $s$-invariant subspaces, 
where $s$ acts as the scalar $\kappa$ on $V^\sharp_\kappa$ when $\kappa = \pm 1$ (with the convention that $-1=1$ when $2|q$), 
and $s$ has no eigenvalue $1$ or $-1$ on $V^\sharp_0$. 
Define $H^* = \GO(V^\sharp)$ if $G^*$ is of type $B$ or $D$, and $H^*=G^*$ otherwise.
Let $H^*_\kappa \leq H^*$ consist of the elements of $H^*$ that act trivially on the orthogonal complement of $V^\sharp_\kappa$ in $V^\sharp$  
for $\kappa \in \{1,-1,0\}$. Then we have 
\begin{equation}\label{s10}
  \CB_{H^*}(s) = H^*_1 \times H^*_{-1} \times \CB_{H^*_0}(s),
\end{equation}  
and $[\CB_{H^*}(s):\CB_{G^*}(s)] \leq 2$.

We can also consider the corresponding (connected) simple algebraic groups $\cG=\Cl(\tilde V)$ and $\cG^* = \Cl^*(\tilde V^\sharp)$, where 
$\tilde V = V \otimes_{\F_q}\overline{\F_q}$ and $\tilde V^\sharp = V^\sharp \otimes_{\F_q}\overline{\F_q}$, and have a similar to \eqref{s10} 
decomposition
\begin{equation}\label{s11}
  \CB_{\cG^*}(s)^\circ = \cG^*_1 \times \cG^*_{-1} \times \CB_{\cG^*_0}(s).
\end{equation}  

Let $\tilde\psi$ be a unipotent character of $\CB_{H^*}(s)$ lying above $\psi$ (so $\tilde\psi(1)/\psi(1) = 1$ or $2$), and let 
$(A_\kappa,B_\kappa)$ denote the symbol of the $H^*_\kappa$-component $\psi_\kappa$ of $\tilde\psi$ for $\kappa=\pm1$, again 
with the convention that $-1=1$ when $2|q$. Then we define
\begin{equation}\label{lev-gen} 
  \lev(\chi) = \frac{\dim V^\sharp}{2}  - \max_{\kappa = \pm 1}\biggl(\max(A_\kappa,B_\kappa) -\lfloor \frac{m_\kappa}{2}\rfloor +\frac{\delta_\kappa}2 \biggl),
\end{equation}
where $\delta_\kappa \in \{0,1\}$ and $\delta_\kappa=1$ precisely when $2 \nmid q$ and $2\nmid \dim(V^\sharp_\kappa)$. Note that this agrees with \eqref{lev-uni} when $\chi$ is unipotent:
say when $G = \Sp(V) \cong \Sp_{2n}(q)$ with $2 \nmid q$ then $G^* = \SO(V^\sharp) \cong \SO_{2n+1}(q)$, $\delta_\kappa=1$ for $\kappa=1$ and this compensates for
the difference $1/2$ between $(\dim V)/2$ and $(\dim V^\sharp)/2$. Also, \eqref{lev-gen} shows that the irreducible Weil characters of $\Sp_{2n}(q)$
when $q$ is odd have level $1/2$. 

\smallskip
Now we can extend Theorem \ref{lev-unip} to non-unipotent characters of finite classical groups:

\begin{thm}\label{lev-deg}
For any $1 \leq \alpha < 3/2$ and any $c>0$, there is some $N=N(\alpha,c)$ such that the following statement holds whenever
$n \geq N$ and $q$ is any prime power.
For any irreducible character $\chi$ of $G \in \{\Sp_{2n}(q),\SO_{2n+1}(q),\SO^\pm_{2n}(q)\}$ satisfying $\log_q \chi(1) \leq cn^{\alpha}$, we have
$$\log_q \chi(1) = 2\,\lev(\chi)n + O(n^{2\alpha-2}),$$
and hence
$$\lev(\chi) = \nint\bigl( \frac{\log_q \chi(1)}{2n} \bigr).$$
\end{thm}

\begin{proof}
We will give the proof in the case $q$ is odd. The even $q$ case is almost entirely similar, the only differences being that 
we identify $G^*$ with $G$, have $-1=1$, and do not need to consider $G=\SO_{2n+1}(q)$ (in particular, $\dim V^\sharp = \dim V$ is always even
and hence $\delta_\kappa=0$).
 
Let $\chi$ be labeled by $(s,\psi)$ as above and consider the decompositions \eqref{s10} and \eqref{s11}. Then $\CB_{\cG^*_0}(s)$ is a direct product
of $f \geq 0$ factors $\GL_{a_i}$, $1 \leq i \leq f$, and clearly
$$f \leq n.$$
When $\cG^*$ is of type $B$, $\cG^*_1$ is of type $B$ and $\cG^*_{-1}$ is of type $D$.
When $\cG^*$ is of type $C$, both $\cG^*_1$ and $\cG^*_{-1}$ are of type $C$.
When $\cG^*$ is of type $D$, both $\cG^*_1$ and $\cG^*_{-1}$ are of type $D$.
We denote by  $b_1$ the rank of the factor of type $B$, by $c_1$ and $c_2$ the ranks of the  factors of type $C$, and by $d_1$ and $d_2$ the ranks of the factors of type $D$.  Some of these, of course, may be zero, and we also have
\begin{equation}
\label{t def}
  n = \sum_i a_i + \sum_i b_i + \sum_i c_i + \sum_i d_i.
\end{equation}

Up to multiplicative errors of $2^{O(f)}$, the prime-to-$p$ part $|\CB_{G^*}(s)|_{p'}$ is bounded from the above by 
$$q^{\sum_i \frac{a_i^2+a_i}2 + \sum_i (b_i^2+b_i) + \sum_i (c_i^2+c_i) + \sum_i (d_i^2-d_i)}$$
and from the below by 
$$q^{\sum_i a_i + \sum_i (b_i^2+b_i) + \sum_i (c_i^2+c_i) + \sum_i (d_i^2-d_i)}.$$
On the other hand,
the prime-to-$p$ part of $|G|$ is given up to a bounded multiplicative factor by $q^{n^2+n}$ if $\cG$ is of type $B$ or $C$, and $q^{n^2}$ if it is of type $D$.
By \cite{LusztigBook}, 
$$\chi(1)=\frac{\psi(1) |G|_{p'}}{|\CB_{G^*}(s)|_{p'}},$$
so if $\log_q \chi(1) \le cn^\alpha$, then
\begin{equation}\label{s20}
  n^2 - \sum_i \frac{a_i^2}2- \sum_i b_i^2 - \sum_i c_i^2 - \sum_i d_i^2 \le cn^\alpha + O(f).
\end{equation}
By \eqref{t def}, the left-hand side of \eqref{s20} is
$$\sum_i a_i(n-a_i/2) + \sum_i b_i(n-b_i) + \sum_i c_i(n-c_i) + \sum_i d_i(n-d_i).$$
Clearly, $n-a_i/2 \geq n/2 > cn^{\alpha-1}+O(1)$ for all $i$. If all the $n-b_i$, $n-c_i$, and $n-d_i$ are more than $cn^{\alpha-1}+O(1)$, then
the above sum is larger than
$$\bigl(cn^{\alpha-1}+O(1)\bigr)\bigl(\sum_i a_i+\sum_i b_i +\sum_i c_i + \sum_i d_i\bigr) = cn^{\alpha} + O(n),$$
contrary to \eqref{s20}. So at least one of $b_i$, $c_i$, or $d_i$, call it $e$, is at least 
$$n-cn^{\alpha-1}+ O(1),$$ 
and hence 
$$f \le cn^{\alpha-1}+ O(1)=O(n^{\alpha-1})$$ 
by \eqref{t def}. In fact,
$$\sum_i a_i \leq \sum_i (a_i^2+a_i)/2 \leq \bigl(\sum_i a_i\bigr)^2 \leq (n-e)^2 = O(n^{2\alpha-2}).$$
Similarly, if $b$ is any of the $b_j$, $c_j$, or $d_j$ different from $e$, then
$$b^2-b \leq b^2+b \leq 2(n-e)^2 = O(n^{2\alpha-2}).$$
It follows that
$\log_q |\CB_{G^*}(s)|_{p'}$ must be $e^2+e+O(n^{2\alpha-2})$ if $e$ is a $b_i$ or a $c_i$ (in which case $\cG$ is of type $B$ or $C$) and 
$e^2+O(n^{2\alpha-2})$ if $e$ is a $d_i$.
Therefore, 
\begin{equation}
\label{log ratio}
\log_q \frac{|G|_{p'}}{|\CB_{G^*}(s)|_{p'}} = n^2 - e^2 + O(n^{2\alpha-2})
\end{equation}
except in the case that $G^*$ is of type $B$ and $e$ comes from a factor of type $D$, in which case it is $n^2-e^2+n + O(n^{2\alpha-2})$.

If $n$ is sufficiently large, we conclude that $s$ has an eigenvalue $\kappa \in \{\pm 1\}$ with multiplicity $2e+1$ (in which case $G^*$ is of type $B$) or $2e$, 
and the right-hand side of \eqref{log ratio} is 
$$(2(n-e)+O(n^{2\alpha-3}))n,$$ 
unless $G^*$ is of type $B$ and $\kappa = -1$, in which case it is $(2(n-e)+1+O(n^{2\alpha-3}))n$.  In any event,
\begin{equation}
\label{sharp non kappa}
\log_q \frac{|G|_{p'}}{|\CB_{G^*}(s)|_{p'}}  = (\dim V^\sharp-\dim V^\sharp_\kappa)n + O(n^{2\alpha-2}).
\end{equation}
We can express $\tilde\psi$ as a character $\psi_\kappa\boxtimes \psi^\kappa$ of $\CB_{H^*}(s)$ which is a direct product of $H^*_\kappa$ and a complementary factor, and since the codimension of $V^\sharp_\kappa$ in $V^\sharp$ is $O(n^{\alpha-1})$, we have 
$\psi^\kappa(1)=q^{O(n^{2\alpha-2})}$.  Therefore,
$$\log_q \chi(1) = \log_q \psi_\kappa(1) + \log_q \frac{|G|_{p'}}{|\CB_{G^*}(s)|_{p'}}  + O(n^{2\alpha-2}).$$
By Theorem~\ref{lev-unip} and equations \eqref{lev-uni} and \eqref{sharp non kappa},
\begin{equation}\label{s21}
\begin{aligned}
\frac{\log_q \chi(1)}{2n} &= \rank(A_\kappa,B_\kappa) - \max(A_\kappa, B_\kappa) + \bigl\lfloor\frac{m_\kappa}2\bigr\rfloor + \frac{\dim V^\sharp-\dim V^\sharp_\kappa}2  + O(n^{2\alpha-3}) \\
& = \frac{\dim V^\sharp}{2} - \max(A_\kappa,B_\kappa) +\lfloor \frac{m_\kappa}{2}\rfloor -\frac{\delta_\kappa}2 + O(n^{2\alpha-3}).
\end{aligned}
\end{equation}
since $\rank(A_\kappa,B_\kappa)=(\dim V^\sharp_\kappa-\delta_\kappa)/2$. This implies that
$$\max(A_\kappa,B_\kappa) -\lfloor \frac{m_\kappa}{2}\rfloor +\frac{\delta_\kappa}2 = \frac{\dim V^\sharp}{2} - \frac{\log_q \chi(1)}{2n}- O(n^{2\alpha-3}) 
     = n-O(n^{\alpha-1}).$$
On the other hand, by \eqref{max vs rank} we have
$$\max(A_{-\kappa},B_{-\kappa}) -\lfloor \frac{m_{-\kappa}}{2}\rfloor +\frac{\delta_{-\kappa}}2 \leq \frac{\dim V^\sharp_{-\kappa}}{2}=O(n^{\alpha-1}).$$
Hence, when $n$ is sufficiently large we have 
$$\max(A_\kappa,B_\kappa) -\lfloor \frac{m_\kappa}{2}\rfloor +\frac{\delta_\kappa}2 >
\max(A_{-\kappa},B_{-\kappa}) -\lfloor \frac{m_{-\kappa}}{2}\rfloor +\frac{\delta_{-\kappa}}2,$$
and so
$$\lev(\chi) =  \frac{\dim V^\sharp}{2} - \max(A_\kappa,B_\kappa) +\lfloor \frac{m_\kappa}{2}\rfloor -\frac{\delta_\kappa}2$$
by \eqref{lev-gen}. The statement now follows from \eqref{s21}.
\end{proof}

\section{The Howe correspondence}
Throughout this section, we assume that $q$ is an {\it odd} prime power.
Let $G$ be one of $\GO_{2n}^{\varepsilon}(q)$, $\SO_{2n+1}(q)$, or $\Sp_{2n}(q)$, and let $G^\circ$ denote the kernel of the determinant map
$G \to \{\pm 1\}$.
Let $G'$ be one of $\Sp_{2n'}(q)$, $\GO_{2n'}^{\varepsilon}(q)$, or $\SO_{2n'+1}(q)$.  If one of $G$ and $G'$ is an orthogonal group in dimension $m$ and the other is symplectic in dimension $2m'$,
then the central product $G\ast G'$ embeds in the group $\Sp_{2mm'}(q)$.
In this case, we fix a non-trivial additive character of $\F_q$, and this determines the (reducible) Weil representation $\omega$ of $\Sp_{2mm'}(q)$, of degree $q^{mm'}$. A pair $(\chi,\chi')$ of characters $\chi \in \Irr(G)$ and $\chi' \in \Irr(G')$ such that $\chi \boxtimes \chi'$ occurs in the restriction 
$\omega|_{G \ast G'}$ are said to be in the {\it Howe correspondence} (between $G$ and $G'$).


We consider first the case  $G = \GO^\varepsilon_{2n}(q)$.
The dual group $G^*$ is then $\GO^\varepsilon_{2n}(q)$, and the natural representation $V^\sharp$ has dimension $2n$.
Let $s\in (G^\circ)^* = \SO^\varepsilon_{2n}(q)$ be a semisimple element.  We use the notation \eqref{decomp} for the decomposition of $V^\sharp$ under $s$
and write $s=\diag(s_1,s_{-1},s_0)$.  The centralizer $\CB_{G^*}(s)$ can be expressed as $G_1^*\times G_{-1}^*\times G_0^*$,
where $G_1^* = \GO(V_1^\sharp)$, $G_{-1}^* = \GO(V_{-1}^\sharp)$, and $G_0^* = \CB_{\GO(V^\sharp_0)}(s_0)$.
The factor $G_0^*$ is a product of groups of type $\GL$ and $\GU$, which
is naturally embedded in $\GL(V^\sharp_0)$.

If $G = \SO_{2n+1}(q)$, the
dual group $G^*$ is  $\Sp_{2n}(q)$, and the natural representation $V^\sharp$ has dimension $2n$.
Let $s=\diag(s_1,s_{-1},s_0)\in G^* = \Sp_{2n}(q)$ be a semisimple element.  
The centralizer $\CB_{G^*}(s)$ can be expressed as $G_1^*\times G_{-1}^*\times G_0^*$,
where $G_1^* = \Sp(V_1^\sharp)$, $G_{-1}^* = \Sp(V_{-1}^\sharp)$, and $G_0^* = \CB_{\Sp(V^\sharp_0)}(s_0)$.
The factor $G_0^*$ is a product of groups of type $\GL$ and $\GU$, which
is naturally embedded in $\GL(V^\sharp_0)$.

If $G$ is $\Sp_{2n}(q)$, and $s=\diag(s_1,s_{-1},s_0) \in G^* = \SO_{2n+1}(q)$, then $\CB_{G^*}(s)$ is the determinant $1$ subgroup of 
$\GO(V_1^\sharp)\times \GO(V_{-1}^\sharp)\times\CB_{\editnew{\GO}(V^\sharp_0)}(s_0)$.
Since $\dim(V_1^\sharp)$ is odd, $G_1^*\cong \{\pm I\}\times \SO(V_1^\sharp)$; for any $g_1 \in G_1^*$ let $h_1$ denote its 
$\SO(V_1^\sharp)$ component. Then $\CB_{G^*}(s)$ is canonically isomorphic to 
$G_1^*\times G_{-1}^*\times G_0^*$ via the map $\diag(g_1,g_{-1},g_0) \mapsto \diag(h_1,g_{-1},g_0)$, 
where $G_1^* = \SO(V_1^\sharp)$, $G_{-1}^* = \GO(V_{-1}^\sharp)$, and $G_0^* = \CB_{\GO(V^\sharp_0)}(s_0)< \GL(V_0^\sharp)$.

A {\it Lusztig correspondence} associates any $\chi \in \Irr(G)$ to a pair $(s^{G^*},\psi)$ consisting of the $G^*$-conjugacy class of a semisimple element
$s \in G^*$ and a unipotent character $\psi$ of $\CB_{G^*}(s)$. The existence of such a correspondence is the fundamental result of Lusztig \cite{LusztigBook,DM};
however it need not be unique. But, as shown by Pan \cite{Pan-T}, we can, and will, choose one that is compatible with the Howe correspondence for the relevant 
dual pair $(G,G')$, see \cite[p. 426]{Pan-T}.  By means of this choice,
every element $\chi$ in the  Lusztig series $\cE(G,s)$ is identified with a unique unipotent character $\psi = \psi_1\boxtimes \psi_{-1}\boxtimes \psi_0$ of $\CB_{G^*}(s)$.
The character $\psi$ is determined by the unipotent character $\psi_0$ of $\CB_{\GO(V^\sharp_0)}(s_0)$, together with the reduced symbols $(A_1,B_1)$ and $(A_{-1},B_{-1})$ 
of the unipotent characters $\psi_1$ and $\psi_{-1}$, corresponding to $V^\sharp_1$ and $V^\sharp_{-1}$, respectively. Then the {\it level} $\lev(\chi)$ is defined as 
in \eqref{lev-gen}. 
Note that the ambiguity in matching the two symbols $(A_\kappa,B_\kappa)$ and $(B_\kappa,A_\kappa)$ with the two unipotent 
characters of $\GO(V^\sharp_\kappa)$ (which lie above a unique unipotent character of $\SO(V^\sharp_\kappa)$) when this group is even-dimensional (and $A_\kappa \neq B_\kappa$) 
does not affect the $\kappa$-summand in \eqref{lev-gen}, and hence the definition of the level of $\chi$. The corresponding labels of unipotent characters
of $\GO$ and $\SO$ also show that,  in the case of
$G = \GO^\varep_{2n}(q)$, $\chi \in \Irr(G)$ has the same level as the irreducible constituents of its restriction
to $\SO^\varep_{2n}(q)$.

Likewise a semisimple element $s'\in (G')^*$ has centralizer $(G')^*_1\times (G')^*_{-1}\times (G')^*_0$.
Any element $\chi'$ in the Lusztig series $\cE(G',s')$ corresponds to $\psi' = \psi'_1\boxtimes \psi'_{-1}\boxtimes \psi'_0$.
We denote the symbols associated with $\psi'_1$ and $\psi'_{-1}$ by $(A'_1,B'_1)$ and $(A'_{-1},B'_{-1})$ respectively.

If $G$ is an even orthogonal group and $G'$ is symplectic, \cite[Theorem~9.10]{Pan-T} asserts that $(\chi,\chi')$ appears in the Howe correspondence between 
$G$ and $G'$ if and only if all of the following conditions hold:
\begin{equation}
\label{Howe-even}
\begin{split}
&\bullet\text{The symbol $(A_1,B_1)$ is linked to $(A'_1,B'_1)$, in the sense of \eqref{split conditions} and \eqref{non-split conditions}.}\\
&\bullet\text{$(A_{-1},B_{-1})=(A'_{-1},B'_{-1})$.  In particular, $G^*_{-1}\cong (G')^*_{-1}$.}\\
&\bullet\text{There is an isomorphism of $V^\sharp_0$ and $(V')^\sharp_0$ which identifies $(s_0,\psi_0)$ and 
$(s'_0,\psi'_0)$.}
\end{split}
\end{equation}

If $G$ is an odd orthogonal group and $G'$ is symplectic,  \cite[Theorem~9.8]{Pan-T} asserts that $(\chi,\chi')$ appears in the Howe correspondence between 
$G$ and $G'$ if and only if all of the following conditions hold:
\begin{equation}
\label{Howe-odd}
\begin{split}
&\bullet\text{The symbol $(A'_{-1},B'_{-1})$ is linked to $(A_1,B_1)$, in the sense of \eqref{split conditions} and \eqref{non-split conditions}.}\\
&\bullet\text{$(A_{-1},B_{-1})=(A'_1,B'_1)$.}  \\
&\bullet\text{There is an isomorphism of $V^\sharp_0$ and $(V')^\sharp_0$ which identifies $(-s_0,\psi_0)$ and 
$(s'_0,\psi'_0)$.}
\end{split}
\end{equation}

\begin{prop}
\label{main-piece}
Let $S=(A,B)$ be a symbol of even defect such that 
$$\rank(S) > 4\,\lev(S)^2 + 2\,\lev(S).$$ 
Replacing $S$ by its transpose if necessary,
there exists a symbol $S'$ which is linked to $S$ and which has all of the following properties;
\begin{enumerate}[\rm(i)]
\item $\defect(S')\equiv 1\pmod 4$.
\item $\rank(S') = \lev(S)$.
\item If $T$ is a symbol linked to $S'$ such that $\rank(T)=\rank(S)$, $\defect(T)\equiv \defect(S)\pmod 4$, and $\lev(T)\ge \lev(S)$, then $T=S$.
\end{enumerate}
\end{prop}

\begin{proof}
By Lemma~\ref{no tie}, $c_m=\max(A,B)$ occurs in only one of $A$ and $B$, with $m=|A|+|B|-1$ as defined in \eqref{m10}. 
Replacing $S$ by its transpose if necessary, we may assume $c_m\in A$ if $\defect(S)$ is divisible by $4$ and $c_m\in B$ otherwise.  In the former case we define 
$$S' = (B,\bar A),$$ 
with $\bar A$ as defined in \eqref{Abar}, 
so $S$ is linked to $S'$ by \eqref{split conditions}, and in the latter case 
$$S'=(\bar B,A)$$ 
again using \eqref{Abar}, 
so $S$ is linked to $S'$ by \eqref{non-split conditions}. Statement (i) follows immediately.

\smallskip
For (ii), note that $2 \nmid m$, whereas 
the parameter $m$ for $S'$ is $m'=m-1$. So by \eqref{rk10} and \eqref{lev-uni} we have  
\begin{align*}
\rank(S') & =  \sum_{a \in A}a+\sum_{b \in B}b-c_m - \frac{(m-1)^2}{4}\\
& = \rank(S)+ \frac{m^2-1}{4}-c_m - \frac{(m-1)^2}{4}\\
& = n-c_m + \frac{m-1}{2}\\
& = \lev(A,B). 
\end{align*}

\smallskip
To prove (iii), assume $T = (C,D)$ is linked to $S'$.

\smallskip
(a) Suppose first that $\defect(S)$ is divisible by $4$.
We have $\pi(D)\preceq \pi(B)$, and $\pi(\bar A)\preceq \pi(C)$ by \eqref{split conditions}.
By hypothesis, $\rank(T)=\rank(S)$ and $\lev(T) \ge \lev(S)$; furthermore,
$$|C|-|D|=\defect(T) = 1 -\defect(S') = \defect(S) = |A|-|B|.$$ 
Let $m=|A|+|B|-1$ as before, and let $m_T = |C|+|D|-1$. Then 
$$m_T-m= |C|-|A|+|D|-|B| = 2(|C|-|A|),$$
in particular, $\epsilon_m=\epsilon_{m_T}$. 
It follows from \eqref{rk11} that 
\begin{equation}\label{pi10}
  |\pi(A)|+|\pi(B)|=|\pi(C)|+|\pi(D)|.  
\end{equation}  
Using \eqref{lev-uni} we now get
$$\frac{m_T}2 - \max(C,D) \geq \frac m2 - \max(A,B),$$
whence 
$$|C|-|A|=\frac{m_T-m}2 \geq \max(C,D)-\max(A,B).$$
As $\max(A,B)=c_m = \max(A)$, it follows that
$$\max(A)-|A| \geq \max(C,D)-|C| \geq \max(C)-|C|,$$
and so the largest part of $\pi(A)$ is at least the largest part of $\pi(C)$.
As $\pi(\bar A)\preceq \pi(C)$, we also have $\pi(\bar C)\preceq \pi(\bar A)$. Hence we have shown that
$\pi(C) \preceq \pi(A)$, whence
$$|\pi(C)| \leq |\pi(A)|$$
with equality only if $\pi(C)=\pi(A)$. 
Since we also have $|\pi(D)| \leq |\pi(B)|$ with equality only if $\pi(D)=\pi(B)$, \eqref{pi10} now forces equality signs in all the above inequalities, 
i.e.  
$$\pi(C)=\pi(A),~\pi(D)=\pi(B),~|C|=|A|,~|D|=|B|,$$
and hence $A=C$, $B=D$. Thus $S=T$ as stated.

\smallskip
(b) Now we consider the case that $\defect(S)\equiv 2\pmod 4$.  Thus, $\pi(\bar B)\preceq \pi(D)$ and $\pi(C)\preceq \pi(A)$ by
\eqref{non-split conditions}. We have 
$$|C|-|D|=\defect(T) = -1-\defect(S') = \defect(S)= |A|-|B|;$$
furthermore, $\rank(T)=\rank(S)$ and $\lev(T)\ge \lev(S)$ by hypothesis.
Let $m=|A|+|B|-1$ and $m_T = |C|+|D|-1$ as above. Then 
$$m_T-m= |C|-|A|+|D|-|B| = 2(|D|-|B|),$$
in particular, $\epsilon_m=\epsilon_{m_T}$. 
It follows from \eqref{rk11} that \eqref{pi10} holds in this case as well.
Using \eqref{lev-uni} we now get
$$\frac{m_T}2 - \max(C,D) \geq \frac m2 - \max(A,B),$$
whence 
$$|D|-|B|=\frac{m_T-m}2 \geq \max(C,D)-\max(A,B).$$
As $\max(A,B)=c_m = \max(B)$, it follows that
$$\max(B)-|B| \geq \max(C,D)-|D| \geq \max(D)-|D|,$$
and so the largest part of $\pi(B)$ is at least the largest part of $\pi(D)$.
As $\pi(\bar B)\preceq \pi(D)$, we also have $\pi(\bar D)\preceq \pi(\bar B)$. Hence we have shown that
$\pi(D) \preceq \pi(B)$, and so 
$$|\pi(D)| \leq |\pi(B)|$$
with equality only if $\pi(D)=\pi(B)$. 
Since we also have $|\pi(C)| \leq |\pi(A)|$ with equality only if $\pi(C)=\pi(A)$, \eqref{pi10} now forces equality signs in all the above inequalities, 
i.e.  
$$\pi(D)=\pi(B),~\pi(C)=\pi(A),~|D|=|B|,~|C|=|A|,$$
and hence $D=B$, $A=C$. Thus $S=T$ as stated.
\end{proof}

\begin{prop}
\label{main-piece-odd}
Let $S=(A,B)$ be a symbol such that $\defect(S) \equiv 1 \pmod 4$ and
$$\rank(S) > 4\,\lev(S)^2 + 2\,\lev(S).$$ 
Then there exists a symbol $S'$ which is linked to $S$ and such that
\begin{enumerate}[\rm(i)]
\item $\rank(S') = \lev(S)$.
\item If $T$ is a symbol linked to $S'$ such that $\rank(T)= \rank(S)$, $\defect(T)\equiv \defect(S) \pmod4$, and $\lev(T)\ge \lev(S)$,
then $T=S$.
\end{enumerate}
\end{prop}

\begin{proof}
If $c_m\in A$, let $S' = (B,\bar A)$, and if $c_m\in B$, let $S' = (\bar B,A)$.  
For (i), note that $2 \vert m$, whereas 
the parameter $m$ for $S'$ is $m'=m-1$. So by \eqref{rk10} and \eqref{lev-uni} we have  
\begin{align*}
\rank(S') & =  \sum_{a \in A}a+\sum_{b \in B}b-c_m - \frac{m(m-2)}{4}\\
& = \rank(S)+ \frac{m^2}{4}-c_m - \frac{m(m-2)}{4}\\
& = n-c_m + \frac m2\\
& = \lev(A,B). 
\end{align*}

\smallskip
For (ii), let $T=(C,D)$.
We consider first the case that $4$ divides $\defect(S')$, which is equivalent to the condition $c_m\in A$ (using Lemma \ref{no tie}).
As $\defect(T)+\defect(S') \equiv \defect(S)+\defect(S') \equiv 1 \pmod 4$,
it follows that condition \eqref{split conditions} must hold 
for $S'$ and $T$.  
As $S'$ is linked to $T$,
$$\pi(\bar A) \preceq \pi(C),\ \pi(D)\preceq \pi(B).$$
Applying the argument of (a) of Proposition~\ref{main-piece}, we conclude $S=T$.

If $4$ does not divide $\defect(S')$, then $c_m\in B$ and $\defect(S)+\defect(S')\equiv -1\pmod4$. Thus,  
the condition \eqref{non-split conditions} must hold 
for $S'$ and $T$. 
As $S'$ is linked to $T$,
$$\pi(\bar B)\preceq \pi(D),\ \pi(C)\preceq \pi(A).$$
Applying the argument of (b) of Proposition~\ref{main-piece}, we conclude $S=T$.
\end{proof}

\begin{thm}\label{d-head}
Let $q$ be an odd prime power, $G = \GO^\varepsilon_{2n}(q)$, $G' = \Sp_{2n'}(q)$, and $\chi$ be an irreducible character of $G$ of level $n' \geq 1$, with
$n>4(n')^2+3n'$. Then there exists an irreducible character $\chi'$ of $G'$ and a linear character $\sigma$ of $G$ such that $(\chi\otimes \sigma,\chi')$ appears in the
Howe correspondence for $(G,G')$.  Moreover, if $\theta$ is an irreducible character of $G$ such that $(\theta,\chi')$ also appears in the Howe correspondence for 
$(G,G')$, then $\lev(\theta) \le \lev(\chi)$ with equality only if $\theta = \chi\otimes \sigma$.
\end{thm}

\begin{proof}
Using \eqref{max vs rank} and $n \geq 7n'+1$, we see that the maximum in \eqref{lev-gen} is achieved for exactly one value of $\kappa \in \{\pm 1\}$. 
Note that the spinor norm \cite[(22.11)]{Asch} gives rise to a linear character $\varsigma$ of order $2$ of $G$ which is trivial on $\Omega^\varepsilon_{2n}(q)$, and tensoring $\chi$ with $\varsigma$ has the effect of interchanging the unordered pairs of half-symbols $\{A_1,B_1\}$ and $\{A_{-1},B_{-1}\}$, see \cite[Cor. 8.10]{Pan-T}.
Hence, there is a unique $k \in \{0,1\}$ such that, after replacing $\chi$ by $\chi \otimes \varsigma^k$, we have that the maximum in \eqref{lev-gen} is achieved for $\kappa=1$.  
It follows that
\begin{equation}
\label{n'}
\lev(\chi)=n' = \lev(A_1,B_1) + \frac{\dim V^\sharp_0+\dim V^\sharp_{-1}}2.
\end{equation}
In particular, 
\begin{equation}\label{for-s20}
  \dim V^\sharp_1 \ge  \dim V^\sharp - 2n' = 2n-2n', 
\end{equation}   
whence
\begin{equation}
\label{big 1 rank}
\rank(A_1,B_1) = \frac{\dim V^\sharp_1}{2} > 4\,\lev(A_1,B_1)^2 + 2\,\lev(A_1,B_1).
\end{equation}
Tensoring $\chi$ by $\sgn$, the unique linear character of $G$ of order $2$, has the effect of transposing both $(A_1,B_1)$ and $(A_{-1},B_{-1})$ in the chosen Lusztig correspondence, see
\cite[Cor. 8.9]{Pan-T}. So setting $m = m(A_1,B_1)$ as in \eqref{m10} 
and $\varepsilon_1\in \{\pm\}$  the type of $\GO(V^\sharp_1)$ as an even-dimensional orthogonal group, and using Lemma \ref{no tie},
there is a unique $l \in \{0,1\}$ such that, after replacing $\chi$ by $\chi \otimes \sgn^l$ we have 
$c_m\in A_1\setminus B_1$ if $\varepsilon_1=+$ and $c_m\in B_1\setminus A_1$ if $\varepsilon_1 = -$, where 
$c_m=\max(A_1,B_1)$.
(Thus the character $\sigma$ alluded to in the statement's formulation is precisely $\varsigma^k\otimes\sgn^l$.)

We define $s'\in (G')^* = \SO((V')^\sharp) \cong \SO_{2n'+1}(q)$ such that $\diag(s',I_{2n-2n'-1})$ is conjugate to $s$ in $G$; this is possible because of \eqref{for-s20}.
Writing 
$$\CB_{(G')^*}(s') \cong (G')^*_1\times (G')^*_{-1} \times (G')^*_0$$
with  $(G')^*_{-1} = \GO((V')^\sharp_{-1})$,
we have natural isomorphisms $G^*_i\cong (G')^*_i$ for $i=-1$ and $i=0$.
Define $\psi'_i = \psi_i$ for these two values of $i$.  Let $\psi'_1$ be the unipotent character of $(G')^*_1 \cong \SO((V')^\sharp_1)$
associated to the symbol $S':=(B_1,\bar A_1)$ if $\varepsilon_1=+$, and $S':=(\bar B_1,A_1)$  if $\varepsilon_1=-$.
Note that this $S'$ is exactly the symbol constructed in Proposition \ref{main-piece} for $S:=(A_1,B_1)$.
Let $\chi'$ be the irreducible character of $\Sp_{2n'}(q)$ which lies in the series $\cE(G',s')$ and which is associated
to $\psi'_1\boxtimes \psi'_{-1}\boxtimes \psi'_0$.
By the construction and \eqref{Howe-even}, $(\chi,\chi')$ appears in the Howe correspondence between $G$ and $G'$.

Now suppose $(\theta,\chi')$ also appears in the Howe correspondence between $G$ and $G'$, and assume $\lev(\theta) \ge \lev(\chi)$.  
Since both $\theta$ and $\chi$ are in the Howe correspondence with the same character $\chi'$, \eqref{Howe-even} implies that $\theta$ is 
in the same series $\cE(G,s)$ as of $\chi$.
By the Lusztig correspondence, $\chi$ and $\theta$ correspond to unipotent characters $\psi_1\boxtimes \psi_{-1}\boxtimes \psi_0$
and $\varphi_1\boxtimes \varphi_{-1}\boxtimes \varphi_0$ respectively on $G_1^*\times G_{-1}^*\times G_0^*$.  Again because both characters appear in the Howe correspondence with $\chi'$, by \eqref{Howe-even} we have $\psi_{-1} = \varphi_{-1}$, $\psi_0=\varphi_0$, and the symbol $T=(A'_1,B'_1)$ of $\varphi_1$ is linked to $S'$. Now we have $\rank(T) = \rank(S)$ and $\defect(T) \equiv \defect(S) \pmod4$ by the choice of $S'$. Moreover,
since $\lev(\theta) \geq \lev(\chi)$, by \eqref{lev-gen} and \eqref{n'} we get
$$\frac{\dim V^\sharp}{2}-\bigl(\max(T)-\lfloor m_T/2 \rfloor \bigr) \geq \lev(\theta) \geq \lev(\chi) = \lev(S) + \frac{\dim V^\sharp_0+\dim V^\sharp_{-1}}2,$$
which implies that
$$\lev(T) = \frac{\dim V^\sharp_1}{2} - \max(T)+\lfloor m_T/2 \rfloor \geq \lev(S).$$ 
 By \eqref{big 1 rank} and Proposition~\ref{main-piece}, this implies $(A_1,B_1) = (A'_1,B'_1)$, so $\chi=\theta$.
\end{proof}

\begin{thm}\label{b-head}
Let $q$ be an odd prime power, $G = \SO_{2n+1}(q)$, $G' = \Sp_{2n'}(q)$, and $\chi$ be an irreducible character of $G$ of level $n' \geq 1$ with
$n>4(n')^2+3n'$. Then there exists an irreducible character $\chi'$ of $G'$ and a linear character $\sigma$ of $G$ such that $(\chi\otimes \sigma,\chi')$ appears in the
Howe correspondence for $(G,G')$.  Moreover, if $\theta$ is an irreducible character of $G$ such that $(\theta,\chi')$ also appears in the Howe correspondence for
$(G,G')$, then $\lev(\theta) \le \lev(\chi)$ with equality only if $\theta = \chi\otimes \sigma$.
\end{thm}

\begin{proof}
Using \eqref{max vs rank} and $n \geq 7n'+1$, we see that the maximum in \eqref{lev-gen} is achieved for exactly one value of $\kappa \in \{\pm 1\}$. 
Note that the spinor norm \cite[(22.11)]{Asch} gives rise to a linear character $\varsigma$ of order $2$ of $G$ which is trivial on $\Omega_{2n+1}(q)$, and tensoring $\chi$ with $\varsigma$ has the effect of interchanging the unordered pairs of half-symbols $\{A_1,B_1\}$ and $\{A_{-1},B_{-1}\}$, see \cite[Lemma 7.6]{Pan-T}.
Hence, there is a unique $k \in \{0,1\}$ such that, after replacing $\chi$ by $\chi \otimes \varsigma^k$ (thus taking $\sigma=\varsigma^k$), 
we have that the maximum in \eqref{lev-gen} is achieved for $\kappa=1$, and therefore \eqref{n'}--\eqref{big 1 rank} hold.

Let $S:=(A_1,B_1)$ and $m=m_S$ as in \eqref{m10}. Using \eqref{for-s20}  (and recalling Lemma \ref{no tie})
we can find an element 
$s'\in (G')^* = \SO((V')^\sharp) \cong \SO_{2n'+1}(q)$
such that 
\begin{itemize}
\item the $1$-eigenspace $(V')^\sharp_1$ of $s'$ on $(V')^\sharp$ is of (odd) dimension $\dim(V^\sharp_{-1})+1$; 
\item the $(-1)$-eigenspace $(V')^\sharp_{-1}$ of $s'$ is of (even) dimension $2n'-\dim(V^\sharp_{-1})-\dim(V^\sharp_0)$, of type $\varepsilon_1=+$ if $c_m=\max(A_1,B_1) \in A_1$ and $\varepsilon_1=-$ if $c_m\in B_1$; and
\item $s'$ acts as $-s_0$ on $(V')^\sharp_0$ which is of dimension equal to $\dim V^\sharp_0$.  
\end{itemize}
Writing $\CB_{(G')^*}(s') = (G')^*_1\times (G')^*_{-1} \times (G')^*_0$ with $(G')^*_{-1} = \GO((V')^\sharp_{-1})$, we have a natural isomorphism 
$G^*_0\cong (G')^*_0$;  define $\psi'_0 = \psi_0$.  Next, $(G')^*_1 \cong \SO((V')^\sharp_1)$ is dual to 
$G^*_{-1} = \Sp(V^\sharp_{-1})$. Hence we can define the unipotent character  
$\psi'_1$ to have the same symbol as of the unipotent character $\psi_{-1}$.  Let $\psi'_{-1}$ be the unipotent character of $(G')^*_{-1}$
associated to the symbol $S':=(B_1,\bar A_1)$ if $\varepsilon_1=+$ and $S':=(\bar B_1,A_1)$ if $\varepsilon_1=-$.
Note that this $S'$ is precisely the symbol constructed in Proposition \ref{main-piece-odd} for $S:=(A_1,B_1)$ which has defect $\equiv 1 \pmod4$ because 
$G^*_1 = \Sp(V^\sharp_1)$.
Let $\chi'$ be the irreducible character of $G'=\Sp_{2n'}(q)$ which lies in the series $\cE(G',s')$ and which is associated
to $\psi'_1\boxtimes \psi'_{-1}\boxtimes \psi'_0$.
By the construction and \eqref{Howe-odd}, $(\chi,\chi')$ appears in the Howe correspondence between $G$ and $G'$.

Now suppose that $(\theta,\chi')$ also appears in the correspondence between $G$ and $G'$, and assume $\lev(\theta) \ge \lev(\chi)$.  
Since both $\theta$ and $\chi$ are in the Howe correspondence with the same character $\chi'$, \eqref{Howe-odd} implies that $\theta$ is 
belongs to the same series $\cE(G,s)$ as of $\chi$.
By the Lusztig correspondence, $\chi$ and $\theta$ correspond to unipotent characters $\psi_1\boxtimes \psi_{-1}\boxtimes \psi_0$
and $\varphi_1\boxtimes \varphi_{-1}\boxtimes \varphi_0$ respectively on $G_1^*\times G_{-1}^*\times G_0^*$.  Again because both characters appear in the Howe correspondence with $\chi'$, by \eqref{Howe-odd} we have $\psi_{-1} = \varphi_{-1}$, $\psi_0=\varphi_0$, and the symbol $T=(A'_1,B'_1)$ of $\varphi_1$ is linked to $S'$. Now we have $\rank(T) = \rank(S)$ and $\defect(T) \equiv 1 \pmod4$. Moreover,
since $\lev(\theta) \geq \lev(\chi)$, by \eqref{lev-gen} and \eqref{n'} we get
$$\frac{\dim V^\sharp}{2}-\bigl(\max(T)-\lfloor m_T/2 \rfloor \bigr) \geq \lev(\theta) \geq \lev(\chi) = \lev(S) + \frac{\dim V^\sharp_0+\dim V^\sharp_{-1}}2,$$
which implies that
$$\lev(T) = \frac{\dim V^\sharp_1}{2} - \max(T)+\lfloor m_T/2 \rfloor \geq \lev(S).$$ 
By \eqref{big 1 rank} and Proposition~\ref{main-piece-odd}, this implies $(A_1,B_1) = (A'_1,B'_1)$, so $\chi=\theta$.
\end{proof}

\begin{cor}\label{irr-restr}
In the situation of Theorem \ref{d-head}, the character $\chi$ is irreducible over $[G,G] = \Omega^\varepsilon_{2n}(q)$, and hence also over 
$\SO^\varepsilon_{2n}(q)$. Similarly, in the situation of Theorem \ref{b-head}, the character $\chi$ is irreducible over $[G,G] = \Omega_{2n+1}(q)$.
In both cases, the linear character $\sigma$ and the character $\chi'$ are each uniquely determined by $\chi$.
\end{cor}

\begin{proof}
(a) Set $m=2n$, respectively $m=2n+1$. Replacing $\chi$ by $\chi \otimes \sigma$, where $\sigma$ is the character of $G/[G,G]$ in
Theorem \ref{d-head}, respectively Theorem \ref{b-head}, we have that $\chi$ appears in the restriction of a Weil character $\omega$ of 
$\Gamma:=\Sp(V)=\Sp_{2mn'}(q)$ to the image of $G$ under the homomorphism $G \times \Sp_{2n'}(q) \to \Gamma$. As shown in \cite[Proposition 3.1]{GLT2},
$\omega\bar\omega$ is the permutation character $\tau$ of $\Sp(V)$ on the point set of $V = \F_q^{2mn'}$. Hence $(\omega^2)|_G$ is the permutation character of $G$ on 
the set of $2n'$-tuples $\xi:=(v_1, \ldots,v_{2n'})$, $v_i \in A:=\F_q^{m}$ the natural module for $G$. Given any such $2n'$-tuple $\xi$, 
we can always put it in a non-degenerate $4n'$-dimensional subspace $B$ in $A$. Since $\mathrm{codim}_A B \geq 2(n-n') > 4$, $\Stab_G(\xi)$ contains 
elements that act trivially on $B$ and have any prescribed determinant and spinor norm on $B^\perp$. It follows that
$|\xi^G|= [G:\Stab_G(\xi)] = [[G,G]:\Stab_{[G,G]}(\xi)] = |\xi^{[G,G]}|$ and hence $\xi^G=\xi^{[G,G]}$. Thus $G$ and $[G,G]$ have the same orbits on the set of 
$2n'$-tuples, and so 
\begin{equation}\label{eq:sc1}
  [\omega|_G,\omega|_G]_G = [(\omega\bar\omega)|_G,1_G]_G =  [(\omega\bar\omega)|_{[G,G]},1_{[G,G]}]_{[G,G]} = [\omega|_{[G,G]},\omega|_{[G,G]}]_{[G,G]}.
\end{equation}  
This implies that every irreducible constituent of $\omega|_G$, including $\chi$, is irreducible over $[G,G]$. 

\smallskip
(b) Assume now that for a given $\chi$ of level $n'$, in addition to the pair $(\sigma,\chi')$ constructed in the proof of Theorem \ref{d-head},
respectively Theorem \ref{b-head}, there is another pair $(\tilde\sigma,\tilde\chi')$ such that $\tilde\sigma \in \Irr(G/[G,G])$ and 
$\chi \otimes \tilde\sigma$ are in the Howe correspondence with $\tilde\chi' \in \Irr(S)$,
where $S=\Sp_{2n'}(q)$. Then $\chi \otimes \sigma$ and $\chi \otimes \tilde\sigma$ restrict to the same character $\varphi$ of $[G,G]$, and $\varphi$ is 
irreducible by (a). On the other hand,
\eqref{eq:sc1} implies that distinct irreducible constituents of $\omega\vert_G$ must have distinct restrictions to $[G,G]$. Hence 
$\chi \otimes \sigma=\chi \otimes \tilde\sigma$, and hence $\sigma=\tilde\sigma$ by Gallagher's theorem \cite[(6.17)]{Is}.

We will now show that $\chi'=\tilde\chi'$. By the preceding result, we may assume that $\sigma=\tilde\sigma$. Suppose for instance that we are in the case of
Theorem \ref{d-head}, so that the maximum in \eqref{lev-gen} is achieved for $\kappa=1$, and moreover $\varep_1=+$, whence $S' = (B_1,\bar A_1)$. 
Suppose $\tilde\chi'$ lies in the series $\cE(G',\tilde s)$ and  is associated
to $\tilde\psi_1\boxtimes \tilde\psi_{-1}\boxtimes \tilde\psi_0$. Since $\chi$ corresponds to both $\chi'$ and $\tilde\chi'$, \eqref{Howe-even} implies that 
we may assume $\tilde s = s'$ and then have $\tilde\psi_{-1}=\psi_{-1}= \psi'_{-1}$, and $\tilde\psi_0= \psi_{0} = \psi'_0$. Moreover, if $(C,D)$ is the symbol for 
$\tilde\psi_1$, then $(A_1,B_1)$ is linked to $(C,D)$, and hence 
$$\pi(B_1) \preceq \pi(C),~\pi(D) \preceq \pi(A_1),~|C|-|D|= 1-(|A_1|-|B_1|) = |B_1|-|\bar A_1|$$
 by \eqref{split conditions}. In this case we also have $\pi(\bar A_1) \preceq \pi(D)$; and hence $|\pi(B_1)|+|\pi(\bar A_1)| \leq |\pi(C)|+|\pi(D)|$. On the other hand,
 $\rank(S')=n'=\rank(T)$, so \eqref{rk10} implies that equality must be attained in the last inequality. Thus $C=B_1$, $D=\bar A_1$, $T=S'$, 
 $\tilde\psi_1=\psi'_1$, and hence $\tilde\chi'=\chi'$. The same arguments work in all other cases. 
\end{proof}

\begin{cor}\label{so-bijection}
In the situation of Theorem \ref{d-head}, the map $\chi \mapsto \chi'$ gives a four-to-one surjection between the set of characters $\chi$ of 
$G = \GO^\varep_{2n}(q)$ of level $n'$  and $\Irr(\Sp_{2n'}(q))$, where each fiber consists of four characters with  
the same restriction to $[G,G] = \Omega^\varepsilon_{2n}(q)$.
Likewise, in the situation of Theorem \ref{b-head}, the map $\chi \mapsto \chi'$ gives a two-to-one surjection between the set of characters $\chi$ of 
$G = \SO_{2n+1}(q)$ of level $n'$  and $\Irr(\Sp_{2n'}(q))$, where each fiber consists of two characters with  
the same restriction to $[G,G] = \Omega_{2n+1}(q)$.
\end{cor}

\begin{proof}
In view of Corollary \ref{irr-restr}, it suffices to show that the map $\chi \mapsto \chi'$ is onto $\Irr(\Sp_{2n'}(q))$. 
Consider for instance the situation of Theorem \ref{d-head}. Then we need to show that, given any integer 
$0 \leq j \leq n'$, any symbol $S'=(C,D)$ of rank $n'-j$ and defect $\equiv 1 \pmod4$,
and any sign $\varep_1 \in \{\pm\}$, we can find a symbol $S=(A,B)$ of rank $n-j$, level $n'-j$, and such that 
$\defect(S) \equiv 0 \pmod 4$ and $S'=(B,\bar A)$ when $\varep_1=+$, and 
$\defect(S) \equiv 2 \pmod4$ and $S'=(\bar B,A)$ when $\varep_1=-$.  Suppose for instance $\varep_1=+$, and take $m:=|C|+|D|$ which is odd. 
By \eqref{max vs rank}, $\max(C,D) \leq \rank(S') \leq n'$, hence $c_m:= n+(m-1)/2-(n'-j) > \max(C,D)$. Defining $S=(A,B)$ with 
$A := \{c_m\} \sqcup D$ and $B:= C$, we have $S'=(B,\bar A)$, $m=m_S$, $\defect(S)=1-\defect(S') \equiv 0 \pmod4$, $\max(A,B)=c_m$,
$$\rank(S)=\rank(S') +c_m+\frac{m^2-1}{4}-\frac{(m-1)^2}{4}=n$$
by \eqref{rk10}, and 
$$\lev(S) = \rank(S)-c_m+\frac{m-1}{2} = n'-j$$
by \eqref{lev-uni}, 
as desired. The same arguments work in all of the remaining cases.
\end{proof}

For the next result, recall that in the case of symplectic groups $G$ in odd characteristics, the level of $\chi \in \Irr(G)$ is only half-integral (see the comments after 
\eqref{lev-gen}).

\begin{thm}\label{c-head}
Let $q$ be any odd prime power, $G = \Sp_{2n}(q)$ and $\chi$ be an irreducible character of $G$ of level $n' \geq 1$ with
$n>4(n')^2+3n'$. Then there exist a (special if $n' \notin \Z$ and full if $n' \in \Z$)
orthogonal group $G'$ over $\F_q$ of dimension $2n'$,  and an
irreducible character $\chi'$ of $G'$, such that $(\chi,\chi')$ appears in the
Howe correspondence for $(G,G')$.  Moreover, if $\theta$ is an irreducible character of $G$ such that $(\theta,\chi')$ also appears in the Howe correspondence for
$(G,G')$,
then $\lev(\theta) \le \lev(\chi)$ with equality only if $\theta = \chi$.
\end{thm}

\begin{proof}
Let $(A_1,B_1)$ and $(A_{-1},B_{-1})$ be the symbols associated to $\psi_1$ and $\psi_{-1}$ respectively in the Lusztig correspondence for $\chi$, so 
$\defect(A_1,B_1)$ is $1$ (mod $4$)
and $\defect(A_{-1},B_{-1})$ is even.  Define $\varepsilon_{-1}$ to be $+$ if the latter defect is divisible by $4$
and $-$ if not.  Also, let $\varepsilon_0$ denote the type of the even-dimensional orthogonal space $V^\sharp_0$.
Using \eqref{max vs rank} and $n \geq 7n'+1$, we see that the maximum in \eqref{lev-gen} is achieved for exactly one value of $\kappa \in \{\pm 1\}$. 

\smallskip
(a) We consider first the case that the maximum in \eqref{lev-gen} is achieved for $\kappa=1$. In particular, \eqref{n'} holds, $n' \in \Z_{\geq 1}$, and
\begin{equation}\label{for-s21}
  \dim V^\sharp_1 \ge  \dim V^\sharp - 2n' = 2n+1-2n', 
\end{equation}   
whence
\begin{equation}
\label{big 2 rank}
\rank(A_1,B_1) = \frac{\dim(V^\sharp_1)-1}{2} > 4\,\lev(A_1,B_1)^2 + 2\,\lev(A_1,B_1).
\end{equation}
Using Lemma \ref{no tie}, we
define $\varepsilon_1$ to be $+$ if $c_m\in A_1$ and  $-$ if $c_m\in B_1$. Recalling \eqref{for-s21}, we can find some $\varepsilon\in\{\pm\}$ and some element 
$s'\in \SO(V') \cong \SO^\varepsilon_{2n'}(q)$
such that
\begin{itemize}
\item $\diag(s',I_{2n+1-2n'})$ is conjugate to $s$ in $\GL_{2n+1}(q)$;  
\item the restriction of the quadratic form $\sf Q$ on $V'=\F_q^{2n'}$ to the $1$-eigenspace $V'_1$ of 
$s'$ on $V'$ has type $\varepsilon_1$;
\item the restriction of $\sf Q$ to the $(-1)$-eigenspace $V'_{-1}$ has type $\varepsilon_{-1}$; and
\item  the restriction of $\sf Q$ to the subspace $V'_{0}$ has type $\varepsilon_{0}$.
\end{itemize}
Let $G' = \GO(V') \cong \GO^\varepsilon_{2n'}(q) \cong (G')^*$, and define $\psi'_i$, $i=0,\pm 1$ by the following conditions:
\begin{itemize}
\item $\psi'_1$ has symbol $S':=(B_1,\bar A_1)$ if $\varepsilon_1=+$ and symbol $S':=(\bar B_1,A_1)$ if $\varepsilon_1=-$;
\item $\psi'_{-1}$ has symbol $(A_{-1},B_{-1})$; and
\item $\psi'_0 = \psi_0$.
\end{itemize}
Note that $S'$ is precisely the symbol constructed in Proposition \ref{main-piece-odd} for $S:=(A_1,B_1)$; in particular, $S'$ is linked to $S$.
By \eqref{Howe-even}, $(\chi,\chi')$ appears in the Howe correspondence for $(G,G')$.

Now suppose $(\theta,\chi')$ also appears in the Howe correspondence between $G$ and $G'$, and assume $\lev(\theta) \ge \lev(\chi)$.  
Since both $\theta$ and $\chi$ are in the Howe correspondence with the same character $\chi'$, \eqref{Howe-even} implies that $\theta$ is 
in the same series $\cE(G,s)$ as of $\chi$.
By the Lusztig correspondence, $\chi$ and $\theta$ correspond to unipotent characters $\psi_1\boxtimes \psi_{-1}\boxtimes \psi_0$
and $\varphi_1\boxtimes \varphi_{-1}\boxtimes \varphi_0$ respectively on $G_1^*\times G_{-1}^*\times G_0^*$.  Again because both characters appear in the Howe correspondence with $\chi'$, by \eqref{Howe-even} we have $\psi_{-1} = \varphi_{-1}$, $\psi_0=\varphi_0$, and the symbol $T=(A'_1,B'_1)$ of $\varphi_1$ is linked to $S'$. Now we have $\rank(T) = \rank(S)$ and $\defect(T) \equiv 1 \pmod4$. Moreover,
since $\lev(\theta) \geq \lev(\chi)$, by \eqref{lev-gen} and \eqref{n'} we get
$$\frac{\dim(V^\sharp)-1}{2}-\bigl(\max(T)-\lfloor m_T/2 \rfloor \bigr) \geq \lev(\theta) \geq \lev(\chi) = \lev(S) + \frac{\dim V^\sharp_0+\dim V^\sharp_{-1}}2,$$
which implies that
$$\lev(T) = \frac{\dim(V^\sharp_1)-1}{2} - \max(T)+\lfloor m_T/2 \rfloor \geq \lev(S).$$ 
By \eqref{big 2 rank} and Proposition~\ref{main-piece-odd}, this implies $(A_1,B_1) = (A'_1,B'_1)$, so $\chi=\theta$.

\smallskip
(b) Now consider the case that the maximum in \eqref{lev-gen} is achieved for $\kappa=-1$.
It follows that
\begin{equation}
\label{np2}
n'=\lev(\chi)= \lev(A_1,B_1) + \frac{\dim V^\sharp_0+\dim V^\sharp_{1}}2
\end{equation}
is half-integral (but not integral, since $\dim V^\sharp_1$ is odd and $\dim V^\sharp_0$ is even), and
\begin{equation}\label{for-s22}
  \dim V^\sharp_{-1} \ge  \dim V^\sharp - 2n' = 2n+1-2n', 
\end{equation}   
whence
\begin{equation}
\label{big 3 rank}
\rank(A_{-1},B_{-1}) = \frac{\dim V^\sharp_{-1}}{2} > 4\,\lev(A_{-1},B_{-1})^2 + 2\,\lev(A_{-1},B_{-1}).
\end{equation}

Let $G' = \SO(V') \cong \SO_{2n'}(q)$ (recall that $2n' \geq 3$ is an odd integer).
Then using \eqref{for-s22} we can find an element $s'\in (G')^* = \Sp((V')^\sharp) \cong \Sp_{2n'-1}(q)$ such that 
\begin{itemize}
\item $\diag(-s',-I_{2n-2n'+2})$ is conjugate to $s$ in $\GL_{2n+1}(q)$;
\item the $(-1)$-eigenspace $(V')^\sharp_{-1}$ of $s'$ is of (even) dimension $\dim(V^\sharp_{1})-1$; 
\item $s'$ acts as $-s_0$ on $(V')^\sharp_0$ which is of dimension equal to $\dim V^\sharp_0$.  
\end{itemize}
Define $\psi'_i$, $i = 0,\pm 1$, satisfying the following conditions
\begin{itemize}
\item $\psi'_1$ has symbol $S':=(B_{-1},\bar A_{-1})$ if $\varepsilon_{-1}=+$ and $S':=(\bar B_{-1},A_{-1})$ if $\varepsilon_{-1}=-$;
\item $\psi'_{-1}$ has symbol $(A_1,B_1)$; and
\item $\psi'_0=\psi_0$.
\end{itemize}
Note that $S'$ is precisely the symbol constructed in Proposition \ref{main-piece} for $S:=(A_{-1},B_{-1})$; in
particular, $S$ is linked to $S'$.  By \eqref{Howe-odd}, $(\chi,\chi')$ appears in the Howe correspondence for $(G,G')$.

Now suppose $(\theta,\chi')$ also appears in the Howe correspondence between $G$ and $G'$, and assume $\lev(\theta) \ge \lev(\chi)$.  
Since both $\theta$ and $\chi$ are in the Howe correspondence with the same character $\chi'$, \eqref{Howe-odd} implies that $\theta$ is 
in the same series $\cE(G,s)$ as of $\chi$.
By the Lusztig correspondence, $\chi$ and $\theta$ correspond to unipotent characters $\psi_1\boxtimes \psi_{-1}\boxtimes \psi_0$
and $\varphi_1\boxtimes \varphi_{-1}\boxtimes \varphi_0$ respectively on $G_1^*\times G_{-1}^*\times G_0^*$.  Again because both characters appear in the Howe correspondence with $\chi'$, by \eqref{Howe-odd} we have $\psi_{1} = \varphi_{1}$, $\psi_0=\varphi_0$, and the symbol $T=(A'_{-1},B'_{-1})$ of $\varphi_{-1}$ is linked to $S'$. Now we have $\rank(T) = \rank(S)$ and $\defect(T) \equiv \defect(S) \pmod4$ by the choice of $S'$. Moreover,
since $\lev(\theta) \geq \lev(\chi)$, by \eqref{lev-gen} and \eqref{np2} we get
$$\frac{\dim V^\sharp}{2}-\bigl(\max(T)-\lfloor m_T/2 \rfloor \bigr) \geq \lev(\theta) \geq \lev(\chi) = \lev(S) + \frac{\dim V^\sharp_0+\dim V^\sharp_{1}}2,$$
which implies that
$$\lev(T) = \frac{\dim V^\sharp_{-1}}{2} - \max(T)+\lfloor m_T/2 \rfloor \geq \lev(S).$$ 
By \eqref{big 3 rank} and Proposition~\ref{main-piece}, this implies $(A_{-1},B_{-1}) = (A'_{-1},B'_{-1})$, so $\chi=\theta$.

\smallskip
(c) Finally, let us comment on the choice of a Lusztig correspondence. As proved in \cite{Pan-T}, in the case $H = \GO^\pm_{2n}(q)$ or 
$H=\SO_{2n+1}(q)$, there is a unique choice of a Lusztig correspondence for $H$ which satisfies some specific properties including being 
compatible with the Howe correspondence for the pairs $(H,\Sp_{2m}(q))$. In the case of  $G=\Sp_{2n}(q)$, it is proved in \cite{Pan-T} that 
there is a unique choice of a Lusztig correspondence for $G$ which satisfies some specific properties including being 
compatible with the Howe correspondence for the pairs $(G,\GO^\pm_{2m}(q))$ (that occur in part (a) of this proof), and there is a unique choice of a Lusztig correspondence for $G$ which satisfies 
some specific properties including being compatible with the Howe correspondence for the pairs $(G,\SO_{2m+1}(q))$ (that appear only in part (b)).  
This however does not cause any ambiguity for our result. Indeed, as follows from \eqref{for-s21} and \eqref{for-s22}, in the case of (a) we have
$\dim V^\sharp_1 > (\dim V^\sharp)/2$ whereas in (b) we have $\dim V^\sharp_{-1} > (\dim V^\sharp)/2$. These inequalities can be read off from the labeling element 
$s$ of the rational series $\cE(G,s)$ that contains $\chi$, independently of the choice of Lusztig correspondence. Hence for any given $\chi$ whose level $j$
satisfies $n > 4j^2+3j$ one can determine unambiguously whether it belongs to case (a) or (b) of the proof, at which point we will work with only one kind of Howe dual 
pairs.
\end{proof}

Analogues of Corollaries \ref{irr-restr} and \ref{so-bijection} also hold for $\Sp_{2n}(q)$, although we do not need them in the rest of the paper.

\section{A sharp character estimate for odd-characteristic orthogonal groups}
In this section we prove the following result, which implies Theorem B:

\begin{thm}\label{so-bound}
Let $N,m,j \in \Z_{\geq 1}$, $q$ any odd prime power, and let $G=\SO^\varep_N(q)$ with $\varep= \pm$. 
\begin{enumerate}[\rm(i)]
\item Suppose that
$$N \geq 8j^2+4j +\max(4m,2j)+2$$ 
and $\chi$ is an irreducible complex character of $G$ of level $j$.
Then there exists some $\beta=\beta(\chi,\lambda) \in \{1,-1\}$ such that when $g\in G$ has support $m$ and primary eigenvalue $\lambda = \pm 1$, we have
\begin{equation}
\label{SO-bound1}
\Bigm|\frac{q^{mj}\chi(g)}{\chi(1)}-\beta\Bigm| < q^{-N/4}.
\end{equation}
Moreover, if $2|N$ or $\lambda=1$, then $\beta = \chi(\lambda \cdot 1_G)/\chi(1)$. 
\item If $N \geq 8j^2+6j+2$, $\chi$ is an irreducible complex character of $G$ of level $j \geq 1$, and $g\in G$ has support 
$m \geq 2j+7$, then
\begin{equation}
\label{SO-bound2}
  \Bigl|\frac{\chi(g)}{\chi(1)}\Bigr| < \left\{\begin{array}{rl}q^{3-m}, & j=1,\\
  q^{-m}, & j \geq 2.\end{array}\right. 
\end{equation}
\end{enumerate}
\end{thm}

\begin{proof}
(a) We prove the statements by induction on $j\geq 0$, with the induction base $j=0$ being obvious.
The statements are also obvious for $m=0$, so we will assume $m \geq 1$. Write $n = \lfloor N/2 \rfloor$.

Let $\chi \in \Irr(G)$ have level $j \geq 1$. 
Let $S := \Sp_{2j}(q)$, and let $\omega$ denote a (reducible) Weil character of degree $q^{Nj}$ of $\Gamma:=\Sp_{2Nj}(q)$.  
We write $G=\SO(A)$ with $A:=\F_q^N$ and $S=\Sp(B)$ with $B := \F_q^{2j}$. Then $V := A\otimes_{\F_q} B$ is endowed with the tensor product of the 
orthogonal form on $A$ and the symplectic form on $B$, and we can take $\Gamma = \Sp(V)$. 
The restriction of $\omega$ from $\Gamma$ to the image $G \ast S$ of $G \times S$ in $\Gamma$ gives a decomposition
$$\omega|_{G\ast S} = \sum_{\alpha\in \Irr(S)} D_\alpha\boxtimes \alpha,$$
where $D_\alpha$ is given by
\begin{equation}\label{so-dual1}
  D_\al(g) = \frac{1}{|S|}\sum_{s \in S}\omega(g \otimes s)\bar\al(s).
\end{equation}  
In particular, any irreducible constituent of $D_\al$ lies in the Howe correspondence for $(G,S)$ with $\alpha$. 
Since $N \geq 8j^2+6j+2$, $\chi$ extends to an irreducible character of $\GO^\varep_N(q)$ by Corollary \ref{irr-restr}. Multiplying $\chi$ with the spinor
character if necessary, we may assume by Theorems \ref{d-head} and \ref{b-head} that for some $\al \in \Irr(S)$, $D_\al$ has a unique irreducible constituent 
$D^\circ_\al$ of level $j$ which is $\chi$, and all other
irreducible constituents have strictly lower level.

Next we verify \eqref{SO-bound2} directly when $j=1$. Since 
$m = \supp(g)$, we have $\dim\Ker(g-\lambda \cdot \Id_A) \leq N-m$ for all $\lambda \in \overline{\F_q}^\times$. It follows from \cite[Lemma 8.2]{GLT1} 
that $d_V(g \otimes s) = \dim \Ker(g \otimes s - \Id_V) \leq 2(N-m)$, and hence $|\omega(g \otimes s)| \leq q^{N-m}$ for any $s \in S$. Also, according to
Tables I and II in \cite{LOST}, $\chi = D_\al$ or $D_\al-1_G$, and hence
$$\chi(1) \geq (1-q^{2-n})q^N\al(1)/|S|,$$
see the proof of \cite[Proposition 5.12]{LOST}.
It follows from \eqref{so-dual1} that
$$\Bigl| \frac{\chi(g)}{\chi(1)}\Bigr| \leq \frac{q^{N-m}\al(1)|S|}{(1-q^{2-n})q^N\al(1)} = q^{3-m}\frac{1-q^{-2}}{1-q^{2-n}} <q^{3-m}.$$ 
As shown in Example \ref{level1} (below), this upper bound $q^{3-m}$ is optimal (up to a linear factor).

\smallskip
Consider any $g \in G$ of support $m \geq 1$. We will
bound $\DC_\al(1)$ and $|\DC_\al(g)|$ using \eqref{so-dual1}. Embed $G$ as $G \times \{1\}$ in $G \ast S$, we can write 
\begin{equation}\label{so-dual2}
  \omega|_G = \sum^M_{i=1}a_i\theta_i,~~D'_\al := D_\al-\DC_\al = \sum^{M'}_{i=1}b_i\theta_i,
\end{equation}  
where $\theta_i \in \Irr(G)$ are pairwise distinct, $a_i,b_i \in \Z_{\geq 0}$,
$M \geq M'$. Moreover, if $1 \leq i \leq M'$, then $a_i \geq b_i$, and $l:=\lev(\theta_i) \leq j-1$.  By Theorems 
\ref{d-head} and \ref{b-head}, any such $\theta_i$, tensored with a unique linear character, appears as $D^\circ_{\beta_i}$ 
in the Howe correspondence for $G \times \Sp_{2l}(q)$, where $\beta \in \Irr(\Sp_{2l}(q))$. Hence $\theta_i(1) \leq q^{N(j-1)}$, and, as  
$D^\circ_{\beta_i}$ is the unique irreducible constituent of level $l$ of $D_{\beta_i}$, we have  
$$M' \leq \sum^{j-1}_{i=0}k(\Sp_{2i}(q)) \leq 10.8\sum^{j-1}_{i=0}q^i < 5.4q^j.$$
Here, $k(X) = |\Irr(X)|$ denotes the class number of a finite group $X$ as before, and $k(\Sp_{2i}(q)) \leq 10.8q^i$ by \cite[Theorem 3.12]{FG}. 
Also, $\sum^M_{i=1}a_i^2 = [\omega|_G,\omega|_G]_G \leq 12q^{j(2j+1)}$ by \cite[Lemma 2.6]{GLT2} (since 
$\omega\vert_G$ is the $j^{\mathrm{th}}$ power of the permutation character $\tau_A$ of $G$ on $A=\F_q^{N}$, see e.g. \cite[Proposition 3.1(iii)]{GLT2},
and therefore $\omega\overline\omega\vert_G$ is the permutation character of $G$ acting on the ordered $2j$-tuples of vectors from $A$, and we add a factor of $2$ to go from $\GO$-orbits to $\SO$-orbits). It follows that
\begin{equation}\label{for-m30}
  (\sum^{M'}_{i=1}b_i)^2 \leq M'\sum^{M'}_{i=1}b_i^2 < 5.4q^j\sum^M_{i=1}a_i^2 < 65q^{2j(j+1)},
\end{equation}  
and so
\begin{equation}\label{so-dual3}
  D'_\al(1) \leq \sum^{M'}_{i=1}b_iq^{N(j-1)} < \sqrt{65q^{2j(j+1)}}q^{N(j-1)} \leq q^{N(j-1)+j(j+1)+2}.
\end{equation}

\smallskip
(b) We now complete the induction step for (i). Recall that $g$ 
has support $m \leq N/4$, which implies that $g$ has a 
primary eigenvalue $\lambda$ with 
$\lambda^2=1$, and 
$$\delta_A(g) = \dim\Ker(g-\lambda \cdot \Id_A)= N-m > N/2.$$
If $2|N$ or $\lambda=1$, we can multiply $g$ by the central element $\lambda \cdot  I_N$ of $G$, which changes $\chi(g)$ by a factor of 
$\beta=\chi(\lambda \cdot I_N)/\chi(1)$; in such a case we will now assume that $\lambda=1$ and $\beta=1$.
Consider any $s \in S = \Sp(B)$. By \cite[Lemma 8.2]{GLT1} we have 
\begin{equation}\label{so-chi10a}
  d_V(g \otimes s):=\dim \mathrm{Ker}({g\otimes s}-\Id_V) \leq 2j(N-m).
\end{equation}  
In fact, this bound is attained for $s_0=\lambda \cdot I_{2j}$. For all other elements $s\neq s_0$ in $S$ we have the stronger bound
\begin{equation}\label{SO-chi10}
  d_V(g \otimes s) \leq (N-m)(2j-2)+N = 2N^*+N, \mbox{ where }N^*= (N-m)(j-1).
\end{equation}  
Note that $|\omega(g \otimes s)|^2 = q^{d_V(g \otimes s)}$ by \cite[Proposition 3.1(i)]{GLT2}. Next, if $(e_1, \ldots,e_j,f_1, \ldots,f_j)$ is a
symplectic basis of $B$ (so that $B_1:=\langle e_1, \ldots,e_j \rangle_{\F_q}$ and $B_2:=\langle f_1, \ldots,f_j \rangle_{\F_q}$ are two 
complementary maximal totally isotropic subspaces of $B$), then $A \otimes_{\F_q}B_1$ and $A \otimes_{\F_q}B_2$ are two 
complementary maximal totally isotropic subspaces of $V$ which are both fixed by $g \otimes s_0$. Thus the element 
$g \otimes s_0$ is contained in the Levi subgroup $\GL_{Nj}(q)$ in the Siegel parabolic subgroup $\mathrm{Stab}_{\Gamma}(A \otimes_{\F_q}B_1)$. 
It is well known, see e.g. \cite[(13.3)]{Gr}, that the restriction of $\omega$ to the derived subgroup $\SL_{Nj}(q)$ of this Levi subgroup is just its permutation character 
on $\F_q^{Nj}$. If $2|N$ or $\lambda=1$, then $g \otimes s_0$ belongs to $\SL_{Nj}(q)$. It follows that  
$$\frac{\omega(g \otimes s_0)\al(s_0)}{\al(1) }= q^{(N-m)j} = \beta q^{(N-m)j}.$$ 
If $2 \nmid N$ and $\lambda=-1$, then 
$g \otimes s_0$ acts on $A \otimes_{\F_q} B_1$ with determinant $(-1)^{Nj}$; set $\beta_0$ to be the Legendre symbol
$\left(\frac{(-1)^j}{q}\right)$, and let $\beta=\beta_0\alpha(s_0)/\al(1)$. Then using \cite[(13.3)]{Gr} we get
$$\frac{\omega(g \otimes s_0)\al(s_0)}{\al(1)} = \beta q^{(N-m)j}.$$ 
By \eqref{so-dual1} we can now write
\begin{equation}\label{SO-chi11}
  \chi(g)=\DC_\al(g) = \frac{\al(1)}{|S|}\Bigl(  \beta q^{(N-m)j} + X \Bigr),
\end{equation}  
where 
$$X := \sum_{s_0 \neq s \in S}\frac{\omega(g \otimes s)\bar\al(s)}{\al(1)}-\frac{|S|}{\al(1)}D'_\al(g).$$
Recall that for $1 \leq i \leq M'$, $\theta_i$ has level $0 \leq l \leq j-1$ and degree $\leq q^{Nl}$, so by the induction hypothesis we have 
$$|\theta_i(g)| \leq \frac{\theta_i(1)}{q^{ml}}\bigl(1+q^{-N/4}\bigr).$$
Here, as $N \geq 16$ we have $1+q^{-N/4} \leq 1+q^{-4} \leq 1+1/81 < q^{0.1}.$ 
It follows that 
$$|\theta_i(g)| \leq q^{(N-m)l+0.1} \leq q^{N^*+0.1}.$$ 
Together with \eqref{for-m30}, this implies that
$$|D'_\al(g)| \leq \sqrt{65q^{2j(j+1)}}q^{N^*+0.1} < q^{N^*+j(j+1)+2}.$$
Furthermore, 
$$|S| = |\Sp_{2j}(q)| < q^{2j^2+j}.$$ 
Using \eqref{so-dual3} and \eqref{SO-chi10}, we have
$$|X| \leq q^{N^*+N/2+2j^2+j}+q^{N^*+3j^2+2j+2}.$$
Similarly, $\dim \Ker (\Id_A \otimes s-\Id_V) = N\dim\Ker(s-\Id_B) \leq N(2j-1)$ for $s \neq 1_S$, and so using \eqref{so-dual3} we have
\begin{equation}\label{SO-chi12}
  \chi(1)=\DC_\al(1)=\frac{\al(1)}{|S|}\Bigl(  q^{Nj} + Y \Bigr),
\end{equation}  
where
\begin{equation}\label{SO-chi12a} 
   |Y| := \Bigl{|}\sum_{1_S \neq s \in S}\frac{\omega(1_G\otimes s)\bar\al(s)}{\al(1)}-\frac{|S|}{\al(1)}D'_\al(1)\Bigr{|} \leq q^{N(j-1/2)+2j^2+j}+q^{N(j-1)+3j^2+2j+2}.
\end{equation}  
Now, \eqref{SO-chi11} and \eqref{SO-chi12} imply
$$\Bigm| \frac{q^{mj}\chi(g)}{\chi(1)}-\beta\Bigm| = \Bigm| \frac{\beta q^{Nj}+Xq^{mj}}{q^{Nj}+Y}-\beta\Bigm| = \Bigm|\frac{Xq^{mj}-\beta Y}{q^{Nj}+Y}\Bigm|.$$
Setting $R:= q^{N/2+2j^2+j}+q^{3j^2+2j+2}$, 
we have 
$$|Xq^{mj}-\beta Y| \leq R\bigl(q^{N^*+mj}+q^{N(j-1)}\bigr)=q^{N(j-1)}R(q^m+1),~~|q^{Nj}+Y| \geq q^{N(j-1)}(q^N-R).$$
Hence to prove \eqref{bound1}, it suffices to prove
$$R\bigl((q^m+1)q^{N/4}+1\bigr) < q^{N}.$$
Note that $q^m+1 \leq q^m(1+1/3) < q^{m+0.3}$, and $q^{m+N/4+0.3}+1  < q^{m+N/4+0.4}$ since $N \geq 16$ and $q \geq 3$. 
Also, $N/2 \geq 4j^2+3j+1$, whence $R \leq q^{N/2+2j^2+j+0.1}$. It follows that
$$R\bigl((q^m+1)q^{N/4}+1\bigr) < q^{3N/4+m+2j^2+j+0.5} \leq q^N$$
since $N \geq 8j^2+4j+4m+2$, and so we are done.

\smallskip
(c)  Next we complete the induction step for (ii); in particular $j \geq 2$. 
By \eqref{so-dual1} we have 
$$\chi(g)=\DC_\al(g)=  \frac{\al(1)Z}{|S|},$$
where 
$$Z := \sum_{s \in S}\frac{\omega(g \otimes s)\bar\al(s)}{\al(1)}-\frac{|S|}{\al(1)}D'_\al(g).$$
By \eqref{so-chi10a}, the first sum has absolute value at most $|S|q^{(N-m)j}$.
For $1 \leq i \leq M'$, $\theta_i$ has true level $0 \leq l \leq j-1$ and degree $\leq q^{Nl}$. If $l \geq 1$, then the induction hypothesis implies that
$$|\theta_i(g)| \leq q^{3-m}\theta_i(1) \leq q^{3-m+N(j-1)}.$$
Since $j \geq 2$, the same bound also holds for $l=0$.
Together with \eqref{for-m30}, this implies that
$$|D'_\al(g)| \leq \sqrt{65q^{2j(j+1)}}q^{N(j-1)+3-m} < q^{N(j-1)-m+j^2+j+5}.$$
It follows that
\begin{equation}\label{SO-chi12b}
  |Z| \leq q^{(N-m)j+2j^2+j}+q^{N(j-1)-m+3j^2+2j+5}.
\end{equation}  
Since \eqref{SO-chi12} also holds in this case, to prove \eqref{SO-bound2}, it suffices to prove
$$q^m|Z| +|Y| < q^{Nj}.$$
The assumptions on $m,N$ (and $j \geq 2$) imply that $(N-m)j+2j^2+j+m \leq Nj-1$, and 
$$\max(N(j-1)+3j^2+2j+5,N(j-1/2)+2j^2+j) \leq Nj-13.$$
Using \eqref{SO-chi12a} and \eqref{SO-chi12b}, we now have 
$$q^m|Z| +|Y| \leq q^{Nj}(q^{-1}+3q^{-13}) < q^{Nj},$$
and so we are done again.
\end{proof}

\begin{exa}\label{large-level}
{\em Let $G$ be any of the finite classical groups $\GL_N(q)$, $\GU_N(q)$, $\Sp_N(q)$, or $\SO^\varep_N(q)$, of (semisimple) rank $r$. Then the Steinberg 
character $\St$ of $G$ has level $r$ (see e.g. \cite[Example 3.11]{GLT1}), and it vanishes at any non-semisimple element $g$, whose support can 
be taken to be small or close to $N$. This example shows that one cannot remove the low-level assumption in Theorems~\ref{mainB}, \ref{mainC}.

Next, let $G = \GL_N(q)$ with $N \geq 2$, $(N,q) \neq (2,2)$, and let $\chi \in \Irr(G)$ be a unipotent Weil character of degree $(q^N-q)/(q-1)$. Then $\chi$ has level 
$1$ (see again \cite[Example 3.11]{GLT1}), and $\chi(g)=0$ for any regular semisimple element $g$ of order $q^{N-1}-1$ with $1$ as an eigenvalue, which has 
support $m=N-1$. This example shows that one cannot completely remove the constraint on $m$ set in Theorem~\ref{mainB}. Similar examples can 
be constructed for $\GU_N(q)$ and for groups in Theorem~\ref{mainC}.}
\end{exa}

\begin{exa}\label{level1}
{\em Let $q \geq 5$ be any odd prime power and let $1 \leq m \leq N$ be integers such that $N-2 \geq m \geq N(q-4)/(q-3)$ and $2|N$. 
Then we can construct an element 
$g \in \SO(V)\cong \SO^\pm_N(q)$, which has only eigenvalues $\lambda \in \F_q^\times \setminus \{-1\}$ as follows. Each 
$\lambda \in \F_q^\times \setminus \{\pm 1\}$ is an eigenvalue of $g$ with multiplicity $N-m = \dim \Ker(g-\lambda \cdot \Id_V)$. Next, $1$ is an eigenvalue of
$g$ with multiplicity $e:=N-(q-3)(N-m)$, and if $e > 0$, then $g$ has two Jordan blocks with eigenvalue $1$ of size $1$ and $e-1$, so that
$\dim \Ker(g- \Id_V) \leq 2$ and $\supp(g)=m$. In the notation of 
\cite[Table II]{LOST}, $\rho:=1_G+D^\circ_{1_S}+D^\circ_{\St}$ is the permutation character of $G$ acting on the set of singular $1$-spaces of $V$. For any 
$\lambda \in \F_q^\times \setminus \{\pm 1\}$, $\Ker(g-\lambda \cdot \Id_V)$ is totally singular, and each of its $1$-spaces is $g$-invariant, whence
$$\rho(g) \geq (q-3)(q^{N-m}-1)/(q-1).$$
On the other hand, using the notation of the proof of \cite[Propositions 5.7, 5.11]{LOST}, for any $x \in S=\Sp_2(q)$, we note that $|\omega_N(x\otimes g)|$ is at most 
$q^2$ when $x=I_2$ or $x$ is nontrivial unipotent, $q^{N-m}$ when $x$ has eigenvalues $\lambda^{\pm 1}$ with $\pm 1 \neq \lambda \in \F_q^\times$ (in which 
case $\St(x)=1$), and $1$ otherwise. It follows that
$$\begin{aligned} \bigl{|} D^\circ_{1_S}(g)-D^\circ_{\St}(g)\bigr{|}  & = \bigl{|} D_{1_S}(g)-D_{\St}(g)\bigr{|}=  \bigl{|} \frac{1}{|S|}\sum_{x \in S}\omega_N(x\otimes g)(1-\St(x))\bigr{|}\\
&  \leq 
   \frac{q^2(q-1)+q^2(q^2-1)}{q(q^2-1)} < q+1,\end{aligned}$$ 
and therefore that for $\chi:= D^\circ_{1_S}$, $\chi(g) \geq (q-3)(q^{N-m}-1)/2(q-1) - (q+2)/2 \approx q^{N-m}/2$. Since $\chi(1) \approx q^{N-3}$, we see
that $\chi(g)/\chi(1)$ satisfies a lower bound on the order of $q^{3-m}/2$.

A similar example can be constructed for $2 \nmid N$.}
\end{exa}

Recall that the {\it $U$-rank} $\mathsf{r}(\chi)$ of a character $\chi$ of a finite classical group was defined in \cite{GH2} and \cite[Definition 4.1]{GLT2}.  Roughly, the idea is to restrict $\chi$ to a subgroup of $G$ of the form 
$U=\left\{\begin{pmatrix}I_{N/2}&*\\ 0&I_{N/2}\end{pmatrix}\right\}$, identify the irreducible characters of $U$ with $N/2\times N/2$-matrices, and take the maximum rank which appears in the decomposition.

\begin{cor}\label{so-rank}
In the situation of Theorem \ref{d-head}, respectively Theorem \ref{b-head}, the $U$-rank of $\chi$ is $2n'=2\lev(\chi)$.
\end{cor}

\begin{proof}
Note that tensoring $\chi$ with a linear character of $G/[G,G]$ does not change $\mathsf{r}(\chi)$. So we may assume that $(\chi,\chi')$ appears in
the Howe correspondence for $(G,S)$ with $S = \Sp_{2n'}(q)$ and $n'=\lev(\chi)$: $\chi = \DC_\al$ for some $\al \in \Irr(S)$, in the notation of \eqref{so-dual1}. 
By \cite[Corollary 4.7]{GLT2}, some irreducible constituent $\chi_0$ of $D_\al$ has $U$-rank $2n'$. On the other hand, by Theorems \ref{d-head} and \ref{b-head},
any irreducible constituent $\theta$ of $D_\al-\chi$ has level $j \leq n'-1$, and hence, possibly tensored with a linear character of $G/[G,G]$, appears in
the Howe correspondence for $(G,\Sp_{2j}(q))$. The latter implies $\mathsf{r}(\theta) \leq 2j < 2n'$ by the proof of \cite[Corollary 4.4]{GLT2}. It follows
that $\chi_0=\chi$ and thus $\mathsf{r}(\chi)=2n'$.
\end{proof}

Using Corollary \ref{so-rank}  one obtains a {\it lower} bound on $\chi(1)$ in terms of its level $n'=\lev(\chi)$ and its
Howe correspondent $\chi'$. Since we 
do not need this bound in the sequel, we formulate it only in the case of groups of split type D.

\begin{cor}\label{so-bound-split}
In the situation of Theorem \ref{d-head} with $\varep=+$ we have
$$\chi(1) \geq \chi'(1)\frac{|\GL_{n}(q)|}{q^{n(n-2n')}|\Sp_{2n'}(q)|}.$$
\end{cor}

\begin{proof} 
As in the proof of Corollary \ref{so-rank}, we may assume that $(\chi,\chi')$ appears in
the Howe correspondence for $(G,S)$ with $S = \Sp_{2n'}(q)$ and $n'=\lev(\chi)$.
By \cite[Lemma 4.3]{GLT2}, the restriction of $\chi$ to the Siegel parabolic subgroup $P_n= U \rtimes L$ of $G \in \{\GO^+_{2n}(q), \SO^+_{2n}(q)\}$ 
also has $U$-rank $\mathsf{r}(\chi)=2n'$. The idea is to count the total multiplicity in $\chi$ of $U$-characters $\lambda$ of rank $2n'$. For any such $\lambda$, by \cite[Proposition 4.6]{GLT2}, the restriction of the Weil representation with character $\omega$ of 
$\Sp_{4nn'}(q)$ to $U \times S$, with $S:=\Sp_{2n'}(q)$, contains $\lambda \boxtimes \mathsf{reg}_S$, where $\mathsf{reg}_S$ is the regular representation of
$S$. The $\chi'$-isotypic component of the latter has dimension $\chi'(1)$, showing that the multiplicity of $\lambda$ in $\chi\vert_{U}$ is at least $\chi'(1)$. The 
stabilizer in the Levi subgroup $L$ of $\lambda$ has order $q^{n(n-2n')}|\Sp_{2n'}(q)|$, and hence the statement follows.  
\end{proof}

\begin{proof}[Proof of Theorem C] 
We assume $n_0 \geq 3$ to avoid $G \cong \SL_2(q)$. By \cite[Theorem 5.5]{LT3}, there exists a universal constant $\sigma > 0$ such that 
$|\chi(g)|/\chi(1) < \chi(1)^{-\sigma \supp(g)/N}$. Hence, if $\chi(1) > q^{N/\sigma}$ then $|\chi(g)|/\chi(1) < q^{-m}$. Assume now $\chi(1) \leq q^{N/\sigma}$, i.e.
$$\log_q \chi(1) \leq N/\sigma < N^{4/3}$$
(when $N$ is large enough; also if $G = \Omega^\eps_N(q)$ we replace $G$ by $\SO^\eps_N(q)$ and $\chi$ by an 
irreducible character of $\SO^\eps_N(q)$ above it). Applying \cite[Theorem 1.3]{GLT1} when $G = \SL$, $\SU$, and Theorem \ref{lev-deg} in the orthogonal cases,
we deduce that $\chi$ has bounded level $j \leq n_1$ (for some $n_1 \in \Z$ that depends on $\sigma$). 
In particular, when $N \geq 8n_1^2+6n_1+2$, 
$\chi$ extends to a character $\tilde \chi$ of the same (true) level of 
$\tilde G$, where 
$\tilde G = \GL_N(q)$ if $G=\SL_N(q)$ (by \cite[Proposition 5.10]{KT}), $\tilde G = \GU_N(q)$ if 
$G = \SU_N(q)$ (by \cite[Theorem 3.9]{LOST2}), and $\tilde G = \GO^\eps_N(q)$ in the orthogonal cases (by Corollary \ref{irr-restr}). 
If $j = 1$, or if $j \geq 2$ but $m \geq 2n_1+7$, then we are done by 
Theorems \ref{slu-bound-main}(ii) and \ref{so-bound}(ii). In the remaining case where $2 \leq j \leq n_1$ and $m \leq 2n_1+6$, we 
finish by applying Theorems \ref{slu-bound-main}(i) and \ref{so-bound}(i). Finally, when $N$ is large enough, $\lev(\chi)=1$ is equivalent
to $1 < \chi(1) < q^N$ by the main result of \cite{TZ96}.
\end{proof}

\section{Thompson's conjecture for $\Omega_{2p+1}(q)$}

\begin{lem}\label{cent-size}
Let $q$ be an odd prime power bounded from the above by any constant $q_0$. Then there is a constant $C=C(q_0)$ such that the following statement holds 
for any $n \in \Z_{\geq 1}$. If $g \in G=\GO^\pm_n(q)$ has support $m \geq 2$, then 
$$|\CB_G(g)| \leq Cq^{n(n-5)/2}.$$
If $g \in G$ has support $1$, then $g$ is a reflection up to sign.
\end{lem}

\begin{proof}
By \cite[Proposition 6.4(b)]{LT3}, $|\CB_G(g)| \leq q^{n(n-m)/2+n/6} < q^{n(n-5)/2}$ when $m \geq 6$. 

In the remaining cases $1 \leq m \leq 5$, let $g_{\rm ss}$, respectively $g_{\rm u}$ denote the semisimple part of $g$, respectively the unipotent part of
$g$. Since $q \leq q_0$, by enlarging $C$ if necessary, it suffices for us to show that 
$$D(g) \leq n(n-5)/2+O(1),$$ 
where  $D(g)$ denotes the dimension of the centralizer
of $g$ in the orthogonal group $\cG=\GO_n(\overline{\F_q})$. Suppose first that $g_{\rm ss} \in \ZB(G)$. Then we may assume 
that $g$ is unipotent, and use \cite[Theorem 7.1]{LiSe} to bound $D(g)$. Denoting the Jordan block of size $i$ and eigenvalue $1$
by $J_i$, and writing $g = \oplus_i J_I^{r_i}$ we have 
$$m=\sum_i(i-1)r_i,$$
and furthermore $2|r_i$ whenever $2|i$ \cite[Corollary 3.6]{LiSe}. It follows that $2|s$, and hence we have 
the following possibilities for $g$. 
\begin{itemize}
\item $g=J_1^{n-3}\oplus J_3^1$. Then $m=2$ and $D(g) = n(n-5)/2+4$.
\item $g=J_1^{n-4}\oplus J_2^2$. Then $m=2$ and $D(g) = n(n-5)/2+6$.
\item $g=J_1^{n-5}\oplus J_5^1$. Then $m=4$ and $D(g) = n(n-9)/2+12$.
\item $g=J_1^{n-6}\oplus J_3^2$. Then $m=4$ and $D(g) = n(n-9)/2+14$.
\item $g=J_1^{n-7}\oplus J_2^2 \oplus J_3^1$. Then $m=4$ and $D(g) = n(n-9)/2+16$.
\item $g=J_1^{n-8}\oplus J_2^4$. Then $m=4$ and $D(g) = n(n-9)/2+20$.
\end{itemize}

Now we may assume $g_{\rm ss} \notin \ZB(G)$. In particular, if $m=1$ then $g=g_{\rm ss}$ is a reflection up to sign. 
Also, if $\CB_\cG(g)$ preserves a decomposition of the natural $\cG$-module $V$ into an orthogonal sum of non-degenerate 
$k$-dimensional and $n-k$-dimensional subspaces, with $2 \leq k \leq n-2$, then
$$D(g) \leq k(k-1)/2+(n-k)(n-k-1)/2 \leq n(n-5)/2+4.$$
On the other hand, we have the $\CB_{\cG}(g)$-invariant orthogonal decomposition $V = V_1 \oplus V_{-1} \oplus V_0$, where 
$g_{\rm ss}$ acts as the scalar $\alpha \in \{\pm 1\}$ on $V_{\alpha}$ and has no eigenvalue $\pm 1$ on $V_0$. Hence the previous 
remarks allow us to assume that $\dim V_i \in \{0,1,n-1\}$ for $i \in \{0,\pm 1\}$. Also note that $2|(\dim V_0)$. If moreover $\dim V_0 = n-1$ then 
$$D(g) \leq ((n-1)/2)^2+1 < n(n-5)/2$$ 
when $n \geq 9$. Thus we may now assume that $V_0=0$ and $\dim V_1=n-1$. If $g_{\rm u}$ acts trivially on $V_1$ then $m=1$. 
Otherwise we have $m \geq 2$, and $\CB_{\GO(V_1)}(g)$ has codimension at least $n-O(1)$ in $\GO(V_1)$, and hence 
$$D(g) \leq (n-1)(n-2)/2+1-n+O(1) = n(n-5)/2+O(1).$$
\end{proof}

Let $p \geq 7$ be any prime, and let $q\in \{3,5\}$.
In this section we show that Thompson's conjecture
holds for the simple group $\Omega_{2p+1}(q)$ as long as $p$ is sufficiently large. 
Together with the main result of \cite{EG}, this implies Thompson's conjecture for all sufficiently large odd-dimensional orthogonal simple groups 
in the case that the rank is a prime.

We will work inside the special orthogonal group $G = \SO_{2p+1}(q)$ and consider any element $x \in S=\Omega_{2p+1}(q)$ which has some  fixed
$\xi \in \F_{q^{2p}} \setminus (\F_{q^p} \cup \F_{q^2})$ as an eigenvalue. Note that $\xi$ has degree $2p$ over $\F_q$, and the orbit of $\xi$ under the map
$\lambda \mapsto \lambda^{q}$ has length $2p$ and contains $\xi^{-1}$. It follows that $x$ is regular semisimple; furthermore it is real in $S$ by 
\cite[Proposition 3.1]{TZ}. This element is the one denoted by $t_p$ in \S\ref{sec:unip} with $\lambda=\emptyset$.

\begin{thm}\label{so2p1}
There is an explicit absolute constant $A > 0$ such that the following statement holds when the prime $p$ is at least $A$. If $C$ denotes the 
conjugacy class of the image of the element $x$ described above in the simple group $S = \Omega_{2p+1}(q)$, then
$S=C^2$. 
\end{thm}

The rest of the section is devoted to the proof of Theorem \ref{so2p1}.
The chosen element $x$ has 
$$T:=\CB_G(x) \cong C_{q^{p}+1}.$$ 
Note that $T \cap S$ has index $2$ in $T$; indeed, the subgroup $C_{q+1}$ in $T$ can be identified with the diagonal subgroup of
$$\underbrace{\SO^-_2(q) \times \SO^-_2(q) \times \ldots \times \SO^-_2(q)}_{p~{\rm times}}$$
inside $G$. Hence the conjugacy class $x^G$ is a single $S$-conjugacy class. As $x$ is real,
it suffices to show that any nontrivial element $g \in S$ 
is a product of two $G$-conjugates of $x$.
By the extension \cite[Lemma 5.1]{GT1} of Gow's lemma \cite{Gow} (see also \cite{GT3} for further extensions of it), the statement holds in the case
$g$ is semisimple. So we may assume that $g$ is not semisimple, and furthermore  
$$\supp(g) \geq 2$$
(by the proof of Lemma \ref{cent-size}).

As $x$ is real, by Frobenius' formula it suffices to prove 
\begin{equation}\label{SO-sum10}
  \sum_{\chi \in \Irr(G)}\frac{|\chi(x)|^2\bar\chi(g)}{\chi(1)} \neq 0.
\end{equation}  
The summation in \eqref{SO-sum10} includes two linear characters, which both take value $1$ at all elements of 
$S$. Hence it suffices to show
\begin{equation}\label{SO-sum11}
\Bigm|\sum\nolimits^*\frac{|\chi(x)|^2\chi(g)}{\chi(1)}\Bigm| < 2,
\end{equation} 
where $\sum^*$ indicates the sum over $\chi\in\Irr(G)$ such that $\chi(1)>1$, $\chi(x)\neq 0$, and $\chi(g)\not\in [0,\infty)$.
We denote by $\Sigma^*$ the sum in \eqref{SO-sum11}.

The dual group $G^*$ can be identified with $\Sp_{2p}(q)$. Consider any irreducible character $\chi$ in the Lusztig series $\cE(G,s)$, where 
$s \in G^*$ is a semisimple element. The character formula \cite[Proposition 10.1.12]{DM} shows that the generalized Deligne--Lusztig character
$R^G_{T'}(\theta)$ can be non-vanishing on $x$ only when a $G$-conjugate of $x$ is contained in the maximal torus $T'$ of $G$, which means that
$T'$ is conjugate to $T$. It follows that $\chi(x)$ can be nonzero only when (a conjugate of) $s$ is contained in a torus
$$T^* \cong C_{q^p+1}$$
dual to $T$.

Consider the special case where $L^*:=\CB_{G^*}(s)$ is a (proper) Levi subgroup
of $G^*$. Let $L$ be a Levi subgroup of $G$ dual to $L^*$, and let $\hat{s}$ denote the linear character of $L$ in the Lusztig series 
$\cE(L,(s))$. By \cite[Theorem 11.4.3]{DM},  
\begin{equation}\label{SO-sum12}
  \chi = \pm R^G_L(\hat{s}\psi),
\end{equation} 
where $\psi$ is a unipotent character of $L$. Let $\St_G$ and $\St_L$ denote the Steinberg character of $G$, respectively of
$L$. By \cite[Corollary 10.2.10(ii)]{DM},
\begin{equation}\label{SO-sum13}
  \St_G\cdot \chi = \pm \Ind^G_L(\hat{s}\psi\cdot\St_L).
\end{equation}  

Assume now that $\chi(x) \neq 0$. As $x$ is regular semisimple, 
$$\St_G(x) = \pm 1.$$
Now using the primality of $p$, we see that 
the characters $\chi \in \Irr(G)$ that are non-vanishing at $x$ are divided into the following types:

\begin{enumerate}[\rm(I)]
\item Unipotent characters, multiplied possibly by a linear character of $G$ (here $s \in \ZB(G^*)$).
\item Characters $\chi$ in \eqref{SO-sum12} with $s$ belonging to the unique subgroup $C_{q+1}$ of $T^*$, but not in $\ZB(G^*) \cong C_2$.
For any such $s$, $\CB_{G^*}(s) = L^* \cong \GU_p(q)$, a Levi subgroup of $G^*$.
\item Characters $\chi$ in \eqref{SO-sum12} with $s \in T^* \setminus C_{q+1}$.
Any such $s$ is regular semisimple, with $\CB_{G^*}(s) =T^*$.
\end{enumerate} 

Let $m$ denote the support of $g$, which is also $\supp(zg)$ for any $z \in \ZB(G)$. We will use the character bound \cite[Theorem 5.5]{LT3}
\begin{equation}\label{SO-lt-bound}
  \frac{|\chi(g)|}{\chi(1)} \leq \chi(1)^{-\sigma m/N}
\end{equation}
for an explicit absolute constant $\sigma > 0$, and 
$$N:=2p+1.$$

\begin{lem}\label{SO-type1}
There is an absolute constant $N_1>0$ such that when $N \geq N_1$, the total contribution of characters of type {\rm (I)} to $\Sigma^*$ has absolute value 
$< 1$. 
\end{lem}

\begin{proof}
It suffices to prove that the contribution of unipotent characters $\chi$ of type {\rm (I)} to $\Sigma^*$ has absolute value 
$< 1/2$. By Proposition \ref{unip-regBC}, for each level $0 < j < N$ there are at most two unipotent characters $\chi$ of level $j$ that do not vanish at $x$, 
furthermore, 
$$|\chi(x)| = 1.$$
  
First we consider those $\chi$ with $\chi(1) \geq q^{N^{5/4}}$. 
Then by \eqref{SO-lt-bound} the total contribution of 
these unipotent characters to $|\Sigma^*|$ is at most
$$2Nq^{-\sigma N^{1/4}} < 1/6$$
when $N \geq N_1$ and $N_1$ is chosen large enough.

Now we look at the characters $\chi$ with $\chi(1)  < q^{N^{5/4}}$.
Choosing $N_1$ sufficiently large, when $N \geq N_1$ we have $1 \leq j=\lev(\chi) \leq N^{1/4}$ by Theorem \ref{lev-unip}. 
Suppose that 
$$m=\supp(g) \geq N^{1/4}.$$ 
Since $\chi(1) \geq q^{N/2}$, by \eqref{SO-lt-bound}, the total contribution of these unipotent characters to $|\Sigma^*|$ is at most
$$2N^{1/4}q^{-N^{1/4}\sigma/2} < 1/6$$
when $N_1$ is large enough, yielding the claim in this case.

In the remaining case, we have $m \leq N^{1/4}$ and $j=\lev(\chi) \leq N^{1/4}$. Note that any linear character of $G$ takes value $1$ at 
$g \in S$, hence $\chi(g)$ is not changed when we multiply $\chi$ by such a character. Therefore, by Theorem~\ref{mainC},
$$\frac{\chi(g)}{\chi(1)}=\e_\chi + \beta q^{-mj},$$
where $\beta= \beta(\chi,g) = \pm 1$ and $|\e_\chi| < q^{-N/4}$. As $m \geq 2$ and $q \geq 3$, we have
$$\Bigm| \sum_{j \geq 1}q^{-mj}\Bigm| \leq \sum_{j \geq 1}9^{-j} = \frac{1}{8}.$$ 
Choosing $N_1$ large enough, we also have
$$\Bigm| \sum_{1 \leq j \leq N^{1/4}}\eps_\chi\Bigm| \leq 3^{-N/4}N^{1/4} < \frac{1}{24}.$$
It follows that the contribution of these unipotent characters to $|\Sigma^*|$ is less than 
$$2\bigl(\frac 18+\frac 1{24}\bigr) = \frac13,$$ 
and so we are done in this case as well.
\end{proof}

\begin{lem}\label{SO-type2}
There is an absolute constant $N_2>0$ such that when $N \geq N_2$, the total contribution of characters of type {\rm (II)} to $\Sigma^*$ has absolute value 
$< 1/2$. 
\end{lem}

\begin{proof}
Each of these characters can be written in the form \eqref{SO-sum12}, where $L=\GU_p(q)$ is unique up to conjugacy in $G$. We may assume that 
$x \in L$. Note that $L = \CB_G(t)$, where $t$ has order $q+1$. Thus $t$ is an element of order $q+1$ in $T = \CB_G(g) \cong C_{q^p+1}$. It follows 
that $\langle t \rangle = \langle x^A \rangle$, where $A:=(q^p+1)/(q+1)$, and without loss we may assume $t=x^A$. 
We claim that there are at most $q|L|$ elements $h \in G$ such 
that $hxh^{-1} \in L$. Indeed, for any such $h$, $\langle t \rangle$ is the unique $C_{q+1}$ subgroup in $\CB_G(hxh^{-1})$, which is 
$\langle hx^Ah^{-1} \rangle$. It follows that there is some $1 \leq i \leq q$ coprime to $q+1$ such that 
$$t^i=x^{Ai} = hx^Ah^{-1}=hth^{-1}.$$
For any fixed $i$, any two choices for such $h$ will differ by some element in $\CB_G(t)=L$, yielding the claim.

Next, for any such $h$, $hxh^{-1}$ is a regular semisimple element in $L$ labeled by the $p$-cycle element in the Weyl group of type $A_{p-1}$.
Hence, by \cite[Corollary 3.1.2]{LST}, there are exactly $p=(N-1)/2$ unipotent characters $\psi$ of $L$ that are nonzero at $hxh^{-1}$ (namely the ones labeled 
by hook partitions of $p$), and then 
$\psi(hxh^{-1}) = \pm 1$. It follows from \eqref{SO-sum12} that 
$$|\chi(x)| \leq q \leq 5.$$

There are at most $(q+1)-2 \leq 4$ choices for the element $s$ (up to conjugacy). 
Thus there are fewer than $2N$ characters of type (II) with $\chi(x) \neq 0$. Each of them has degree
$$\chi(1) \geq [G:L]_{q'} = (q-1)(q^2+1)(q^3-1) \ldots (q^{p}-1) > q^{(N^2-9)/8}.$$
Hence, by \eqref{SO-lt-bound}, the total contribution of 
these characters to $|\Sigma^*|$ is at most
$$50Nq^{-\sigma N/9} < \frac 12$$
when $N \geq N_2$ and $N_2$ is large enough.
\end{proof}

To complete the proof of \eqref{SO-sum11} and of Theorem \ref{so2p1}, it remains to prove:

\begin{lem}\label{type3}
There is an absolute constant $N_3>0$ such that when $N \geq N_3$, the total contribution of characters of type {\rm (III)} to $\Sigma^*$ has absolute value $< 1/2$. 
\end{lem}

\begin{proof}
Recall that $\supp(g) \geq 2$, so by Lemma \ref{cent-size} (with $q_0=5$) there exists an absolute constant $C > 0$ such that
$$|\chi(g)| \leq \sqrt{|\CB_G(g)|} \leq Cq^{p^2-1.5p}$$
for any $\chi \in \Irr(G)$. 

Each $\chi$ of type (III) can be written in the form $\pm R^G_T(\theta)$ for some $\theta \in \Irr(T)$. Hence, by \cite[Proposition 7.1]{LTT}
we have the bound
$$|\chi(x)| \leq 2p,$$
and furthermore
$$\chi(1) = [G:T]_{q'} = (q^2-1)(q^4-1) \ldots (q^{2p}-1)/(q^{p}+1) > q^{p^2-1}.$$
There are fewer than $q^p$ choices for $\theta$. Hence the total contribution of these characters to $|\Sigma^*|$ is at most
$$4p^2Cq^pq^{-1.5p+1}  \leq 4p^2Cq^{1-0.5p}< \frac12$$
when $N \geq N_3$ and $N_3$ is large enough.
\end{proof}

\section{Thompson's conjecture for $P\Omega^\pm_{2p+2}(q)$}

Let $p \geq 7$ be any prime, and let $q\in \{3,5\}$.
In this section we show that Thompson's conjecture
holds for the simple groups $P\Omega^\varep_{2p+2}(q)$, $\varep=\pm$,  as long as $p$ is sufficiently large. 
Together with the main result of \cite{EG} and \cite[Theorem 7.7]{LT3}, this implies Thompson's conjecture for all sufficiently large even-dimensional orthogonal simple groups 
in the case that the rank is a prime plus one.

We begin with some preliminary facts.

\begin{lem}\label{rational}
Let $\cG$ be a connected reductive algebraic group over a field of positive characteristic
with a Steinberg endomorphism $F$, and let
$(\cG^*,F^*)$ be dual to $(\cG, F)$. Consider the finite group $G = \cG^F$ and its dual group $G^* = (\cG^*)^{F^*}$. Suppose that 
$\chi$ belongs to the rational Lusztig series $\cE(G,(s))$ labeled by a semisimple element $s \in G^*$. If $g \in G$ is a semisimple 
element of order coprime to $|s|$, then $\chi(g)$ is rational.
\end{lem}

\begin{proof}
By \cite[Lemma 4.1]{TZ2}, on restriction to the set of semisimple elements of $G$, $\chi$ is a $\Q$-linear combination of 
the Deligne--Lusztig characters $R_{\cT,\theta}$ that belong to $\cE(G,(s))$. The latter condition implies by \cite[Lemma 2.1]{H}
that the character $\theta$ of the subgroup $\cT^F$ has order $m:=|s|$. Since the order $|g|$ is coprime to $m$, the character formula
for $R_{\cT,\theta}$ shows that it takes a rational value at $g$ (see e.g. part 1) of the proof of \cite[Theorem 4.2]{TZ2}).
\end{proof}

\begin{lem}\label{congr}
Let $\ell$ be a prime, $G$ a finite group, and $g \in G$ any element of order $\ell$. Suppose that $\chi$ is a complex (not necessarily irreducible) character
of $G$ and $\chi(g) \in \Z$. Then $\chi(g) \equiv \chi(1) \pmod{\ell}$. If in addition $|\chi(g)| < \ell$, then $\chi(g)=0$ if and only if $\ell|\chi(1)$.
\end{lem}

\begin{proof}
Let $\zeta \in \C^\times$ denote a primitive $\ell^{\mathrm {th}}$ root of unity. Then $\chi(g) \equiv \chi(1) \pmod{(1-\zeta)}$ in $\Z[\zeta]$, whence 
the first statement follows. In particular, if $\chi(g)=0$, then $\ell|\chi(1)$. Conversely, if $\ell|\chi(1)$, then $\ell|\chi(g)$, and hence the second statement follows. 
\end{proof}

%

For a given $\varep=\pm$, we will work inside the special orthogonal group $G = \SO^\varep_{2p+2}(q)$. Again according to \cite{Zs}, $q^{2p+2}-1$ admits a
primitive prime divisor $\ell$ (i.e. one that does not divide $\prod^{2p+1}_{i=1}(q^i-1)$).
Consider an element 
$$x \in \Omega^-_{2p}(q) \hookrightarrow S=\Omega^\varep_{2p+2}(q)$$ 
of order $\ell$, which has some fixed
$\xi \in \F_{q^{2p}} \setminus (\F_{q^p} \cup \F_{q^2})$ of order $\ell$ as an eigenvalue. 
Note that $\xi$ has degree $2p$ over $\F_q$, and the orbit of $\xi$ under the map
$\lambda \mapsto \lambda^{q}$ has length $2p$ and contains $\xi^{-1}$. 
It follows that $x$ is regular semisimple in $S$; furthermore it is real in $S$ by 
\cite[Proposition 3.1]{TZ}. This element is the one denoted by $t_{p}$ in \S\ref{sec:unip} (with $\lambda=\emptyset$ or $(1)$ depending on whether 
$\varep=+$ or $\varep=-$). Moreover, in the terminology of \cite[\S4]{LTT}, $x$ has the {\it signed cycle type} $(2,-2p)$; in particular, it has (two)
pairwise distinct cycles.

\begin{thm}\label{so2p2}
There is an explicit absolute constant $A > 0$ such that the following statement holds when the prime $p$ is at least $A$ and $\varep=\pm$. If $C$ denotes the 
conjugacy class of the image of the element $x$ described above in the quasisimple group $S = \Omega^\varep_{2p+2}(q)$, then
$C^2=S \smallsetminus \{-\mathrm{Id}\}$ if $\varep=+$ and $C^2=S$ if $\varep=-$. 
\end{thm}

The rest of the section is devoted to the proof of Theorem \ref{so2p2}.
The chosen element $x$ has 
$$T:=\CB_G(x) = T_1 \times T_2 \cong C_{q^{p}+1} \times C_{q+\varep}.$$ 
Note that $G = S \times \langle -\mathrm{Id} \rangle$ and $\ZB(S)=1$ if $\varep=-$. If $\varep=+$, then 
$\ZB(G)=\ZB(S) = \langle -\mathrm{Id} \rangle$. Next, 
$T \cap S$ has index $2$ in $T$. More concretely, the first direct factor $T_1 \cong C_{q^p+1}$ in $T$ has a unique cyclic subgroup $C_{q+1}$ which 
can be identified with the diagonal subgroup $T_{11}$ of
$$\underbrace{\SO^-_2(q) \times \SO^-_2(q) \times \ldots \times \SO^-_2(q)}_{p~{\rm times}}$$
inside $\SO^-_{2p}(q)$, and the second direct factor $T_2 \cong C_{q+\varep}$ can be identified with $\SO^{-\varep}_2(q)$, where we embed
naturally $\SO^-_{2p}(q) \times \SO^{-\varep}_2(q)$ in $G$.
Hence the conjugacy class $x^G$ is a single $S$-conjugacy class. As $x$ is real,
it suffices to show that any element $g \in S \smallsetminus \ZB(G)$ 
is a product of two $G$-conjugates of $x$.
By the extension \cite[Lemma 5.1]{GT1} of Gow's lemma \cite{Gow} (see also \cite{GT3} for further extensions of it), the statement holds in the case
$g$ is semisimple. So we may assume that $g$ is not semisimple, and furthermore  
$$\supp(g) \geq 2$$
(by the proof of Lemma \ref{cent-size}).

As $x$ is real, by Frobenius' formula it suffices to prove 
\begin{equation}\label{sum20}
  \sum_{\chi \in \Irr(G)}\frac{|\chi(x)|^2\bar\chi(g)}{\chi(1)} \neq 0.
\end{equation}  
The summation in \eqref{sum20} includes two linear characters, which both take value $1$ at all elements of 
$S$, and a priori the sum is a rational number. Hence it suffices to show
\begin{equation}\label{SO-sum21}
 \Re\Bigm(\sum\nolimits^*\frac{|\chi(x)|^2\chi(g)}{\chi(1)}\Bigm) > -2,
\end{equation} 
where $\sum^*$ indicates the sum over $\chi\in\Irr(G)$ such that $\chi(1)>1$ and $\chi(x)\neq 0$.
We denote by $\Sigma^*$ the sum in \eqref{SO-sum21}.

Note that the dual group $G^*$ can be identified with $G$, and we will do so for the rest of the proof. 
Consider any irreducible character $\chi$ in the Lusztig series $\cE(G,s)$, where 
$s \in G$ is a semisimple element. The character formula \cite[Proposition 10.1.12]{DM} shows that the generalized Deligne--Lusztig character
$R^G_{T'}(\theta)$ can be non-vanishing on $x$ only when a $G$-conjugate of $x$ is contained in the maximal torus $T'$ of $G$, which means that
$T'$ is conjugate to $T$. It follows that $\chi(x)$ can be nonzero only when (a conjugate of) $s$ is contained in a torus
$$T^* \cong C_{q^p+1} \times C_{q+\varep}$$
dual to $T$. Without loss of generality, and using the identification of $G^*$ with $G$, we may assume that $s \in T$.
Using the aforementioned decomposition $T = T_1 \times T_2$, we will write 
$$s = \diag(s_1,s_2)$$ 
with $s_i \in T_i$ for $i=1,2$.

Consider the special case where $L:=\CB_{G}(s)$ is a (proper) Levi subgroup
of $G$, which happens precisely when $|s| > 2$,
and let $\hat{s}$ denote the linear character of $L$ in the Lusztig series 
$\cE(L,(s))$. By \cite[Theorem 11.4.3]{DM},  is 
\begin{equation}\label{sum22}
  \chi = \pm R^G_L(\hat{s}\psi),
\end{equation} 
where $\psi$ is a unipotent character of $L$. Let $\St_G$ and $\St_L$ denote the Steinberg character of $G$, respectively of
$L$. By \cite[Corollary 10.2.10(ii)]{DM},
\begin{equation}\label{sum23}
  \St_G\cdot \chi = \pm \Ind^G_L(\hat{s}\psi\cdot\St_L).
\end{equation}  

Assume now that $\chi(x) \neq 0$. As $x$ is regular semisimple, 
$$\St_G(x) = \pm 1.$$
Now using the primality of $p$, we see that 
the characters $\chi \in \Irr(G)$ that are non-vanishing at $x$ are divided into the following types:

\begin{enumerate}[\rm(I)]
\item Unipotent and quadratic-unipotent characters, multiplied possibly by a linear character of $G$. Here we have $|s| \leq 2$.
\item Characters $\chi$ in \eqref{sum22}, where $s_1 \in T_{11}$ has order at most $2$ but $s_2 \in T_2$ has order larger than $2$.
For any such $s$, $L=\CB_{G}(s) \cong \SO^-_{2p}(q) \times T_2$, a Levi subgroup of $G$.
\item Characters $\chi$ in \eqref{sum22} with $s_1$ belonging to the unique subgroup $T_{11} \cong C_{q+1}$ of $T_1$, but $|s_1| >2$.
For any such $s$, $s_1$ has spectrum $(\alpha, \ldots,\alpha,\alpha^{-1}, \ldots, \alpha^{-1},1,1)$ on the natural module 
$V=\F_q^{2p+2}$ for $G$, where $\alpha \in \C^\times$ and $\alpha^{q+1}=1 \neq \alpha^2$. Furthermore, the spectrum of $s_2$ on 
$V$ is $(1, \ldots,1,\beta,\beta^{-1})$, where $\beta \in \C^\times$ and $\beta^{q+\varep}=1$. If $\beta \notin \{\alpha,\alpha^{-1}\}$ then 
$L=\CB_{G}(s) \cong \GU_p(q) \times T_2$. If $\beta \in \{\alpha,\alpha^{-1}\}$ (which can happen only when $\varep=-$), then 
$L=\CB_G(s) \cong \GU_{p+1}(q)$. In either case, $L$ is a Levi subgroup of $G$.
\item Characters $\chi$ in \eqref{sum22} with $s_1 \in T_1 \setminus T_{11}$.
Any such $s$ is regular semisimple, with $\CB_{G}(s) =T$.
\end{enumerate} 

Let $m$ denote the support of $g$, which is also $\supp(zg)$ for any $z \in \ZB(G)$. We will use the character bound \cite[Theorem 5.5]{LT3}
\begin{equation}\label{SO-lt-bound2}
  \frac{|\chi(g)|}{\chi(1)} \leq \chi(1)^{-\sigma m/N}
\end{equation}
for an explicit absolute constant $\sigma > 0$, and 
$$N:=2p+2.$$

\begin{prop}\label{d-type1}
There is an absolute constant $N_1>0$ such that when $N \geq N_1$, the total contribution of characters of types {\rm (I)} and 
{\rm (II)} to $\Sigma^*$ has real part greater than $-1$. 
\end{prop}

\begin{proof}
(a) Recall that replacing $s$ by $-s$ is equivalent to tensoring $\chi$ with the unique linear character of $G$ of order $2$. 
Hence it suffices to prove that the contribution to $\Sigma^*$ of characters $\chi$ that correspond to $s$ with $s_1=1$ has real part
greater than $-1/2$. 

Here we have three cases: $s_2=1$, $|s_2|=2$, and $|s_2| > 2$. In all cases, $|s|$ divides $q^2-1$ and so is coprime to $\ell$. Hence by Lemma \ref{rational} we have
$$\chi(x) \in \Z.$$

\smallskip
(a1) In the first case, $\chi$ is unipotent, and, by Proposition \ref{unip-regD}, for each level $0 < j \leq p$ there are $c_j \leq 2$ two unipotent characters $\chi$ of level $j$ that do not vanish at $x$ ($c_j=1$ if $j=1$ and $c_j \leq 2$ if $j>1$);
furthermore, 
\begin{equation}\label{for-x10}
  |\chi(x)| = 1.
\end{equation}  

\smallskip
(a2) In the second case, $\chi$ is quadratic-unipotent, and $C=C_G(s)$ contains 
$$D=\SO^-_{2p}(q) \times \SO^{-\varep}_2(q)$$ 
as a subgroup of index $2$. As noted above, $x$ has $2$ distinct cycles. Since $\chi$ is quadratic-unipotent, it follows from \cite[Corollary 8.2]{LTT} that 
 $|\chi(x)| \leq 2^{11}$.
Recall that $\ell$ is a primitive prime divisor of $q^{2p+2}-1$, which implies that $\ell \geq 2p+3=N+1$. Taking $N> 2^{11}$, we see
that $\ell > 2^{11}+1$, and hence 
\begin{equation}\label{for-x14}  
  |\chi(x)| < \ell-1.
\end{equation}  
It follows from Lemma \ref{congr} that $\chi(x) \neq 0$ if and only if $\ell \nmid \chi(1)$.

By Lusztig's classification, $\chi$ is labeled by a unipotent character $\tilde\psi$ of $C$, and 
\begin{equation}\label{for-x11}
  \chi(1)=[G:C]_{q'}\tilde\psi(1).
\end{equation}
As $\ell$ divides $q^p+1$ we have 
$$q^{p+1}-\varep \equiv -(q+\varep) \pmod{\ell},~~q^p-1 \equiv -2 \pmod{\ell}.$$
But $\ell \nmid 2(q+\varep)$, so we see that 
\begin{equation}\label{for-x12}
  [G:C]_{q'} = \frac{(q^{p+1}-\eps)(q^p-1)}{2(q+\varep)} \equiv 1 \pmod{\ell}.
\end{equation}
It follows that $\chi(x) \neq 0$ exactly when $\ell \nmid \tilde\psi(1)$.  Any unipotent
character $\tilde\psi$ of $C$ is an irreducible constituent of $\Ind^C_D(\psi)$, where $\psi$ is a unipotent character of $D$, so in fact its kernel contains 
$\SO^{-\varep}_2(q)$, and we can write
\begin{equation}\label{for-x12a}
  \psi = \psi_1 \boxtimes 1_{\SO^{-\varep}_2(q)},
\end{equation}  
where $\psi_1$ is a unipotent character of $\SO^-_{2p}(q)$.  By \cite[Theorem 2.5]{Ma}, $\psi_1$ is $C$-invariant, and hence extends to $C$. Thus each $\psi_1$ gives rise to two unipotent characters of $\GO^-_{2p}(q)$, and also two choices of $\tilde\psi$, all of the same degree 
$\tilde\psi(1)=\psi(1)=\psi_1(1)$. Viewing $x$ as an element of order $\ell$ in $\SO^-_{2p}(q)$, we have $\psi_1(x) \in \{0,\pm 1\}$ by Proposition \ref{prime-rank-D}. So $\chi(x) \neq 0$ exactly when 
$\psi_1(x) \neq 0$. Now, if $\psi_1 \in \Irr(\SO^-_{2p}(q))$ has level $j_1 \leq p-2$ and $\chi$ has level $j$, then \eqref{lev-uni} and \eqref{lev-gen} imply that 
$j_1=j-1.$

By Proposition \ref{prime-rank-D}, for each level $0 < j_1 \leq p-1$ there is
a unique unipotent character $\psi_1$ of level $j$ with $\psi_1(x) \neq 0$, and 
furthermore, 
$\psi_1(x) = \pm 1$.
Now applying Lemma \ref{congr}, we see that
$$\tilde\psi(1)=\psi_1(1) \equiv \pm 1 \pmod{\ell}.$$ 
Combining this with \eqref{for-x11} and \eqref{for-x12}, we obtain that
$\chi(1) \equiv \pm 1 \pmod{\ell}.$
Again using Lemma \ref{congr}, we deduce that
$$\chi(x) \equiv \pm 1 \pmod {\ell}.$$
In conjunction with \eqref{for-x14}, this implies that \eqref{for-x10} also holds in this case as well. 
We have also shown that for any fixed level $1 \leq j \leq p-1$, there are two characters $\chi$ of type (I) and level $j$ with
$|s_2|=2$ such that $\chi(x) \neq 0$.
If $p \leq j \leq p+1$, then there are at most four such characters (combining two different values for $j_1$).

\smallskip
(a3) In the third case, $\chi$ is of type (II) and we have
$$C_G(s)=L=D=\SO^-_{2p}(q) \times \SO^{-\varep}_2(q).$$ 
So $\chi$ is written in the form \eqref{sum22}, where $\psi$ is a unipotent character of $D$. In particular, 
\begin{equation}\label{for-x15}
  \chi(1)=[G:D]_{q'}\psi(1) = \frac{(q^{p+1}-\eps)(q^p-1)}{q+\varep}\psi(1).
\end{equation}  
Again write $\psi$ in the form \eqref{for-x12a}.
Using \eqref{sum23} and arguing as in the proof of Lemma \ref{SO-type2}, as well as $q \leq 5$, we see that there is some 
absolute constant $C_1$ such that
$|\chi(x)| \leq C_1$.
Recall that $\ell \geq 2p+3=N+1$. Taking $N \geq C_1+2$, we see
that $\ell \geq C_1+3$ and hence 
\begin{equation}\label{for-x17}  
  |\chi(x)| \leq \ell-3.
\end{equation}  
Arguing as in (a2) or using \eqref{sum23}, we see that the unipotent character
$\psi_1$ of $\SO^-_{2p}(q)$ is non-vanishing at $x$ (viewed as an element in $\SO^-_{2p}(q)$), and moreover, if $\psi_1$ has level $j_1 \leq N-3$ and $\chi$ has level $j$, then 
$j_1=j-1.$

By Proposition \ref{prime-rank-D}, for each level $0 < j_1 \leq p-1$ there is
a unique unipotent character $\psi_1$ of level $j_1$ with $\psi_1(x) \neq 0$.
Furthermore, $\psi_1(x) = \pm 1$ for such a $\psi_1$, and so
$$\psi(1)=\psi_1(1) \equiv \pm 1 \pmod{\ell}$$ 
by Lemma \ref{congr}.
Combining this with \eqref{for-x15} and \eqref{for-x12}, we obtain
$$\chi(1) \equiv \pm 2 \pmod{\ell}.$$
Again using Lemma \ref{congr}, we deduce that
$\chi(x) \equiv \pm 2 \pmod {\ell}$.
In conjunction with \eqref{for-x17}, this implies that 
\begin{equation}\label{for-x18}
  \chi(x) = \pm 2
\end{equation}  
in this case.

Note that the set of $q+\varep-2$ elements of order $>2$ of $T_2$ breaks into $(q+\varep-2)/2$ conjugacy classes in $S$.
We have therefore shown that for any fixed level $1 \leq j \leq p-1$, there are $(q+\varep-2)/2 \leq 2$ characters $\chi$ of type (II) and level $j$ 
such that $\chi(x) \neq 0$ (and $s_1=1$).
If $p \leq j \leq p+1$, then there are at most $q+\varep-2 \leq 4$ such characters (again combining two different values for $j_1$).

In particular, for each level $1 \leq j \leq N$, the number of characters $\chi$ of types (I) and (II) with $\chi(x) \neq 0$ and $s_1=1$ is 
at most $10$, and we always have $|\chi(x)| \leq 2$.

\smallskip  
(b) First we consider those $\chi$ with $\chi(1) \geq q^{N^{5/4}}$. 
Then by \eqref{SO-lt-bound} and \eqref{for-x10}, the total contribution of 
these characters (subject to $s_1=1$) to $|\Sigma^*|$ is at most
$$40Nq^{-\sigma N^{1/4}} < \frac 14$$
when $N \geq N_1$ and $N_1$ is chosen large enough.

Now we look at the characters $\chi$ with $\chi(1)  < q^{N^{5/4}}$.
Choosing $N_1$ sufficiently large, when $N \geq N_1$ we have $1 \leq j=\lev(\chi) \leq N^{1/4} \leq N-2$ by Theorem \ref{lev-unip}. 
Suppose that 
$$m=\supp(g) \geq N^{1/4}.$$ 
Since $\chi(1) \geq q^{N/2}$, by \eqref{SO-lt-bound2}, the total contribution of these characters to $|\Sigma^*|$ is at most
$$40N^{1/4}q^{-N^{1/4}\sigma/2} < \frac14$$
when $N_1$ is large enough, yielding the claim in this case.

\smallskip
(c) In the remaining case, we have $m \leq N^{1/4}$ and $j =\lev(\chi) \leq N^{1/4}$. In particular, 
$g$ has primary eigenvalue $\lambda=\pm 1$. Note that any linear character of $G$ takes value $1$ at 
$g \in S$, hence $\chi(g)$ is not changed when we multiply $\chi$ by such a character. Therefore, by 
Theorem~\ref{mainC},
$$\frac{\chi(g)}{\chi(1)}=\e_\chi + \frac{\chi(\lambda \cdot I)}{\chi(1)}q^{-mj},$$
where $|\e_\chi| < q^{-N/4}$.
Recall that $m \geq 2$ and $q \geq 3$.
Choosing $N_1$ large enough, we have
$$\Bigm| \sum_{1 \leq j \leq N^{1/4}}\eps_\chi|\chi(x)|^2\Bigm| \leq 40\cdot 3^{-N/4}N^{1/4} < \frac{1}{8}.$$
Now, if $\lambda=1$, then the ``main terms'' $\dfrac{\chi(\lambda \cdot I)}{\chi(1)}q^{-mj}$ in $\dfrac{\chi(g)}{\chi(1)}$ are all positive,
and so we are done in this case.

Assume now that $\lambda=-1$.
To determine the sign of $\chi(-I)/\chi(1) = \pm 1$, we use \cite[Proposition 4.5]{NT}, which implies that 
$\chi(-I)/\chi(1)=1$ if and only if $s_2$ lies in 
$$T_2 \cap [G,G] = C_{(q+\varep)/2}.$$

Fix any $1 \leq j \leq N^{1/4}$. First suppose that $4|(q+\varep)$, which implies that the element of order $2$ in $T_2$ lies in $[G,G]$.
Thus $T_2 \cap [G,G]$ gives rise to $c_j \in \{1,2\}$ unipotent characters, two with $|s_2|=2$, and $(q+\varep-4)/4$ characters with
$|s_2| > 2$, all with $\chi(-I)=\chi(1)$. On the other hand, $T_2 \setminus [G,G]$ gives rise to  $(q+\varep)/4$ characters with
$|s_2| > 2$ and $\chi(-I)=-\chi(1)$. Their total contribution of main terms to $\Re(\Sigma^*)$ is 
$$q^{-mj}\bigl( c_j+2+4 \frac{q+\varep-4}4-4  \frac{q+\varep}4\bigr) =(c_j-2)q^{-mj}.$$
Suppose now that $4 \nmid (q+\varep)$, which implies that the element of order $2$ in $T_2$ does not lie in $[G,G]$.
Thus $T_2 \cap [G,G]$ gives rise to $c_j \in \{1,2\}$ unipotent characters and $(q+\varep-2)/4$ characters with
$|s_2| > 2$, all with $\chi(-I)=\chi(1)$. On the other hand, $T_2 \setminus [G,G]$ gives rise to two characters with $|s_2|=2$ and $(q+\varep-2)/4$ characters with
$|s_2| > 2$, all with $\chi(-I)=-\chi(1)$. Their total contribution of main terms to $\Re(\Sigma^*)$ is 
$$q^{-mj}\bigl( c_j+4 \frac{q+\varep-2)}4-2-4  \frac{q+\varep-2}4\bigr) =(c_j-2)q^{-mj}.$$
We have shown that the total contribution of main terms to $\Re(\Sigma^*)$ is $-q^{-m} \geq -1/9$,
and so we are done in this case as well.
\end{proof}

\begin{lem}\label{d-type2}
There is an absolute constant $N_2>0$ such that when $N \geq N_2$, the total contribution of characters of type {\rm (III)} to $\Sigma^*$ has absolute value 
$< 1/2$. 
\end{lem}

\begin{proof}
Each of these characters can be written in the form \eqref{sum22}, where $L=\GU_p(q) \times T_2$ or $\GU_{p+1}(q)$ is unique up to conjugacy in $G$. We may assume that $x \in L$. Arguing as in the proof of Lemma \ref{SO-type2}, we see that there is some absolute constant $C_2>0$ such that 
the number of $h \in G$ such that $hxh^{-1} \in L$ is at most $C_2|L|$.
For any such $h$, $hxh^{-1}$ is a regular semisimple element, either in $\GU_p(q)$ labeled by the $p$-cycle element in the Weyl group of type $A_{p-1}$, or 
in $\GU_{p+1}(q)$ labeled by the $p$-cycle element in the Weyl group of type $A_{p}$.
Hence, by \cite[Corollary 3.1.2]{LST}, there are at most $p=(N-2)/2$ unipotent characters $\psi$ of $L$ that are nonzero at $hxh^{-1}$, and then 
$\psi(hxh^{-1}) = \pm 1$. It follows from \eqref{sum22} that 
$$|\chi(x)| \leq C_2.$$

There are at most $q^2-1 \leq 24$ choices for the element $s$ (up to conjugacy). 
Thus there are fewer than $12N$ characters of type (III) with $\chi(x) \neq 0$. Each of them has degree
$$\chi(1) \geq [G:L]_{q'} \geq (q-1)(q^2+1)(q^3-1) \ldots (q^{p}-1)(q^{p+1}+1) > q^{(N^2-2N-8)/8}.$$
Hence, by \eqref{SO-lt-bound2}, the total contribution of 
these characters to $|\Sigma^*|$ is at most
$$12NC_2^2q^{-\sigma N/9} < \frac12$$
when $N \geq N_2$ and $N_2$ is large enough.
\end{proof}

To complete the proof of \eqref{SO-sum21} and of Theorem \ref{so2p2}, it suffices to prove:

\begin{lem}\label{d-type3}
There is an absolute constant $N_3>0$ such that when $N \geq N_3$, the total contribution of characters of type {\rm (IV)} to $\Sigma^*$ has absolute value $< 1/2$. 
\end{lem}

\begin{proof}
Recall that $\supp(g) \geq 2$, so by Lemma \ref{cent-size} (with $q_0=5$) there exists an absolute constant $C_3 > 0$ such that
$$|\chi(g)| \leq \sqrt{|\CB_G(g)|} \leq C_3q^{N(N-5)/4}$$
for any $\chi \in \Irr(G)$. 

Each $\chi$ of type (III) can be written in the form $\pm R^G_T(\theta)$ for some $\theta \in \Irr(T)$. Hence, by \cite[Proposition 7.1]{LTT}
we have the bound
$$|\chi(x)| \leq 4p=2N-4,$$
and furthermore
$$\chi(1) = [G:T]_{q'} \geq (q^2-1)(q^4-1) \ldots (q^{2p}-1)(q^{p+1}-1)/(q^{p}+1)(q+1) > q^{N(N-2)/4-2}.$$
There are fewer than $q^p(q+1) < q^{N/2+1}$ choices for $\theta$. Hence the total contribution of these characters to $|\Sigma^*|$ is at most
$$(2N-4)^2C_3q^{3-N/4} < 4N^2C_33^{3-N/4}< \frac12$$
when $N \geq N_3$ and $N_3$ is large enough.
\end{proof}

\section{Thompson's conjecture for odd-characteristic orthogonal groups}
In this section we show that Thompson's conjecture
holds for the simple group $\Omega_N(q)$ with $q=3,5$, as long as $N$ is sufficiently large. 
Together with the main result of \cite{EG}, this implies Thompson's conjecture for all sufficiently large orthogonal simple groups in odd characteristic.
Combined with \cite[Theorem 7.7]{LT3} this completes the proof of the orthogonal part of Theorem~\ref{mainA}.
Combined with the unitary part of Theorem~\ref{mainA} in \S7, this finishes the proof of the asymptotic Thompson Conjecture.

\begin{thm}\label{main-B}
Let $q\in\{3,5\}$.
There is an explicit absolute constant $A>0$ such that if $N>A$ and $\varep =\pm$, then there exists a regular semisimple conjugacy class $C$ in 
$S=\Omega^\varep_N(q)$ such that 
$C^2$ contains all the elements in $S$, except the central involution $-\Id$ of $\SO^\varep_N(q)$ if it belongs to $S$.
\end{thm}

\begin{proof}
(a) If $2 \nmid N$, it is convenient for us to write 
$$N=4n+3+2e,$$ 
where we take $e=0$ if $N \equiv 3 \pmod 4$ and $e=3$ if $N \equiv 1 \pmod 4$ (and we can take $\varep$ to be void).
Similarly, if $2 \mid N$ we write 
$$N=4n+4+2e,$$ 
where we take $e=0$ if $N \equiv 0 \pmod 4$ and $e=3$ if $N \equiv 2 \pmod 4$.

By the prime number theorem, if $n$ is sufficiently large we can write 
$$2n+1 = p + 4k$$ 
for some prime $p$ such that 
$$n < p < 3n/2.$$
Choose $\kappa \in \{1,2\}$ such that $2\mid (N-\kappa)$. Then we have 
$$N = (2p+\kappa)+8k+2e.$$

\smallskip
Let $V = \F_q^N$ denote the natural module for $S$, so that $S = \Omega(V)$.
We fix a non-degenerate subspace $W$ of $V$ of codimension $2p+\kappa$
(whence $\dim(W) = 8k+2e$ is even) and of type $\delta$ chosen as follows.
If $e=0$, then $\delta=+$.
If $e=3$, then $\delta=-$ if $q=3$ and $\delta=+$ if $q=5$. 
We will decompose 
$$W = V_2 \oplus V_3,$$
as an orthogonal sum, where $V_2$ is a non-degenerate subspace of dimension $8k$ and of type $+$. If $e=0$, then $V_3=0$;
otherwise $V_3$ is non-degenerate of dimension $2e$ and of type $\delta$.
Let $V_1$ denote the orthogonal complement to $W$ in $V$. If $2|N$, then $V_1$ has type $\gamma =\varep\delta$; if 
$2 \nmid N$ we take $\gamma$ to be void. Then let $S_1$ denote the group 
$$\Omega(V_1)=\Omega^\gamma_{2p+\kappa}(q),$$ 
embedded in $S$ as the subgroup that acts trivially on $W$.

\smallskip
Let $A_1>0$ and $C_1\subset S_1$ be respectively the constant denoted $A$ and the conjugacy class denoted $C$ in Theorem~\ref{so2p1}
when $\kappa=1$, and in Theorem~\ref{so2p2} when $\kappa=2$. 
If $A$ is sufficiently large, $p>A_1$, so these theorems assert the existence of such a $C_1$.
Let $x_1$ be an element of the class $C_1$.

\smallskip
Let $\ell$ be a primitive prime divisor of $q^{4k-2}-1$ \cite{Zs}, and let $y$ be a regular semisimple element of 
$$R=\Omega^+_{4k}(q)$$ 
of order $\ell$ (with eigenvalues of order $\ell$ or $1$ and with centralizer of order $(q^{2k-1}+1)(q+1)$ in $\SO^+_{4k}(q)$, see e.g. \cite[\S2.1]{GT2}), which is then real
by \cite[Proposition 3.1]{TZ}. Note that $-I_{4k} \in R$, so $-y\in R$, and all eigenvalues of $-y$ have order $2\ell$ or $2$. 
We can embed $R \times R$ in 
$$S_2 = \Omega(V_2) \cong \Omega^+_{8k}(q),$$ 
and see that $\diag(y^{-1},-y^{-1})$ is conjugate to $\diag(-y^{-1},y^{-1})$ in $S_2$ by \cite[Lemma~7.3]{LT3}. Hence, for any scalar $\alpha = \pm 1 \in \F_q^\times$, 
we can express 
$\diag(\al I_{4k},\al I_{4k})$ as a product of two conjugates of $\diag(y,-y)$ in $\Omega^+_{8k}(q)$:
$$\diag(I_{4k},I_{4k}) = \diag(y,-y)\cdot\diag(y^{-1},-y^{-1}),~~\diag(-I_{4k},-I_{4k}) = \diag(y,-y)\cdot\diag(-y^{-1},y^{-1}).$$

\smallskip
If $e=0$, we define 
$$x = \diag(x_1,y,-y),$$ 
which is an element of $\Omega^\varep_N(q)=S$, and we define $C$ to be the conjugacy class of $x$ in $S$. 
Note that $x$ is not regular but has two repeated eigenvalues, $1$ with multiplicity $3$ if $2 \nmid N$ and 
$4$ if $2|N$, and $-1$ with multiplicity $2$
(since $4k<p$, and $y$ and $-y$ have no common eigenvalue).  Still, in either case we have 
$$|\CB_S(x)| \leq q^{2n-1}|\GO^-_4(q)| \cdot |\GO^-_2(q)| < q^{2n+8}.$$

Suppose $e=3$, and let $i$ denote a fixed element of $\F_{q^2}^\times$ of order $4$. If $q=3$, we take
$$z=\diag(i,-i,-1,-1,1,1) \in \SO^-_2(3) \times \SO^+_2(3) \times \Omega^+_2(3) < \SO^\delta_6(3).$$
If $q=5$, we take
$$z=\diag(i,-i,-1,-1,1,1) \in \SO^+_2(5) \times \SO^-_2(5) \times \Omega^-_2(5) < \SO^\delta_6(5).$$
Note that, in both cases, $z$ is a regular semisimple and real element in 
$$S_3=\Omega^\delta_6(q)$$ 
which contains $-I$, and furthermore, $z$ is conjugate to $-z$ in $S_3$.
(These statements are obvious if we consider $z$ as an element of $H:=\GO^\delta_6(q)$. Note that $\CB_H(z)$ contains 
a natural subgroup $\GO^{-\delta}_2(q)$ which has index $4$ over $S_3 \cap \GO^{-\delta}_2(q) = \Omega^{-\delta}_2(q)$, and so 
$\CB_H(z)$ has index $4=[H:S_3]$ over $\CB_{S_3}(z)$, whence the statements follow.) It follows that for any scalar $\al = \pm 1 \in \F_q^\times$, 
we can write 
$\al I$ as a product of two $S_3$-conjugates of $z$:
$$I = z \cdot z^{-1},~~-I = -z \cdot z^{-1}.$$
Now we define 
$$x = \diag(x_1,y,-y,z),$$ 
which is an element of $\Omega^\varep_N(q)=S$, and we define $C$ to be the conjugacy class of $x$ in $S$. Again, $x$ is not regular, with two 
repeated eigenvalues: $1$ with multiplicity $5$ if $2\nmid N$ and $6$ if $2|N$, and $-1$ with multiplicity $4$. Hence we have 
$$|\CB_S(x)| \leq q^{2n} \cdot |\GO^-_6(q)| \cdot |\GO^-_4(q)|  < q^{2n+23}.$$

\smallskip
(b) We now show that if $A$ is sufficiently large, $C^2$ contains all elements $g \neq -I$ of $S$.

Let $g$ denote any element of $S$.  We consider first the case that 
$$\supp(g) \le n+1.$$  
Then the primary eigenvalue $\lambda$ of $g$ lies in $\{\pm 1\}$, and, by \cite[Lemma 6.3.4]{LST}, the $\lambda$-eigenspace of $g$ on $V$ contains 
a non-degenerate subspace of codimension $\leq 2n+2 \leq 2p+\kappa-1$. It follows that some $S$-conjugate of $g$ acts as the scalar $\lambda$ on $V_1^\perp$,
i.e. 
$g$ is conjugate in $S$ to an element of the form 
$$\diag(g_1,\lambda I_{8k},\lambda I_{2e}).$$  
Moreover, $\lambda I_{8k} \in S_2$, $\lambda I_{2e} \in S_3$, so $g_1 \in S_1$. As mentioned above, we can express $\lambda I_{8k}$ as a product of two conjugates of $\diag(y,-y)$ in $S_2=\Omega^+_{8k}(q)$, 
and we can express $\lambda I_{2e}$ as a product of two conjugates of $z$ in $S_3=\Omega^\delta_{2e}(q)$.
If $2 \nmid N$, then by Theorem~\ref{so2p1}, $g_1$ is a product of two $S_1$-conjugates of $x_1$, and so $g$ itself lies in $C^2$.
By Theorem~\ref{so2p2}, the same holds if $2|N$ and $\lambda=1$. Suppose $2|N$ and $\lambda=-1$. By assumption, $g \neq -I$, so $g_1 \neq -I$, and 
hence we are again done by Theorem~\ref{so2p2}.

\smallskip
We may therefore assume 
$$\supp(g)\ge n+1>\frac N5.$$  
By \cite[Theorem~5.5]{LT3}, for every irreducible character $\chi$ of $S$,
$$\frac{|\chi(g)|}{\chi(1)} \le \chi(1)^{-\sigma/5},$$
where $\sigma>0$ is an absolute constant.  By \cite[Theorem~6.2]{GLT2}, there exists $\nu > 0$ such that 
if the centralizer of $t$ in $S$ is smaller than $|S|^\nu$, then $|\chi(t)| \le \chi(1)^{\sigma/11}$. 
Taking $A$ large enough, we may assume that $|\CB_S(x)| < |S|^\nu$, and hence 
$$|\chi(x)| \le \chi(1)^{\sigma/11}.$$ 
Therefore,
$$\frac{|\chi(x)^2\chi(g)|}{\chi(1)} \leq \chi(1)^{-\sigma/55}$$
for all $\chi \in \Irr(S)$.
By \cite[Theorem~1.2]{LiSh}, if $A$ is sufficiently large, 
$$\sum_{1_S \neq \chi \in \Irr(S)} \frac{|\chi(x)^2\chi(g)|}{\chi(1)} < 1.$$
%
The Frobenius formula implies that $g \in C^2$ in this case as well.
\end{proof}

\end{document}